\definecolor{mygreen}{rgb}{0.1,0.6,0.3}
\definecolor{mybrown}{rgb}{0.6,0.3.5,0}
\newtheorem{theorem}{Theorem}[section]
\newtheorem{proposition}[theorem]{Proposition}
\newtheorem{lemma}[theorem]{Lemma}
\newtheorem{corollary}[theorem]{Corollary}
\crefname{theorem}{Theorem}{Theorems}
\crefname{proposition}{Proposition}{Propositions}
\crefname{lemma}{Lemma}{Lemmas}
\crefname{corollary}{Corollary}{Corollaries}
\theoremstyle{definition}
\newtheorem{definition}[theorem]{Definition}
\newtheorem{example}[theorem]{Example}
\newtheorem{question}[theorem]{Question}
\newtheorem{remark}[theorem]{Remark}
\newtheorem{notation}[theorem]{Notation}
\crefname{definition}{Definition}{Definitions}
\crefname{example}{Example}{Examples}
\crefname{problem}{Problem}{Problems}
\crefname{fact}{Fact}{Facts}
\crefname{question}{Question}{Questions}
\crefname{remark}{Remark}{Remarks}
\crefname{convention}{Convention}{Conventions}
\crefname{notation}{Notation}{Notations}
\DeclareMathOperator{\Spec}{Spec}
\DeclareMathOperator{\Kdim}{Kdim}
\DeclareMathOperator{\Supp}{Supp}
\DeclareMathOperator{\supp}{supp}
\DeclareMathOperator{\cosupp}{cosupp}
\DeclareMathOperator{\mSpec}{mSpec}
\DeclareMathOperator{\height}{\mathrm{ht}}
\DeclareMathOperator{\grade}{\mathrm{gr}}
\DeclareMathOperator{\depth}{depth}
\DeclareMathOperator{\width}{width}
\DeclareMathOperator{\Mod}{Mod}
\DeclareMathOperator{\Proj}{Proj}
\DeclareMathOperator{\Flat}{Flat}
\DeclareMathOperator{\Inj}{Inj}
\DeclareMathOperator{\Add}{Add}
\DeclareMathOperator{\Prod}{Prod}
\DeclareMathOperator{\D}{\mathbf{D}}
\DeclareMathOperator{\K}{\mathbf{K}}
\DeclareMathOperator{\C}{\mathbf{C}}
\DeclareMathOperator{\Gen}{Gen}
\DeclareMathOperator{\thick}{thick}
\DeclareMathOperator{\Ctra}{Ctra}
\DeclareMathOperator{\Id}{Id}
\DeclareMathOperator{\Ker}{Ker}
\DeclareMathOperator{\Ima}{Im}
\DeclareMathOperator{\Cone}{Cone}
\DeclareMathOperator{\Coker}{Coker}
\DeclareMathOperator{\Hom}{Hom}
\DeclareMathOperator{\RHom}{\mathbf{R}Hom}
\DeclareMathOperator{\RGamma}{\mathbf{R}\Gamma}
\DeclareMathOperator{\LLambda}{\mathbf{L}\Lambda}
\newcommand{\LotimesR}{\mathop{\otimes_R^{\mathbf L}}\nolimits}
\newcommand{\LotimesRp}{\mathop{\otimes_{R_\pp}^{\mathbf L}}\nolimits}
\DeclareMathOperator{\Ext}{Ext}
\DeclareMathOperator{\End}{End}
\DeclareMathOperator{\Tor}{Tor}
\newcommand{\Ad}{\mathrm{A}}
\newcommand{\ZZ}{\ensuremath{\mathbb{Z}}}
\newcommand{\QQ}{\ensuremath{\mathbb{Q}}}
\newcommand{\pp}{\mathfrak{p}}
\newcommand{\qq}{\mathfrak{q}}
\newcommand{\mm}{\mathfrak{m}}
\newcommand{\cA}{\ensuremath{\mathcal{A}}}
\newcommand{\cC}{\ensuremath{\mathcal{C}}}
\newcommand{\cE}{\ensuremath{\mathcal{E}}}
\newcommand{\cH}{\ensuremath{\mathcal{H}}}
\newcommand{\cL}{\ensuremath{\mathcal{L}}}
\newcommand{\cS}{\ensuremath{\mathcal{S}}}
\newcommand{\cT}{\ensuremath{\mathcal{T}}}
\newcommand{\cU}{\ensuremath{\mathcal{U}}}
\newcommand{\cV}{\ensuremath{\mathcal{V}}}
\newcommand{\cW}{\ensuremath{\mathcal{W}}}
\newcommand{\cY}{\ensuremath{\mathcal{Y}}}
\newcommand{\cX}{\ensuremath{\mathcal{X}}}
\newcommand*{\Perp}[1]{{}^{\perp_{#1}}}
\newcommand{\bd}{\mathrm{b}}
\newcommand{\cp}{\mathrm{c}}
\newcommand{\fg}{\mathrm{fg}}
\newcommand{\op}{\mathrm{op}}
\newcommand{\cd}{\mathsf{d}}
\newcommand{\sm}{\setminus}
\newcommand{\f}{\mathsf{f}}
\newcommand{\isoto}{\xrightarrow{\smash{\raisebox{-0.25em}{$\sim$}}}}
\newcommand{\ha}{\hookrightarrow}
\crefname{enumi}{}{}
\Crefname{enumi}{}{}
\crefname{enumii}{}{}
\Crefname{enumii}{}{}
\crefname{enumiii}{}{}
\Crefname{enumiii}{}{}
\numberwithin{equation}{section}
\crefname{equation}{}{}
\Crefname{equation}{}{}
\title[Tilting complexes and codimension functions]{Tilting complexes and codimension functions over commutative noetherian rings}
\author{Michal Hrbek}
\address[M. Hrbek]{Institute of Mathematics of the Czech Academy of Sciences, \v{Z}itn\'{a} 25, 115 67 Prague, Czech Republic}
\email{hrbek@math.cas.cz}
\author{Tsutomu Nakamura}
\address[T. Nakamura]{Department of Mathematics, Faculty of Education, Mie University, 1577 Kurimamachiya-cho, Tsu city, Mie, 514-8507, Japan}
\email{nakamura@edu.mie-u.ac.jp}
\address{Osaka Central Advanced Mathematical Institute, Osaka Metropolitan University, 3-3-138 Sugimoto, Sumiyoshi-ku, Osaka, 558-8585, Japan}
\email{t.nakamura.math@gmail.com}
\author{Jan \v{S}\v{t}ov\'{i}\v{c}ek}
\address[J. \v{S}\v{t}ov\'{i}\v{c}ek]{Charles University, Faculty of Mathematics and Physics, Department of Algebra, Sokolovsk\'{a} 83, 186 75 Praha, Czech Republic }
\email{stovicek@karlin.mff.cuni.cz}
\thanks{M. Hrbek was supported by the GAČR project 20-13778S and RVO: 67985840. T. Nakamura was supported by PRIN-2017  ``Categories, Algebras: Ring-Theoretical and Homological Approaches (CARTHA)'' and Grant-in-Aid for JSPS Fellows JP20J01865. J. Šťovíček was supported by the GAČR project 20-13778S}
\keywords{Derived category, silting object, cosilting object, tilting complex, commutative noetherian ring.}
\subjclass[2020]{13D09 (Primary), 13D45, 13H10, 18G80 (Secondary)}
\begin{document}
\dedicatory{Dedicated to Lidia Angeleri H\"ugel on the occasion of her 60th birthday}

\begin{abstract}
In the derived category of a commutative noetherian ring, we explicitly construct a silting object associated with each sp-filtration of the Zariski spectrum satisfying the ``slice'' condition. Our new construction is based on local cohomology and it allows us to study when the silting object is tilting. For a ring admitting a dualizing complex, this occurs precisely when the sp-filtration arises from a codimension function on the spectrum. In the absence of a dualizing complex, the situation is more delicate and the tilting property is closely related to the condition that the ring is a homomorphic image of a Cohen--Macaulay ring. We also provide dual versions of our results in the cosilting case.
\end{abstract}

\maketitle

\tableofcontents
\section{Introduction}
Rickard's theorem \cite{Ri89} states that derived equivalences between categories of modules over rings are fully governed by compact tilting complexes. While these are ubiquitous in representation theory, it is well known that any derived equivalence boils down to a Morita equivalence whenever one of the rings happens to be commutative (e.g., \cite[Proposition 5.14]{AHKLY17}). As a consequence, in commutative algebra, the classical tilting theory reduces just to studying progenerators of the module category. However, there is a much richer supply of large (= possibly non-compact) tilting objects. They were first introduced in the module-theoretic incarnation by Colpi and Trlifaj \cite{CT95} and Angeleri H\"{u}gel and Coelho \cite{AHC01}.
Psaroudakis and Vitória \cite{PV18} and Nicolás, Saorín, and Zvonareva \cite{NSZ19} introduced large silting and tilting objects in triangulated categories, where they naturally generalized the concepts of compact silting objects (\cite{KV88,AI12}) and large silting complexes (\cite{Wei13,AHMV16a}).
In contrast with the classical setting, the endomorphism ring of a large tilting module is typically not derived equivalent to the original ring. However, if the tilting module is ``good'', then one obtains a triangulated equivalence after localizing the derived category of the endomorphism ring (\cite{BMT11}). This discrepancy can be measured by a recollement; see \cite[\S 5]{AHKL11}, \cite{CX12,CX19}, and \cite{BP13}.
Silting objects are defined as those inducing t-structures in a natural way, and the heart of such a t-structure is an abelian category having a projective generator. Under mild assumptions, the inclusion from the heart of a silting t-structure to the derived category extends to a derived equivalence if and only if the silting object is tilting (\cite{PV18}). An analogous dual theory exists for cosilting and cotilting objects, for which the hearts are abelian categories having an injective cogenerator (in fact, the cosilting hearts are often Grothendieck categories). See \cref{preliminaries} for more details. We also remark that recently a mutation theory for large silting and cosilting objects has been developed in \cite{AHLSV22}; pure-injective cosilting objects play an important role there, and mutation for such objects encompasses mutation for compact silting complexes in the classical context (\cite{KV88,AI12}). The theory is meaningful in the setting of the present paper as well; see \cite[Example 4.11]{AHLSV22} and the last paragraph of \cref{subsec-real-func}.

Let $R$ be a commutative noetherian ring. A lot of structural information about the derived category $\D(R)$ is known to be controlled by the Zariski spectrum $\Spec R$.  
Neeman \cite{Nee92} proved that there is a canonical bijection between the localizing subcategories of $\D(R)$ and the subsets of $\Spec R$.
Alonso Tarr\'{\i}o, Jerem\'{\i}as L\'{o}pez and Saor\'{i}n
\cite{ATJLS10} classified the compactly generated t-structures in $\D(R)$ by using the sp-filtrations of $\Spec R$ (= the filtrations of specialization closed subsets of $\Spec R$); see \cref{ATJLS-bijec}.
The compactly generated t-structures induced by silting objects have also been classified in terms of sp-filtrations of $\Spec R$. For tilting modules, this was essentially achieved by Angeleri H\"{u}gel, Pospíšil, the third author, and Trlifaj \cite{AHPST14}, and in general, by Angeleri H\"{u}gel and the first author \cite{AHH21}; see \cref{silt-fin-sp}.
On the other hand, explicit constructions of the silting objects inducing these t-structures are known only in rather limited instances; see \cite[\S 3]{AHHT06} and \cite{PT11}.

The first main goal of this paper is to provide such a construction for sp-filtrations of special type, which we call \emph{slice} sp-filtrations. The slice condition asserts that in each step of the filtration we are only allowed to remove at most a zero-dimensional (=``thin'') layer of prime ideals. Such sp-filtrations are rather ubiquitous; see \cref{hight-codim} and the next paragraph. The philosophy behind these filtrations is akin to the notion of the slice filtration in equivariant or motivic homotopy theory; see \cref{slice-remark,slice-functor}. In particular, the slice condition is sufficient for the Bousfield localization and colocalization functors induced by the consecutive members of the sp-filtration to decompose objects into ``computable pieces''. Our construction of silting objects is also related to the method that builds tilting modules and complexes from ring epimorphisms in the representation theoretical context; see, e.g.,  \cite[Theorem 3.5]{AHS11} and \cite[\S 5]{AHH21}. To successfully produce a silting object, the existing constructions need to impose a strong homological restriction on the ring epimorphism(s) appearing there, while the slice condition can be viewed as a suitable geometric analog. See the end of \cref{proof-theorem} for a more detailed discussion. 

To state our first main result, we remark that there is a bijection between the sp-filtrations $\Phi$ of $\Spec R$ and the order-preserving functions $\f_{\Phi}:\Spec R \to \ZZ \cup\{\infty, -\infty\}$. Then $\Phi$ is a slice sp-filtration if and only if it corresponds to a strictly increasing function $\f_{\Phi}:\Spec R \to \ZZ$; see \cref{order-preserv} and \cref{slice-strict}.

\begin{theorem}[\cref{slice-silting}]\label{intro-slice}
	Let $R$ be a commutative noetherian ring and $\Phi$ a slice sp-filtration of $\Spec R$. Then
	\[T_\Phi := \bigoplus_{\pp \in \Spec R} \Sigma^{\f_\Phi(\pp)}\RGamma_{\pp}R_{\pp}\]
	is a silting object in $\D(R)$, and this induces the t-structure whose aisle is \[\cY_{\Phi}:=\{X \in \D(R) \mid \width_{R}(\pp,X)>n~\forall n \in \ZZ~\forall \pp\in \Phi(n)\}.\]
\end{theorem}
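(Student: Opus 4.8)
The plan is to verify the two defining conditions of a silting object for $T_\Phi$ directly, namely that $T_\Phi$ is "self-orthogonal in positive degrees" ($\Hom_{\D(R)}(T_\Phi, \Sigma^n T_\Phi)=0$ for $n>0$) and that the pair $(\cY_\Phi, \cY_\Phi^{\perp_{\le 0}})$ determined by $T_\Phi$ is a t-structure whose aisle is generated by $T_\Phi$; then the identification of the aisle with $\cY_\Phi$ follows. The key technical input is the local description of the building blocks $\Sigma^{\f_\Phi(\pp)}\RGamma_\pp R_\pp$: for a prime $\pp$, the complex $\RGamma_\pp R_\pp$ is the local cohomology of the regular local... ring $R_\pp$ supported at its maximal ideal, and width and support/cosupport computations (see the width-related notions recalled in the preliminaries) reduce statements about $\Hom$ and about membership in $\cY_\Phi$ to local statements at each prime. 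First I would record, for each $\pp,\qq\in\Spec R$, the computation of $\RHom_R(\RGamma_\pp R_\pp, \Sigma^n\RGamma_\qq R_\qq)$; this vanishes unless $\qq\subseteq\pp$ (by support considerations: $\RGamma_\qq R_\qq$ is supported on $V(\qq)$ while $\RGamma_\pp R_\pp$ is, dually, ``cosupported'' away from there unless $\pp\in V(\qq)$), and when $\qq\subseteq\pp$ one is reduced to computing $\RHom_{R_\pp}(\RGamma_{\pp R_\pp}R_\pp, \RGamma_{\qq R_\pp}R_\pp)$, which by local duality over the Gorenstein-up-to-shift object $\RGamma_{\pp R_\pp}R_\pp$ concentrates the Ext-groups in the degree window dictated by $\dim R_\pp - \dim R_{\qq}$, i.e. by $\f_\Phi(\pp)-\f_\Phi(\qq)$ using that $\f_\Phi$ is strictly increasing along specialization. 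Combining these local contributions with the shifts $\Sigma^{\f_\Phi(\pp)}$ yields the positive self-orthogonality.

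Next I would show that $\cY_\Phi$ as displayed is exactly the aisle $\{X \mid \Hom_{\D(R)}(T_\Phi,\Sigma^n X)=0 \ \forall n<0\}$ (equivalently $\{X\mid \Hom(\Sigma^{-n}T_\Phi, X)=0\ \forall n<0\}$, using the $\perp$ convention). Here I would use the adjunction/duality computing $\RHom_R(\RGamma_\pp R_\pp, X)$ in terms of the derived completion or, more conveniently, directly in terms of the invariant $\width_R(\pp,X)$ — the point being that $\Hom_{\D(R)}(\Sigma^{\f_\Phi(\pp)}\RGamma_\pp R_\pp,\Sigma^n X)=0$ for all $n<0$ is equivalent to $\width_R(\pp, X) > \f_\Phi(\pp) + n$ failing for no $n<0$... more precisely to $\width_R(\pp,X) \ge \f_\Phi(\pp)$, and then summing over $\pp$ and rewriting via $\pp\in\Phi(\f_\Phi(\pp))$ (which is how the bijection $\Phi\leftrightarrow\f_\Phi$ works, \cref{order-preserv}) gives exactly the stated condition $\width_R(\pp,X)>n$ for all $\pp\in\Phi(n)$. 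That the resulting pair is a genuine t-structure follows because $\cY_\Phi$ is the aisle cogenerated-by-$\Hom$-vanishing from a \emph{set} of objects, hence it is a suspended, coproduct-closed subcategory of $\D(R)$ closed under the relevant limits, so by Bondarko–Keller–Vossieck-type criteria it is an aisle; the nontrivial part is that the co-aisle is generated correctly so that $T_\Phi$ is a silting object and not merely a ``silting-like'' object, and this is where one must invoke that $\cY_\Phi$ is in fact the aisle associated to the sp-filtration $\Phi$ via \cref{ATJLS-bijec}/\cref{silt-fin-sp} — i.e. I would identify $\cY_\Phi$ with the aisle of the compactly generated t-structure attached to $\Phi$ and then check $T_\Phi\in\cY_\Phi$ together with $\Sigma^{-1}T_\Phi$ generating the co-aisle.

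The main obstacle I anticipate is the second half: showing that $T_\Phi$ actually \emph{generates} the t-structure $(\cY_\Phi, \cdots)$, i.e. that the smallest coproduct-closed suspended subcategory containing $T_\Phi$ equals $\cY_\Phi$ — equivalently that $\cY_\Phi^{\perp}$ (objects right-orthogonal to all non-negative shifts of $T_\Phi$) coincides with the co-aisle. The self-orthogonality and the inclusion $T_\Phi\in\cY_\Phi$ are bookkeeping with local cohomology, but generation requires a genuine dévissage: one must build an arbitrary object of $\cY_\Phi$ from shifted copies of the summands $\RGamma_\pp R_\pp$, and this is precisely where the \emph{slice} hypothesis is essential — because each layer $\Phi(n)\sm\Phi(n+1)$ has dimension zero, the Bousfield (co)localization triangles attached to consecutive members of the filtration split the truncation functors into the ``computable pieces'' alluded to in the introduction (\cref{slice-functor}), each piece being a coproduct of shifts of residue-field-like local cohomology complexes $\RGamma_\pp R_\pp$ for $\pp$ in a single thin layer. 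I would therefore structure the generation argument as an induction up the filtration, at each stage using a triangle $\Sigma^{?}(\text{layer }n\text{ part}) \to (\text{truncation at }n) \to (\text{truncation at }n{+}1)$ to peel off one slice, and the zero-dimensionality to guarantee that the layer part is built from the $\Sigma^{\f_\Phi(\pp)}\RGamma_\pp R_\pp$ with no higher obstruction. Assembling these triangles (and checking convergence, which is where completeness/coproduct arguments enter) yields that $T_\Phi$ generates $\cY_\Phi$, completing the proof that it is silting with the asserted aisle.
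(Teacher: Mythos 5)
There is a genuine gap in your proposal, and it sits precisely at the step you present as straightforward: you write that $\Hom_{\D(R)}(\Sigma^{\f_\Phi(\pp)}\RGamma_\pp R_\pp,\Sigma^n X)=0$ (for the relevant range of $n$) is ``equivalent to $\width_R(\pp,X)\geq\f_\Phi(\pp)$.'' Unwinding the adjunction $\RHom_R(\RGamma_\pp R_\pp,-)\cong \LLambda^\pp\RHom_R(R_\pp,-)$ actually produces the condition $\width_{R_\pp}\RHom_R(R_\pp,X)\geq \f_\Phi(\pp)$, whereas $\width_R(\pp,X)=-\sup\LLambda^\pp X$ is a different invariant whenever $\pp$ is not maximal; the local object governing the former is $\RGamma_\pp R_\pp$ while the latter is $\RGamma_\pp R$. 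These two conditions are \emph{not} equivalent for a general $X$ and a general non-exact colocalization $\RHom_R(R_\pp,-)$. Proving that they agree for all $\pp$ simultaneously under the slice hypothesis is exactly the content of \cref{classes-equal}, which is the technical heart of the paper and is carried out by a transfinite induction over the ordinal-valued chain $W_\alpha$ (of transfinite ``large Krull dimension''), with the Lukas lemma needed at limit stages to handle cofiltrations. Your proposed dévissage over the integer-indexed layers $\Phi(n)\setminus\Phi(n+1)$ is a different induction and is not obviously well-founded: a single layer can contain primes of unbounded height, so peeling off slices one integer at a time gives no convergence mechanism for a general $X\in\cY_\Phi$. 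In fact the problem reorganizes more cleanly than you suggest: by \cref{silting-cond}, once $T_\Phi^{\perp_{>0}}=\cY_\Phi$ is established (so that coproduct-closure of $T_\Phi^{\perp_{>0}}$ follows from the compactly generated description of $\cY_\Phi$), $T_\Phi^{\perp_\ZZ}=0$ is a one-line support argument (\cref{generate}), and $T_\Phi\in\cY_\Phi$ is an elementary width computation using $\sup \RGamma_\pp(R_\pp/\qq_\pp)\leq\dim R_\pp/\qq_\pp\leq \f_\Phi(\pp)-\f_\Phi(\qq)$ from the slice condition---no ``generation by dévissage'' is required.

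There are also two localized errors in your sketch of the $\RHom$ computation. First, the containment direction is reversed: $\RHom_R(\RGamma_\pp R_\pp,\Sigma^n\RGamma_\qq R_\qq)$ vanishes unless $\pp\subseteq\qq$, not unless $\qq\subseteq\pp$ (the support of $\RGamma_\qq R_\qq$ is the single point $\{\qq\}$, while $\cosupp\RGamma_\qq R_\qq\subseteq U(\qq)$; cf.\ \cref{minimal-case}, \cref{comparison}\cref{Gamma-adjoint}, and the explicit computation in \cref{P:dualizing}). Second, $\RGamma_{\pp R_\pp}R_\pp$ is \emph{not} ``Gorenstein-up-to-shift'' for a general noetherian local ring $R_\pp$: its cohomology spreads over degrees $\depth R_\pp,\dots,\dim R_\pp$, and no local-duality argument over $\RGamma_{\pp R_\pp}R_\pp$ is available without a Cohen--Macaulay or Gorenstein hypothesis. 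The theorem asserts nothing of the sort and is proved for arbitrary commutative noetherian rings; the path via local duality works only in the presence of a dualizing complex, where the paper indeed gives a shortcut (\cref{P:dualizing}, \cref{cdc-shortcut}), but the general case needs the transfinite argument described above.
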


In general, the tilting property is not easily read from the silting t-structure alone, as the vanishing of negative self-extensions is typically not tested by orthogonality properties with respect to any set of compact objects.
Our explicit construction of silting objects enables us to study this question:
\begin{center}\emph{When is the silting object $T_\Phi$ tilting?}\end{center} 
In fact, if $T_\Phi$ is tilting, then $\Phi$ is necessarily a codimension filtration (i.e., $\f_{\Phi}$ is a codimension function on $\Spec R$); see \cref{necessity}. This makes a limitation on $R$ because the existence of a codimension function implies $R$ is catenary. 
Our second main result shows that, for rings which are nice enough in a certain sense, $T_\Phi$ is tilting for any codimension filtration $\Phi$.

\begin{theorem}[\cref{finite-theorem,tilt-dc}]\label{intro-tilt}
Assume one of the following conditions hold:
\begin{enumerate}[label=(\arabic*), font=\normalfont]
\item \label{intro-tilt-dc} $R$ admits a (strongly pointwise) dualizing complex; 
\item \label{intro-tilt-CM} $R$ is a homomorphic image of a Cohen--Macaulay ring of finite Krull dimension.
\end{enumerate}
Let $\Phi$ be a codimension filtration of $\Spec R$, which exists under the assumption. Then $T_{\Phi}$ is a tilting object in $\D(R)$.
\end{theorem}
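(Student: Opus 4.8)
The plan is to prove that $T_\Phi$ is tilting by verifying the defining vanishing condition $\Hom_{\D(R)}(T_\Phi,\Sigma^n T_\Phi)=0$ for all $n>0$, using the explicit description of $T_\Phi$ as $\bigoplus_\pp \Sigma^{\f_\Phi(\pp)}\RGamma_\pp R_\pp$ together with the aisle description $\cY_\Phi$ from \cref{intro-slice}. Since $T_\Phi$ already generates the silting t-structure with aisle $\cY_\Phi$, it suffices to show that $\Sigma^{-n}T_\Phi \in \cY_\Phi$ for all $n\geq 0$ — equivalently, that each summand $\Sigma^{\f_\Phi(\pp)}\RGamma_\pp R_\pp$ lies in $\cY_\Phi$ and stays there after arbitrary positive shifts (which for tilting amounts to the homming-into condition). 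Concretely, this reduces to a computation of the widths $\width_{R}(\qq, \Sigma^{\f_\Phi(\pp)}\RGamma_\pp R_\pp)$ for all pairs $\pp,\qq$ with $\qq\in\Phi(n)$, and checking the inequality $\width_R(\qq,-)>n$. The key numerical input is that a codimension function satisfies $\f_\Phi(\qq)=\f_\Phi(\pp)+\dim(R_\qq/\pp R_\qq)$ whenever $\pp\subseteq\qq$, together with standard facts that $\RGamma_\pp R_\pp$, localized and completed appropriately, behaves like a (shift of a) dualizing-type complex.

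\textbf{Case \ref{intro-tilt-dc}: dualizing complex.} When $R$ has a strongly pointwise dualizing complex $D$, normalized so that its codimension function is $\f_\Phi$, the philosophy is that $T_\Phi$ should be closely related to $D$ itself — indeed, one expects a quasi-isomorphism-up-to-summands argument or at least that $\RGamma_\pp R_\pp$ identifies with a local cohomology of $D_\pp$ placed in the correct degree. The cleanest route is: use local duality, which gives $\RGamma_\pp R_\pp \simeq \RHom_{R_\pp}(\text{something involving } D_\pp, E(R/\pp))$-type formulas, to convert the width computation into a depth/dimension computation governed by $D$. Because $D$ is dualizing, $\RHom_R(R/\pp, D)$ is concentrated in the single degree $\f_\Phi(\pp)$ (this is exactly the defining property via the codimension function), and from there the positivity of the relevant widths follows from Grothendieck vanishing combined with the codimension identity above. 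So the heart of this case is: (i) set up local duality correctly in the non-local, strongly-pointwise setting; (ii) translate $T_\Phi \in \cY_\Phi[{-n}]$ into the statement that $D$ has the expected concentration, which is essentially automatic.

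\textbf{Case \ref{intro-tilt-CM}: homomorphic image of a finite-dimensional Cohen--Macaulay ring.} Here there is no dualizing complex in general, so the strategy is to reduce to Case (1) by a faithfully flat or finite base change. Writing $R = S/I$ with $S$ Cohen--Macaulay of finite Krull dimension, $S$ admits a dualizing complex (a finite-dimensional CM ring always does, e.g.\ a suitable shift of $S$ itself locally, glued via the codimension function), hence so does $R$ by restriction along $S\to R$ — wait, that is only true if $S$ is \emph{local} or otherwise nice; in general one must work Zariski-locally or pass to completions. The technique I expect is: the tilting property of $T_\Phi$ can be checked locally on $\Spec R$ — more precisely, $\Hom_{\D(R)}(T_\Phi,\Sigma^n T_\Phi)$ vanishes iff it vanishes after localizing at every maximal (or prime) ideal, and each $R_\mm$ is then a homomorphic image of the CM ring $S_{\mathfrak q}$, which is local CM, hence \emph{does} have a dualizing complex, reducing us to Case (1). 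The codimension function on $R$ restricts compatibly (up to a constant shift on each component) to a codimension function on each $R_\mm$, and the explicit formula for $T_\Phi$ commutes with localization since local cohomology does: $(\RGamma_\pp R_\pp)_\mm = \RGamma_{\pp R_\mm} (R_\mm)_{\pp R_\mm}$ when $\pp\subseteq\mm$ and vanishes otherwise. Thus $T_\Phi$ localizes to the analogous object over $R_\mm$, and we invoke Case (1).

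\textbf{The main obstacle} I anticipate is the bookkeeping in Case (2): the passage to localizations is clean for the \emph{construction} of $T_\Phi$ (since local cohomology localizes), but one must be careful that the \emph{tilting} condition genuinely descends/ascends along localization — this is not automatic for arbitrary objects of $\D(R)$ and relies on the specific structure (e.g.\ that $T_\Phi$ is a silting object with a well-understood aisle, and that $\Hom$-vanishing into shifts can be tested on a faithfully flat cover because $\bigoplus_\mm R_\mm$-module structure detects zero, provided one handles the infinite direct sum/product subtleties in $\D(R)$ correctly). Equivalently, one must ensure the localization functors $\D(R)\to\D(R_\mm)$ interact well enough with the (possibly infinite) direct sum defining $T_\Phi$ and with Hom into shifts; a safe way is to reduce first to the case $\dim R<\infty$ (which holds here by hypothesis) so that $\Phi$ is eventually constant and the relevant computations are finitary, then argue that vanishing of $\Hom_{\D(R)}(T_\Phi,\Sigma^nT_\Phi)$ is equivalent to vanishing of each $\Hom_{\D(R)}(\Sigma^{\f_\Phi(\pp)}\RGamma_\pp R_\pp, \Sigma^{n+\f_\Phi(\qq)}\RGamma_\qq R_\qq)$, a Hom between objects supported at single primes, which localizes transparently. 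Once the reduction is in place, both cases collapse to the single clean computation that a (strongly pointwise) dualizing complex with codimension function $\f_\Phi$ has $\RHom_R(R/\pp,D)$ concentrated in degree $\f_\Phi(\pp)$, which is the definition.
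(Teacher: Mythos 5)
Your sketch of Case \ref{intro-tilt-dc} points in roughly the same direction as the paper: the vanishing of negative self-extensions is ultimately tested by knowing that $\check{C}(\pp)_\pp\otimes_R D\simeq\Sigma^{-\cd_D(\pp)}E_R(R/\pp)$ and that such computations can be carried out on the nose in a homotopy category of injectives. But the precise mechanism the paper uses is the Iyengar--Krause/Neeman equivalence $D\otimes_R-\colon\D(\Flat R)\isoto\K(\Inj R)$ (\cref{NIK}), which lets it identify $\Hom_{\D(R)}(T_\Phi,\Sigma^nT_\Phi^{(\varkappa)})$ with $\Hom_{\K(\Inj R)}$ between coproducts of shifted injective envelopes; that computation (\cref{P:dualizing}) reads off the tilting condition directly from the codimension function. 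Your phrasing of reducing a width to a depth/dimension via local duality gestures at this but is loose on the step where you must control products/coproducts and arbitrary cardinals $\varkappa$, which is exactly where the equivalence earns its keep.

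For Case \ref{intro-tilt-CM} your argument has a genuine gap and takes a route the paper (deliberately) avoids. You propose localizing at primes and invoking Case \ref{intro-tilt-dc} over each $R_\mm$, on the grounds that $R_\mm$ is a quotient of a local Cohen--Macaulay ring $S_\qq$ ``which does have a dualizing complex.'' This is false: a local Cohen--Macaulay ring need not admit a dualizing complex. The paper itself cites \cite[Proposition 3.1]{FR70} (see \cref{1-dim-no-dc}) for a one-dimensional local domain --- hence Cohen--Macaulay --- with no dualizing complex; by Kawasaki's theorem (\cref{Kawasaki}\cref{Kawasaki-dc}) having a classical dualizing complex is equivalent to being a quotient of a \emph{Gorenstein} ring, a strictly stronger condition. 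For the same reason your parenthetical claim that ``a finite-dimensional CM ring always'' admits a dualizing complex (as a glued shift of $S$ itself) is wrong: $S$ is a dualizing complex for $S$ only when $S$ is Gorenstein. So the localization route does not reduce (2) to (1). There is also a secondary issue you half-flag yourself: $\RGamma_\pp R_\pp$ is not compact, so $\Hom_{\D(R)}(\RGamma_\pp R_\pp,-)$ into coproducts is not automatically controlled by localization; the paper sidesteps this entirely.

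The paper's actual strategy for \ref{intro-tilt-CM} is base change along the \emph{finite} ring map, not localization. With $R$ Cohen--Macaulay of finite Krull dimension and $A$ a module-finite $R$-algebra (in particular a quotient), \cref{finite-1,finite-2} identify $T_{f^{-1}\Psi}\cong T_\Psi\LotimesR A$ and $\RHom_A(T_{f^{-1}\Psi},T_{f^{-1}\Psi}^{(\varkappa)})\cong\RHom_R(T_\Psi,T_\Psi^{(\varkappa)})\LotimesR A$, using that $T_\Psi$ is a bounded complex of projectives. The crucial new input is \cref{flat-End}: over a Cohen--Macaulay ring of finite Krull dimension, $\RHom_R(T_\Psi,T_\Psi^{(\varkappa)})$ is concentrated in degree $0$ \emph{and the resulting module is flat}. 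Flatness is exactly what makes $-\otimes_R A$ exact, so concentration in degree $0$ survives base change, giving the tilting property over $A$ with no dualizing complex ever mentioned. That flatness lemma (proved via an Auslander--Buchsbaum-type depth computation) is the idea your proposal is missing, and it is what the paper's \cref{flat-End-thm} later shows cannot be dispensed with.
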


Note that $R$ can have infinite Krull dimension under the condition \cref{intro-tilt-dc} of \cref{intro-tilt}.
When a (strongly pointwise) dualizing complex $D$ for $R$ exists, $R$ has finite Krull dimension if and only if $D$ has finite injective dimension. In this case, we refer to $D$ as a classical dualizing complex (\cref{strong-pw}).
We also remark that, for rings of finite Krull dimension, \cref{intro-tilt-dc} is strictly stronger than \cref{intro-tilt-CM}. Indeed, the existence of a classical dualizing complex implies that $R$ is a homomorphic image of a Gorenstein ring of finite Krull dimension. This fact (originally conjectured by Sharp \cite{Sha79}) is due to Kawasaki \cite{Kaw02} (\cref{Kawasaki}\cref{Kawasaki-dc}). 

Given a classical dualizing complex $D$ for $R$, we know that it is a cotilting object in the bounded derived category $\D^{\bd}_{\fg}(R)$ (\cref{dc-cotilt-rem}). Under this interpretation, Kawasaki's result characterizes homomorphic image of Gorenstein rings of finite Krull dimension in a tilting theoretic way.
His other work \cite{Kaw08} provides a necessary and sufficient condition for a commutative noetherian ring to be a homomorphic image of a Cohen--Macaulay ring (\cref{Kawasaki}\cref{Kawasaki-nonloc}), while this characterization itself looks far from tilting theory. Nevertheless, it turns out that his work is closely related to the tilting property of $T_{\Phi}$ for a codimension filtration $\Phi$, and it actually makes sense to ask the next question:
\begin{center}\emph{Is $T_\Phi$ tilting if and only if $R$ is a homomorphic image of a Cohen--Macaulay ring?}\end{center} 
An important viewpoint to study this question is flatness of the endomorphism ring of $T_\Phi$ over $R$.
Indeed, an essential ingredient of our proof of \cref{intro-tilt} under the assumption \cref{intro-tilt-CM} is that $\End_{\D(R)}(T_{\Phi})$ is flat whenever $R$ is a Cohen--Macaulay ring of finite Krull dimension (\cref{flat-End}).
Furthermore, if $R$ is a 1-dimensional commutative noetherian ring, then it has a codimension filtration $\Phi$, and $T_{\Phi}$ is always tilting (\cref{1-dim}). In this case, the endomorphism ring can explicitly be computed as follows:  
\begin{equation}\label{intro-End}
\End_{\D(R)}(T_{\Phi})\cong
\begin{pmatrix}
	S^{-1}R &  0 \vspace{2mm}\\
	\displaystyle{(\prod_{\mm\in \mSpec R} \widehat{R_{\mm}})\otimes_RS^{-1}R} & \displaystyle{\prod_{\mm\in \mSpec R} \widehat{R_{\mm}}}
	\end{pmatrix},
\end{equation}
where $S$ stands for the complement of the union of all non-maximal prime ideals and $\widehat{R_{\mm}}$ stands for the $\mm$-adic completion of the local ring $R_{\mm}$ (\cref{1-dim-example}).
Evidently, $\End_{\D(R)}(T_{\Phi})$ is flat over $R$, and in fact, every 1-dimensional ring satisfies the condition \cref{intro-tilt-CM} of \cref{intro-tilt} by results of Kawasaki (\cref{1-dim-CM}).
More generally, given any commutative noetherian ring $R$ with a codimension filtration $\Phi$, the flatness of $\End_{\D(R)}(T_{\Phi})$ implies that all the formal fibers of all the localizations of $R$ are Cohen--Macaulay (\cref{stalk}). 
This fact along with a result of Kawasaki (\cref{Kawasaki}\cref{Kawasaki-local}) enables us to show the next theorem. Although Kawasaki's result requires that $R$ is universally catenary, perhaps surprisingly, this follows from the tilting property of $T_\Phi$ (\cref{univ-catenary}).

\begin{theorem}[\cref{flat-End-thm}]\label{intro-flat-End}
	Let $R$ be a commutative noetherian local ring with a codimension filtration $\Phi$. The following conditions are equivalent:
	\begin{enumerate}[label=(\arabic*), font=\normalfont]
		\item\label{intro-flat-End-cond} $T_\Phi$ is tilting and $\End_{\D(R)}(T_\Phi)$ is a flat $R$-module.
		\item $R$ is a homomorphic image of a Cohen--Macaulay local ring.
	\end{enumerate}
\end{theorem}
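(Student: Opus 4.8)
The plan is to prove the two implications separately. The implication $(1)\Rightarrow(2)$ will be obtained by assembling three earlier results: \cref{univ-catenary}, \cref{stalk}, and Kawasaki's characterization \cref{Kawasaki}\cref{Kawasaki-local} of homomorphic images of Cohen--Macaulay local rings. The implication $(2)\Rightarrow(1)$ will be reduced, via base change from a Cohen--Macaulay ring, to \cref{intro-tilt}\cref{intro-tilt-CM} (which yields the tilting property) together with \cref{flat-End} (which yields flatness of the endomorphism ring).

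For $(1)\Rightarrow(2)$, assume $T_\Phi$ is tilting and $E:=\End_{\D(R)}(T_\Phi)$ is a flat $R$-module. Since $T_\Phi$ is tilting, \cref{univ-catenary} shows that $R$ is universally catenary. Since $E$ is $R$-flat, \cref{stalk} shows that the formal fibres of every localization of $R$ are Cohen--Macaulay. A noetherian local ring with these two properties is a homomorphic image of a Cohen--Macaulay local ring by Kawasaki's theorem \cref{Kawasaki}\cref{Kawasaki-local}, which is condition $(2)$.

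For $(2)\Rightarrow(1)$, write $R=S/I$ with $S$ a Cohen--Macaulay local ring. As $S$ is noetherian local it has finite Krull dimension, so $S$ is a Cohen--Macaulay ring of finite Krull dimension of which $R$ is a homomorphic image; hence $T_\Phi$ is a tilting object of $\D(R)$ by \cref{intro-tilt}\cref{intro-tilt-CM}, and it remains to prove that $E=\End_{\D(R)}(T_\Phi)$ is $R$-flat. Because $S$ is Cohen--Macaulay, hence catenary, the assignment $\pp\mapsto\height_S\widetilde\pp$, where $\widetilde\pp\in\Spec S$ is the preimage of $\pp\in\Spec R$ along $S\twoheadrightarrow R$ (necessarily containing $I$), is a codimension function on $\Spec R$; as $\Spec R$ is connected, $\f_\Phi$ differs from it by a global constant, so subtracting that constant from $\height_S(-)$ yields a codimension function $\f$ on $\Spec S$ restricting to $\f_\Phi$ on $V(I)=\Spec R$. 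Let $\Psi$ be the corresponding (codimension, hence slice) sp-filtration of $\Spec S$. Using the stable Koszul description $\RGamma_\qq(-)\simeq(-)\otimes_S^{\mathbf L}\RGamma_\qq S$ of local cohomology, one checks that $\RGamma_\qq S_\qq\otimes_S^{\mathbf L}R$ vanishes when $I\not\subseteq\qq$ (then $(S/I)_\qq=0$) and is isomorphic to $\RGamma_\pp R_\pp$ when $\qq=\widetilde\pp$. Since $\f$ restricts to $\f_\Phi$, this gives
\begin{align*}
T_\Psi\otimes_S^{\mathbf L}R
&\;\simeq\;\bigoplus_{\qq\in\Spec S}\Sigma^{\f(\qq)}\bigl(\RGamma_\qq S_\qq\otimes_S^{\mathbf L}R\bigr)\\
&\;\simeq\;\bigoplus_{\pp\in\Spec R}\Sigma^{\f_\Phi(\pp)}\RGamma_\pp R_\pp\;=\;T_\Phi
\end{align*}
in $\D(R)$. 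Now by \cref{flat-End} the ring $E_S:=\End_{\D(S)}(T_\Psi)$ is a flat $S$-module, and $\RHom_S(T_\Psi,T_\Psi)$ is concentrated in degree $0$ since $T_\Psi$ is tilting over $S$. By the extension--restriction adjunction $\RHom_R(T_\Phi,T_\Phi)\simeq\RHom_S(T_\Psi,\,T_\Psi\otimes_S^{\mathbf L}R)$, so the crux is to identify the latter with $\RHom_S(T_\Psi,T_\Psi)\otimes_S^{\mathbf L}R=E_S\otimes_S^{\mathbf L}R$. Granting this, flatness of $E_S$ over $S$ makes $E_S\otimes_S^{\mathbf L}R$ concentrated in degree $0$ and equal to $E_S\otimes_SR$, whence $E\cong E_S\otimes_SR$ is flat over $R$.

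The step I expect to be the main obstacle is precisely this base-change identification $\RHom_S(T_\Psi,\,T_\Psi\otimes_S^{\mathbf L}R)\simeq\RHom_S(T_\Psi,T_\Psi)\otimes_S^{\mathbf L}R$. In contrast with the compact case, $T_\Psi$ is far from perfect over $S$ and $R$ need not have finite flat dimension over $S$, so this is not a formal consequence and must be extracted from the explicit shape of $T_\Psi$ as a coproduct of the local-cohomology complexes $\RGamma_\qq S_\qq$. The natural tool is Greenlees--May duality, which gives $\RHom_S(\RGamma_\qq S_\qq,-)\simeq\RHom_S(S_\qq,\LLambda_\qq(-))$ and reduces matters to how the derived $\qq$-adic completions $\LLambda_\qq$ interact with the quotient $S\to R$ and with one another; one also exploits that $S$ is Cohen--Macaulay, which puts each $\RGamma_\qq S_\qq$ in a single cohomological degree and thereby controls the off-diagonal Hom-groups. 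Once this identification is established, the remaining ingredients --- reducing the tilting property to \cref{intro-tilt}, computing $T_\Psi\otimes_S^{\mathbf L}R$, and the deductions in $(1)\Rightarrow(2)$ --- are routine or direct citations.
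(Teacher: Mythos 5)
Your implication $(1)\Rightarrow(2)$ is exactly the paper's argument: \cref{univ-catenary} for universal catenarity, \cref{stalk} (packaged in the paper as \cref{flat-end-fibres}) for the Cohen--Macaulay formal fibres, and then \cref{Kawasaki}\cref{Kawasaki-local}. No issue there.

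The implication $(2)\Rightarrow(1)$, however, contains the gap you flag yourself, and it is a real one. The strategy is right --- reduce to the Cohen--Macaulay ring $S$ via $\Psi:=\Phi_{\f}$ with $\f=\height_S-c$ and then tensor everything down --- and the identification $T_\Psi\otimes^{\mathbf L}_S R\cong T_\Phi$ is correct (it is the surjective special case of \cref{finite-1}). What is missing is the base-change isomorphism $\RHom_S(T_\Psi,T_\Psi)\otimes^{\mathbf L}_S R\cong\RHom_S(T_\Psi,T_\Psi\otimes^{\mathbf L}_S R)$, and the route you propose (Greenlees--May duality and an explicit analysis of the off-diagonal $\LLambda^\qq$-interactions, using Cohen--Macaulayness to collapse each $\RGamma_\qq S_\qq$ to a single degree) is much heavier than what the situation requires and is not carried out. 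The paper's \cref{finite-2} fills exactly this step by a cleaner local observation: since $S$ is local, it has finite Krull dimension, so by \cref{fin-proj-dim} $T_\Psi$ is isomorphic to a \emph{bounded complex $P$ of projective $S$-modules}. For each such $P^i$, the functor $\Hom_S(P^i,-)$ is a direct summand of a product functor $(-)^{\mu}$, and over a noetherian ring tensoring by the finitely generated $S$-module $R$ commutes with arbitrary products; hence $\Hom_S(P,P)\otimes_S R\cong\Hom_S(P,P\otimes_S R)\cong\Hom_R(P\otimes_S R,P\otimes_S R)$ already at the level of complexes. Since $\Hom_S(P,P)$ is a bounded complex of flat modules, the left-hand side computes $\RHom_S(T_\Psi,T_\Psi)\otimes^{\mathbf L}_S R$. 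This is what makes the argument go through without needing $T_\Psi$ to be perfect or $R$ to have finite flat dimension over $S$.

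Two smaller remarks. First, \cref{intro-tilt}\cref{intro-tilt-CM} is in the paper deduced from \cref{finite-theorem}, which \emph{already} asserts the flatness of $\End_{\D(R)}(T_\Phi)$; once you invoke it for the tilting property you have the flatness for free, so the separate base-change argument is superfluous if one is willing to cite \cref{finite-theorem} in full. Second, to make your reduction rigorous you should verify that $\Psi$ is bounded so that \cref{fin-proj-dim}/\cref{finite-2} apply; this holds automatically because $S$ is local and the codimension function $\f$ takes values in a finite interval.
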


It is natural to ask if we can get rid of the second condition from \cref{intro-flat-End-cond} of \cref{intro-flat-End} so that Cohen--Macaulay homomorphic images can be determined purely by the tilting property of $T_\Phi$. Namely, we are interested in the following question:

\begin{center}
\emph{Is $\End_{\D(R)}(T_\Phi)$ flat as an $R$-module whenever $T_\Phi$ is tilting?}
\end{center}
We will show that the answer is affirmative for any ring $R$ of Krull dimension two (\cref{2-dim-tilt-flat-end}). Note that this is the least dimension in which rings may \emph{not} be homomorphic images of Cohen--Macaulay rings; see \cref{1-dim-CM,non-CM-im}.

Up to now, we have focused on the silting case, but the last three sections provide the cosilting counterparts for all of the results mentioned so far. Most of them can not be deduced by formally dual arguments, while it is actually efficient to start with silting objects, because we can translate each silting object $T_\Phi$ to a cosilting object $\RHom_R(T_\Phi, E)$ in $\D(R)$ by an injective cogenerator $E$ in $\Mod R$; see \cref{(co)silt-dual}. 
We further equivalently replace each cosilting object $\RHom_R(T_\Phi, E)$ by a more explicit one $C_\Phi$ (\cref{CPhi}) using local duality and Matlis duality; see \cref{slice-cosilt} and its proof.

Given a slice sp-filtration $\Phi$, we obtain the cosilting object $C_\Phi$ as mentioned above, and it induces a t-structure in a natural way. This assignment is compatible with the classification of compactly generated t-structures in $\D(R)$ based on the sp-filtrations of $\Spec R$ (\cite{ATJLS10}). 
When $R$ admits a classical dualizing complex $D$, $D$ gives a codimension function (\cref{cd-func}), from which we obtain a codimension filtration $\Phi$. Then the t-structure induced by $C_\Phi$ is precisely the compactly generated t-structure in $\D(R)$ which restricts to the \emph{Cohen--Macaulay t-structure} in $\D^{\bd}_{\fg}(R)$ with respect to $D$ in the sense of \cite[\S 6]{ATJLS10}; see \cref{dc-cotilt-rem}.
Even if a dualizing complex is not available, as far as $R$ admits a codimension filtration $\Phi$, we obtain the heart of the t-structure in $\D(R)$ induced by $C_\Phi$, and this heart, up to equivalence, does not depend on the choice of the codimension filtration. 
We call this canonical abelian category the \emph{Cohen--Macaulay heart} (\cref{cm-heart}). 
As far as $R$ has finite Krull dimension, $C_{\Phi}$ is cotilting if and only if the inclusion from the Cohen--Macaulay heart induces a derived equivalence.
Moreover, if $R$ is local and of Krull dimension 2, we can verify that $C_{\Phi}$ is cotilting if and only if $R$ is a homomorphic image of a Cohen--Macaulay local ring (\cref{2-dim-tilt}), as in the silting case.

The paper is structured as follows. In \cref{preliminaries}, we gather required facts and notions from several different topics: the silting theory in triangulated categories, localization theory of $\D(R)$, depth and width over commutative noetherian rings, sp-filtrations of $\Spec R$, and dualizing complexes. We also recall the technical notions of cofiltrations and Lukas lemma for complexes which is required to complete the proof of \cref{intro-slice}. \cref{slice-section} introduces slice sp-filtrations and codimension filtrations, and we provide a rather concise proof for \cref{intro-slice,intro-tilt} restricted to rings admitting a classical dualizing complex (\cref{cdc-shortcut}); for this, the infinite completion of Grothendieck duality (\cref{NIK}) plays a key role. The goal of \cref{proof-theorem} is to establish \cref{intro-slice} in full generality, which demands a more technical approach. In \cref{cosilting-section} we deal with the dual setting for cosilting objects, and in \cref{end-section} we study the flatness of the endomorphism ring of both tilting and cotilting objects induced by codimension functions. Finally, in \cref{Hom-Im-CM}, we consider rings which are homomorphic images of Cohen--Macaulay rings and finish the proofs of \cref{intro-tilt,intro-flat-End}. The questions listed above are restated for the silting and cotilting case both (\cref{question,Q-flatend}\cref{Q-flatend-codim}), and they are affirmatively answered for local and non-local rings, respectively, of Krull dimension at most two.

\noindent {\bf Acknowledgment.} The authors would like to thank Takesi Kawasaki, Sergio Pavon, Ryo Takahashi, and Jorge Vitória for very helpful discussions concerning the manuscript. 
The authors would also like to thank the referee for carefully reading the manuscript and giving useful suggestions.

The second author was a postdoc at the University of Verona when this project was started. Part of the project was completed while he was a JSPS Research Fellow at the Nagoya University and later at the University of Tokyo.

\section{Preliminaries}\label{preliminaries}
\noindent {\bf Convention.} Throughout this paper, subcategories of a given category are assumed to be full, additive and closed under isomorphisms.

\subsection{Silting objects in derived categories}
We begin with recalling basic facts on (co)silting objects in the sense of \cite{NSZ19} and \cite{PV18}. 
Some of the facts work well in more general triangulated categories, but we confine ourselves to the case of unbounded derived categories of rings.

\subsubsection{} 
Let $R$ be a ring and let $\D(R)$ be the unbounded derived category of the category of all right $R$-modules. We denote by $\Sigma$ the suspension functor on $\D(R)$.
For a class $\cC$ of objects in $\D(R)$ and a set $I$ of integers, define the following subcategories of $\D(R)$:
		\begin{align*}
		\cC\Perp{I} & := \{X \in \D(R) \mid \Hom_{\D(R)}(C,\Sigma^i X) = 0 ~\forall C \in \mathcal{C} ~\forall i \in I\}\text{ and}\\
		\Perp{I}\cC & := \{X \in \D(R) \mid \Hom_{\D(R)}(X,\Sigma^i C) = 0 ~\forall C \in \mathcal{C} ~\forall i \in I\}.
		\end{align*}
The role of the set $I$ will often be played by symbols of the form $n$, $>n$, $<n$, $\geq n$, $\leq n$, and $\neq n$, interpreted in the obvious way, where $n$ is an integer. For example, $\cC\Perp{n}:=\cC\Perp{\{n\}}$ and $\cC\Perp{>n}:=\cC\Perp{\{i\in \ZZ \mid i>n\}}$.
If $\mathcal{C}=\{X\}$ for a single object $X\in \D(R)$, we use $X\Perp{I}$ and $\Perp{I}X$ instead of $\{X\}\Perp{I}$ and $\Perp{I}\{X\}$, respectively.

\subsubsection{} A \emph{t-structure} in $\D(R)$ is a pair $(\cU, \cV)$ of subcategories satisfying the following axioms (\cite{BBD82}):
	\begin{enumerate}[label=(t-\arabic*), font=\normalfont,align=left]
		\item \label{t-1} $\Hom_{\D(R)}(U,V) = 0$ for all $U \in \cU$ and $V \in \cV$,
		\item \label{t-2} for any $X \in \D(R)$ there is a triangle
			\[U \rightarrow X \rightarrow V \rightarrow \Sigma U\]
			with $U \in \cU$ and $V \in \cV$, and
		\item \label{t-3} $\Sigma \cU \subseteq \cU$.
	\end{enumerate}
	The subcategory $\cU$ is called  the \emph{aisle} of the t-structure, and $\cV$ is called the \emph{coaisle}. 
	A consequence of the axioms is that $\cV = \cU\Perp{0}$ and $\cU = \Perp{0}\cV$. In particular, the aisle and the coaisle both are closed under direct summands, and the former is closed under coproducts, while the latter is closed under products. Moreover, \cref{t-3} is equivalent to
	\begin{enumerate}[align=left]
		\item[(t-3')] $\Sigma^{-1} \cV \subseteq \cV$
	\end{enumerate}
	under the other two axioms.
	
	If $(\cU,\cV)$ is a t-structure, then the triangle in \cref{t-2} is functorially unique up to isomorphism. More precisely, the inclusions from $\cU$ and $\cV$ into $\D(R)$ admit a right adjoint $u: \D(R) \rightarrow \cU$ and a left adjoint $v: \D(R) \rightarrow \cV$ respectively, and the counit $u \to  \Id_{\D(R)}$ and the unit $\Id_{\D(R)} \rightarrow v$ of these adjunctions yield a triangle
\begin{equation}
u X \rightarrow X \rightarrow vX \rightarrow \Sigma uX
\end{equation}
for every $X\in \D(R)$, and this triangle can be identified with the triangle in \cref{t-2} via the isomorphisms $U\isoto u X$ and $v X\isoto V$ induced by adjointness.
Note also that the essential images $\Ima u$ and $\Ima v$ coincide with $\cU$ and $\cV$, respectively, and the kernels $\Ker u$ and $\Ker v$ coincide with $\cV$ and $\cU$, respectively; see \cite[Proposition 4.1.4]{KS06} for example.

The intersection $\cH = \cU \cap \Sigma\cV$ is called the \emph{heart} of the t-structure $(\cU,\cV)$. It is shown in \cite[Theorem 1.3.6]{BBD82} that the heart of a t-structure is an abelian category such that its exact structure is induced by the triangles belonging to the heart. In addition, the truncation functors $u$ and $v$ of the t-structure give rise to a cohomological functor $H^0_{\cH}: \D(R) \to \cH$, which can be defined as the composition $u \circ\Sigma \circ v\circ \Sigma^{-1}$. See also \cite[\S 10.1]{KS94} for details.

For each $n \in \ZZ$, there is a canonical t-structure $(\D^{\leq n}(R),\D^{>n}(R))$ in $\D(R)$, where $\D^{\leq n} (R) := \{X \in \D(R) \mid H^i(X) = 0 ~\forall i > n\}$ and $\D^{> n} (R) := \{X \in \D(R) \mid H^i(X) = 0 ~\forall i \leq n\}$. We say that a t-structure $(\cU,\cV)$ is \emph{intermediate} if there are integers $n \leq m$ such that $\D^{\leq n}(R) \subseteq \cU \subseteq \D^{\leq m}(R)$. We also call such a t-structure \emph{$l$-intermediate} in case the integers satisfy $m-n \leq l$ for some non-negative integer $l$. 
A t-structure $(\cU,\cV)$ is called \emph{non-degenerate} provided that $\bigcap_{n \in \ZZ}\Sigma^n \cU = 0$ and $\bigcap_{n \in \ZZ}\Sigma^n \cV = 0$.  
It is straightforward to check that any intermediate t-structure is non-degenerate. Non-degenerate t-structures are well-behaved in the sense that they are fully determined by the induced cohomological functor $H^0_{\cH}$ to the heart; see \cite[Proposition 1.3.7]{BBD82} or \cite[Proposition 10.1.10]{KS94}.

A t-structure $(\cU,\cV)$ in $\D(R)$ is called \emph{stable} if $\Sigma^{-1}\cU \subseteq \cU$, or equivalently, if both $\cU$ and $\cV$ are triangulated subcategories of $\D(R)$.
The notion of stable t-structures can be essentially identified with the notion of Bousfield localizations; see \cref{Bous-loc} below.

A t-structure $(\cU,\cV)$ in $\D(R)$ is called \emph{compactly generated} if $\cV=\cS\Perp{0}$ for some set $\cS$ of compact objects in $\D(R)$. Recall that an object in $\D(R)$ is compact if and only if it is isomorphic to a bounded complex of finitely generated projected $R$-modules; see \cite[2.1.3]{Nee96}. See also \cite[Theorem 2.2]{BIK12a}.
\begin{remark}\label{compact-t-Gro}
If a t-structure in $\D(R)$ is compactly generated, then its heart is a Grothendieck category by \cite[Theorem D]{SSV17}.
In fact, this is a locally finitely presented Grothendieck category by \cite[Theorem 1.6]{SS20}.
\end{remark}

\subsubsection{}\label{silt-equiv}

An object $T \in \D(R)$ is called \emph{silting} if the pair $(T\Perp{>0},T\Perp{\leq 0})$ is a t-structure in $\D(R)$.
 Dually, an object $C \in \D(R)$ is \emph{cosilting} if $(\Perp{\leq 0}C, \Perp{> 0}C)$ is a t-structure in $\D(R)$.
Given a silting object $T$ (resp. a cosilting object $C$), it is easily seen that $T\in T\Perp{>0}$ (resp. $C\in \Perp{> 0}C$).
A t-structure is called \emph{silting} (resp. \emph{cosilting}) if it is induced by a silting (resp. cosilting) object as above.
All silting t-structures and cosilting t-structures are non-degenerate; see \cite[Proposition 4.3]{PV18}.

Silting objects $T$ and $T'$ are said to be \emph{equivalent} if the induced t-structures $(T\Perp{>0},T\Perp{\leq 0})$ and $(T'\Perp{>0},T'\Perp{\leq 0})$ coincide. Similarly, cosilting objects $C$ and $C'$ are  said to be equivalent if the induced t-structures $(\Perp{\leq 0}C, \Perp{> 0}C)$ and $(\Perp{\leq 0}C', \Perp{> 0}C')$ coincide.

If an object $X$ satisfies $\Add(X) = \Add(X')$ (resp. $\Prod(X) = \Prod(X')$) for a silting (resp. cosilting) object $X'$, then $X$ is a silting (resp. cosilting) object equivalent to $X'$, and the converse is also true (see, e.g.,  \cite[Lemma 4.5]{PV18}), where we denote by $\Add(X)$ (resp. $\Prod(X)$) the subcategory of $\D(R)$ consisting of all objects being direct summands of a coproduct (resp. of a product) of copies of $X$.

\begin{lemma}\label{silting-cond}
	An object $T \in \D(R)$ is silting if and only if the following conditions hold:
	\begin{enumerate}[label=(\arabic*), font=\normalfont]
		\item $T \in T\Perp{>0}$,
		\item $T\Perp{\ZZ} = 0$, and
		\item $T\Perp{>0}$ is closed under coproducts.
	\end{enumerate}
\end{lemma}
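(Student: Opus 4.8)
The statement is an "if and only if" characterizing silting objects via three concrete conditions, so I would prove the two implications separately, with the forward direction being essentially a matter of unwinding the definition and the backward direction being the substantive part.

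\emph{Necessity of (1)--(3).} Suppose $T$ is silting, so that $(T\Perp{>0},T\Perp{\leq 0})$ is a t-structure. Condition (1) is already recorded in \cref{silt-equiv}: one sees $T\in T\Perp{>0}$ directly, for instance from the defining triangle $uT\to T\to vT\to\Sigma uT$ with $uT\in T\Perp{>0}$ and $vT\in T\Perp{\leq 0}$, noting $\Hom(T,\Sigma^{i}uT)=0$ for $i>0$ forces the connecting map to vanish in the relevant degrees and hence $T$ is a summand of $uT$; alternatively it is immediate that $\Hom(T,\Sigma^iT)$ for $i>0$ can be analyzed via this triangle. Condition (3) is automatic since the aisle of any t-structure is closed under coproducts. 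For condition (2): by axiom \cref{t-1} we have $\Hom_{\D(R)}(U,V)=0$ for $U\in T\Perp{>0}$, $V\in T\Perp{\leq 0}$; an object $X\in T\Perp{\ZZ}$ lies in both $T\Perp{>0}$ and $T\Perp{\leq 0}$ (indeed in $\Sigma^nT\Perp{\leq 0}$ and $\Sigma^nT\Perp{>0}$ for all $n$), so taking $U=V=X$ gives $\Hom_{\D(R)}(X,X)=0$, whence $X=0$.

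\emph{Sufficiency of (1)--(3).} Assume (1), (2), (3); I must verify the three t-structure axioms for the pair $(\cU,\cV):=(T\Perp{>0},T\Perp{\leq 0})$. Axiom \cref{t-3}, $\Sigma\cU\subseteq\cU$, is immediate from the shape of the index set $\{>0\}$. Axiom \cref{t-1}, $\Hom(\cU,\cV)=0$: given $U\in T\Perp{>0}$, the subcategory $\Loc(T)$ generated by $T$ under coproducts, shifts, and triangles lies in $\Perp{>0}U$... more precisely, I would argue that $\cU$ is closed under the operations making $\{X : \Hom(X,\Sigma^iU)=0 \ \forall i\leq 0\}$ contain $T$: by (3) $\cU$ is closed under coproducts, it is closed under extensions and positive shifts trivially, so $\cU\supseteq$ (the smallest such subcategory containing $T$), and since $V\in\cV=T\Perp{\leq 0}$ means exactly $\Hom(\Sigma^iT,V)=0$ for $i\leq 0$, i.e. $T\in\Perp{\leq 0}V$, and $\Perp{\leq 0}V$ is closed under coproducts, extensions, and positive shifts, we get $\cU\subseteq\Perp{0}\cV$, hence \cref{t-1}. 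The heart of the matter is axiom \cref{t-2}: the existence of approximation triangles $U\to X\to V\to\Sigma U$. For this I would invoke the standard machinery: since $T\in T\Perp{>0}$ by (1) and $T\Perp{>0}$ is closed under coproducts and positive shifts, one builds, for each $X$, a functorial tower approximating $X$ by objects in $\Loc^{\leq 0}(T)$ (the coproduct-and-extension closure of the positive shifts of $T$), via the small object argument / Bousfield localization techniques; the homotopy colimit of this tower yields a triangle $U\to X\to V\to\Sigma U$ with $U$ in the aisle and $V\in T\Perp{>0}$... and then (2), $T\Perp{\ZZ}=0$, is exactly what upgrades $V\in T\Perp{>0}$ to $V\in T\Perp{\leq 0}$: one shows $V$ actually lies in $T\Perp{\ZZ}$ after a suitable modification, or rather that the cohomological boundedness forces the approximation to land in $T\Perp{\leq 0}=\cV$.

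\emph{The main obstacle.} The delicate point is constructing the approximation triangles in axiom \cref{t-2}, i.e. showing that the pair $(T\Perp{>0}, T\Perp{\leq 0})$ is genuinely \emph{complete} as a torsion pair in the triangulated sense. The cheap move is to cite the literature directly — this equivalence is \cite[Proposition 4.2]{PV18} or can be assembled from \cite{NSZ19} — but if proving it from scratch, the work is in the transfinite/homotopy-colimit construction of the approximation and in the precise bookkeeping showing that conditions (1) and (3) make the aisle candidate a genuine aisle while condition (2) is precisely what forces the cofiber into the correct coaisle (rather than merely into $T\Perp{>0}$). Concretely, the tower construction gives $V\in T\Perp{>0}$, and one then argues: the natural map $X\to V$ is a $T\Perp{>0}$-reflection, so applying the same construction to $\Sigma^{-1}V$ and iterating, together with $T\Perp{\ZZ}=0$, pins down $V\in\cV$. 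So the expected structure of the argument is: (i) trivial axiom \cref{t-3}; (ii) axiom \cref{t-1} via closure properties of $\Perp{\leq 0}\cV$; (iii) axiom \cref{t-2} via the standard Bousfield-localization tower, with (1) and (3) ensuring the aisle side and (2) ensuring the coaisle side; (iv) the necessity direction as above.
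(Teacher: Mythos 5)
The paper's proof of this lemma is a single line: it cites \cite[Proposition 4.13]{PV18}, which is exactly the ``cheap move'' you acknowledge as available (you write 4.2, but the relevant result is 4.13). Since you chose to sketch a proof from scratch rather than only cite, let me evaluate that sketch.

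Your necessity direction is fine. Your sufficiency direction, however, has a genuine gap in the architecture, and it shows up already in your verification of axiom (t-1). You argue: (a) since $\cU := T\Perp{>0}$ is closed under coproducts, extensions and positive shifts and contains $T$, the smallest such subcategory $\cU'$ containing $T$ satisfies $\cU'\subseteq\cU$; (b) for $V\in\cV$, the subcategory $\Perp{\leq 0}V$ also contains $T$ and is closed under the same operations, hence $\cU'\subseteq\Perp{\leq 0}V$. But you then conclude $\cU\subseteq\Perp{0}\cV$. This does not follow: (a) and (b) only show $\cU'$ sits inside both $\cU$ and $\bigcap_{V\in\cV}\Perp{\leq 0}V$, not that $\cU$ itself does. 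Until you know $\cU = \cU'$, you cannot run this argument, and $\cU = \cU'$ is essentially the whole content of the lemma.

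The tower/Bousfield-localization step in your (t-2) discussion is the right machinery, but the way conditions (1), (2), (3) enter is not as you describe. The intended argument is: invoke the theorem of Alonso--Jerem\'{\i}as--Souto (cited in the paper's own \cref{cosilt-cond}, or Neeman's extension) to get that $(\cU',{\cU'}\Perp{0})$ is a t-structure, and observe that ${\cU'}\Perp{0} = T\Perp{\leq 0}$ since orthogonality to a fixed $V$ is preserved under coproducts, extensions and suspensions. Condition (1) together with (3) gives the inclusion $\cU'\subseteq T\Perp{>0}$, since $T\Perp{>0}$ contains $T$ and is closed under suspensions, extensions and, by (3), coproducts. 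For the reverse inclusion $T\Perp{>0}\subseteq\cU'$, take $X\in T\Perp{>0}$ and use the approximation triangle $U\to X\to V\to\Sigma U$ with $U\in\cU'$, $V\in T\Perp{\leq 0}$; from the rotated triangle $X\to V\to\Sigma U\to\Sigma X$ and the closure of $T\Perp{>0}$ under extensions you get $V\in T\Perp{>0}\cap T\Perp{\leq 0}=T\Perp{\ZZ}=0$ by (2), hence $X\cong U\in\cU'$. This is where (2) actually does its work; your phrasing about ``cohomological boundedness'' and modifying $V$ does not capture it. So the correct route is not to verify (t-1), (t-2), (t-3) separately for the pair $(T\Perp{>0},T\Perp{\leq 0})$, but to produce a t-structure on $\cU'$ first and then identify $\cU' = T\Perp{>0}$; (t-1) for your pair then comes for free.
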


\begin{proof}
	See \cite[Proposition 4.13]{PV18}.
\end{proof}

\begin{lemma}\label{cosilting-cond}
	A pure-injective object $C \in \D(R)$ is cosilting if and only if the following conditions hold:
	\begin{enumerate}[label=(\arabic*), font=\normalfont]
		\item $C \in \Perp{> 0}C$,
		\item $\Perp{\ZZ}C=0$, and
		\item $\Perp{> 0}C$ is closed under products. 
	\end{enumerate}
\end{lemma}

\begin{proof}
	The assumptions imply by \cite[Lemma 4.8]{AHMV17} that there is a t-structure of the form $(\cU,\Perp{> 0}C)$ in $\D(R)$. Since $C \in \Perp{> 0}C$ and $\cU=\Perp{0}(\Perp{> 0}C)=\Perp{\leq 0}(\Perp{> 0}C)$, it follows that $\cU\subseteq \Perp{\leq 0}C$. It remains to show the converse inclusion. Let $X \in \Perp{\leq 0}C$ and consider the triangle $uX \to X \to vX \to \Sigma uX$ with $uX \in \cU$ and $vX \in \Perp{> 0}C$. Since both $X$ and $uX$ belong to $\Perp{\leq 0}C$, we see that $vX \in \Perp{\leq 0}C$. But then $vX \in \Perp{\leq 0}C \cap \Perp{> 0}C = \Perp{\ZZ}C=0$. Therefore, $X \cong uX \in \cU$.
\end{proof}

\begin{remark}\label{rem-silt-cond}
Alonso Tarr\'{\i}o, Jerem\'{\i}as L\'{o}pez and Souto Salorio \cite[Theorem 3.4]{ATJLSS03} proved that, for an object $X$ in the unbounded derived category of a Grothendieck category, there exists a t-structure whose aisle is the smallest subcategory containing $X$ and closed under extensions, suspensions, and coproducts. This fact, which is essential for \cite[Proposition 4.13]{PV18}, has been extended by Neeman \cite[Theorem 2.3]{Nee21} to well generated triangulated categories. Consequently, we may replace $\D(R)$ in \cref{silting-cond} by a well generated triangulated category. On the other hand, the proof of \cref{cosilting-cond} will work when $\D(R)$ is replaced by a compactly generated triangulated category $\cT$ as \cite[Lemma 4.8]{AHMV17} is proved for $\cT$.
\end{remark}

Let $X=(\cdots \to X^{i-1} \to X^i\to X^{i+1}\to \cdots)$ be a complex of $R$-modules. We say that $X$ is \emph{bounded above} (resp. \emph{bounded below}) if $X^i=0$ for $i\gg 0$ (resp. $i\ll 0$).
If $X$ is bounded above and below, it is called \emph{bounded}.

\begin{proposition}\label{silt-bound}
	Let $T$ be a bounded complex of projective $R$-modules. Then $T$ is a silting object in $\D(R)$ if and only if the following conditions hold:
	\begin{enumerate}[label=(\arabic*), font=\normalfont]
		\item\label{silt-bound-1} $\Add(T) \subseteq T\Perp{>0}$, and
		\item\label{silt-bound-2} all bounded complexes of projective $R$-modules belong to the smallest triangulated subcategory of $\D(R)$ containing $\Add(T)$.
	\end{enumerate}
\end{proposition}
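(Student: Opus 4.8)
The plan is to deduce the proposition from the abstract characterization of silting objects in \cref{silting-cond}, according to which $T$ is silting exactly when $T\in T\Perp{>0}$, $T\Perp{\ZZ}=0$, and $T\Perp{>0}$ is closed under coproducts.

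For the ``only if'' direction I would argue directly. If $T$ is silting with aisle $\cU=T\Perp{>0}$, then \cref{silt-bound-1} is immediate, since an aisle always contains $T$ and is closed under coproducts and direct summands. For \cref{silt-bound-2}, the relevant point is that a silting object which is a bounded complex of projectives induces an \emph{intermediate} t-structure: if $T$ is concentrated in degrees $[a,b]$, then $\Hom_{\D(R)}(T,-)$ only detects a bounded band of cohomology, which sandwiches $\cU$ as $\D^{\le n}(R)\subseteq\cU\subseteq\D^{\le m}(R)$. Every bounded complex of projectives therefore has finite amplitude with respect to the silting t-structure, hence is a \emph{finite} iterated extension of shifts of objects of the heart $\cH=T\Perp{\neq 0}$; since $\cH$ has the silting object as a projective generator, a dévissage then places $R$, and so all bounded complexes of projectives, in the smallest triangulated subcategory containing $\Add(T)$. (This implication is well known; cf.\ \cite{PV18,NSZ19}.)

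For the ``if'' direction I would verify the three conditions of \cref{silting-cond}. The first is just \cref{silt-bound-1} applied to $T\in\Add(T)$. For the second, write $\cE$ for the smallest triangulated subcategory of $\D(R)$ containing $\Add(T)$; then $\cE\Perp{\ZZ}=\Add(T)\Perp{\ZZ}=T\Perp{\ZZ}$, and since $R\in\cE$ by \cref{silt-bound-2} we get $T\Perp{\ZZ}\subseteq R\Perp{\ZZ}=0$. The third condition, closure of $T\Perp{>0}$ under coproducts, is the real content, and I would obtain it by identifying $T\Perp{>0}$ with the aisle $\cA$ generated by $T$ — the smallest subcategory of $\D(R)$ containing $T$ and closed under $\Sigma$, extensions and coproducts, which is an aisle by \cite{ATJLSS03} (see \cref{cosilt-cond}). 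Granting the inclusion $\cA\subseteq T\Perp{>0}$, the opposite inclusion is formal: for $X\in T\Perp{>0}$, the $\cA$-truncation triangle $uX\to X\to vX\to\Sigma uX$ has $uX\in\cA\subseteq T\Perp{>0}$, hence $vX\in T\Perp{>0}$; and $vX\in\cA\Perp{0}$ forces $\Hom_{\D(R)}(T,\Sigma^i vX)=0$ for $i\le 0$ because $\Sigma^iT\in\cA$ when $i\ge 0$, so $vX\in T\Perp{\ZZ}=0$ and $X\cong uX\in\cA$. Thus $T\Perp{>0}=\cA$ is an aisle and $T$ is silting.

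The whole argument therefore hinges on the inclusion $\cA\subseteq T\Perp{>0}$, and this is where I expect the main obstacle to lie. Condition \cref{silt-bound-1} must be used well beyond the object $T$ itself: the key observation is that, $T$ being \emph{bounded}, for any coproduct $C$ of objects $\Sigma^jT'$ with $j\ge 0$ and $T'\in\Add(T)$ the Hom-complex $\Hom_R^{\bullet}(T,\Sigma^iC)$ splits as the corresponding coproduct — each of its finitely many nonzero homogeneous components being a direct sum of the components coming from the individual $\Sigma^{i+j}T'$ — so that \cref{silt-bound-1} yields $\Hom_{\D(R)}(T,\Sigma^iC)=0$ for all $i\ge 1$, i.e.\ $C\in T\Perp{>0}$. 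Since $T\Perp{>0}$ is also closed under extensions and summands, one can then try to propagate membership along a cellular tower building $\cA$; the delicate part is the limit stages, where one must control $\Hom_{\D(R)}(T,-)$ of an infinite coproduct of partially assembled objects, and it is precisely there that I would bring in the Lukas lemma for complexes together with \cref{silt-bound-2}. The underlying difficulty — and the reason this step is not routine — is that $\Hom_{\D(R)}(T,-)$ need not commute with coproducts once $T$ has infinitely generated terms, so the coproduct-closedness of $T\Perp{>0}$ cannot simply be read off termwise.
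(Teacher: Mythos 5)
The proposition is proved in the paper simply by citation to the references, so there is no in-text proof to compare against; what matters is whether your attempt actually establishes the statement. It does not, and you essentially say so yourself.

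Your framework is sound and in fact mirrors how the paper proceeds in its own hard analogue (\cref{slice-silting}): reduce to \cref{silting-cond}, handle non-degeneracy via $R\in\thick(\Add T)$, and try to identify $T\Perp{>0}$ with the aisle $\cA$ generated by $T$ using \cite{ATJLSS03}. The "only if" direction and the first two conditions of \cref{silting-cond} are fine, and the formal argument deducing $T\Perp{>0}\subseteq\cA$ from $\cA\subseteq T\Perp{>0}$ plus $T\Perp{\ZZ}=0$ is correct.

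The gap is exactly where you say it is, namely the inclusion $\cA\subseteq T\Perp{>0}$, and your "key observation" does not bridge it. What you verify is that a single coproduct $C=\bigoplus_j\Sigma^j T_j$ with $T_j\in\Add(T)$, $j\ge 0$, lies in $T\Perp{>0}$: since $T$ is concentrated in a bounded range, each homogeneous component of $\Hom_R^{\bullet}(T,C)$ only sees finitely many values of $j$, so the Hom-complex \emph{does} split as a (degreewise finite) coproduct over $j$, and \cref{silt-bound-1} kills each piece. (Incidentally, the parenthetical claim that $\Hom_R^\bullet(T,\Sigma^i C)$ has "finitely many nonzero homogeneous components" is not true — the complex is bounded above but typically not below — but this is harmless for the conclusion.) The real problem is that the aisle $\cA$ is not just $\Add\bigl(\bigoplus_{j\ge 0}\Sigma^j T\bigr)$ closed under finite extensions: it is the smallest cocomplete pre-aisle containing $T$, which is built by transfinitely iterating extensions and coproducts. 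Since $\Hom_{\D(R)}(T,-)$ does not commute with coproducts when $T$ has infinitely generated terms, you cannot conclude from "each cell is in $T\Perp{>0}$" that a Milnor colimit of partial assemblies is, and you explicitly flag this as "the delicate part." Naming the Lukas lemma as a plausible tool is a good instinct — it is precisely what the paper uses in \cref{classes-equal} to handle a structurally similar obstruction — but the proposal does not supply the required cofiltration, nor does it show how \cref{silt-bound-2} enters to close the induction. As written, therefore, the proof of the "if" direction is incomplete at the crucial step, and the proposition is not established.
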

\begin{proof}
See \cite[Example 2.9(4) and Proposition 4.2]{AHMV16a} or \cite[Proposition 5.3]{AH19}.
\end{proof}

\begin{proposition}\label{cosilt-bound}
	Let $C$ be a bounded complex of injective $R$-modules. Then $C$ is a cosilting object in $\D(R)$ if and only if the following conditions hold:
	\begin{enumerate}[label=(\arabic*), font=\normalfont]
		\item\label{cosilt-bound-1} $\Prod(C) \subseteq \Perp{>0}C$, and
		\item\label{cosilt-bound-2} all bounded complexes of injective $R$-modules belong to the smallest triangulated subcategory of $\D(R)$ containing $\Prod(C)$.
	\end{enumerate}
\end{proposition}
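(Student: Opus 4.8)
The plan is to establish \cref{cosilt-bound} as the formal dual of \cref{silt-bound}, running the dual of the argument for that result (as in \cite[Proposition 4.2]{AHMV16a} or \cite[Proposition 5.3]{AH19}). There is no obvious shortcut reducing it to \cref{silt-bound} itself, since $\D(R)$ carries no self-duality interchanging bounded complexes of projectives with bounded complexes of injectives: for instance $\RHom_R(-,E)$, for $E$ an injective cogenerator of $\Mod R$, sends a bounded complex of projectives to a bounded complex of objects of $\Prod(E)$, and the latter is in general a proper subclass of the injective modules. So I would instead verify directly that conditions \cref{cosilt-bound-1,cosilt-bound-2} are together equivalent to $C$ being cosilting; a convenient framework is \cref{cosilt-cond}, applicable here because $\D(R)$ is a well generated algebraic triangulated category and a bounded complex of injective $R$-modules is pure-injective in $\D(R)$. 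Thus it suffices to show that \cref{cosilt-bound-1,cosilt-bound-2} are together equivalent to the three conditions $C\in\Perp{>0}C$, $\Perp{\ZZ}C=0$, and $\Perp{>0}C$ closed under products.

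The formal part is Hom-orthogonality bookkeeping together with the use of an injective cogenerator. If $C$ is cosilting then $C\in\Perp{>0}C$, and since the coaisle $\Perp{>0}C$ of a t-structure is closed under products and direct summands we get $\Prod(C)\subseteq\Perp{>0}C$, i.e.\ \cref{cosilt-bound-1}; conversely \cref{cosilt-bound-1} gives $C\in\Prod(C)\subseteq\Perp{>0}C$. For the condition $\Perp{\ZZ}C=0$: fix an injective cogenerator $E$ of $\Mod R$, viewed as a bounded complex of injectives concentrated in degree $0$. Assuming \cref{cosilt-bound-1,cosilt-bound-2}, any $X\in\Perp{\ZZ}C$ lies in $\Perp{\ZZ}\Prod(C)$, hence in $\Perp{\ZZ}$ of the localizing subcategory generated by $\Prod(C)$, hence $X\in\Perp{\ZZ}E$; then $\Hom_R(H^{-i}(X),E)=\Hom_{\D(R)}(X,\Sigma^{i}E)=0$ for all $i$, and since $E$ is a cogenerator $X=0$. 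In the other direction, $C$ cosilting gives $\Perp{\ZZ}C=0$, and then Bousfield colocalization in the well generated $\D(R)$ yields $\Coloc(C)=\D(R)$.

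The genuine content, and the step I expect to be the main obstacle, is the interaction between products and the triangulated structure; this is exactly where the hypothesis that $C$ is a \emph{bounded} complex of \emph{injective} modules is indispensable, and it is not formal. (It already fails for bad choices of $C$: for $C=\QQ\in\D(\ZZ)$ in degree $0$ one has $\Prod(C)\subseteq\Perp{>0}C$, yet $\Perp{>0}C$ is not closed under products and $C$ is not cosilting.) Writing $C=(I^{a}\to\cdots\to I^{b})$ with each $I^{p}$ injective, I would exploit the brutal-truncation filtration exhibiting $C$ as a finite iterated extension of shifts of the single injective modules $I^{a},\dots,I^{b}$, together with the exactness of each $\Hom_R(-,I^{p})$ and the fact that $\Hom^{\bullet}_R(-,C)$ is, in every degree, a \emph{finite} product of copies of the functors $\Hom_R(-,I^{q})$. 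This yields that $\Perp{>0}C$ is pinched between $\D^{\geq b}(R)$ and $\D^{\geq a-1}(R)$ and that products commute past the relevant finite totalizations. For the ``if'' direction the devissage lets me, for an arbitrary $X\in\D(R)$, construct the approximation triangle $U\to X\to V$ with $U\in\Perp{\leq 0}C$ and $V\in\Perp{>0}C$, building $V$ out of a bounded complex of injective modules attached to a truncation of $X$ --- an object lying in the triangulated subcategory generated by $\Prod(C)$ by \cref{cosilt-bound-2} --- and thereby verifying the t-structure axioms, closure of $\Perp{>0}C$ under products included. For the ``only if'' direction the same boundedness lets me truncate the Postnikov systems witnessing $X\in\Coloc(C)=\D(R)$ for a bounded complex of injective modules $X$, placing $X$ already in $\thick(\Prod(C))$ and so proving \cref{cosilt-bound-2}. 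Both of these are the precise duals of the corresponding steps for \cref{silt-bound} in \cite{AHMV16a,AH19}.
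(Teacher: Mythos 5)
Your overall route (invoke \cref{cosilt-cond}, then show that conditions~\cref{cosilt-bound-1,cosilt-bound-2} are together equivalent to the triple $C\in\Perp{>0}C$, $\Perp{\ZZ}C=0$, $\Perp{>0}C$ product-closed) is genuinely different from the paper's. The paper does not verify the triple directly: it delegates the ``if'' direction wholesale to \cite[Proposition 3.10]{MV18}, and for the ``only if'' direction it shows (1) trivially and derives (2) from the single containment $\Perp{>0}C\subseteq\D^{>n}(R)$ together with \cite[Proposition 2.10]{ZW17}. Your reduction of $\Perp{\ZZ}C=0$ to the fact that $E\in\thick(\Prod(C))$ is clean (modulo the slip where you wrote ``localizing subcategory'' --- you only need the triangulated closure, which is what (2) actually gives), and your example $C=\QQ\in\D(\ZZ)$ is a nice, correct illustration that without (2) the coaisle can fail to be product-closed even for an injective module.

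There are, however, two concrete gaps in the hard step. First, you assert without argument that a bounded complex of injective $R$-modules is pure-injective in $\D(R)$; this is true, but it needs a reason (each stalk $\Sigma^{-n}I^n$ is pure-injective since $I^n$ is a pure-injective module, and pure-injective objects of a compactly generated triangulated category are closed under finite extensions), and it cannot be deduced from the very proposition you are proving. Second, and more seriously, the devissage sketch for product-closure of $\Perp{>0}C$ does not go through as described. Showing that $\Perp{>0}C$ is pinched between $\D^{\geq b}(R)$ and $\D^{\geq a-1}(R)$ does not help: both of those classes are product-closed, but that gives no control over where a product of objects of $\Perp{>0}C$ sits \emph{within} the band; and $\Hom_{\D(R)}(-,\Sigma^i C)$ does not interact with infinite products in the first variable in any way that finite totalizations can fix. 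The actual mechanism has to come from pure-injectivity via the coaisle-generation theorem \cite[Proposition 6.11]{SS20} (or \cite[Corollary 5.11]{LV20}), intertwined with conditions~\cref{cosilt-bound-1,cosilt-bound-2} to force the generated coaisle to coincide with $\Perp{>0}C$, and this is precisely where your claim that the argument is ``the precise dual of the corresponding steps for \cref{silt-bound}'' breaks down: for silting, aisle generation is unconditional (\cite{ATJLSS03}, \cite[Theorem 2.3]{Nee21}), while for cosilting, coaisle generation is only available for pure-injective objects. The asymmetry is already flagged in \cref{cosilt-cond}, and it is the reason a formally dual proof does not exist.
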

\begin{proof}
This is stated in \cite[Proposition 6.8(2)]{AH19} with a sketch proof, and the ``if'' part follows from \cite[Proposition 3.10]{MV18}. Let us prove the ``only if'' part in a little more detail.
Assume that $C$ is a cosilting object in $\D(R)$. Then we have the t-structure $(\Perp{\leq 0}C, \Perp{> 0}C)$ and $C\in \Perp{> 0}C$, so \cref{cosilt-bound-1} holds. 
By assumption, we have $C\in \D^{>n}(R)$ for some integer $n$.
Thus $\Sigma^{i}C\in \D^{>n}(R)$ for all $i\leq 0$. It follows that $\Perp{\leq 0}C\supseteq \Perp{0}\D^{> n}(R) = \D^{\leq n}(R)$.
Applying $(-)\Perp{0}$ to this inclusion, we have $\Perp{> 0}C \subseteq \D^{> n}(R)$, which implies 
\cref{cosilt-bound-2} by \cite[Proposition 2.10]{ZW17}.
\end{proof}

If a bounded complex $T$ of projective $R$-modules (resp. a bounded complex $C$ of injective $R$-modules) satisfies the two conditions in \cref{silt-bound} (resp. \cref{cosilt-bound}), then $T$ (resp. C) is called a \emph{silting complex} (resp. a \emph{cosilting complex}); see e.g. \cite[Definitions 5.1 and 6.1]{AH19}.
By the propositions, a silting (resp. cosilting) complex is a silting (resp. cosilting) object in $\D(R)$, but in general the latter may not be isomorphic to any bounded complex of projective (resp. injective) modules; see \cref{ex-(co)tilt}.

\begin{remark}\label{thick-closure}
Let $\cT$ be a triangulated category with small coproducts (resp. small products) and $\cS_0$ be a subcategory of $\cT$ closed under small coproducts (resp. small products).
Let $\cS$ be the smallest triangulated subcategory of $\cT$ containing $\cS_0$.
Then, for any object $X\in \cS$ and any cardinal $\varkappa$, the coproduct $X^{(\varkappa)}$ (resp. the product $X^\varkappa$) belongs to $\cS$. This fact follows from the construction of $\cS$ by \cite[\S 3.3, Lemma (1)]{Kra07} along with \cite[Proposition 1.2.1 and Remark 1.2.2]{Nee01}. It also follows that $\cS$ is a \emph{thick subcategory}, i.e., a triangulated subcategory closed under direct summands (see, e.g., \cite[Lemma 3.6]{AH19}).
Hence we have $\cS=\thick (\cS_0)$, where $\thick (\cS_0)$ denotes the smallest triangulated subcategory containing $\cS_0$ and closed under direct summands.

By the above observation applied to $\cT:=\D(R)$, \cref{silt-bound}\cref{silt-bound-2} is equivalent to the condition that $R\in \thick (\Add (T))$. Dually, \cref{cosilt-bound}\cref{cosilt-bound-2} is equivalent to the condition that $E\in \thick (\Prod (C))$, where $E$ is an injective cogenerator of $\Mod R$. 
\end{remark}

Let $l$ be a positive integer. By an \emph{$l$-term} silting complex (resp. an \emph{$l$-term} cosilting complex), we mean a silting (resp. cosilting) complex concentrated degrees from $k$ to $l+k-1$ for some integer $k$.
If an object $X$ in $\D(R)$ is isomorphic to a silting (resp. cosilting) complex, then $X$ is called a \emph{bounded silting object} (resp. a \emph{bounded cosilting object}); cf. \cite[Definition 4.15 and Proposition 4.17]{PV18}.
A silting (resp. cosilting) object $X$ in $\D(R)$ is isomorphic to an $l$-term silting complex (resp. an $l$-term cosilting complex) if and only if the induced t-structure is $(l-1)$-intermediate (see \cite[Definition 4.1 and Lemma 4.5]{AHMV16a} and \cite[Lemma 4.5 and Proposition 4.17]{PV18}); in particular, $X$ is a bounded silting object (resp. a bounded cosilting object) if and only if the induced t-structure is intermediate.

\subsubsection{}
\label{of-finite-type}
A silting object $T$ is called \emph{of finite type} provided that there is a set $\cS$ of compact objects in $\D(R)$ such that $T\Perp{>0} = \cS\Perp{0}$.  A cosilting object $C$ is called \emph{of cofinite type} provided that there is a set $\cS$ of compact objects in $\D(R)$ such that $\Perp{>0}C = \cS\Perp{0}$.
Any bounded silting object in $\D(R)$ is of finite type by \cref{silt-bound} and \cite[Theorem 3.6]{MV18}, while any bounded cosilting object is pure-injective by \cref{cosilt-bound} and \cite[Proposition 3.10]{MV18}.

A \emph{silting t-structure of finite type} refers to a t-structure which is induced by a silting object of finite type, and a \emph{cosilting t-structure of cofinite type} refers to a t-structure which is induced by a cosilting object of cofinite type. We remark that there is a known equality:
\begin{equation}\label{cosilt-cof}
    \begin{Bmatrix}
	\text{cosilting t-structures}\\
	\text{of cofinite type}\\
	\text{in $\D(R)$}  
    \end{Bmatrix}
    =
    \begin{Bmatrix} 
	\text{non-degenerate}\\
	\text{compactly generated}\\
	\text{t-structures in $\D(R)$}
	\end{Bmatrix}.
\end{equation}
The inclusion ``$\subseteq$'' is clear, and the converse inclusion follows from \cite[Theorem 3.5]{AHMV17} and \cref{compact-t-Gro}.

\subsubsection{}\label{charact-dual}
Let $S$ be a commutative ring and let $R$ be an $S$-algebra, that is, $R$ is a ring endowed with a ring homomorphism $S\to R$  whose image is contained in the center of $R$.
Let $E$ be an injective cogenerator of $\Mod S$ and denote by $(-)^+$ the contravariant functor $\RHom_S(-,E)$ from $\D(R)$ to $\D(R^\op)$, where $R^\op$ is the opposite ring of $R$. Given a silting object $T$ of finite type in $\D(R)$, $T^+$ is a  cosilting object of cofinite type in $\D(R^\op)$. This assignment gives rise to an injection from the equivalence classes of silting objects of finite type in $\D(R)$ to the equivalence classes of cosilting objects of cofinite type in $\D(R^\op)$.
Moreover, this injection restricts to a bijection from 
the equivalence classes of bounded silting objects in $\D(R)$ and 
the equivalence classes of bounded cosilting objects of cofinite type in $\D(R^\op)$. See \cite[Theorem 3.3]{AHH21}. Note that every bounded silting object is of finite type as mentioned in \cref{of-finite-type}.

\subsubsection{}\label{subsec-real-func}
Let $T \in \D(R)$ be a silting object and denote by $\cH_T$ the heart of the t-structure $(T\Perp{>0},T\Perp{\leq 0})$ induced by $T$. Then $\cH_T$ has a projective generator (\cite[Proposition 4.3]{PV18}), and the inclusion $\cH_T \ha \D(R)$ extends to a triangulated functor $\D^{\bd}(\cH_T) \to \D(R)$ called a \emph{realization functor} (\cite[Theorem 3.11]{PV18}).
This can be regarded as a functor $\D^{\bd}(\cH_T)\to \D^{\bd}(R)$ between the bounded derived categories if the silting object $T$ is bounded (\cite[Lemma 4.14]{PV18}).
The realization functor is in principle not unique and depends on the choice of an ``f-enhancement''  of $\D(R)$. See \cite[\S 3]{PV18} for details.
We also remark that the existence of a realization functor $\D(\cH_T) \to \D(R)$ between the unbounded derived categories has been established in \cite[\S 6]{Vir18}.

The dual theory for a cosilting object $C \in \D(R)$ is also available in the references above. 
We note that if the cosilting object $C$ is bounded, then not only does the associated heart $\cH_C$ admit an injective cogenerator, but it is in fact a Grothendieck category (\cite[Proposition 3.10]{MV18}). 
This fact more generally holds as far as the cosilting object $C$ is pure-injective (\cite[Theorem 4.6]{Lak20}), where recall that every bounded cosilting object is pure-injective as mentioned in \cref{of-finite-type}. 

If the cosilting object $C$ is of cofinite type, then $C$ is pure-injective (\cite[Theorem 4.6]{Lak20}) and $\cH_C$ is a locally finitely presentable Grothendieck category (\cref{compact-t-Gro}).
In general, a bounded cosilting object may not be of cofinite type (\cite[Example 3.12]{MV18}), but if $R$ is a commutative noetherian ring, then every pure-injective cosilting object in $\D(R)$ is of cofinite type as shown by the first and second authors (\cite[Corollary 2.14]{HN21}). 
All the cosilting objects we consider for commutative noetherian rings in this paper are of cofinite type, and so the induced hearts are locally finitely presentable Grothendieck categories.

\subsubsection{}\label{subsec-der-eq}
A silting object $T\in \D(R)$ is called \emph{tilting} provided that $\Add(T) \subseteq T\Perp{<0}$. 
Assume that $T$ is a bounded silting object and let $\D^{\bd}(\cH_T)\to \D^{\bd}(R)$ be a realization functor. 
Then $T$ is tilting if and only if the realization functor is a triangulated equivalence; see \cite[Corollary 5.2]{PV18} (which is stated for a realization functor with respect to a specific f-enhancement \cite[Example 3.2]{PV18}, but its proof works for any f-enhancement of $\D(R)$).
The realization functor $\D(\cH_T)\to \D(R)$ due to Virili is also a triangulated equivalence if and only if $T$ is tilting (\cite[Theorem 7.12]{Vir18}).

We remark that a similar theory so far does not seem to be fully developed for silting objects which are not bounded. It at least holds that, given a silting object $T\in \D(R)$ and a realization functor $\D^{\bd}(\cH_T)\to \D(R)$, $T$ is tilting if and only if the realization functor is fully faithful (\cite[Proposition 5.1]{PV18}).

A cosilting object $C$ is called \emph{cotilting} provided that $\Prod(C) \subseteq \Perp{<0}C$. There are analogous results about realization functors and derived equivalences; see the references above. 

If a silting (resp. cosilting) complex $X$ is a tilting (resp. cotilting) object in $\D(R)$, then $X$ is called a \emph{tilting complex} (resp. a \emph{cotilting complex}).
A \emph{bounded tilting object} (resp. a \emph{bounded cotilting object}) $X\in \D(R)$ means a bounded silting object (resp. a bounded cosilting object) which is tilting (resp. cotilting) in $\D(R)$.

\begin{remark}\label{tilt-module}
An $R$-module $M$ is tilting (resp. cotilting) in the sense of \cite{AHC01} if and only if $M$ is a bounded tilting object (resp. a bounded cotilting object) in $\D(R)$ by \cref{silt-bound} and \cite[Corollary 3.7]{Wei13} (resp. \cref{cosilt-bound} and \cite[Proposition 2.6]{ZW17}).
If a tilting (resp. cotilting) module has projective (resp. injective) dimension at most $n$, then it is called \emph{$n$-tilting} (resp. \emph{$n$-cotilting}).
\end{remark}

\subsection{Bousfield localization}\label{Bous-loc}
A triangulated functor $\lambda: \D(R) \to \D(R)$ is called a \emph{(Bousfield) localization functor} if there exists a morphism $\eta:\Id_{\D(R)}\to \lambda$ such that $\lambda \eta : \lambda \to \lambda^2$ is invertible and $\lambda \eta =\eta \lambda$.
If $\lambda:\D(R)\to \D(R)$ is a localization functor, then 
$(\Ker \lambda, \Ima \lambda)$ is a stable t-structure. In particular, $\Ima \lambda$ (resp. $\Ker \lambda$) is a reflective (resp. coreflective) subcategory, that is, the inclusion functor $\Ima \lambda\hookrightarrow \D(R)$ (resp. $\Ker \lambda\hookrightarrow \D(R)$) admits a left (resp. right) adjoint, where $\lambda$ can be naturally regarded as the left adjoint $\D(R) \to \Ima \lambda$. Conversely, given a stable t-structure $(\cL,\cC)$, by definition, the inclusion functors $\cL \hookrightarrow \D(R)$ and $\cC \hookrightarrow\D(R)$ admit  a right adjoint $\gamma: \D(R)\to \cL$ and a left adjoint $\lambda:\D(R)\to \cC$ respectively, where $\gamma$ and $\lambda$ are triangulated (\cite[Lemma 5.3.6]{Nee01}).
Further, if $\lambda$ is interpreted as the composition $\D(R)\xrightarrow{\lambda} \cC\hookrightarrow \D(R)$, then the functor $\lambda: \D(R)\to \D(R)$ is a localization functor.
Dually, if $\gamma$ is interpreted as the composition $\D(R)\xrightarrow{\gamma} \cL\hookrightarrow \D(R)$, then the functor $\gamma: \D(R)\to \D(R)$ is a \emph{colocalization functor}, that is, the counit morphism $\varepsilon: \gamma\to \Id_{\D(R)}$ satisfies that $\gamma \varepsilon : \gamma^2\to \gamma$ is invertible and $\gamma \varepsilon =\varepsilon \gamma$. See \cite[Proposition 4.9.1]{Kra10} for details.

A triangulated subcategory of $\D(R)$ is called \emph{localizing} (resp. \emph{colocalizing}) if it is closed under small coproducts (resp. small products).
Given a subcategory $\cX$, the pair $(\cX,\cX\Perp{0})$ is a stable t-structure if and only if $\cX$ is a coreflective localizing subcategory; similarly, the pair $(\Perp{0}\cX, \cX)$ is a stable t-structure if and only if $\cX$ is a reflective colocalizing subcategory. A localization functor $\lambda: \D(R)\to \D(R)$ is called \emph{smashing} if it commutes with small coproducts, or equivalently, if $\Ima \lambda$ is closed under small coproducts.
In this case, $\Ker \lambda$ is called a \emph{smashing subcategory}.

Now, assume that $R$ is a commutative noetherian ring. The theorem of Neeman \cite[Theorem 2.8]{Nee92} asserts that localizing subcategories of the derived category $\D(R)$ are in bijective correspondence with subsets of the Zariski spectrum $\Spec R$. 
If $W\subseteq \Spec R$ is the subset corresponding to a localizing subcategory $\cL$, then $\cL$ is the smallest localizing subcategory containing the small set $\{\kappa(\pp) \mid \pp\in W\}$, where $\kappa(\pp) = R_\pp/\pp R_\pp$. Thus, it also follows that every localizing subcategory of $\D(R)$ is coreflective; see \cite[\S 5.1 and \S 7.2]{Kra10}.  In other words, $(\cL, \cL\Perp{0})$ is a stable t-structure.

 This correspondence can be described by using an invariant in $\D(R)$. The \emph{support} of a complex $X\in \D(R)$ is the set
	\[\supp X := \{\pp \in \Spec R \mid \kappa(\pp) \otimes_R^\mathbf{L} X \neq 0\}\text{.}\]
Note that $\supp X=\emptyset$ if and only if $X=0$ in $\D(R)$ by \cite[Lemma 2.12]{Nee92}.
For each $W\subseteq \Spec R$, the subcategory
	\[\cL_W := \{X \in \D(R) \mid \supp X \subseteq W\}\]
is localizing, and the bijection due to Neeman is given by $W\mapsto \cL_W$.
As mentioned above, $\cL_W$ is coreflective, so the inclusion functor $\cL_W\hookrightarrow \D(R)$ admits a right adjoint, which we denote by $\gamma_W:\D(R)\to \cL_W$. Since the pair $(\cL_W, \cL_W^{\perp_0})$ is a stable t-structure, the inclusion $\cL_W^{\perp_0}\hookrightarrow \D(R)$ admits a left adjoint, which we denote by $\lambda_W: \D(R)\to \cL_W^{\perp_0}$.
Regarding this functor as the composition $\D(R) \xrightarrow{\lambda_W} \cL_W^{\perp_0}\hookrightarrow \D(R)$, we obtain a localization functor $\lambda_W:\D(R)\to \D(R)$. By \cite[Theorem 3.3]{Nee92}, the localization functor $\lambda_W$ is smashing if and only if $W$ is \emph{specialization closed}, that is, $W$ is an upper subset of the poset $(\Spec R,\subseteq)$, or equivalently, $W$ is an arbitrary union of Zariski closed subsets of $\Spec R$. To understand $\lambda_W$ from another viewpoint, let us use a dual invariant to the notion of support.

The \emph{cosupport} of a complex $X\in \D(R)$ is the set
	\[\cosupp X  := \{\pp \in \Spec R \mid \RHom_R(\kappa(\pp),X) \neq 0\}.\]
Note that $\cosupp X=\emptyset$ if and only if $X=0$ in $\D(R)$ by \cite[Theorem 2.8]{Nee92}.
For each $W\subseteq \Spec R$, the subcategory 
	\[\cC^{W} := \{X \in \D(R) \mid \cosupp X \subseteq W\}\] is colocalizing (and in fact every colocalizing subcategory of $\D(R)$ is of this form by \cite{Nee11b}).
Since the localizing subcategory $\cL_{W^c}$ for $W^\cp:=\Spec R\sm W$
is generated by $\{\kappa(\pp) \mid \pp \in W^\cp\}$, we have 
	\[\cL_{W^\cp}^{\perp_0} = \cC^{W}.\] 
Denote by $\lambda^W : \D(R)\to \cC^{W}$ the left adjoint to the inclusion functor $ \cC^{W}=\cL_{W^\cp}^{\perp_0}\hookrightarrow \D(R)$, that is, 
\[\lambda^W := \lambda_{W^\cp}.\]
By definition, $(\cL_{W^\cp}, \cC^W)$ is a stable t-structure in $\D(R)$, so we have the approximation triangle
\begin{equation}\label{stable-approx}
\gamma_{W^\cp}X\to X\to \lambda^{W}X\to \Sigma\gamma_{W^\cp}X
\end{equation}
for every $X\in \D(R)$.

If $W_0\subseteq W\subseteq \Spec R$, then $\cL_{W_0}\subseteq \cL_W$, so we have
\begin{equation}\label{trivial-iso}
\gamma_{W_0}\gamma_{W}\cong \gamma_{W_0}\cong \gamma_{W}\gamma_{W_0}\hspace{8pt}\text{and}\hspace{8pt}\lambda^{W_0}\lambda^{W}\cong\lambda^{W_0}\cong \lambda^{W}\lambda^{W_0}
\end{equation}
by a standard argument; see \cite[Remark 3.7(i)]{NY18a} and \cite[Remark 2.7(ii)]{NY18b}.

If $V\subseteq \Spec R$ is a specialization closed subset, then the following equalities hold:
\begin{equation}\label{bi-loc}
\cL\Perp{0}_V = \cL_{V^\cp} = \cC^{V^\cp}= \Perp{0}\cC^{V},
\end{equation}
which can be deduced from \cite[Theorem 3.3]{Nee92}.

A subset $W\subseteq \Spec R$ is called \emph{generalization closed} if its complement $W^\cp$ is specialization closed.
If we take the generalization closed subset $U(\pp) := \{\qq \in \Spec R \mid \qq \subseteq \pp\}$ for a prime ideal $\pp$, then 
\begin{equation}\label{typic-(co)loc}
\gamma_{U(\pp)}\cong \RHom_R(R_\pp, -)\hspace{8pt}\text{and}\hspace{8pt}\lambda^{U(\pp)}\cong -\otimes_RR_\pp;
\end{equation}
see \cite[p.~2584]{NY18a} and \cite[(2.8)]{NY18b} for example.

For an $R$-module $M$, define $\Supp M:=\{\pp\in \Spec R \mid M_\pp\neq 0\}$, which we refer to as the \emph{classical support} of $M$.
Let $V$ be a specialization closed subset $V$, and denote by $\Gamma_V: \Mod R\to \Mod R$ the functor that assigns to each $R$-module $M$ the submodule $\Gamma_V M:=\{x\in M \mid \Supp Rx \subseteq V\}$ (\cite[Chapter IV, \S 1, Variation 1]{Har66}).
It is well known that
\begin{equation}\label{(co)smash-iso} \gamma_V\cong\RGamma_V\hspace{8pt}\text{and}\hspace{8pt}\lambda^V\cong \RHom_{R}(\RGamma_VR,-);\end{equation}
see \cite[Proposition 3.5.7]{Lip02} and \cite[Proposition 8.3]{BIK12b}. Note that $(\RGamma_VR)\LotimesR X\cong \RGamma_VX$ for every $X\in \D(R)$ (\cite[Proposition 3.5.5(ii)]{Lip02}), and so 
$\lambda^V:\D(R)\to \D(R)$ is a right adjoint to $\gamma_V:\D(R)\to \D(R)$, by \cref{(co)smash-iso} and tensor-hom adjunction.

If we take the Zariski closed subset $V(I):= \{\pp \in \Spec R \mid I \subseteq \pp\}$ for an ideal $I$, then 
\begin{equation}\label{closed-iso}
\gamma_{V(I)}\cong \RGamma_I\hspace{8pt}\text{and}\hspace{8pt}\lambda^{V(I)}\cong \LLambda^I,
\end{equation}
where $\Gamma_{I}$ is the \emph{$I$-torsion functor} $\varinjlim_{n\geq 1}\Hom_R(R/I^n,-)$ and $\Lambda^I$ is the \emph{$I$-adic completion functor} $\varprojlim_{n\geq 1} (-\otimes_R R/I^n)$.
The first isomorphism of \cref{closed-iso} follows from the isomorphism $\Gamma_{I}\cong\Gamma_{V(I)}$ of functors $\Mod R\to \Mod R$; see \cite[Chapter V, Corollary 4.2]{Har66} or \cite[Chapter II, Exercise 5.6]{Har77}. The second isomorphism of \cref{closed-iso} follows from \cref{(co)smash-iso} and the isomorphism
\begin{equation}\label{GM-dual}
\LLambda^{I}\cong \RHom_R(\RGamma_{I} R,- ),
\end{equation}
which is known as \emph{Greenlees--May duality} (\cite{GM92}); see \cite[\S 4]{Lip02} or \cite[Corollary 9.2.4]{SS18}.
In particular, this fact ensures that $\LLambda^{I}:\D(R)\to \D(R)$ is a right adjoint to $\RGamma_{I}:\D(R)\to \D(R)$.
Moreover,
there are isomorphisms 
\begin{align}
\RGamma_{I}\LLambda^I\cong \RGamma_I~\text{ and }~
\LLambda^I\RGamma_{I}\cong \LLambda^I;
\label{ATJLL97}
\end{align}
see \cite[Theorem 9.1.3]{SS18} (cf. \cite[Corollary 5.1.1]{ATJLL97}). More generally, we have $\gamma_V\lambda^V \cong \gamma_V$ and $\lambda^V\gamma_V \cong \lambda^V$ for a specialization closed subset $V$ by a formal argument using \cref{stable-approx} and \cref{bi-loc}.

Let $I$ be an ideal of $R$ and $\boldsymbol{x}=x_1,\ldots,x_t$ be a system of generators of $I$. 
For each $x_i$, consider the complex $(R \to R_{x_i})$ concentrated in degrees $0$ and $1$, where the map $R \to R_{x_i}$ is the localization $R\to S^{-1}R$ with respect to the multiplicatively closed subset $S$ generated by $x_i$.
The \emph{(extended) \v{C}ech complex} with respect to $\boldsymbol{x}$ is the complex $\check{C}(\boldsymbol{x}):=\bigotimes_{i=1}^n (R \to R_{x_i})$. 
For every $X\in \D(R)$, there is a natural isomorphism 
\begin{align}\label{Cech-comp}
\RGamma_{I}X\cong \check{C}(\boldsymbol{x}) \otimes_R X
\end{align}
in $\D(R)$; see \cite[\S 3.1]{Lip02} for example.

\begin{notation}\label{Cech-notation}
For each $\pp\in \Spec R$, we fix once and for all a system of generators $\boldsymbol{x}=x_1,\ldots,x_t$ of $\pp$, and write $\check{C}(\pp):=\check{C}(\boldsymbol{x})$.  
\end{notation}


\subsection{Depth and width}\label{depth-width}
Let $R$ be a commutative noetherian ring and let $X$ be a complex of $R$-modules. The \emph{infimum} and \emph{supremum} of $X$ are defined as 
\[
\inf X:= \inf\{n\in \ZZ \mid H^n(X)\neq 0\} \text{ and } \sup X:= \sup\{n\in \ZZ \mid H^n(X)\neq 0\},\]
respectively, where $\inf X=\infty$ and $\sup X=-\infty$ if $X$ is acyclic (i.e., $H^n(X) =0$ for all $i\in \ZZ$).
Let $I$ be an ideal of $R$, and take a system of generators $\boldsymbol{x}=x_1,\ldots,x_t$ of $I$.
For each $x_i$, consider the complex $(R \xrightarrow{\cdot x_i} R)$ concentrated in degrees $-1$ and $0$.
The \emph{Koszul complex} with respect to $\boldsymbol{x}$ is the complex $K(\boldsymbol{x}):=\bigotimes_{i=1}^n (R \xrightarrow{\cdot x_i} R)$.
By \cite[Theorems 2.1 and 4.1]{FI03}, the following hold:
\begin{align}
&\inf \RGamma_{I}X=\inf\RHom_R(R/I,X)=\inf\Hom_R(K(\boldsymbol{x}),X)\text{ and}\label{FI1}\\ 
&\sup \LLambda^{I}X=\sup ((R/I)\LotimesR X)=\sup (K(\boldsymbol{x})\otimes_R X).\label{FI2}
\end{align}
The \emph{$I$-depth} and \emph{$I$-width} of $X$ are defined as 
\[\depth_R(I,X):=\inf \RGamma_{I}X \text{ and } \width_R(I,X):=-\sup \LLambda^{I}X,\]
respectively.
If $R$ is a local ring with maximal ideal $\mm$, the \emph{depth} and \emph{width} of $X$ are defined as 
\[\depth_R X:=\depth_R(\mm, X) \text{ and } \width_R X:=\width_R(\mm, X),\]
respectively.

For a specialization closed subset $W$ of $\Spec R$, the \emph{$W$-depth} of $X$ is defined as 
\[\depth_R(W,X):=\inf\RGamma_WX;\]
see \cite[Chapter IV, \S 2]{Har66}. Given an ideal $I$ of $R$ with $V(I)\subseteq W$, we have $\RGamma_{V(I)}\RGamma_{W}\cong \RGamma_{V(I)}$; see \cref{trivial-iso} or \cref{Gamma-Inj}. Hence $\inf\RGamma_{W}X\leq \inf\RGamma_IX=\depth_R(I,X)$. Conversely, if there is an integer $n$ such that $n\leq \inf\RGamma_{V(I)}X$ for every ideal $I$ with $V(I)\subseteq W$, then $n\leq \inf\RGamma_WX$ since for each complex Y, $\Gamma_WY$ can be written as $\varinjlim_{V(I)\subseteq W}\Gamma_IY$ in the category $\C(R)$ of complexes of $R$-modules, where $I$ runs through all ideals of $R$ with $V(I)\subseteq W$.
Thus it holds that 
\begin{align}\label{W-depth}
\depth_R(W,X)=\inf \{\depth_R(I,X) \mid V(I)\subseteq W\}.
\end{align}
By using \cref{W-depth} and \cite[Propositions 2.10 and 2.11]{FI03}, we can easily deduce that
\begin{align}\label{depth-loc-glo}
\depth_R(W,X)=\inf \{\depth_{R_\pp}X_\pp \mid \pp \in W\}&=\inf \{\depth_R (\pp, X) \mid \pp \in W\}.
\end{align}

A similar phenomenon to the above can not be expected for width and a specialization closed subset $W$. Nevertheless, we define the \emph{$W$-width} of a complex $X\in \D(R)$ as 
\[\width_R(W,X):= -\sup\lambda^WX,\]
where $\lambda^WX\cong \RHom_R(\RGamma_WR,X)$ in $\D(R)$; see \cref{(co)smash-iso}.
If $W=V(J)$ for some ideal $J$, then $\width_R(W,X)=\width_R(J,X)$ by \cref{closed-iso}.
In general, there is an inequality
\[\sup \LLambda^I X \leq \sup\lambda^WX\]
for every ideal $I$ with $V(I)\subseteq W$ by \cref{trivial-iso}, and hence
\begin{equation*}
\width(W,X)\leq \inf \{\width_R(I,X) \mid V(I)\subseteq W\}.
\end{equation*}
As described in \cref{width-remark} below, there are a specialization closed subset $W$ and a complex $X$ such that 
$\sup X< \sup \lambda^WX$, while $\sup \LLambda^I X \leq \sup X$ for every ideal $I$ with $V(I)\subseteq W$, so it can happen that
\[\width_R(W,X)<-\sup X\leq \inf \{\width_R(I,X) \mid V(I)\subseteq W\}.\]

\begin{remark}\label{width-remark}
Recall that there is a (commutative noetherian) integral domain $R$ of finite global dimension such that the projective dimension  of $R_{(0)}$ over $R$ is greater than one; see \cite[Theorem 2]{Kap66} or \cite[\S 6]{Oso68}. Letting $W:=\Spec R\sm\{(0)\}$, we obtain a triangle
\[\RHom_R(R_{(0)}, X) \to X \to \lambda^{W}X \to \Sigma \RHom_R(R_{(0)}, X) \]
in $\D(R)$, where $\gamma_{W^\cp}X\cong \RHom_R(R_{(0)}, X)$; see \cref{typic-(co)loc,stable-approx}.
Since the projective dimension of $R_{(0)}$ is greater than one, we can choose $X$ as an $R$-module such that $\sup \RHom_R(R_{(0)}, X)>1$, and then $\sup \lambda^WX>0$ by the above triangle.
Thus it happens that $0=\sup X<\sup \lambda^WX$.
\end{remark}

\subsection{Sp-filtrations}\label{sp-filt}
Let $R$ be a commutative noetherian ring.
An \emph{sp-filtration} of $\Spec R$ is a map $\Phi: \ZZ \rightarrow 2^{\Spec R}$ such that $\Phi(n)$ is a specialization closed subset of $\Spec R$ and $\Phi(n) \supseteq \Phi(n+1)$ for every $n \in \ZZ$. An sp-filtration $\Phi$ is
 \emph{non-degenerate} if $\bigcup_{n \in \ZZ}\Phi(n) = \Spec R$ and $\bigcap_{n \in \ZZ}\Phi(n) = \emptyset$, where the former condition is equivalent to $\Phi(n) = \Spec R$ for some $n \in \ZZ$, since $R$ has only finitely many minimal prime ideals.
 
\begin{remark}\label{order-preserv}
To each sp-filtration $\Phi$, assign a function $\f_\Phi: \Spec R \rightarrow \ZZ \cup \{-\infty,\infty\}$ given by 
\[\f_{\Phi}(\pp) := \sup \{n\in \ZZ \mid \pp \in \Phi(n)\}+1\]
for each $\pp\in \Spec R$.
By definition, $\f_\Phi$ is order-preserving, that is, $\pp \subseteq \qq$ implies $\f_\Phi(\pp) \leq \f_\Phi(\qq)$.
Then the assignment $\Phi \mapsto \f_\Phi$ yields
a bijection from the sp-filtrations of $\Spec R$ to the order-preserving functions $\Spec R \rightarrow \ZZ \cup \{-\infty,\infty\}$.\footnote{The authors naturally reached this fact in order to formulate \cref{intro-slice}, while Ryo Takahashi had also noticed this bijection (in relation with \cite{DT15}) and he recently reported it as well; see \cite[Proposition 4.3]{Tak22}. He kindly suggested that the authors more emphasize the bijection, and his suggestion actually clarified their work; e.g., \cref{slice-strict} was added after that.}
The inverse map is described as follows.
To each order-preserving function $\f: \Spec R \rightarrow \ZZ \cup \{-\infty,\infty\}$, assign an sp-filtration $\Phi_{\f}: \ZZ \to \Spec R$ given by
\[\Phi_{\f}(n) := \{ \pp \in \Spec R \mid \f(\pp) >n\}\]
for each $n\in \ZZ$.

Notice that non-degeneracy of an sp-filtration $\Phi$ is equivalent to that the function $\f_\Phi: \Spec R \rightarrow \ZZ \cup \{-\infty,\infty\}$
corestricts to a function $\Spec R \rightarrow \ZZ$.
\end{remark}

By \cite[Theorems 3.10 and 3.11]{ATJLS10}, there is a bijection
\begin{equation}\label{ATJLS-bijec}
	\begin{Bmatrix}
	\text{sp-filtrations}\\
	\text{of $\Spec R$} 
	\end{Bmatrix} 
\isoto
	\begin{Bmatrix}
	\text{compactly generated}\\
	\text{t-structures in $\D(R)$} 
	\end{Bmatrix},
\end{equation}
given by $\Phi\mapsto (\cU_\Phi,\cV_\Phi),$
	where
	\begin{align*}
	\cU_{\Phi} &:= \{X \in \D(R) \mid \Supp H^n(X) \subseteq \Phi(n)~\forall n \in \ZZ\} \text{ and }\\
	 \cV_{\Phi} &:= \{X \in \D(R) \mid \depth_{R}(\Phi(n),X)>n~\forall n \in \ZZ\}.
	 \end{align*}
	 
To describe a set of compact objects generating each t-structure $(\cU_\Phi, \cV_\Phi)$, we use the following notation.
\begin{notation}
For each $\pp\in \Spec R$, we fix once and for all a system of generators $\boldsymbol{x}=x_1,\ldots,x_t$ of $\pp$ (as in \cref{Cech-notation}), and write $K(\pp):=K(\boldsymbol{x})$.  
\end{notation}

Let $\Phi$ be an sp-filtration of $\Spec R$, and let 
\[\cS_{\Phi}:=\{\Sigma^{-n}K(\pp)\mid n\in \ZZ, \pp\in \Phi(n) \}.\]
Using \cref{FI1,depth-loc-glo}, we can show that 
\begin{equation}\label{compact-gen-t}
\cS_{\Phi}^{\perp_0}=\cV_\Phi.
\end{equation}
In particular, the t-structure $(\cU_\Phi, \cV_\Phi)$ is (compactly) generated by $\cS_{\Phi}$. \cref{compact-gen-t} is stated in \cite[Proposition 4.14]{SP16} (cf. \cite[Corollary 3.9 and Theorem 3.10]{ATJLS10}).

We remark that an sp-filtration $\Phi$ is non-degenerate if and only if the t-structure $(\cU_\Phi, \cV_\Phi)$ is non-degenerate; see the proof of \cite[Theorem 3.8]{AHH21}.
Hence, combining \cref{cosilt-cof} and \cref{ATJLS-bijec}, we obtain the bijection
\begin{equation}\label{cosilt-cof-sp}
\begin{Bmatrix}
	\text{non-degenerate}\\
	\text{sp-filtrations}\\
	\text{of $\Spec R$} 
	\end{Bmatrix}
\isoto
	\begin{Bmatrix}
	\text{cosilting t-structures}\\
	\text{of cofinite type}\\
	\text{in $\D(R)$}.  
\end{Bmatrix}
\end{equation}
given by $\Phi\mapsto (\cU_\Phi, \cV_\Phi)$.

We can also classify silting t-structures of finite type by using non-degenerate sp-filtrations as in \cite[Theorem 3.8]{AHH21}.
For the reader's sake, we here give a more direct proof of this fact, describing the classification explicitly. Let $\Phi$ be an sp-filtration. Applying $(-)^*:=\Hom_R(-,R)$ to each object in $\cS_{\Phi}$, we obtain 
the set of compact objects
\[\cS_{\Phi}^*:=\{\Sigma^{n}(K(\pp)^*)\mid n\in \ZZ, \pp\in \Phi(n) \}.\]
By \cite[Theorem 4.15]{SP16}, the assignment
$\Phi\mapsto (\Perp{0}({\cS_{\Phi}^*}\Perp{0}), {\cS_{\Phi}^*}\Perp{0})$ yields a bijection from the sp-filtrations to the compactly generated \emph{co-t-structures} in $\D(R)$.\footnote{We here follow the definition of co-t-structures in \cite[\S 2.5]{AHH21}.
If one follows \cite[Definition 4.2]{SP16}, the above co-t-structure should be just written as $(\Perp{0}({\cS_{\Phi}^*}\Perp{0}), \Sigma^{-1} ({\cS_{\Phi}^*}\Perp{0})$).
}
Further, there is a t-structure in $\D(R)$ of the form $({\cS_{\Phi}^*}\Perp{0},({\cS_{\Phi}^*}\Perp{0})\Perp{0})$ by \cite[Theorem 3.11]{SP16}; see also \cite[Theorem 3.1 and Lemma 3.2]{AHH21} and the two paragraphs after \cite[Example 2.9]{AHH21}. 
Then we see that the assignment $\Phi\mapsto ({\cS_{\Phi}^*}\Perp{0},({\cS_{\Phi}^*}\Perp{0})\Perp{0})$ yields an injective map from the sp-filtrations to the t-structures in $\D(R)$.
The t-structure $({\cS_{\Phi}^*}\Perp{0},({\cS_{\Phi}^*}\Perp{0})\Perp{0})$ is non-degenerate if and only if it is induced by a silting object, which is of finite type by definition; see \cite[Theorem 4.11]{AH19}.
Moreover, non-degeneracy of the t-structure $({\cS_{\Phi}^*}\Perp{0},({\cS_{\Phi}^*}\Perp{0})\Perp{0})$ is equivalent to non-degeneracy of $\Phi$; this fact follows from \cite[Theorem 3.1 and Lemma 3.2]{AHH21} along with \cref{ATJLS-bijec} and \cref{cosilt-cof-sp}.
Therefore, the map given by $\Phi\mapsto ({\cS_{\Phi}^*}\Perp{0},({\cS_{\Phi}^*}\Perp{0})\Perp{0})$ restricts to an injective map from 
the non-degenerate sp-filtrations to the silting t-structures of finite type. On the other hand, given a silting t-structure $(\cY, \cW)$ of finite type, there is a compactly generated co-t-structure of the form $(\Perp{0}\cY, \cY)$ by \cite[Theorem 4.3]{AI12}; see also the third paragraph of \cite[p.~688]{AHH21}. Then \cite[Theorem 4.15]{SP16} implies that $(\Perp{0}\cY, \cY)=(\Perp{0}({\cS_{\Phi}^*}\Perp{0}), {\cS_{\Phi}^*}\Perp{0})$ for some sp-filtration $\Phi$, so that $(\cY,\cW)=({\cS_{\Phi}^*}\Perp{0},({\cS_{\Phi}^*}\Perp{0})\Perp{0})$.
Now, we define 
\[
\cY_\Phi:=\{X \in \D(R) \mid \width_{R}(\pp,X)>n~\forall n \in \ZZ~\forall \pp\in \Phi(n)\},
\]
and remark that 
\begin{equation}\label{comp-gen-co-t}
\cS_{\Phi}^*\Perp{0}=\cY_\Phi
\end{equation}
for each sp-filtration $\Phi$; see \cref{FI2} and the proof of \cite[Theorem 4.15]{SP16}.
Then it follows from the above argument that there is a bijection  
\begin{equation}\label{silt-fin-sp}
	\begin{Bmatrix}
	\text{non-degenerate}\\
	\text{sp-filtrations}\\
	\text{of  $\Spec R$} 
	\end{Bmatrix}
\isoto
	\begin{Bmatrix}
	\text{silting t-structures}\\
	\text{of finite type}\\
	\text{in $\D(R)$}
	\end{Bmatrix}
\end{equation}
given by $\Phi\mapsto (\cY_\Phi,\cY_\Phi^{\perp_0})$.

Given a non-degenerate sp-filtration $\Phi$, the t-structures $(\cU_\Phi, \cV_\Phi)$ and $(\cY_\Phi,\cY_\Phi^{\perp_0})$ are induced by some cosilting object $C$ and some silting object $T$, respectively, by \cref{cosilt-cof-sp} and \cref{silt-fin-sp}, but the classifications do not concretely tell us what $C$ and $T$ are in general.
One of our main purposes is to explicitly realize such a cosilting object and a silting object under some assumption on $\Phi$.

We will first deal with the silting side in \cref{slice-section,proof-theorem}. When we next deal with the cosilting side in \cref{cosilting-section}, it will be important to know that the injection mentioned in \cref{charact-dual} becomes bijective for any commutative noetherian ring $R$. That is, there is a bijection
\begin{equation}\label{(co)silt-dual}
	\begin{Bmatrix}
	\text{equivalence classes of}\\
	\text{silting objects of finite type}\\
	\text{in $\D(R)$}
	\end{Bmatrix}
\isoto
	\begin{Bmatrix}
	\text{equivalence classes of}\\
	\text{cosilting objects of cofinite type}\\
	\text{in $\D(R)$}
	\end{Bmatrix}
\end{equation}
given by $T\mapsto  T^+$, where $(-)^+:=\RHom_R(-,E)$ and $E$ is any injective cogenerator $E$ in $\Mod R$; see \cite[Theorem 3.8]{AHH21} and the last paragraph of \cref{subsec-real-func}. This bijection is compatible with \cref{cosilt-cof-sp} and \cref{silt-fin-sp} in the following sense: 
If $T$ is a silting object with $(T\Perp{>0}, T\Perp{\leq 0})=(\cY_\Phi, \cY\Perp{0}_\Phi)$ for a non-degenerate sp-filtration $\Phi$, then $T^{+}$ is a cosilting object with $(\Perp{\leq 0}(T^+), \Perp{>0}(T^+))=(\cU_\Phi, \cV_\Phi)$; see \cite[Theorems 3.2 and 3.3]{AHH21}, \cref{compact-gen-t}, and \cref{comp-gen-co-t}.

\begin{remark}\label{rem-0-tilt} 
Let $S$, $R$, and $E$ be as in \cref{charact-dual}.
Then $R$ is a tilting object of finite type in $\D(R)$ and it induces the standard t-structure $(\D^{\leq 0}(R), \D^{>0}(R))$, while $R^{+}:=\RHom_S(R,E)$ is a cotilting object of cofinite type in $\D(R^\op)$ and it induces the (shifted) standard t-structure $(\D^{\leq -1}(R^{\op}), \D^{> -1}(R^\op))$.

If $R$ is a commutative noetherian ring and $S=R$, these t-structures can be described by a non-degenerate sp-filtration, as explained above. 
Indeed, take the sp-filtration $\Phi$ defined by $\Phi(n):= \Spec R$ for $n\leq -1$ and $\Phi(n):= \emptyset$ for $n >-1$. Then
 $(\D^{\leq -1}(R), \D^{> -1}(R))=(\cU_\Phi, \cV_\Phi)$ clearly, so $(\D^{\leq 0}(R), \D^{>0}(R))=(\cY_\Phi, \cY\Perp{0}_\Phi)$ by  
 \cref{(co)silt-dual}. 
\end{remark}

\subsection{Dualizing complexes}\label{dc}
Let $R$ be a commutative noetherian ring. We state here various facts about dualizing complexes. We allow them to have infinite injective dimension, following Neeman's approach \cite[Definition 3.1]{Nee10}. 

\subsubsection{}
We say that a complex $X$ of $R$-module is \emph{cohomologically bounded} if $H^i(X)=0$ for all $i\ll 0$ and $i\gg 0$.
We denote by $\D^\bd_{\fg}(R)$ the subcategory of $\D(R)$ formed by cohomologically bounded complexes $X$ with finitely generated cohomology modules. 

A complex $D\in \D^\bd_\fg (R)$ is called a \emph{dualizing complex} for $R$ if the functor $\RHom_R(-,D):\D(R)^\op\to \D(R)$ induces a duality $\D^\bd_\fg(R)^\op \isoto\D^\bd_\fg(R)$, or equivalently, if for every $X\in \D^\bd_\fg (R)$, we have $\RHom_R(X,D)\in \D^\bd_\fg (R)$ and the canonical morphism $X\to \RHom_R(\RHom_R(X,D),D)$ is an isomorphism; see \cite[Proposition 3.6]{Nee10}.

\begin{remark}\label{strong-pw}
Recall that a complex $D$ of $R$-modules is called a \emph{pointwise} dualizing complex for $R$ if $D_\pp=R_\pp \otimes_R D$ is a dualizing complex for $R_\pp$ at each point $\pp\in \Spec R$.
A complex $D$ of $R$-modules is a dualizing complex in our sense if and only if $D$ is a pointwise dualizing complex and cohomologically bounded.
This characterization is due to Gabber (see \cite[Lemma 3.1.5]{Con00}) and he called a complex satisfying the latter conditions a \emph{strongly} pointwise dualizing complex, as stated in \cite[Footnote 1]{Nee10}. See also \cite[Theorem 6.2.2]{AIL10}, where the above characterization is recovered.

We say that a dualizing complex is \emph{classical} if it is isomorphic to a bounded complex of injective modules. 
A complex of $R$-modules is a classical dualizing complex if and only if it is a pointwise dualizing complex and $R$ has finite Krull dimension (\cite[Chapter V, Corollary 7.2 and Proposition 8.2]{Har66}). In the literature, dualizing complexes usually refer to classical ones.
\end{remark}

If $D$ is a pointwise dualizing complex, then, for each $\pp\in \Spec R$, there is a unique integer $\cd_D(\pp)$ such that $\Ext^{\cd_D(\pp)}_{R_\pp}(\kappa(\pp), D_\pp)\neq 0$ (\cite[pp.~282 and 287]{Har66}).
Moreover, it holds that \begin{equation}\label{dc-loc-coh}
\RGamma_\pp D_\pp  \cong\Sigma^{-\cd_D(\pp)}E_R(R/\pp)
\end{equation}
for each $\pp$ (\cite[Chapter V, Proposition 6.1]{Har66}), where $E_R(R/\pp)$ stands for the injective hull of $R/\pp$ over $R$.

If $D$ and $D'$ are dualizing complexes for $R$, then there is an \emph{invertible} complex $L\in \D(R)$ (in the sense of \cite[\S 5]{AIL10}) such that $D \cong D' \otimes_R^\mathbf{L} L$ in $\D(R)$; see \cite[Lemma 3.9]{Nee10} and \cite[Proposition 5.1]{AIL10} (cf. \cite[Chapter V, Theorem 3.1]{Har66}). In particular, if $R$ is local and admits a dualizing complex $D$, then $D$ is uniquely determined in $\D(R)$ up to shift and isomorphism.

\begin{notation}\label{notation-dc}
If $R$ is a local ring having a dualizing complex, we denote by $D_R$ a dualizing complex for $R$ such that $\inf D_R=0$.
\end{notation}

\begin{remark}\label{dc-facts}
Suppose that $R$ admits a dualizing complex $D$. 
\begin{enumerate}[label=(\arabic*), font=\normalfont, leftmargin=*]
	\item \label{localize-dc} If $S$ is a multiplicatively closed subset of $R$, then $S^{-1}R\otimes_R D$ is a dualizing complex for $S^{-1}R$.
	\item \label{complete-dc} 
	If $R$ is local and $\mm$ is the maximal ideal of $R$, then $\widehat{R}\LotimesR D_R\cong \widehat{R} \otimes_R D_R\cong D_{\widehat{R}}$ in $\D(R)$ and in $\D(\widehat{R})$ (see \cite[Chapter V, Corollary 3.5]{Har66} and \cite[Theorem 8.14]{Mat89})), where $\widehat{R}:=\Lambda^\mm R$.
	\item \label{image-dc} If $R\to A$ is a homomorphism of commutative noetherian rings such that $A$ is finitely generated as an $R$-module, then $\RHom_R(A, D)\in \D^{\bd}_{\fg}(A)$ is a dualizing complex for $A$; see \cref{strong-pw} and \cite[Corollary 6.2.4]{AIL10}. If $R$ is a Gorenstein ring (i.e., $R_\mm$ has finite injective dimension over $R_\mm$ for every maximal ideal $\mm$), then $R$ itself is a dualizing complex for $R$ (by \cref{strong-pw} and \cite[Chapter V, Theorem 9.1]{Har66}), and hence, given any ideal $I$ of $R$, $\RHom_R(R/I, R)\in \D^{\bd}_{\fg}(R/I)$ is a dualizing complex for $R/I$.
\end{enumerate}
\end{remark}

If $(R,\mm)$ is a local ring, then $\widehat{R}$ is a homomorphic image of a regular local ring by Cohen's structure theorem (\cite[Theorem 29.4(ii)]{Mat89}), so $\widehat{R}$ admits a dualizing complex (\cite[p.~299]{Har66}).

\subsubsection{}\label{minimal}
Recall that a complex $X$ of $R$-modules is called \emph{minimal} if every homotopy equivalence $X\to X$ is an isomorphism in $\C(R)$ (\cite[p.~397]{AM02}).
A complex $X$ of $R$-modules is called \emph{K-projective}, (resp. \emph{K-injective}, \emph{K-flat}) if the functor $\Hom_R(X,-)$ (resp. $\Hom_R(-,X)$, $-\otimes_RX$) from $\C(R)$ to $\C(R)$ preserves acyclicity of complexes (\cite{Spa88}).
Suppose that $R$ admits a dualizing complex $D$. According to the proof of \cite[Chapter I, Lemma 4.6(3)]{Har66}, one can construct a K-injective resolution $D\to I$ such that $I$ is a minimal complex of injective modules with $I^n=0$ for $n<\inf D$; see also \cite[Appendix B]{Kra05}.

It essentially follows from \cref{dc-loc-coh} and the structure theorem of injective $R$-modules (\cite[Theorems 18.4 and 18.5]{Mat89}) that
\[I^n \cong \bigoplus_{\substack{\pp\in \Spec R\\ \cd_D(\pp)=n}} E_R(R/\pp)\]
for every $n\in \ZZ$. This well-known fact can be verified by noting that $R_{\pp}\otimes_R-$ and $\Gamma_{\pp}$ send every minimal complex of injective $R$-modules to a minimal complex of injective $R$-modules; see \cref{Gamma-Inj} and \cite[Lemma B.1]{Kra05}. If $R$ is local and $D=D_R$ (as in \cref{notation-dc}), then $\cd_D(\pp)=\dim R-\dim R/\pp$; see \cref{cd-func} below.

\subsubsection{}\label{cd-func}
A strictly increasing chain $\pp_0 \subsetneq \pp_1  \subsetneq \cdots \subsetneq \pp_n$ of prime ideals in $\Spec R$ is said to be \emph{saturated} if, for each $i$  with $0\leq i<n$, there is no prime ideal $\qq$ such that $\pp_i \subsetneq \qq\subsetneq  \pp_{i+1}$.
A function $\cd: \Spec R \rightarrow \ZZ$ is called a \emph{codimension function} on $\Spec R$ if it satisfies that $\cd(\pp_1)=\cd(\pp_0) + 1$ for every saturated chain $\pp_0 \subsetneq \pp_1$ of length $1$ in $\Spec R$ (\cite[p.~283]{Har66}). Clearly, this definition is equivalent to that $\cd(\pp_n)=\cd(\pp_0) + n$ for every saturated chain $\pp_0 \subsetneq \pp_1  \subsetneq \cdots \subsetneq \pp_n$ of arbitrary length $n$ in $\Spec R$.
One can also characterize a codimension function as a function $\cd: \Spec R \rightarrow \ZZ$ satisfying $\cd(\pp_2)-\cd(\pp_1)=\height(\pp_2/\pp_1)$ for any inclusion $\pp_1 \subsetneq \pp_2$ in $\Spec R$, where $\height(\pp_2/\pp_1)$ is the height of the prime ideal $\pp_2/\pp_1$ of $R/\pp_1$.

Existence of a codimension function on $\Spec R$ implies that $R$ is \emph{catenary} (\cite[p.~284]{Har66}), that is, whenever $\pp$ and $\qq$ are prime ideals with $\pp \subsetneq \qq$, every saturated chain starting from $\pp$ and ending at $\qq$ has the same length. When a codimension function $\cd$ on $\Spec R$ exists, $\cd+c$ is a codimension function on $\Spec R$ as well for any constant $c \in \ZZ$, and $\cd$ is uniquely determined up to constant if $\Spec R$ is connected (e.g., \cite[Definition 5.1]{Kaw02}). More precisely, given two codimension functions $\cd$ and $\cd'$ on $\Spec R$ and a connected component $S$ of $\Spec R$, $\cd-\cd'$ is constant on $S$.

If $R$ admits a pointwise dualizing complex $D$, the function given by $\pp\mapsto \cd_D(\pp)$ is a codimension function (see \cref{dc-loc-coh} and \cite[p.~287]{Har66}). 
There are cases where $R$ may not admit a (pointwise) dualizing complex but a codimension function exists. 
Indeed, if $R$ is Cohen--Macaulay (i.e., $\depth_{R_\mm} R_\mm=\dim R_\mm$ for every maximal ideal $\mm$), the height function $\height: \Spec R \rightarrow \ZZ$ is a codimension function by \cite[Theorem 17.4(i)]{Mat89}; see also the proof of \cite[Theorem 17.4(ii)]{Mat89}. Moreover, if $R$ is catenary and local, the assignment $\pp \mapsto \dim R - \dim R/\pp$ defines a codimension function. We also remark that, when $R$ is a 1-dimensional ring, the height function and the function given by $\pp\mapsto \dim R - \dim R/\pp$ are both codimension functions, but $R$ may not admit a dualizing complex; see \cref{1-dim-no-dc}.

\begin{example}
Let $k$ be a field.
\begin{enumerate}[label=(\arabic*), font=\normalfont]
\item Let $R:=k[\![x]\!][y]$ and $\pp:=(1-xy)$. Then $\height(\pp)=1$ and $\dim R/\pp=0$, so $\dim R-\dim R/\pp=2>\height(\pp)$.
Note that $R$ is a dualizing complex for $R$ and $\cd_R=\height$.
Since $\Spec R$ is connected, a codimension function on $\Spec R$ is unique up to a constant. 
Thus the function $\Spec R \to \ZZ$ given by $\qq \mapsto \dim R-\dim R/\qq$ is not a codimension function.
\item Let $R:=k[\![x,y,z]\!]/(xy, xz)$. Then $\dim R=2$, and $\pp:=(y,z)\subseteq R$ is a minimal prime ideal, while  $\dim R/\pp=1$. Hence $\dim R-\dim R/\pp=1>\height(\pp)=0$.
The ring $R$ is catenary and local, so the function $\Spec R \to \ZZ$ given by $\qq \mapsto \dim R-\dim R/\qq$ is a codimension function. Thus the height function $\height:\Spec R\to\ZZ$ is not a codimension function, since $\Spec R$ is connected.
\end{enumerate}
\end{example}

\subsubsection{}
The existence of a dualizing complex allows us to use a powerful method, by which we can prove \cref{intro-slice} and \cref{intro-tilt}\cref{intro-tilt-dc} under additional yet mild conditions in a very efficient way; see \cref{cdc-shortcut}. 
We denote by $\Proj R$ (resp. $\Inj R$, $\Flat R$) the category of projective (resp. injective, flat) $R$-modules.
For an additive subcategory $\cA$ of $\Mod R$, we denote by $\K(\cA)$ the homotopy category of complexes of modules in $\cA$. 
The \emph{pure derived category} $\D(\Flat R)$ of flat $R$-modules is defined as the Verdier quotient category of the homotopy category $\K(\Flat R)$ by the subcategory of pure acyclic complexes of flat modules (i.e., acyclic K-flat complexes of flat modules). If we regard $\Flat R$ as the exact category in a natural way, then $\D(\Flat R)$ is the derived category of $\Flat R$ in the sense of \cite[\S 10]{Buh10}.

Under the existence of a classical dualizing complex $D$ (being a bounded complex of injective modules), Iyengar and Krause \cite[Theorem 1]{IK06} showed that the triangulated functor $D\otimes_R-: \K(\Proj R)\to \K(\Inj R)$ is an equivalence, where its quasi-inverse is given by composing $\Hom_R(D,-):\K(\Inj R)\to \K(\Flat R)$ with a right adjoint $\K(\Flat R)\to \K(\Proj R)$ to the inclusion $\K(\Proj R)\ha \K(\Flat R)$.

On the other hand, Neeman \cite[Theorem 1.2 and Facts 2.14]{Nee08} proved that 
the canonical functor $\K(\Proj R)\to \D(\Flat R)$ is a triangulated equivalence (for any ring) and pointed out that $D\otimes_R-: \K(\Proj R)\to \K(\Inj R)$ factors through the quotient functor $\K(\Flat R)\to \D(\Flat R)$ (\cite[Facts 2.17]{Nee08}).

As a consequence, if $D$ is as above, then $D\otimes_R-: \D(\Flat R)\to \K(\Inj R)$ is a triangulated equivalence whose quasi-inverse is given by $\Hom_R(D,-):\K(\Inj R)\to \D(\Flat R)$.
In fact, Neeman \cite[Corollary 3.19]{Nee11} extended the triangulated equivalence $D\otimes_R-: \K(\Proj R)\to \K(\Inj R)$ to the case that $D$ is a (strongly pointwise) dualizing complex (see \cref{strong-pw}).

In conclusion, we have the following fact.

\begin{theorem}[\cite{IK06}, \cite{Nee08}, and \cite{Nee11}]
\label{NIK}
Let $R$ be a commutative noetherian ring.
\leavevmode
\begin{enumerate}[label=(\arabic*), font=\normalfont]
\item \label{Neeman}The canonical functor
\[\K(\Proj R)\to \D(\Flat R)\]
is a triangulated equivalence.
\item \label{Iyengar-Krause} Assume that $R$ admits a dualizing complex $D$ and that $D$ is a bounded below complex of injective $R$-modules. Then the functor
	\[D \otimes_R -: \D(\Flat R) \rightarrow \K(\Inj R)\]
is a triangulated equivalence whose quasi-inverse is given by 
	\[\Hom_R(D,-): \K(\Inj R) \rightarrow \D(\Flat R).\]
\end{enumerate}
\end{theorem}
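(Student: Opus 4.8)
The plan is to assemble both parts from the quoted results of Iyengar--Krause and Neeman, setting up the skeleton and flagging where the genuine content sits.

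For \ref{Neeman}, I would factor the canonical functor as $\K(\Proj R)\ha\K(\Flat R)\xrightarrow{q}\D(\Flat R)$, with $q$ the Verdier quotient by the thick subcategory of pure acyclic complexes of flat modules, and show this composite is fully faithful and essentially surjective. Full faithfulness I would get from the vanishing $\Hom_{\K(\Flat R)}(P,A)=0$ whenever $P$ is a complex of projectives and $A$ a pure acyclic complex of flats — this reduces by a d\'evissage on $P$ to the exactness of $\Hom_R(Q,-)$ on pure exact sequences for $Q$ projective — combined with the standard lemma that a triangulated subcategory which is left orthogonal to the kernel of a Verdier localization maps fully faithfully into the localization. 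Essential surjectivity is the heart of the matter: for a complex $X$ of flat modules one must build a complex $P$ of projectives and a morphism $P\to X$ with pure acyclic cone, by resolving $X$ relative to the pure exact structure (each flat module is a pure epimorphic image of a free module) and assembling the pieces into a genuine complex of projectives. Making this assembly work is exactly where Neeman's input (Enochs' flat covers, the flat cotorsion pair, well generation of $\K(\Proj R)$) is indispensable.

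For \ref{Iyengar-Krause}, the plan is to handle the classical case first and then bootstrap. When $D$ is a bounded complex of injectives this is \cite[Theorem 1]{IK06}: since $R$ is noetherian, $D\otimes_R P$ again has injective terms (a direct sum of injectives over a noetherian ring is injective), so $D\otimes_R-\colon\K(\Proj R)\to\K(\Inj R)$ is well defined and coproduct-preserving; one then checks it is an equivalence, e.g.\ by verifying that it restricts to an equivalence on compact objects implementing $\RHom_R(-,D)$-duality on a homotopy-category model of $\D^{\bd}_{\fg}(R)$, so that a coproduct-preserving functor which is an equivalence on compacts is an equivalence. A quasi-inverse is $\Hom_R(D,-)\colon\K(\Inj R)\to\K(\Flat R)$ composed with a right adjoint of $\K(\Proj R)\ha\K(\Flat R)$, available because $\K(\Proj R)$ is well generated and the inclusion preserves coproducts. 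To upgrade $D$ to an arbitrary strongly pointwise dualizing complex that is only bounded below I would invoke \cite[Corollary 3.19]{Nee11}: the key inputs are that $D_\pp$ is dualizing for $R_\pp$ at every prime and that $\Hom_R(D,-)$ still carries injectives to flats, whence a local-to-global argument promotes the classical statement. Finally I would combine this with \ref{Neeman}: since $D\otimes_R-$ sends pure acyclic complexes of flats to contractible complexes it factors through $\D(\Flat R)$ \cite[Facts 2.17]{Nee08}, and transporting the equivalence along $\K(\Proj R)\isoto\D(\Flat R)$ turns it into $D\otimes_R-\colon\D(\Flat R)\isoto\K(\Inj R)$ with quasi-inverse $\Hom_R(D,-)$ followed by $q$.

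The hard part will not be the bookkeeping above but the material imported from the references: for \ref{Neeman} the construction of pure-projective resolutions of arbitrary complexes of flat modules, equivalently the well generation of $\K(\Proj R)$ and its identification with $\D(\Flat R)$; and for \ref{Iyengar-Krause} Neeman's passage from bounded to merely bounded-below dualizing complexes, where compactness of $\K(\Inj R)$ is no longer available and must be replaced by a subtler generation-and-limit argument. In this exposition those would be cited rather than reproved.
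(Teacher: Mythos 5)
Your proposal follows the same route as the paper. \cref{NIK} has no self-contained proof in the text; the ``proof'' is the discussion in the paragraphs immediately preceding it, which assembles the statement from \cite[Theorem 1]{IK06} (the classical case, $D$ a bounded complex of injectives), \cite[Theorem 1.2 and Facts 2.14, 2.17]{Nee08} (item \cref{Neeman} and the factorization of $D\otimes_R-$ through the Verdier quotient $\K(\Flat R)\to\D(\Flat R)$), and \cite[Corollary 3.19]{Nee11} (the passage from bounded to strongly pointwise dualizing complexes). Your outline reproduces this assembly and adds reasonable scaffolding around each import; the correct interpretation of the quasi-inverse (compose $\Hom_R(D,-)$ with the passage $\K(\Flat R)\to\D(\Flat R)$, or equivalently with a right adjoint of the inclusion $\K(\Proj R)\hookrightarrow\K(\Flat R)$) matches the paper's account of \cite{IK06}. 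The one place where you slightly understate the difficulty is the full faithfulness of $\K(\Proj R)\to\D(\Flat R)$: for unbounded $P$ the vanishing $\Hom_{\K(\Flat R)}(P,A)=0$ against pure acyclic complexes of flats is not a simple d\'evissage (brutal truncation handles the bounded-above part, but the bounded-below part needs a genuine argument); it is itself a theorem in \cite{Nee08} and, like essential surjectivity, relies on flat cotorsion pairs and well generation. Since you import it rather than reprove it, this does not affect the soundness of your plan.
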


\begin{remark}\label{Murfet}
There is a fully faithful functor $\D(R)\hookrightarrow \D(\Flat R)$, which is the composition of the canonical embedding $\D(R)\hookrightarrow \K(\Proj R)$ with the equivalence $\K(\Proj R)\isoto \D(\Flat R)$ by \cref{NIK}\cref{Neeman}.
Given any K-flat complex $X$ of flat $R$-modules and a K-projective complex $P\in \K(\Proj R)$ with a quasi-isomorphism $P\to X$, the functor $\D(R)\hookrightarrow \K(\Proj R)$ replaces $X$ by $P$, and the mapping cone of the quasi-isomorphism $P\to X$ is pure acyclic. Hence $P$ and $X$ are identified in $\D(\Flat R)$. Then we see that the image of $X$ by the functor $\D(R)\hookrightarrow \D(\Flat R)$ is $X$ up to isomorphism.

On the other hand, the canonical functor $\K(\Flat R)\to \D(R)$ naturally induces a triangulated functor $\D(\Flat R)\to \D(R)$, which yields the canonical map $\Hom_{\D(\Flat R)}(X,Y)\to \Hom_{\D(R)}(X,Y)$,
where $X$ and $Y$ are complexes of flat $R$-modules.
Suppose that $X$ and $Y$ are K-flat. Then this canonical map is bijective (as observed in the proof of \cite[Theorem 5.5]{Mur07} for schemes).
Indeed, by the previous paragraph, we have the canonical bijection $\Hom_{\D(R)}(X,Y)\isoto \Hom_{\D(\Flat R)}(X,Y)$, and, composing it with the map $\Hom_{\D(\Flat R)}(X,Y)\to \Hom_{\D(R)}(X,Y)$, we obtain the identity map on $\Hom_{\D(R)}(X,Y)$; the last fact can be easily verified because we may assume that $X$ and $Y$ are K-projective complexes of projective $R$-modules.
\end{remark}

\subsection{Lukas lemma for complexes}
We here recall the Lukas lemma, which makes first extension groups vanish under some conditions.
We use this technical lemma to handle arbitrary commutative noetherian rings of possibly infinite Krull dimension and their arbitrary sp-filtrations; cf. \cref{cdc-shortcut} and \cref{footnote-Kdim}.

Let $R$ be an arbitrary ring and $X\in \C(R)$, where $\C(R)$ is the category of complexes of right $R$-modules. Let $\cC$ be a subcategory of $\C(R)$. A \emph{$\cC$-cofiltration} of $X$ is an inverse system
$(X_\alpha, \pi_{\alpha, \beta} \mid \alpha\leq \beta \leq \delta)$ formed by an ordinal $\delta$ and epimorphisms $\pi_{\alpha, \beta}: X_{\beta}\to X_\alpha$ in $\C(R)$ indexed by $\alpha\leq \beta\leq \delta$ such that the following conditions are satisfied:
\begin{enumerate}
	\item[(i)] $X_\delta = X$,
	\item[(ii)] $\varprojlim_{\alpha<\beta}X_\alpha=X_\beta$ for each limit ordinal $\beta\leq \delta$, and
	\item[(iii)] $X_0\in \cC$ and $\Ker(\pi_{\alpha,\alpha+1}) \in \cC$ for every $\alpha <\delta$.
\end{enumerate}
See \cite[Definition 6.34(i)]{GT12}.

\begin{lemma}[Lukas lemma]\label{Lukas-lemma}
Let $X, Y\in \C(R)$ and 
\[\cC:=\{Z\in \C(R) \mid \Ext^1_{\C(R)}(X,Z)=0\}.\]
If $Y$ has a $\cC$-cofiltration, then $Y\in \cC$. 
\end{lemma}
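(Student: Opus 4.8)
The statement is the Lukas lemma for complexes: if
$\cC=\{Z\in\C(R)\mid\Ext^1_{\C(R)}(X,Z)=0\}$ and $Y$ admits a
$\cC$-cofiltration $(Y_\alpha,\pi_{\alpha,\beta}\mid\alpha\le\beta\le\delta)$,
then $Y=Y_\delta\in\cC$. The natural strategy is transfinite induction on the
length $\delta$ of the cofiltration, showing that $Y_\beta\in\cC$ for every
$\beta\le\delta$. First I would recall that $\Ext^1_{\C(R)}(X,-)$ is computed
in the abelian category $\C(R)$, so $\Ext^1_{\C(R)}(X,Z)=0$ is equivalent to
the statement that every short exact sequence $0\to Z\to E\to X\to 0$ in
$\C(R)$ splits; this reformulation is what lets the induction go through. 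The
base case $\beta=0$ is exactly hypothesis (iii), namely $Y_0\in\cC$.

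\textbf{Successor step.} Suppose $Y_\alpha\in\cC$. By (iii) the kernel
$N_\alpha:=\Ker(\pi_{\alpha,\alpha+1})$ lies in $\cC$, and there is a short
exact sequence $0\to N_\alpha\to Y_{\alpha+1}\xrightarrow{\pi_{\alpha,\alpha+1}}Y_\alpha\to 0$
in $\C(R)$. Applying $\Hom_{\C(R)}(X,-)$ gives a long exact sequence in which
the relevant fragment reads
\[
\Ext^1_{\C(R)}(X,N_\alpha)\longrightarrow\Ext^1_{\C(R)}(X,Y_{\alpha+1})\longrightarrow\Ext^1_{\C(R)}(X,Y_\alpha).
\]
Both outer terms vanish by the induction hypothesis and by $N_\alpha\in\cC$, so
$\Ext^1_{\C(R)}(X,Y_{\alpha+1})=0$, i.e. $Y_{\alpha+1}\in\cC$.

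\textbf{Limit step.} This is the only delicate part. Let $\beta\le\delta$ be a
limit ordinal and assume $Y_\alpha\in\cC$ for all $\alpha<\beta$; by (ii) we
have $Y_\beta=\varprojlim_{\alpha<\beta}Y_\alpha$ with the transition maps
$\pi_{\alpha,\alpha'}$ surjective. I want to show $\Ext^1_{\C(R)}(X,Y_\beta)=0$.
Take any short exact sequence $0\to Y_\beta\to E\xrightarrow{q}X\to 0$ in
$\C(R)$; I must produce a splitting. The idea is to build a compatible family
of partial splittings by transfinite recursion: pushing out along
$Y_\beta\to Y_\alpha$ gives short exact sequences $0\to Y_\alpha\to E_\alpha\to X\to 0$
with $E_\beta:=\varprojlim_{\alpha<\beta}E_\alpha=E$ (using that the inverse
limit is computed degreewise and that the relevant $\varprojlim^1$ terms vanish
because the transition maps of $(Y_\alpha)$ are surjective). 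Since each
$Y_\alpha\in\cC$, each of these sequences splits; one then chooses splittings
$s_\alpha:X\to E_\alpha$ by recursion so that they are compatible with the
transition maps, adjusting at successor stages by an element of
$\Hom_{\C(R)}(X,N_\alpha)$ (possible because the obstruction to lifting a
splitting of $E_\alpha$ to one of $E_{\alpha+1}$ is measured by an
$\Ext^1_{\C(R)}(X,N_\alpha)$ that vanishes) and passing to the limit at limit
stages (here compatibility of the previously chosen splittings is exactly what
guarantees an inverse limit splitting exists). The resulting $s_\beta:X\to E$
splits the original sequence, giving $Y_\beta\in\cC$. Taking $\beta=\delta$
finishes the proof.

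\textbf{Main obstacle.} The successor step is pure diagram chasing and
essentially automatic. The real work is the limit step: one needs the
recursion producing a \emph{coherent} system of splittings, which requires
knowing that at each successor the obstruction class lives in a vanishing
$\Ext^1$ group, and that the inverse limits involved behave well (no
$\varprojlim^1$ obstruction), which is where surjectivity of the transition
maps in the cofiltration is used. Alternatively, one can cite the standard
module-theoretic Lukas lemma (\cite[Lemma 6.37]{GT12} or
\cite[Theorem 6.11]{GT12}) applied in the Grothendieck category $\C(R)$, since
$\cC$ is precisely the right $\Ext$-orthogonal class of the object $X$ and such
classes are closed under the transfinite inverse-limit-type filtrations
described by $\cC$-cofiltrations; in that case the proof reduces to verifying
that the definition of $\cC$-cofiltration above matches the hypotheses of the
cited general statement.
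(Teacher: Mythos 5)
Your argument is in essence the standard proof of the Lukas lemma, while the paper simply invokes \cite[Lemma~6.37]{GT12} and observes that the module-theoretic proof goes through verbatim in $\C(R)$ — exactly the alternative you mention in your last sentence. So you take a more explicit route to the same place. Two remarks are in order.

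First, the outer transfinite induction (``$Y_\alpha\in\cC$ for all $\alpha\le\delta$'') is superfluous and slightly obscures the argument. Your ``inner recursion'' at the limit step is really the whole proof: fix one short exact sequence $0\to Y_\delta\to E\to X\to 0$, push out along $Y_\delta\twoheadrightarrow Y_\alpha$ to get a tower $0\to Y_\alpha\to E_\alpha\to X\to 0$ with $E_\alpha=E/K_\alpha$, where $K_\alpha:=\ker(Y_\delta\to Y_\alpha)$ is viewed inside $E$, and build compatible splittings $f_\alpha\colon X\to E_\alpha$ by a single transfinite recursion on $\alpha\le\delta$. The successor step only needs $\ker(\pi_{\alpha,\alpha+1})\in\cC$ — hypothesis (iii) — not $Y_\alpha\in\cC$; and since the squares over $X$ commute, any lift of a splitting along the surjection $E_{\alpha+1}\to E_\alpha$ is automatically a splitting, so no further ``adjustment by an element of $\Hom_{\C(R)}(X,N_\alpha)$'' is needed.

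Second, and this is the genuine gap: your justification of the limit step is wrong as stated. You claim the relevant $\varprojlim^1$ vanishes ``because the transition maps of $(Y_\alpha)$ are surjective.'' Over a general limit ordinal $\gamma$, surjectivity of transition maps does \emph{not} kill $\varprojlim^1$; Mittag--Leffler needs a countable cofinal chain and fails already for $\gamma=\omega_1$. What actually makes the limit stage work is built into the definition of a $\cC$-cofiltration: at a limit ordinal $\gamma\le\delta$ one has both $Y_\gamma=\varprojlim_{\alpha<\gamma}Y_\alpha$ and the surjection $Y_\delta\twoheadrightarrow Y_\gamma$. Applying $\varprojlim_{\alpha<\gamma}$ to the exact sequences of inverse systems $0\to(K_\alpha)\to(Y_\delta)\to(Y_\alpha)\to 0$ (constant middle term) and using that $Y_\delta\to\varprojlim_{\alpha<\gamma}Y_\alpha=Y_\gamma$ is onto gives $\varprojlim^1_{\alpha<\gamma}K_\alpha=0$; hence $\varprojlim_{\alpha<\gamma}E_\alpha\cong E/K_\gamma=E_\gamma$, and the compatible family $(f_\alpha)_{\alpha<\gamma}$ glues to the desired $f_\gamma$. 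Without this argument your limit step has a real hole whenever $\delta$ is uncountable. With it repaired, your proposal is a correct explicit rendering of the proof that the paper cites.
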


\begin{proof}
See \cite[Lemma 6.37]{GT12}, which is proved for modules, but the proof also works for complexes.
\end{proof}

We close this section by giving an interpretation of this lemma from the view point of the derived category. What we need for this purpose is the equivalence $\Hom_{\D(R)}(X,Y)=0\Leftrightarrow \Ext^1_{\C(R)}(\Sigma P,Y)=0$ for $X,Y\in \C(R)$ and a K-projective resolution $P\to X$ such that $P$ consists of projective modules (cf. \cite[Corollary 3.3]{EJX96}). We state this classical fact in a better way, which is also well known to specialists.

Given $X,Y\in \C(R)$, there is a canonical morphism
$\Hom_{\C(R)}(X,Y)\to \Ext^1_{\C(R)}(\Sigma X,Y)$ of abelian groups, which assigns to each chain map $f: X\to Y$ the equivalence class of the canonical exact sequence $0\to Y\to \Cone(f)\to \Sigma X\to 0$. The exact sequence splits if and only if $f$ is null-homotopic (see \cite[Lemma 3.2]{EJX96}), and the image of the canonical morphism $\Hom_{\C(R)}(X,Y)\to \Ext^1_{\C(R)}(\Sigma X,Y)$ is the subgroup $\Ext^1_{\C(R), \text{dw}}(\Sigma X,Y)$ formed by the equivalence classes of degreewise split exact sequences. Thus, there is a canonical isomorphism $\Hom_{\K(R)}(X,Y)\isoto \Ext^1_{\C(R), \text{dw}}(\Sigma X,Y)$, where $\K(R)$ stands for the homotopy category of complexes of $R$-modules.
For a complex $P$ of projective $R$-modules, we have $\Ext^1_{\C(R), \text{dw}}(\Sigma P,Y)=\Ext_{\C(R)}^1(\Sigma P,Y)$, so if we further assume that $P$ is a K-projective resolution of $X$, then there are canonical isomorphisms
\begin{align}\label{Hom-Ext}
\Hom_{\D(R)}(X,Y)\cong \Hom_{\K(R)}(P,Y)\cong \Ext_{\C(R)}^1(\Sigma P,Y).
\end{align}

\begin{corollary}\label{cofiltration}
Let $X, Y \in \D(R)$, and consider the subcategory $X\Perp{0}$ of $\D(R)$. Let $\cC$ be the subcategory of $\C(R)$ formed by all objects in $X\Perp{0}$, that is, $\cC:=\{Z\in \C(R) \mid \Hom_{\D(R)}(X,Z)=0 \}$. If Y has a $\cC$-cofiltration, then $Y\in X\Perp{0}$. 
\end{corollary}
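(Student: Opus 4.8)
The plan is to deduce this directly from \cref{Lukas-lemma} by replacing the object $X \in \D(R)$ with a complex of projective modules. First I would fix a K-projective resolution $P \to X$ in which $P$ is a complex of projective $R$-modules; such a resolution exists over an arbitrary ring $R$. Then, by \cref{Hom-Ext}, for every complex $Z \in \C(R)$ there is a canonical isomorphism $\Hom_{\D(R)}(X,Z) \cong \Ext^1_{\C(R)}(\Sigma P, Z)$. Hence the subcategory $\cC = \{Z \in \C(R) \mid \Hom_{\D(R)}(X,Z) = 0\}$ coincides with $\{Z \in \C(R) \mid \Ext^1_{\C(R)}(\Sigma P, Z) = 0\}$, which is exactly the subcategory appearing in \cref{Lukas-lemma} with its distinguished complex taken to be $\Sigma P$.

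Now suppose $Y$ has a $\cC$-cofiltration. By the identification of the previous paragraph this is precisely a cofiltration by complexes $Z$ with $\Ext^1_{\C(R)}(\Sigma P, Z) = 0$, so \cref{Lukas-lemma} (applied to $\Sigma P$ in the role of $X$ there) yields $Y \in \cC$; that is, $\Hom_{\D(R)}(X,Y) = 0$, which means $Y \in X\Perp{0}$, as desired.

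I do not expect a genuine obstacle here, as the corollary is a formal consequence of \cref{Lukas-lemma} combined with \cref{Hom-Ext}. The only points that merit a line of care are that in \cref{Hom-Ext} the complex $P$ must be simultaneously K-projective and termwise projective, so that $\Ext^1_{\C(R), \text{dw}}(\Sigma P, -)$ agrees with the full $\Ext^1_{\C(R)}(\Sigma P, -)$, and that the notion of $\cC$-cofiltration used here is literally the one from the paragraph preceding \cref{Lukas-lemma}. Both are immediate, so the argument amounts to a two-line invocation of the Lukas lemma.
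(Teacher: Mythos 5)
Your proposal is correct and follows exactly the paper's own argument: fix a K-projective resolution $P\to X$ with $P$ termwise projective, use \cref{Hom-Ext} to identify $\cC$ with $\{Z\in\C(R)\mid \Ext^1_{\C(R)}(\Sigma P,Z)=0\}$, and invoke \cref{Lukas-lemma}. Nothing to add.
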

\begin{proof}
Let $P$ be a K-projective resolution of $X$ such that $P$ consists of projective $R$-modules. By definition, we have $\Hom_{\D(R)}(X,Z)\cong \Hom_{\K(R)}(P,Z)$ for all complexes $Z$, so $\cC=\{Z\in \C(R) \mid \Hom_{\K(R)}(P,Z)=0 \}$. It follows from \cref{Hom-Ext} that $\cC=\{Z\in \C(R) \mid \Ext^1_{\C(R)}(\Sigma P,Z)=0 \}$, and hence, if $Y$ has a $\cC$-cofiltration, then the Lukas lemma yields $Y\in \cC$, or equivalently, $Y\in X\Perp{0}$.
\end{proof}

\section{Slice sp-filtrations and codimension filtrations}\label{slice-section}
In the rest of this paper, $R$ denotes a commutative noetherian ring unless otherwise specified.
For a subset $W$ of $\Spec R$, $\dim W$ denotes the supremum of lengths of strict chains of prime ideals in $W$. In particular, $\dim (\Spec R)$ is the Krull dimension of $R$, which is denoted by $\dim R$ as before.
Note that if $W$ is empty, then $\dim W=-\infty$ by definition.

\begin{definition}\label{slice-sp}
Let $\Phi$ be an sp-filtration of $\Spec R$. We call $\Phi$ a \emph{slice sp-filtration} of $\Spec R$ if it is non-degenerate and $\dim(\Phi(n) \setminus \Phi(n+1)) \leq 0$ for all $n \in \ZZ$.
\end{definition}

Slice sp-filtrations are precisely the filtrations used in \cite[Chapter IV, \S 3]{Har66} for the definition of Cousin complexes.

\begin{remark}\label{slice-remark}
The authors of the present paper would like to adopt ``slice''\footnote{This terminology was initially brought from \cite[Definition 7.6]{NY18b}, which introduces a family of zero-dimensional subsets of $\Spec R$ satisfying some conditions for a given finite-dimensional subset $W$ to calculate the Bousfield localization functor $\lambda^W$ on $\D(R)$, and the authors of \cite{NY18b} called it a \emph{system of slices} of $W$. The conditions of this notion also imitate the definition of the filtrations in \cite[Chapter IV, \S 3]{Har66}.
}
(rather than ``Cousin'') to avoid a possible confusion with other notions of sp-filtrations; see \cref{w-s-Cousin} below.

After this choice, the authors noticed that the terminology ``slice filtration'' is used in the context of equivariant or motivic homotopy theory; see \cite{GRSO12}, \cite{Hea19}, \cite{Hil12}, and \cite{Ull13} for example. A basic idea behind slice filtrations is very similar to our motivation of \cref{slice-sp}; see \cref{slice-functor}. 
However, our definition is not precisely compatible with the slice filtrations in that area. We leave the prefix ``sp-'' so that the reader can distinguish ours from the other.

On the other hand, we remove the prefix from the filtrations considered in \cref{hight-codim} for simplicity because we think there is no worry of confusion.
\end{remark}

Recall that there is a bijective correspondence between the sp-filtrations of $\Spec R$ and the order-preserving functions $\Spec R \to \ZZ \cup\{\infty, -\infty\}$, and this restricts to a bijective correspondence between the non-degenerate sp-filtrations of $\Spec R$ and the order-preserving functions $\Spec R \to \ZZ$; see \cref{order-preserv}.
In fact, a non-degenerate sp-filtration $\Phi$ is a slice sp-filtration if and only if its corresponding function $\f_\Phi$
is strictly increasing, that is, $\pp \subsetneq \qq$ implies $\f_\Phi(\pp) < \f_\Phi(\qq)$. 
Hence we have the following commutative diagram:
\begin{equation}\label{slice-strict}
\begin{tikzcd}[every label/.append style = {font = \small}]
\begin{Bmatrix}
	\text{non-degenerate}\\
	\text{sp-filtrations}\\
	\text{of $\Spec R$} 
\end{Bmatrix}
	\ar[d,phantom,"\rotatebox{-90}{$\supseteq$}"]
	\ar[r,equal,"\sim"]&
\begin{Bmatrix}
	\text{order-preserving}\\
	\text{functions}\\
	\Spec R \to \ZZ 
\end{Bmatrix}
	\ar[d,phantom,"\rotatebox{-90}{$\supseteq$}"]\\
\begin{Bmatrix}
    \text{slice}\\
	\text{sp-filtrations}\\
	\text{of $\Spec R$} 
	\end{Bmatrix}
	\ar[r,equal,"\sim"]&
\begin{Bmatrix}
	\text{strictly increasing}\\
	\text{functions}\\
	\Spec R \to \ZZ 
\end{Bmatrix}\rlap{.}
\end{tikzcd}
\end{equation}

As observed in \cref{cosilt-cof-sp,silt-fin-sp}, the silting t-structures of finite type and the cosilting t-structures of cofinite type are classified by using the non-degenerate sp-filtrations.
We will explicitly construct a (co)silting object that induces  the (co)silting t-structure corresponding to each slice sp-filtration.

\begin{example}\label{hight-codim}
A typical example of a slice sp-filtration is $\Phi_{\height}$ defined by the height function $\height:\Spec R\to \ZZ$ as in \cref{order-preserv}. We call $\Phi_{\height}$ the \emph{height filtration} of $\Spec R$.
If $R$ admits a codimension function $\cd: \Spec R\to \ZZ$, then $\Phi_{\cd}$ is also a slice sp-filtration, which we call a \emph{codimension filtration} of $\Spec R$.

The grade function $\grade: \Spec R \to \ZZ$ is a natural example of an order-preserving function that may not be strictly increasing, where $\grade(\pp) := \depth(\pp,R)$. We call the sp-filtration $\Phi_{\grade}$ the \emph{grade filtration} of $\Spec R$. Note that $\grade \leq \height$ in general (\cite[Proposition 1.2.14]{BH98}), and the equality holds (or equivalently, $\grade$ is strictly increasing) if and only if $R$ is Cohen--Macaulay (\cite[Proposition 1.2.10(a)]{BH98}).
\end{example}

\begin{remark}\label{w-s-Cousin}
An sp-filtration $\Phi$ of $\Spec R$ is said to satisfy the \emph{weak Cousin condition} if the following condition holds:
For any $n\in \ZZ$ and any strict inclusion $\pp\subsetneq \qq$ in $\Spec R$ such that $\pp$ is maximal under $\qq$ (i.e., $\pp\subsetneq \qq$ is a saturated chain), $\qq \in \Phi(n)$ implies $\pp\in \Phi(n-1)$. See \cite[p.~332, Definition]{ATJLS10}.
If the converse implication also holds, then $\Phi$ is said to satisfy the \emph{strong Cousin condition}. See 
\cite[p.~331, Remark]{ATJLS10}.
 
Under the existence of a classical dualizing complex (resp. a pointwise dualizing complex), the weak Cousin condition characterizes the case when the t-structure corresponding to $\Phi$ via \cref{ATJLS-bijec} restricts to a t-structure in $\D^{\bd}_{\fg}(R)$ (resp. $\D_{\fg}(R)$); see \cite[Theorem 6.9 and Corollaries 3.12 and 6.10]{ATJLS10}. Here, $\D_{\fg}(R)$ stands for the subcategory of $\D(R)$ formed by complexes with finitely generated cohomology modules.

In general, a non-degenerate sp-filtration satisfying the weak Cousin condition need not be a slice sp-filtration. Conversely, a slice sp-filtration need not satisfy the weak Cousin condition.
In fact, a slice sp-filtration satisfies the weak Cousin condition if and only if it is a codimension filtration. One can also directly check that a non-degenerate sp-filtration is a codimension filtration if and only if it satisfies the strong Cousin condition. In conclusion:
\begin{equation*}
	\begin{Bmatrix}
		\text{non-degenerate sp-filtrations}\\
		\text{of $\Spec R$ satisfying}\\
		\text{the strong Cousin condition}
		\end{Bmatrix}
	=
	\begin{Bmatrix}
	\text{slice sp-filtrations}\\
	\text{of $\Spec R$ satisfying the}\\
	\text{weak Cousin condition}
	\end{Bmatrix}
=
	\begin{Bmatrix}
	\text{codimension filtrations}\\
	\text{of $\Spec R$}
	\end{Bmatrix}\rlap{.}
\end{equation*}
\end{remark}

\begin{definition}\label{Tphi}
Let $\Phi$ be a non-degenerate sp-filtration of $\Spec R$. We define 
\begin{equation*} 
T_\Phi := \bigoplus_{\pp \in \Spec R} \Sigma^{\f_\Phi(\pp)}\RGamma_{\pp} R_{\pp},
\end{equation*}
which is an object in $\D(R)$.
\end{definition}

\begin{remark}\label{RHom-TPhi}
Let $\pp\in\Spec R$.
It follows from \cref{GM-dual}, \cref{Cech-comp}, and tensor-hom adjunction that
there is an isomorphism
\[\RHom_R(\RGamma_{\pp} R_{\pp},-)\cong \LLambda^{\pp}\RHom_R(R_{\pp},-)\] of functors $\D(R)\to \D(R)$.
If $\mm$ is a maximal ideal, then  $\RGamma_{\mm} R_{\mm}\cong \RGamma_{\mm} R$ in $\D(R)$ (see \cref{Gamma-Inj} for example), and so $\LLambda^{\mm}\cong \RHom_R(\RGamma_{\mm} R,-)\cong\LLambda^{\mm}\RHom_R(R_{\mm},-)$ as functors $\D(R)\to \D(R)$.

For each $X\in \D(R)$, the inclusion $\supp_R\RGamma_{\pp} X_{\pp}\subseteq \{\pp\}$ holds by  \cref{Cech-comp}. In particular, we have $\supp_R\RGamma_{\pp} R_{\pp}= \{\pp\}$.
It then follows from the previous paragraph that we also have $\cosupp_{R}\LLambda^{\pp}\RHom_R(R_{\pp},X)\subseteq \{\pp\}$.
\end{remark}

We will show that $T_\Phi$ is a silting object inducing the t-structure $(\cY_\Phi, \cY\Perp{0}_\Phi)$ in $\D(R)$ for every slice sp-filtration $\Phi$ of $\Spec R$.
For this, we at least need to show that $T\Perp{>0}_\Phi=\cY_\Phi$, whence it follows that $(T\Perp{>0}_\Phi, (T\Perp{>0}_\Phi)\Perp{0})$ is a t-structure. However, it is still unclear if the equality $(T\Perp{>0}_\Phi)\Perp{0}=T\Perp{\leq 0}_\Phi$ holds, so we have to further show that $T_\Phi$ is silting in $\D(R)$ after all. 
By \cref{silting-cond}, this is equivalent to showing the following conditions: (1) $T_\Phi \in T\Perp{>0}_\Phi$; (2) $T\Perp{\ZZ}_\Phi = 0$; (3) $T\Perp{>0}_\Phi$ is closed under coproducts.
The second condition is not difficult to check thanks to a result of Neeman.

\begin{lemma}\label{generate}
Let $\Phi$ be a non-degenerate sp-filtration.
The object $T_\Phi$ generates $\D(R)$, that is, $T\Perp{\ZZ}_\Phi= 0$.
\end{lemma}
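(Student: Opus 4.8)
The plan is to show that the localizing subcategory generated by $T_\Phi$ is all of $\D(R)$, using Neeman's classification of localizing subcategories via supports. By \cref{RHom-TPhi} we know that $\supp_R \RGamma_{\pp}R_{\pp} = \{\pp\}$ for every $\pp \in \Spec R$; since $\Phi$ is non-degenerate, $\f_\Phi(\pp)$ is a (finite) integer for each $\pp$, so each summand $\Sigma^{\f_\Phi(\pp)}\RGamma_\pp R_\pp$ is a nonzero object of $\D(R)$ with support exactly $\{\pp\}$. Hence $\supp_R T_\Phi = \bigcup_{\pp\in\Spec R}\{\pp\} = \Spec R$.

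Next I would invoke the theorem of Neeman (\cite[Theorem 2.8]{Nee92}, recalled in \cref{Bous-loc}): a localizing subcategory $\cL = \cL_W$ is determined by $W = \bigcup_{X\in\cL}\supp X$, and conversely $\cL_{\Spec R} = \D(R)$. The smallest localizing subcategory $\Loc(T_\Phi)$ containing $T_\Phi$ therefore corresponds to $\supp_R T_\Phi = \Spec R$, so $\Loc(T_\Phi) = \D(R)$. (Alternatively, and perhaps more cleanly, one notes that each $\RGamma_\pp R_\pp$ lies in $\Loc(\kappa(\pp))$ and, conversely, $\kappa(\pp)\in\Loc(\RGamma_\pp R_\pp)$, since $\supp_R\RGamma_\pp R_\pp = \{\pp\} = \supp_R\kappa(\pp)$ and both generate the same minimal localizing subcategory $\cL_{\{\pp\}}$; thus $\Loc(T_\Phi)$ contains $\kappa(\pp)$ for all $\pp$, hence equals $\D(R)$.)

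Finally I would translate ``$\Loc(T_\Phi) = \D(R)$'' into the orthogonality statement $T_\Phi\Perp{\ZZ} = 0$. If $X \in T_\Phi\Perp{\ZZ}$, then $\Hom_{\D(R)}(T_\Phi, \Sigma^i X) = 0$ for all $i\in\ZZ$; the class of objects $Z$ with $\Hom_{\D(R)}(Z,\Sigma^i X)=0$ for all $i$ is a localizing subcategory (it is closed under coproducts, suspensions, and triangles), and it contains $T_\Phi$, hence contains $\Loc(T_\Phi) = \D(R)$. In particular $X \in X\Perp{\ZZ}$, so $\Hom_{\D(R)}(X,X)=0$, forcing $X = 0$.

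There is essentially no serious obstacle here: the only point requiring a little care is confirming $\supp_R T_\Phi = \Spec R$, which rests on the non-degeneracy of $\Phi$ (ensuring $\f_\Phi$ is integer-valued so that no summand vanishes) together with $\supp_R\RGamma_\pp R_\pp = \{\pp\}$ already recorded in \cref{RHom-TPhi}; the rest is the standard formalism of localizing subcategories and Neeman's bijection.
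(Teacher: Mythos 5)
Your proof is correct and follows exactly the same route as the paper: compute $\supp_R T_\Phi = \Spec R$ via the support computation in \cref{RHom-TPhi} (using non-degeneracy so every summand $\Sigma^{\f_\Phi(\pp)}\RGamma_\pp R_\pp$ is nonzero), then apply Neeman's classification to conclude $\Loc(T_\Phi) = \D(R)$, which is equivalent to $T_\Phi^{\perp_\ZZ} = 0$. You have merely spelled out the last equivalence (and the alternative via $\kappa(\pp)$) in more detail than the paper's one-line proof.
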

\begin{proof}
Since $\supp_R T_\Phi=\Spec R$ by \cref{RHom-TPhi}, it follows from \cite[Theorem 2.8]{Nee92} that the smallest localizing subcategory containing $T_\Phi$ is $\D(R)$, or equivalently, $T\Perp{\ZZ}_\Phi = 0$.
\end{proof}

Showing the remaining two conditions (for $T_\Phi$ with a slice sp-filtration $\Phi$) requires technical arguments. Since we have $\cS_{\Phi}^*\Perp{0}=\cY_\Phi$ by \cref{comp-gen-co-t}, the third condition follows once we verify the equality $T\Perp{>0}_\Phi=\cY_\Phi$, and this step is necessary in any case as mentioned above. To compare these two subcategories, let us here give useful descriptions of $T\Perp{>0}_\Phi$ and $\cY_\Phi$. For every object $X\in \D(R)$ and every integer $n$, we have natural isomorphisms
\begin{align*}
\Hom_{\D(R)}(T_\Phi, \Sigma^nX)&\cong \prod_{\pp\in\Spec R}\Hom_{\D(R)}(\Sigma^{\f_\Phi(\pp)}\RGamma_{\pp} R_{\pp}, \Sigma^nX)\\
&\cong \prod_{\pp\in\Spec R}H^0\RHom_R(\Sigma^{\f_\Phi(\pp)}\RGamma_{\pp} R_{\pp}, \Sigma^nX)\\
&\cong \prod_{\pp\in\Spec R}H^{n-\f_\Phi(\pp)} \LLambda^\pp \RHom_R(R_\pp, X)
\end{align*}
by \cref{RHom-TPhi}.
Hence $\Hom_{\D(R)}(T_\Phi, \Sigma^n X)=0$ for all $n>0$ if and only if it holds that $H^{n-\f_\Phi(\pp)} \LLambda^\pp \RHom_R(R_\pp, X)=0$ for all $\pp\in \Spec R$ and all $n>0$, or equivalently, we have $\sup \LLambda^\pp \RHom_R(R_\pp, X)\leq -\f_\Phi(\pp)$ for all $\pp\in \Spec R$.
Therefore
\begin{align}\label{TPhi-posit}
T\Perp{>0}_\Phi = \{X \in \D(R) \mid \width_{R_\pp} \RHom_R(R_\pp, X)\geq \f_\Phi(\pp)~\forall \pp \in \Spec R\}.
\end{align}
On the other hand, for any $X\in \D(R)$, the following bi-implications hold:
\begin{align*}
&\width_R(\pp, X)>n ~\forall n \in \ZZ~\forall \pp \in \Phi(n)\\
&\Leftrightarrow
\width_R(\pp, X)>\sup\{n\in \ZZ\mid \pp \in \Phi(n)\}~\forall \pp\in \Spec R\\
&\Leftrightarrow
\width_R(\pp, X)\geq \f_\Phi(\pp)~\forall \pp \in \Spec R.
\end{align*}
As a consequence,
\begin{equation}\label{cYPhi-posit}
	\cY_\Phi=\{X \in \D(R) \mid \width_R(\pp, X)\geq \f_\Phi(\pp)~\forall \pp \in \Spec R\},
\end{equation}
It will be shown in \cref{classes-equal} that $T\Perp{>0}_\Phi$ coincides with $\cY_\Phi$, but this is not easy. An essential difficulty comes from non-exactness of the colocalization functors $\Hom_R(R_\pp,-)$.

It is also a delicate problem to verify the first condition $T_\Phi \in T\Perp{>0}_\Phi$. We provide two approaches for this.
To get the full generality, we use the Lukas lemma (\cref{Lukas-lemma}) and combine it with various facts on Bousfield localization functors; see \cref{proof-theorem}.
If $R$ admits a dualizing complex, then \cref{NIK} enables us to efficiently prove the first condition as explained below.

Let $\Phi$ be a non-degenerate sp-filtration of $\Spec R$ and assume $R$ admits a dualizing complex $D$, which we may regard as a bounded below complex of injective $R$-modules. By \cref{Cech-comp}, and using \cref{Cech-notation}, we have 
\begin{align}\label{TPhi-check}
T_\Phi\cong \bigoplus_{\pp\in \Spec R} \Sigma^{\f_\Phi(\pp)} \check{C}(\pp)_\pp
\end{align} in $\D(R)$, where $\check{C}(\pp)_\pp=R_\pp\otimes_R\check{C}(\pp)$.
Moreover, it follows from \cref{Cech-comp,dc-loc-coh} that 
\begin{equation}\label{dc-loc-coh1}
D\otimes_R \Sigma^{\f_\Phi(\pp)}\check{C}(\pp)_\pp\cong \Sigma^{\f_\Phi(\pp)-\cd_{D}(\pp)} E_R(R/\pp)
\end{equation}
in $\D(R)$ for every $\pp\in \Spec R$. Since \cref{dc-loc-coh1} is an isomorphism between bounded below complexes of injective modules, it can be realized as an isomorphism in $\K(\Inj R)$. Collecting the isomorphisms in $\K(\Inj R)$ for all $\pp\in \Spec R$, and taking the coproduct of them, we obtain an isomorphism 
\begin{equation}\label{dc-loc-coh2}
\bigoplus_{\pp\in \Spec R} D \otimes_R \Sigma^{\f_\Phi(\pp)} \check{C}(\pp)_\pp\cong \bigoplus_{\pp\in \Spec R}\Sigma^{\f_\Phi(\pp)-\cd_{D}(\pp)} E_R(R/\pp)
\end{equation}
in $\K(\Inj R)$.

\begin{proposition}\label{P:dualizing}
		Let $R$ be a commutative noetherian ring with a dualizing complex and $\Phi$ a non-degenerate sp-filtration of $\Spec R$. Then the following hold.
	\begin{enumerate}[label=(\arabic*), font=\normalfont]
		\item \label{posit-ext-dc} $\Add(T_\Phi) \subseteq T\Perp{>0}_\Phi$ if and only if $\Phi$ is a slice sp-filtration.
		\item \label{negat-ext-dc} $\Add(T_\Phi)\subseteq T_\Phi\Perp{\neq 0}$ if and only if $\Phi$ is a codimension filtration.
	\end{enumerate}
\end{proposition}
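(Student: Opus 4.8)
The plan is to compute the relevant self‑extension groups of $T_\Phi$ after transporting them, via the equivalence $D\otimes_R-\colon\D(\Flat R)\isoto\K(\Inj R)$ of \cref{NIK}\cref{Iyengar-Krause}, into the homotopy category $\K(\Inj R)$, where the computation becomes purely combinatorial. Fix a dualizing complex $D$ for $R$, realized as a bounded below complex of injective modules. By \cref{TPhi-check}, $T_\Phi$ is represented by the K-flat complex of flat modules $P:=\bigoplus_{\pp}\Sigma^{\f_\Phi(\pp)}\check C(\pp)_\pp$; hence for every cardinal $\varkappa$ the coproduct $P^{(\varkappa)}$ is again a K-flat complex of flat modules representing $T_\Phi^{(\varkappa)}$, so \cref{Murfet} gives $\Hom_{\D(R)}(T_\Phi,\Sigma^nT_\Phi^{(\varkappa)})\cong\Hom_{\D(\Flat R)}(P,\Sigma^nP^{(\varkappa)})$ for all $n\in\ZZ$. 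Applying the equivalence $D\otimes_R-$ (which commutes with coproducts) together with the isomorphism \cref{dc-loc-coh2} in $\K(\Inj R)$, this becomes $\Hom_{\K(\Inj R)}(C,\Sigma^nC^{(\varkappa)})$, where
\[C\ :=\ \bigoplus_{\pp\in\Spec R}\Sigma^{\f_\Phi(\pp)-\cd_D(\pp)}E_R(R/\pp).\]
Each summand of $C$ is concentrated in a single cohomological degree, so $C$, and hence each $C^{(\varkappa)}$, carries the zero differential; therefore every null-homotopic chain map between them is zero and
\[\Hom_{\K(\Inj R)}(C,\Sigma^nC^{(\varkappa)})\ \cong\ \prod_{j\in\ZZ}\Hom_R\!\bigl(C^j,(C^{j+n})^{(\varkappa)}\bigr),\qquad
C^j=\bigoplus_{\substack{\pp\in\Spec R\\\cd_D(\pp)-\f_\Phi(\pp)=j}}E_R(R/\pp).\]

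Next, for a set $I\subseteq\ZZ$ the containment $\Add(T_\Phi)\subseteq T_\Phi\Perp{I}$ holds if and only if $\Hom_{\D(R)}(T_\Phi,\Sigma^nT_\Phi^{(\varkappa)})=0$ for every cardinal $\varkappa\geq1$ and every $n\in I$: indeed $T_\Phi^{(\varkappa)}\in\Add(T_\Phi)$, and conversely every object of $\Add(T_\Phi)$ is a direct summand of some $T_\Phi^{(\varkappa)}$ while $T_\Phi\Perp{I}$ is closed under direct summands. Since $\Hom_R$ carries the coproduct in $C^j$ into a product, and since a morphism from $E_R(R/\pp)$ into a coproduct of indecomposable injectives lands in a finite subsum, the product formula above combined with the classical fact that $\Hom_R(E_R(R/\pp),E_R(R/\qq))\neq0$ precisely when $\pp\subseteq\qq$ shows that $\Hom_{\K(\Inj R)}(C,\Sigma^nC^{(\varkappa)})\neq0$ (for some, equivalently for every, $\varkappa\geq1$) if and only if there are comparable primes $\pp\subseteq\qq$ in $\Spec R$ with $(\cd_D(\qq)-\f_\Phi(\qq))-(\cd_D(\pp)-\f_\Phi(\pp))=n$. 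Because $D$ is dualizing, $\cd_D$ is a codimension function (\cref{cd-func}), so $\cd_D(\qq)-\cd_D(\pp)=\height(\qq/\pp)$ whenever $\pp\subseteq\qq$; altogether,
\[\Add(T_\Phi)\subseteq T_\Phi\Perp{I}\quad\Longleftrightarrow\quad
\height(\qq/\pp)-\bigl(\f_\Phi(\qq)-\f_\Phi(\pp)\bigr)\notin I\ \text{ for all comparable }\pp\subseteq\qq.\]

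To finish \cref{posit-ext-dc}, take $I=\{i\in\ZZ\mid i>0\}$, so that the condition reads $\f_\Phi(\qq)-\f_\Phi(\pp)\geq\height(\qq/\pp)$ for all $\pp\subseteq\qq$. If $\Phi$ is slice, i.e.\ $\f_\Phi$ is strictly increasing (\cref{slice-strict}), then along any chain $\pp=\pp_0\subsetneq\cdots\subsetneq\pp_m=\qq$ one has $\f_\Phi(\qq)-\f_\Phi(\pp)=\sum_i(\f_\Phi(\pp_{i+1})-\f_\Phi(\pp_i))\geq m$, and passing to the supremum over all such chains yields the inequality; conversely, applying the inequality to a pair $\pp\subsetneq\qq$ gives $\f_\Phi(\qq)-\f_\Phi(\pp)\geq\height(\qq/\pp)\geq1$, so $\f_\Phi$ is strictly increasing and $\Phi$ is slice. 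For \cref{negat-ext-dc}, take $I=\ZZ\setminus\{0\}$, so that the condition reads $\f_\Phi(\qq)-\f_\Phi(\pp)=\height(\qq/\pp)$ for all $\pp\subseteq\qq$; by the characterization of codimension functions recalled in \cref{cd-func} this holds if and only if $\f_\Phi$ is a codimension function, i.e.\ if and only if $\Phi$ is a codimension filtration.

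The step that needs the most care is the transport through the equivalence: one must be sure to represent $T_\Phi$ and its coproducts by K-flat complexes of flat modules so that \cref{Murfet} applies, and the computation in $\K(\Inj R)$ hinges on the elementary but crucial observation that $C$ has vanishing differential, which is exactly what collapses the $\Hom$-group into a product of $\Hom$'s between indecomposable injectives. The remaining ingredients — the reduction to coproducts of $T_\Phi$, the behaviour of $\Hom$ out of and into coproducts of indecomposable injectives, and the bookkeeping with codimension functions and heights — are routine.
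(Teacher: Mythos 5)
Your argument is correct and takes essentially the same route as the paper: both transport the question into $\K(\Inj R)$ via the Iyengar--Krause--Neeman equivalence, replacing $T_\Phi$ by $\bigoplus_\pp\Sigma^{\f_\Phi(\pp)-\cd_D(\pp)}E_R(R/\pp)$ using local duality, and then reduce to the combinatorics of $\Hom_R(E_R(R/\pp),E_R(R/\qq))$. Your one organizational shortcut — observing that the transported complex has zero differential, so the $\K(\Inj R)$-$\Hom$ factors as a graded product — is a clean way to package the degree-$0$ isolation that the paper carries out explicitly by splitting the coproduct over shifts, but it is not a different method.
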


\begin{proof}
Let $D$ be a dualizing complex for $R$, and regard $D$ as a bounded below complex of injective $R$-modules.
For the sake of brevity, we write simply $T$, $\f$ and $\cd$ instead of $T_\Phi$, $\f_\Phi$ and $\cd_{D}$, respectively. 
		
Let $\varkappa$ be a cardinal and $n$ be an integer. By definition, we have 
\[\Hom_{\D(R)}(T,\Sigma^n T^{(\varkappa)}) \cong \prod_{\pp \in \Spec R}\Hom_{\D(R)}(\Sigma^{\f(\pp)}\RGamma_{\pp}R_{\pp}, \Sigma^nT^{(\varkappa)}).\]
		 Fix $\pp \in \Spec R$. 
		 It follows from \cref{Cech-comp} and \cref{Murfet} that 
		 \begin{align*}
		 \Hom_{\D(R)}(\Sigma^{\f(\pp)}\RGamma_{\pp}R_{\pp},\Sigma^nT^{(\varkappa)})
		 &\cong \Hom_{\D(R)}\Big(\Sigma^{\f(\pp)}\check{C}(\pp)_{\pp},\bigoplus_{\qq \in  \Spec R}\Sigma^{n+\f(\qq)}\check{C}(\qq)_\qq^{(\varkappa)}\Big),\\
		 &\cong \Hom_{\D(\Flat R)}\Big(\Sigma^{\f(\pp)}\check{C}(\pp)_{\pp},\bigoplus_{\qq \in  \Spec R}\Sigma^{n+\f(\qq)}\check{C}(\qq)_\qq^{(\varkappa)}\Big).
		 \end{align*}
Furthermore, using \cref{NIK}\cref{Iyengar-Krause}, \cref{dc-loc-coh1}, and \cref{dc-loc-coh2}, we have
		\begin{align*}
		&\Hom_{\D(\Flat R)}\Big(\Sigma^{\f(\pp)}\check{C}(\pp)_{\pp},\bigoplus_{\qq \in \Spec R}\Sigma^{n+\f(\qq)}\check{C}(\qq)_\qq^{(\varkappa)}\Big)\\ 
		&\cong \Hom_{\K(\Inj R)}\Big(\Sigma^{\f(\pp)}D \otimes_R \check{C}(\pp)_{\pp},D \otimes_R \big(\bigoplus_{\qq \in \Spec R}\Sigma^{n+\f(\qq)}\check{C}(\qq)_\qq^{(\varkappa)}\big)\Big)\\
		&\cong  \Hom_{\K(\Inj R)}\Big(\Sigma^{\f(\pp)-\cd(\pp)}E_R(R/\pp),\bigoplus_{\qq \in \Spec R}\Sigma^{n+\f(\qq)-\cd(\qq)}E_R(R/\qq)^{(\varkappa)}\Big)\\
		&\cong  \Hom_{\K(\Inj R)}\Big(E_R(R/\pp),\bigoplus_{\qq \in \Spec R}\Sigma^{n+\f(\qq)-\f(\pp)-\cd(\qq)+\cd(\pp)}E_R(R/\qq)^{(\varkappa)}\Big)\\
		&\cong \Hom_{\K(\Inj R)}\Bigg(E_R(R/\pp),\bigoplus_{i\in \ZZ}\bigg(\bigoplus_{\substack{\qq \in \Spec R\\ n+\f(\qq)-\f(\pp)-\cd(\qq)+\cd(\pp)=i}}\Sigma^{i}E_R(R/\qq)^{(\varkappa)}\bigg)\Bigg)\\
		&\cong \Hom_{\K(\Inj R)}\Bigg(E_R(R/\pp),\bigoplus_{\substack{\qq \in \Spec R\\ n+\f(\qq)-\f(\pp)-\cd(\qq)+\cd(\pp)=0}}E_R(R/\qq)^{(\varkappa)}\Bigg)\\
		&\cong \Hom_{\K(\Inj R)}\Bigg(E_R(R/\pp),\bigoplus_{\substack{\qq \in V(\pp)\\ n+\f(\qq)-\f(\pp)-\cd(\qq)+\cd(\pp)=0}}E_{R}(R/\qq)^{(\varkappa)}\Bigg),
		\end{align*}
where the last isomorphism follows from \cref{comparison}\cref{Gamma-adjoint} below.

As a consequence, there is an isomorphism
\[\Hom_{\D(R)}(T,\Sigma^n T^{(\varkappa)})\cong \prod_{\pp \in \Spec R}\Hom_{\K(\Inj R)}\Bigg(E_R(R/\pp),\bigoplus_{\substack{\qq \in V(\pp)\\ n+\f(\qq)-\f(\pp)=\cd(\qq)-\cd(\pp)}}E_{R}(R/\qq)^{(\varkappa)}\Bigg).\]
Then we see from \cref{comparison}\cref{Gamma-adjoint} that
\begin{align*}
\text{$\Add(T)\subseteq T\Perp{n}$}\Leftrightarrow  \text{there is no $\pp\subseteq \qq$ in $\Spec R$ such that $n +\f(\qq)-\f(\pp)=\cd(\qq)-\cd(\pp)$}.
\end{align*}
Now let us complete the proof.

\cref{posit-ext-dc}: Suppose that $\Phi$ is a slice sp-filtration, or equivalently, $\f$ is strictly increasing; see \cref{slice-strict}. Take any inclusion $\pp\subseteq \qq$ in $\Spec R$.
Then $\f(\qq) - \f(\pp)\geq \cd(\qq) - \cd(\pp)$ by \cref{comparison}\cref{slice-vs-codim} below. Therefore, $n+\f(\qq) - \f(\pp)> \cd(\qq) - \cd(\pp)$ for every $n>0$. This implies $\Add(T) \subseteq T\Perp{>0}$ by the above equivalence.

Conversely, suppose that $\Add(T) \subseteq T\Perp{>0}$. If the order-preserving function $\f:\Spec R\to \ZZ$ is not strictly increasing, then there is a strict inclusion $\pp\subsetneq \qq$ in $\Spec R$ such that $\f(\qq)-\f(\pp)<\cd(\qq)-\cd(\pp)$ by \cref{comparison}\cref{slice-vs-codim}. So, setting $n:=-\f(\qq)+\f(\pp)+\cd(\qq)-\cd(\pp) >0$, we have  the equality $n+\f(\qq) - \f(\pp)= \cd(\qq) - \cd(\pp)$, and then 
$\Add(T) \not\subseteq T\Perp{n}$ by the above equivalence. This is a contradiction. Hence $\f$ must be strictly increasing.

		\cref{negat-ext-dc}: Suppose that $\Phi$ is a codimension filtration, or equivalently, $\f$ is a codimension function. 
Take an inclusion $\pp\subseteq \qq$ in $\Spec R$.
Then $\f({\qq})-\f({\pp})=\cd(\qq)-\cd(\pp)$ by \cref{comparison}\cref{slice-vs-codim}. Thus $\Add(T) \subseteq T\Perp{\neq 0}$ by the above equivalence. 

Conversely, suppose that $\Add(T) \subseteq T\Perp{\neq 0}$. 
By \cref{posit-ext-dc}, 
the function $\f:\Spec R\to \ZZ$ is strictly increasing, so 
$\f(\qq)-\f(\pp)\geq  \cd(\qq) -\cd(\pp)$ for every inclusion $\pp\subsetneq \qq$ in $\Spec R$ by \cref{comparison}\cref{slice-vs-codim}.
If the inequality $\f(\qq)-\f(\pp)> \cd(\qq) -\cd(\pp)$ occurs for some inclusion $\pp\subsetneq \qq$ in $\Spec R$, then 
setting $n:=-\f(\qq)+\f(\pp)+\cd(\qq)-\cd(\pp) <0$, we have the equality $n+\f(\qq) - \f(\pp)= \cd(\qq) - \cd(\pp)$. Hence $\Add(T) \not\subseteq T\Perp{n}$ by the above equivalence, but this can not happen as we supposed that $\Add(T) \subseteq T\Perp{\neq 0}$. Thus we have $\f(\qq)-\f(\pp)= \cd(\qq) -\cd(\pp)$  for every inclusion $\pp\subsetneq \qq$ in $\Spec R$, that is, $\f$ is a codimension function.
	\end{proof}

\begin{remark}\label{comparison}
We here summarize two elementary facts used in the above proof.

\begin{enumerate}[label=(\arabic*), font=\normalfont]
\item \label{Gamma-adjoint}
Let $\pp, \qq\in \Spec R$. Then $\Hom_R(E_R(R/\pp), E_R(R/\qq))\neq 0$ if and only if $\pp\subseteq \qq$. The ``if'' part follows because there is a nonzero $R$-linear map $E_R(R/\pp)\to E_R(R/\qq)$
that extends the composition of the canonical map $R/\pp\twoheadrightarrow R/\qq$ and the given map $R/\qq \ha E_R(R/\qq)$. The ``only if'' part follows because $E_R(R/\qq)$ is $\qq$-local (i.e., the canonical map $E_R(R/\qq)\to E_R(R/\qq)_{\qq}$ is bijective) and $E_R(R/\pp)$ is $\pp$-torsion (i.e., the canonical map $\Gamma_{\pp} E_R(R/\pp)\to E_R(R/\pp)$ is bijective); see \cite[Theorem 18.4(v) and (vi)]{Mat89}.
Indeed, this fact implies that there is no nonzero $R$-linear map from $E_R(R/\pp)$ to any coproduct of $\qq$-local $R$-modules for all $\qq\in \Spec R$ with $\qq\notin V(\pp)$.

\item \label{slice-vs-codim} Let $\pp, \qq\in \Spec R$ with $\pp\subsetneq \qq$. Since $R$ is noetherian, there is a saturated chain \[\pp = \qq_0 \subsetneq \qq_1 \subsetneq \qq_2 \subsetneq \cdots \subsetneq \qq_k = \qq\]
in $\Spec R$. Then the inequality $\f(\qq) - \f(\pp)\geq k$ holds for each strictly increasing function  $\f:\Spec R\to \ZZ$. Moreover, if a codimension function $\cd$ exists, then $\cd(\qq) - \cd(\pp)=k$ by definition. 
In fact, under the existence of a codimension function $\cd$, an order-preserving function $\f:\Spec R\to \ZZ$ is strictly increasing if and only if $\f(\qq) - \f(\pp)\geq \cd(\qq) - \cd(\pp)$ for every inclusion $\pp\subseteq \qq$ in $\Spec R$. Note also that we simply have $\f(\qq) - \f(\pp)= \cd(\qq) - \cd(\pp)$ when $\pp= \qq$.
\end{enumerate}

\end{remark}

We say that an sp-filtration $\Phi$ of $\Spec R$ is \emph{bounded} if there are integers $s$ and $t$ such that $\Phi(s)=\Spec R$ and $\Phi(t)=\emptyset$ (cf. \cite[\S 5.1, Definition]{ATJLS10}), or equivalently, such that $s< \f_\Phi(\pp)\leq t$ for all $\pp\in \Spec R$. Every bounded sp-filtration is non-degenerate by definition.

In the rest of this section, using \cref{P:dualizing}, we establish a short proof for the claims in \cref{intro-slice,intro-tilt}, provided that $R$ admits a classical dualizing complex and $\Phi$ is a bounded slice sp-filtration; see \cref{cdc-shortcut} below.

\begin{lemma}\label{fin-proj-dim}
Assume that $R$ has finite Krull dimension. Let $\Phi$ be a bounded sp-filtration of $\Spec R$.
Then $T_\Phi$ is isomorphic to a bounded complex of projective $R$-modules in $\D(R)$.
\end{lemma}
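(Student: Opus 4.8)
The plan is to exhibit $T_\Phi$ as a \emph{uniformly} bounded coproduct of complexes of flat $R$-modules, and then to trade flatness for finite projective dimension using that $R$ is noetherian of finite Krull dimension. Write $d := \dim R < \infty$, and fix integers $s<t$ with $s < \f_\Phi(\pp) \le t$ for every $\pp \in \Spec R$, which exist since $\Phi$ is bounded.

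First I would analyse a single summand. By \cref{Cech-comp} we have $\RGamma_{\pp} R_{\pp} \cong \check{C}(\pp)_{\pp} = R_{\pp} \otimes_R \check{C}(\pp)$, which is a bounded complex of flat $R$-modules; but the number of degrees it occupies depends on the generating set fixed for $\pp$, so this bound is not uniform in $\pp$. To remedy this, note that over the local ring $R_{\pp}$ the complex $\check{C}(\pp)_{\pp}$ represents $\RGamma_{\pp R_{\pp}} R_{\pp}$, and since $\RGamma_{(-)}$ depends only on the radical of the ideal while $\pp R_{\pp}$ is generated up to radical by a system of parameters of length $\height\pp = \dim R_{\pp}$, the object $\RGamma_{\pp} R_{\pp}$ is isomorphic in $\D(R_{\pp})$ — hence, restricting scalars along $R \to R_{\pp}$, in $\D(R)$ — to a complex $C_{\pp}$ of flat $R_{\pp}$-modules concentrated in degrees $0,\dots,\height\pp$; these are flat over $R$ as well, $R_{\pp}$ being $R$-flat. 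Consequently $T_\Phi \cong \bigoplus_{\pp \in \Spec R}\Sigma^{\f_\Phi(\pp)} C_{\pp}$ in $\D(R)$, where the $\pp$-th summand is a complex of flat $R$-modules placed in degrees between $-\f_\Phi(\pp)$ and $\height\pp - \f_\Phi(\pp)$. Since $s < \f_\Phi(\pp) \le t$ and $0 \le \height\pp \le d$, all of these lie in the single range $[-t,\,d-s-1]$, so $T_\Phi$ is isomorphic in $\D(R)$ to a complex $F$ of flat $R$-modules concentrated in finitely many degrees.

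Finally I would pass from $F$ to a bounded complex of projectives by invoking the well-known fact that over a commutative noetherian ring of finite Krull dimension every flat module has finite projective dimension (indeed, bounded by the dimension). Replacing each term of $F$ by a projective resolution of length $\le d$ and taking the total complex of the resulting double complex — which is bounded, as $F$ is bounded and each resolution has length $\le d$ — yields a bounded complex of projective $R$-modules isomorphic to $F \cong T_\Phi$ in $\D(R)$, as required. The only point needing care is the uniform degree estimate on the summands $\Sigma^{\f_\Phi(\pp)} C_{\pp}$: this is exactly where both hypotheses are used, finiteness of $\Kdim R$ bounding the heights and boundedness of $\Phi$ bounding the shifts, and the reduction to a system of parameters is the small technical device that makes the per-prime estimate independent of the generators chosen for $\pp$.
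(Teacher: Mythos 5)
Your proof is correct, and it follows the same outline as the paper's (represent each $\RGamma_\pp R_\pp$ by a {\v C}ech complex of flat modules, then pass from flats to projectives using finiteness of the Krull dimension), but you handle more carefully a uniformity issue that the paper's proof is terse about. The paper's $\check{C}(\pp)_\pp$ is built on the once-and-for-all fixed generating set of $\pp$ (Notation~\ref{Cech-notation}), so it occupies degrees $[0,t_\pp]$ with $t_\pp$ unbounded as $\pp$ ranges over $\Spec R$; thus, even granting that each $\check{C}(\pp)_\pp$ is quasi-isomorphic to a bounded complex of projectives via Raynaud--Gruson, one must still argue that these replacements can be taken in a \emph{uniform} degree range before taking the coproduct over $\pp$ (using, say, that the cohomological amplitude of $\RGamma_\pp R_\pp$ is at most $\dim R$ together with a finitistic-dimension bound). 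Your reduction to a system of parameters of $R_\pp$ --- which generates $\pp R_\pp$ up to radical and therefore computes the same $\RGamma_{\pp R_\pp}$ --- produces a {\v C}ech representative concentrated in degrees $[0,\height\pp]\subseteq[0,\dim R]$, making the needed uniform bound immediate. This is a clean and genuine improvement on the paper's write-up.

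One small point to tighten in the last step: the ``double complex of projective resolutions'' should be constructed by iterated degreewise projective covers of $F$ (an exact sequence of complexes $\cdots\to P_1\to P_0\to F\to 0$ with each $P_j$ a bounded complex of projectives; the $j$th kernel has terms which are $j$th syzygies of the $F^i$, hence of projective dimension at most $d-j$, so the process stops after $d$ steps). The classical Cartan--Eilenberg resolution assembles its columns from resolutions of the cycles, boundaries and cohomology of $F$, and the $H^i(F)$ here are local cohomology modules of $R_\pp$, which may well have infinite projective dimension --- so that construction does not automatically give bounded columns. Equivalently, one can induct on the number of nonzero terms of $F$ via the stupid filtration, realizing the relevant morphisms as chain maps between bounded complexes of projectives and taking mapping cones; either way the resulting complex of projectives sits in degrees $[-t-\dim R,\,d-s-1]$. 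With this adjustment in phrasing the proof is complete.
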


\begin{proof}
For each $\pp\in \Spec R$, $\RGamma_{\pp} R_{\pp}$ is isomorphic to a complex of flat $R$-modules concentrated in degrees from $0$ to $\height(\pp)$; see \cref{radical-torsion-sop} below. Since $\height(\pp)\leq \dim R<\infty$ and $\Phi$ is bounded, $T_{\Phi}$ is isomorphic to a bounded complex of flat $R$-modules.
Then the lemma follows from \cite[Part II, Corollary 3.2.7]{RG71}, which states that every flat $R$-module has projective dimension at most $\dim R$. 
\end{proof}

\begin{remark}\label{radical-torsion-sop}
Let $(R,\mm)$ be a commutative noetherian local ring, and take a system of parameters $\boldsymbol{x}=x_1,\ldots,x_{d}$ of $R$, where $d=\dim R$; see \cite[\S 14]{Mat89}.
If $I$ is the ideal generated by $\boldsymbol{x}$, it has the same radical as the maximal ideal $\mm$, so that $\Gamma_I=\Gamma_\mm$; see \cite[Proposition 7.3]{ILL+07}.
Thus we have a natural isomorphism
\[\RGamma_\mm X\cong \check{C}(\boldsymbol{x})\otimes_R X\]
for every $X\in \D(R)$ by \cref{Cech-comp}, where $\check{C}(\boldsymbol{x})$ is a complex of flat $R$-modules concentrated in degrees from 0 to $d$.

Let $R$ be an arbitrary commutative noetherian ring, and take $\pp\in \Spec R$.
Regard $\RGamma_{\pp R_\pp} R_\pp\in \D(R_\pp)$ as an object in $\D(R)$ via the (fully faithful) scalar restriction functor $\D(R_\pp)\to \D(R)$. 
Then we have a natural isomorphism
\[\RGamma_{\pp} R_\pp\cong \RGamma_{\pp R_\pp} R_\pp\]
 in $\D(R)$; cf. \cite[Proposition 7.15(3)]{ILL+07}.
Moreover, by the first paragraph, $\RGamma_{\pp R_\pp} R_\pp$ is isomorphic in $\D(R_\pp)$ to a bounded complex of flat $R_\pp$-modules concentrated in degrees from 0 to $\dim R_\pp=\height(\pp)$.
Therefore $\RGamma_{\pp} R_\pp$ is isomorphic in $\D(R)$ to a bounded complex of flat $R$-modules concentrated in degrees from 0 to $\height(\pp)$.
\end{remark}

\begin{example}
$\Spec \ZZ$ has a slice sp-filtration which is not bounded.\footnote{This does not conflict with \cite[Corollary 4.8(3)]{ATJLS10}, which implicitly assumes the weak Cousin condition. See also \cref{w-s-Cousin}.}
For this, choose a function $\f: \Spec \ZZ\to \ZZ$ such that $\f((0))= 0$ and $\f((p))= p$ for every prime $p$. Then $\f$ is strictly increasing, as $\f((0))<\f((p))$ for every prime $p$, so 
the sp-filtration $\Phi_\f$ corresponding to $\f$ is a slice sp-filtration, but this is not bounded.
\end{example}

\begin{lemma}\label{Kb(ProjR)}
Assume that $R$ admits a classical dualizing complex. Let $\Phi$ be a bounded sp-filtration of $\Spec R$.
Then $R\in \thick(\Add(T_\Phi))$.
Therefore, all bounded complexes of projective $R$-modules belong to $\thick(\Add(T_\Phi))$.
\end{lemma}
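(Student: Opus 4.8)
The plan is as follows. Fix a classical dualizing complex $D$ for $R$; by \cref{strong-pw} this forces $\dim R<\infty$, so $T_\Phi$ is isomorphic to a bounded complex of projective modules by \cref{fin-proj-dim}, and we may take $D$ itself to be a bounded complex of injective modules. It suffices to prove $R\in\thick(\Add(T_\Phi))$: granting this, \cref{thick-closure} gives $R^{(\varkappa)}\in\thick(\Add(T_\Phi))$ for all cardinals $\varkappa$, hence $\Proj R\subseteq\thick(\Add(T_\Phi))$, and since the latter is a triangulated subcategory it then contains every bounded complex of projective modules (viewed as a finite iterated extension of its terms via the brutal truncations).

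To prove $R\in\thick(\Add(T_\Phi))$ I would transport the problem to $\K(\Inj R)$, exactly as in the proof of \cref{P:dualizing}: composing the fully faithful embedding $\D(R)\hookrightarrow\D(\Flat R)$ of \cref{Murfet} with the equivalence $D\otimes_R-\colon\D(\Flat R)\isoto\K(\Inj R)$ of \cref{NIK}\cref{Iyengar-Krause} yields a fully faithful triangulated functor $G:=D\otimes_R-\colon\D(R)\hookrightarrow\K(\Inj R)$ which preserves coproducts. Its essential image is then a thick subcategory of $\K(\Inj R)$ closed under coproducts, so $G$ identifies $\thick(\Add(T_\Phi))$ with $\thick_{\K(\Inj R)}(\Add(G(T_\Phi)))$; hence it is enough to check that $G(R)=D$ lies in $\thick_{\K(\Inj R)}(\Add(G(T_\Phi)))$. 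By \cref{TPhi-check} and \cref{dc-loc-coh2},
\[G(T_\Phi)\;\cong\;\bigoplus_{\pp\in\Spec R}\Sigma^{\,\f_\Phi(\pp)-\cd_D(\pp)}E_R(R/\pp)\qquad\text{in }\K(\Inj R).\]
Because $\Phi$ is bounded and $\cd_D$ is bounded (as $D$ has finite injective dimension), the exponent $n_\pp:=\f_\Phi(\pp)-\cd_D(\pp)$ takes only finitely many values; grouping the summands of $G(T_\Phi)$ accordingly shows that, for each integer $k$, the complex $\bigoplus_{n_\pp=k}E_R(R/\pp)$ is a shift of a direct summand of $G(T_\Phi)$, and therefore $\bigoplus_{\pp\in A}E_R(R/\pp)\in\thick_{\K(\Inj R)}(\Add(G(T_\Phi)))$ for every subset $A$ contained in a single fibre $\{\pp\mid n_\pp=k\}$.

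It remains to build $D$ out of such blocks. Using its minimal injective resolution $D\to I$ with $I^n=\bigoplus_{\cd_D(\pp)=n}E_R(R/\pp)$ (\cref{minimal}), which is bounded, I would fix $n$ and partition $\{\pp\mid\cd_D(\pp)=n\}$ by the finitely many values of $\f_\Phi$: this exhibits $I^n$ as a finite direct sum of blocks $\bigoplus_{\pp\in A}E_R(R/\pp)$ with $A\subseteq\{\pp\mid n_\pp=k\}$, whence $I^n\in\thick_{\K(\Inj R)}(\Add(G(T_\Phi)))$. Since $I$ is a bounded complex of such modules, its brutal truncations present it as a finite iterated extension of the $\Sigma^{-n}I^n$, so $D\simeq I$ lies in $\thick_{\K(\Inj R)}(\Add(G(T_\Phi)))$; applying $G^{-1}$ finishes the proof.

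The step I expect to be the main obstacle is precisely handling the (possibly infinite) direct sum $I^n=\bigoplus_{\cd_D(\pp)=n}E_R(R/\pp)$: a subcategory of the form $\thick(\Add(-))$ is closed under coproducts of copies of a \emph{single} object but not under arbitrary infinite coproducts of distinct objects, so one cannot merely observe that each $E_R(R/\pp)$ individually lies in $\thick(\Add(G(T_\Phi)))$. The boundedness hypothesis on $\Phi$ (together with the automatic boundedness of $\cd_D$) is exactly what rescues the argument, by letting each relevant infinite coproduct be cut into finitely many pieces that are honest direct summands of $G(T_\Phi)$.
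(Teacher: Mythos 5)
Your proof is correct and uses the same underlying mechanism as the paper: transport the problem to $\K(\Inj R)$ via the fully faithful coproduct-preserving functor $G = D\otimes_R-$ (whose essential image is a localizing subcategory, so it identifies the two thick closures), then exploit the fact that boundedness of $\Phi$ and classicality of $D$ make the exponent $\f_\Phi - \cd_D$ take only finitely many values. Your diagnosis of the technical point is also on the mark: \cref{thick-closure} gives closure under coproducts of copies of a \emph{single} object in $\thick(\Add(-))$, not under arbitrary small coproducts, so one cannot simply sum up $E_R(R/\pp)\in\thick(\Add(G(T_\Phi)))$ over an infinite index set; the finiteness of the shift-range is precisely what lets each $I^n$ be cut into finitely many honest direct summands of a single $\Sigma^{-k}G(T_\Phi)$.

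The only difference from the paper's proof is an arrangement choice, not a different idea. The paper first replaces $T_\Phi$ by $T_{\Phi_{\cd_D}}$ inside $\thick(\Add(-))$ (using boundedness to regroup $T_\Phi = \bigoplus_{i\in I}\Sigma^i X_i$ with $I$ finite), so that the transferred object becomes $E = \bigoplus_\pp E_R(R/\pp)$ with no shifts at all; then $\Add(E)$ is all of $\Inj R$ and one immediately gets that every bounded complex of injectives, in particular $D$, lies in $\thick(\Add(E))$. You skip this normalization and instead partition each component $I^n$ of the minimal injective resolution of $D$ by the finitely many values of $n_\pp$, showing $I^n$ is a finite sum of blocks that are summands of shifts of $G(T_\Phi)$. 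Both routes use boundedness in the same essential way; the paper's normalization is a bit cleaner because it deals with all bounded complexes of injectives at once, whereas your version only needs to handle $D$ itself but requires the fibre-wise decomposition of each $I^n$.
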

\begin{proof}
We first show that $\thick(\Add(T_\Phi))=\thick(\Add(X))$ for $X:=\bigoplus_{\pp\in \Spec R} \RGamma_\pp R_\pp$.
Let $I$ be the image of the function $\f_\Phi:\Spec R\to \ZZ$.
By assumption, $I$ is a finite set.
Setting $X_i:=\bigoplus_{\f_{\Phi}(\pp)=i}\RGamma_\pp R_\pp$, we can write $T_{\Phi}=\bigoplus_{i\in I} \Sigma^iX_i$.
Then it follows that $T_{\Phi}\in \thick(\Add(X))$ and $X\in \thick(\Add(T_{\Phi}))$.

Next, let $D$ be a classical dualizing complex and assume that $D$ is a bounded complex of injective $R$-modules. Let $\cd$ be the codimension function associated to $D$, that is, $\cd:=\cd_D$; see \cref{dc-loc-coh}. 
The above argument implies that $\thick(\Add(T_\Phi))=\thick(\Add(X))=\thick(\Add(T_{\Phi_\cd}))$, where $T_{\Phi_\cd}=\bigoplus_{\pp\in \Spec R} \Sigma^{\cd(\pp)}\RGamma_\pp R_\pp$.
Hence it suffices to show $R\in \thick(\Add(T_{\Phi_\cd}))$.

In the rest of the proof, using $\cref{TPhi-check}$, we regard $T_{\Phi_\cd}$ as $ \bigoplus_{\pp\in \Spec R} \Sigma^{\cd(\pp)} \check{C}(\pp)_\pp$. 
By \cref{Murfet}, we have the canonical embedding $\D(R)\hookrightarrow \D(\Flat R)$ and this sends $T_{\Phi_\cd}$ to $T_{\Phi_\cd}$.
In addition, the canonical embedding $\D(R)\hookrightarrow \K(\Inj R)$ sends $E:=\bigoplus_{\pp\in \Spec R} E_R(R/\pp)$ to $E$.
By \cref{dc-loc-coh2}, the equivalence $D\otimes_R-: \D(\Flat R)\isoto \K(\Inj R)$ in \cref{NIK}\cref{Iyengar-Krause} sends $T_{\Phi_\cd}$ to $E$. Hence this equivalence induces an equivalence  $\thick(\Add(T_{\Phi_{\cd}}))\isoto \thick(\Add(E))$ of the subcategories of $\D(R)$.
Since  $\thick(\Add(E))$ contains all bounded complexes of injective $R$-modules, we have $D\in \thick(\Add(E))$. 
The quasi-inverse $\Hom_R(D,-): \K(\Inj R)\isoto \D(\Flat R)$ sends $D\in \K(\Inj R)$ to $\Hom_R(D,D) \in \D(\Flat R)$, so it follows that $R\cong \Hom_R(D,D) \in \thick(\Add(T_{\Phi_{\cd}}))$.

The second claim follows from \cref{thick-closure}. 
\end{proof}

\begin{remark}\label{cdc-shortcut}
Assume $R$ admits a classical dualizing complex. Then \cref{P:dualizing} implies, in conjunction with \cref{silt-bound} and \cref{fin-proj-dim,Kb(ProjR)}, that $T_\Phi$ is a silting object in $\D(R)$ for every bounded slice sp-filtration $\Phi$, and that $T_\Phi$ is a tilting object in $\D(R)$ for every codimension filtration $\Phi$. Hence \cref{P:dualizing} gives a quite reasonable approach to know whether $T_\Phi$ can be silting and tilting. Having established the silting property of $T_\Phi$ for a bounded slice sp-filtration $\Phi$, it is also not difficult to show the equality $T\Perp{>0}_\Phi = \cY_\Phi$ by a direct argument. Indeed, since $T_\Phi$ is a bounded silting object, it is of finite type (see \cref{of-finite-type}), and therefore $T\Perp{>0}_\Phi = \cY_{\Phi'}$ for some non-degenerate sp-filtration $\Phi'$ by \cref{silt-fin-sp}. It remains to show that $\Phi = \Phi'$, or equivalently, $\f_\Phi=\f_{\Phi'}$. 
Fix $\qq\in \Spec R$. Since $\RHom_{R}(R_\qq, \kappa(\qq))\cong \kappa(\qq)$, it follows that $\width_{R_\qq} \RHom_{R}(R_\qq, \Sigma^{i}\kappa(\qq))=-i$ for all $i\in \ZZ$.
In addition, $\width_{R_\pp} \RHom_{R}(R_\pp, \kappa(\qq))=\infty$ whenever $\qq\neq \pp\in \Spec R$.
Thus we have $\Sigma^{i}\kappa(\qq)\in T\Perp{>0}_\Phi$ if and only if $-i\geq \f_\Phi(\qq)$ by \cref{TPhi-posit}.
On the other hand, given $\pp\in \Spec R$ with $\pp\subseteq \qq$, we have $\width_{R}(\pp,\Sigma^i\kappa(\qq))=-i$ for all $i\in \ZZ$, and if $\pp\not\subseteq \qq$, then $\width_{R}(\pp,\Sigma^i\kappa(\qq))=\infty$. 
Hence, it follows from \cref{cYPhi-posit} that $\Sigma^{i}\kappa(\qq)\in \cY_{\Phi'}$ if and only if $-i\geq \f_{\Phi'}(\qq)$. Since $T\Perp{>0}_\Phi = \cY_{\Phi'}$, it has been shown that $-i\geq \f_{\Phi}(\qq)$ if and only if $-i\geq \f_{\Phi'}(\qq)$ for all $i\in \ZZ$. In other words, $\f_{\Phi}(\qq) =\f_{\Phi'}(\qq)$. Therefore $\Phi=\Phi'$.
\end{remark}

In the next section, for any slice sp-filtration $\Phi$ of $\Spec R$, we will prove the equality $T\Perp{>0}_\Phi = \cY_{\Phi}$ and that $T_\Phi$ is silting in $\D(R)$ without any additional assumption; see \cref{slice-silting}.
Its proof is independent from \cref{P:dualizing}, while we will need it and \cref{P:dualizing}\cref{negat-ext-dc} both to prove \cref{tilt-dc}. 
We also mention that, by \cref{slice-silting} and \cref{necessity}\cref{silt-necessity}, the assumption on the existence of a dualizing complex in \cref{P:dualizing} can be removed from \cref{posit-ext-dc} and from the ``only if'' part of \cref{negat-ext-dc}.

	
\section{Proof of \cref{intro-slice}}
\label{proof-theorem}	

Let $R$ be a commutative noetherian ring.
To prove \cref{intro-slice} in full generality, we need to treat a possibly larger dimension (as defined below) than $\dim R$. 
Given a subset $W\subseteq \Spec R$, we denote by $\max W$ (resp. $\min W$) the set of maximal (resp. minimal) elements in the poset $W$.

We recursively define specialization subsets $W_\alpha$ of $\Spec R$ for each ordinal $\alpha$ in the following way. 
First, let $W_0$ be the set of maximal ideals of $R$.
If $W_\alpha$ is defined for some ordinal $\alpha$, and $W_\alpha\subsetneq \Spec R$, then 
we put $W_{\alpha+1}:=W_\alpha\cup \max(\Spec R\sm W_\alpha)$. 
If $\beta$ is a limit ordinal and $W_\alpha$ is defined for every $\alpha<\beta$, then we put $W_\beta := \bigcup_{\alpha < \beta} W_\alpha$. 
In this way, we obtain the least ordinal $\delta$ satisfying $W_\delta=\Spec R$. Writing $\Kdim R$ for $\delta$, we call it the \emph{large Krull dimension} of $R$.

By construction, $W_\alpha$ is specialization closed for every $\alpha\leq \Kdim R$, and $W_\alpha\subsetneq W_\beta$ whenever $\alpha<\beta\leq \Kdim R$. Moreover, $\Kdim R=\delta$ is always a successor ordinal because $R$ has only finitely many minimal prime ideals.

If $\dim R$ is infinite, we may interpret it as the first infinite ordinal, in terms of the definition of $\dim R$. 
Under this convention, we always have $\dim R\leq \Kdim R$, and the equality holds if $\dim R$ is finite.
We remark that $\Kdim R$ could be uncountable in general; see \cite[Corollary 8.14, and Theorem 9.8]{GR73}.\footnote{
The large Krull dimension of $R$ is greater than or equal to the ``classical Krull dimension'' of $R$ in the sense of \cite[p.~48]{GR73}. To obtain the latter ordinal, we only need to choose $(\bigcup_{\alpha <\beta} W_{\alpha})\cup \max(\Spec R\sm \bigcup_{\alpha<\beta} W_{\alpha})$ instead of $\bigcup_{\alpha <\beta} W_{\alpha}$
when $W_{\beta}$ is defined for each limit ordinal $\beta$.
Then we see that the difference of the two ordinals is up to one.
We define $\Kdim R$ as above because it is slightly better for our transfinite induction in the proof of \cref{classes-equal}.
}

The main task of this section is to show the equality $T\Perp{>0}_\Phi=\cY_\Phi$ for a slice sp-filtration $\Phi$ of $\Spec R$.
 To this end, for an ordinal $\alpha \leq \Kdim R$,  we define the following subcategories:
	\begin{align*}
	\cY_{(\Phi, \alpha)} &:= \{X \in \D(R) \mid \width_R(\pp, X)>n ~\forall n \in \ZZ ~\forall \pp \in \Phi(n)\cap W_\alpha\},\\
	\cY'_{(\Phi, \alpha)} &:= \{X \in \D(R) \mid \width_{R_\pp}\RHom_R(R_{\pp},X) \geq \f_{\Phi}(\pp) ~\forall \pp \in W_\alpha \},\\
	\cY''_{(\Phi, \alpha)} &:= \{X \in \D(R) \mid \width_R(V,X)>n  ~\forall n \in \ZZ ~\forall V \subseteq \Phi(n)\cap W_\alpha\}.
\end{align*}
Note that $V$ in the last definition is assumed to be specialization closed.
We will show $\cY_{(\Phi, \alpha)}=\cY'_{(\Phi, \alpha)}=\cY''_{(\Phi, \alpha)}$ for every ordinal $\alpha \leq \Kdim R$
by using transfinite induction (\cref{classes-equal}). 
The only trivial inclusion is: 
$\cY''_{(\Phi, \alpha)}\subseteq \cY_{(\Phi, \alpha)}$. Notice also from \cref{cYPhi-posit} that
\begin{equation}\label{cY-f}
\cY_{(\Phi, \alpha)} = \{X \in \D(R) \mid \width_R(\pp, X)\geq\f_\Phi(\pp) ~\forall \pp \in W_\alpha\}.
\end{equation}

We start with the following lemma.
For simplicity, we will write a triangle $X\to Y\to Z\to \Sigma X$ in $\D(R)$ as $X\to Y\to Z\xrightarrow{+}$ throughout the rest of the paper.

\begin{lemma}\label{dim0-triangle}
Let $V_0\subseteq V$ be specialization closed subsets of $\Spec R$. If $\dim (V\sm V_0)\leq 0$, then there is a triangle of the form
	\[\prod_{\pp \in V \setminus V_0} \LLambda^{\pp}\RHom_R(R_{\pp},X) \rightarrow \lambda^{V} X \rightarrow \lambda^{V_0} X \xrightarrow{+}\]
for every $X\in \D(R)$.
\end{lemma}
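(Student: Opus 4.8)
The plan is to produce the desired triangle by comparing the two stable $t$-structures $(\cL_{V^\cp},\cC^V)$ and $(\cL_{V_0^\cp},\cC^{V_0})$ and identifying the ``difference'' colocalization with a product of the local pieces $\LLambda^{\pp}\RHom_R(R_{\pp},-)$. First I would recall from \cref{trivial-iso} that, since $V_0\subseteq V$, there is a natural transformation relating $\lambda^{V}$ and $\lambda^{V_0}$; more precisely $\lambda^{V_0}\lambda^{V}\cong\lambda^{V_0}$, so the unit map $X\to\lambda^V X$ followed by $\lambda^V X\to\lambda^{V_0}\lambda^V X\cong\lambda^{V_0}X$ gives a canonical morphism $\lambda^{V}X\to\lambda^{V_0}X$. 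Completing it to a triangle $Z\to\lambda^{V}X\to\lambda^{V_0}X\xrightarrow{+}$, the task is to identify $Z$. Dually (using \cref{stable-approx} and the fact that $\gamma_{V^\cp},\gamma_{V_0^\cp}$ are the colocalizations onto $\cL_{V^\cp}\subseteq\cL_{V_0^\cp}$), one gets an octahedron on $\gamma_{V_0^\cp}X\to X$ and $\gamma_{V^\cp}X\to X$ showing that $Z\cong\Sigma^{-1}\bigl(\text{cofiber of }\gamma_{V^\cp}X\to\gamma_{V_0^\cp}X\bigr)$, i.e. $Z$ is the colocalization of $X$ at the ``layer'' $V\sm V_0$. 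So the heart of the matter is: when $\dim(V\sm V_0)\le 0$, this layerwise colocalization is $\prod_{\pp\in V\sm V_0}\gamma_{U(\pp)}\lambda^{?}$-type object, and matches $\prod_{\pp}\LLambda^{\pp}\RHom_R(R_\pp,X)$.

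The key computation is the case $V\sm V_0=\{\pp\}$ a single (necessarily maximal-in-$V\sm V_0$, hence closed-in-its-support) prime, and more generally a set of pairwise incomparable primes. Here I would argue as follows. Because $\dim(V\sm V_0)\le0$, each $\pp\in V\sm V_0$ is both minimal and maximal in $V\sm V_0$; in particular $V_0\cup\{\qq : \qq\supsetneq\pp\}$ is again specialization closed and sits between $V_0$ and $V$. This lets one reduce, via the transitivity isomorphisms \cref{trivial-iso} and a straightforward limit/Mittag-Leffler argument (the layer being a disjoint-in-the-poset union of points), to computing the colocalization at one point $\pp$ with the generalizations removed. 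For a single point, combine \cref{typic-(co)loc}, namely $\lambda^{U(\pp)}\cong-\otimes_R R_\pp$ and $\gamma_{U(\pp)}\cong\RHom_R(R_\pp,-)$, with \cref{closed-iso}, namely $\gamma_{V(\pp)}\cong\RGamma_\pp$ and $\lambda^{V(\pp)}\cong\LLambda^\pp$: the colocalization of $\RHom_R(R_\pp,X)$ that kills everything outside $\{\pp\}$ is $\lambda^{V(\pp)}\RHom_R(R_\pp,X)\cong\LLambda^\pp\RHom_R(R_\pp,X)$ (using \cref{RHom-TPhi}, which records exactly $\RHom_R(\RGamma_\pp R_\pp,-)\cong\LLambda^\pp\RHom_R(R_\pp,-)$ and that its cosupport is contained in $\{\pp\}$). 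Assembling over the (set-indexed) points of $V\sm V_0$ turns the direct-sum-of-disjoint-supports decomposition of $\RGamma$ into a product on the $\lambda$-side — exactly the product appearing in the statement — because $\RGamma_{V\sm V_0}R\cong\bigoplus_\pp\RGamma_\pp R_\pp$ locally at each point forces $\lambda^{V\sm V_0}(-)\cong\RHom_R(\bigoplus_\pp\RGamma_\pp R_\pp,-)\cong\prod_\pp\RHom_R(\RGamma_\pp R_\pp,-)$ via \cref{(co)smash-iso}.

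Concretely I would structure the write-up as: (i) build the octahedron on the composable maps $\lambda^V X\to\lambda^V X$ and the unit, getting the triangle $Z\to\lambda^V X\to\lambda^{V_0}X\xrightarrow{+}$ with $Z\cong$ fiber, and simultaneously the triangle $\gamma_{V^\cp}X\to\gamma_{V_0^\cp}X\to W\xrightarrow{+}$ with $Z\cong\Sigma^{-1}W$; (ii) observe $\supp W\subseteq V\sm V_0$ hence by \cref{bi-loc} and \cref{(co)smash-iso}, $W\cong\gamma_{V\sm V_0}(\text{something})$, and dually $Z\in\cC^{V\sm V_0}$; (iii) identify $Z$ on the nose with $\lambda^{V\sm V_0}X'$ for the appropriate $X'$, then use the zero-dimensionality to split $\RGamma$ over points and invoke the single-point computation above to get $\prod_{\pp\in V\sm V_0}\LLambda^\pp\RHom_R(R_\pp,X)$. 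The main obstacle I anticipate is step (ii)–(iii): carefully justifying that the ``layer'' colocalization at a zero-dimensional specialization-closed difference really is the product over the points, i.e. that no higher-dimensional interaction survives and that the natural comparison map $Z\to\prod_\pp\LLambda^\pp\RHom_R(R_\pp,X)$ is an isomorphism — this requires checking it after applying $\RHom_R(\kappa(\qq),-)$ for every $\qq$ (using that cosupport detects vanishing, \cite[Theorem 2.8]{Nee92}) and matching both sides point by point, where the $\dim\le0$ hypothesis is exactly what makes the bookkeeping collapse.
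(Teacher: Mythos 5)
Your plan takes a genuinely different — and substantially more circuitous — route than the paper. The paper's proof is a one‑step dualization: it first establishes the triangle
\[\RGamma_{V_0}R\to \RGamma_{V}R\to \bigoplus_{\pp\in V\sm V_0}\RGamma_{\pp}R_\pp\xrightarrow{+},\]
where the identification of the third term uses \cref{Gamma-Inj} (the quotient $\Gamma_V I/\Gamma_{V_0}I$ of a complex of injectives splits pointwise precisely because $\dim(V\sm V_0)\le 0$), and then applies $\RHom_R(-,X)$, turning the coproduct into a product; the resulting three terms are identified via $\lambda^V\cong\RHom_R(\RGamma_VR,-)$ from \cref{(co)smash-iso} and $\RHom_R(\RGamma_\pp R_\pp,-)\cong\LLambda^\pp\RHom_R(R_\pp,-)$ from \cref{RHom-TPhi}. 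You instead build the triangle on the $\lambda$-side first (via the canonical comparison $\lambda^V X\to\lambda^{V_0}X$ and an octahedron), and then have to \emph{identify} the fiber $Z$ with the product. That identification is exactly where your proposal is thin. You gesture at a ``layer colocalization'' $\lambda^{V\sm V_0}$ and a cosupport check, but $V\sm V_0$ is neither specialization nor generalization closed, so the operator you invoke is not directly available with the paper's conventions, and the promised verification ``after applying $\RHom_R(\kappa(\qq),-)$'' would still require knowing that the natural comparison map from $Z$ to the product exists and has no hidden homology — which is precisely what the splitting of $\RGamma_VR/\RGamma_{V_0}R$ over points delivers for free on the other side. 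Your approach can certainly be made rigorous, but it is strictly more work: you would in effect re-derive the decomposition of \cref{Gamma-Inj} indirectly through cosupport, whereas the paper reads it off from the degreewise description of a minimal complex of injectives and then lets $\RHom_R(-,X)$ do the rest. The paper's route also automatically produces the product (rather than having to argue separately why no derived interaction between the points survives), which is the main bookkeeping worry you flagged yourself.
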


\begin{proof}
We first remark that the canonical morphism $\RGamma_{V_0}R\cong \RGamma_{V_0}\RGamma_{V}R\to \RGamma_{V}R$ yields a triangle
\begin{equation*}\label{relative-lc}
\RGamma_{V_0}R\to \RGamma_{V}R\to \bigoplus_{\pp\in V\sm V_0}\RGamma_{\pp}R_\pp\xrightarrow{+}.
\end{equation*}
See \cref{Gamma-Inj} below.
Next, let $X\in \D(R)$, and apply $\RHom_R(-,X)$ to the above triangle. Then we obtain the following triangle:
\[\prod_{\pp\in V\sm V_0}\RHom_R(\RGamma_{\pp}R_\pp
,X)\to \RHom_R(\RGamma_{V}R,X)\to \RHom_R(\RGamma_{V_0}R,X) \xrightarrow{+}.\]
Hence we have the triangle in the lemma by \cref{(co)smash-iso,RHom-TPhi}.
\end{proof}

\begin{remark}\label{Gamma-Inj}
For the reader's convenience, we here summarize some basic facts on a complex $I$ of injective $R$-modules; part of them are used in the proofs of \cref{dim0-triangle,classes-equal}.

For each $n\in \ZZ$, we may write $I^n=\bigoplus_{\pp\in \Spec R} E_R(R/\pp)^{(\mu^n_\pp)}$ by some cardinals $\mu^n_\pp$ (see \cite[Theorem 18.5]{Mat89}).
Given a multiplicatively closed subset $S\subset R$,
the canonical chain map $I\to I\otimes_R S^{-1}R$ is degreewise split surjective, where each map $I^n\to I^n\otimes_R S^{-1}$ can be identified with the canonical surjection $\bigoplus_{\pp\in \Spec R} E_R(R/\pp)^{(\mu^n_\pp)}\to \bigoplus_{\pp \cap S=\emptyset} E_R(R/\pp)^{(\mu^n_\pp)}$.
Moreover, for a specialization closed subset $V$, the $n$th component of the canonical chain map $\Gamma_VI\to I$ can be identified with the canonical injection $\bigoplus_{\pp\in V} E_R(R/\pp)^{(\mu^n_\pp)}\to \bigoplus_{\pp\in\Spec R} E_R(R/\pp)^{(\mu^n_\pp)}$.

If $V_0\subseteq V$ are specialization closed subsets of $\Spec R$, then the canonical morphism $\Gamma_{V_0}\cong \Gamma_{V_0}\Gamma_V\to \Gamma_V$ of functors induces a degreewise split exact sequence 
\begin{equation}\label{relative-lc1}
0\to \Gamma_{V_0}I\to \Gamma_VI\to \Gamma_VI/\Gamma_{V_0}I\to 0,
\end{equation}
where $(\Gamma_VI/\Gamma_{V_0}I)^n\cong \bigoplus_{\pp\in V\sm V_0} E_R(R/\pp)^{(\mu^n_\pp)}$.
Assume $\dim (V/V_0)\leq 0$. Then we have a decomposition
$\Gamma_VI/\Gamma_{V_0}I\cong \bigoplus_{\pp\in V/V_0}\Gamma_\pp I_\pp$ as complexes; see \cref{comparison}\cref{Gamma-adjoint}. 
If $I$ is K-injective, then \cref{relative-lc1} clearly realizes the triangle 
\begin{equation}\label{relative-lc2}
\RGamma_{V_0}X\to \RGamma_{V}X\to \bigoplus_{\pp\in V\sm V_0}\RGamma_{\pp}X_\pp\xrightarrow{+}
\end{equation}
in $\D(R)$ for every complex $X$ quasi-isomorphic to $I$. 
In particular, letting $I$ be an injective resolution of $R$, we may put $X:=R$ in \cref{relative-lc2}.

In fact, the triangle \cref{relative-lc2} exists for any complex $X\in \D(R)$ by the isomorphism $(\RGamma_{W}R)\LotimesR X\cong \RGamma_{W}X$ for a specialization closed subset $W$ (\cite[Proposition 3.5.5(ii)]{Lip02}). More precisely, \cref{relative-lc1} realizes \cref{relative-lc2} whenever $I$ is a complex of injective modules that is quasi-isomorphic to a given complex $X$, because $\RGamma_{W}I\cong \Gamma_{W}I$ holds without K-injectivity on $I$ (\cite[Lemma 3.5.1]{Lip02}).
\end{remark}

\begin{remark}\label{slice-functor}
Given specialization closed subsets $V_0\subseteq V$ of $\Spec R$, we have the triangle 
\[\gamma_{V_0}\gamma_VX\to \gamma_VX\to \lambda^{V_0^{\cp}}\gamma_VX \xrightarrow{+}\]
by \cref{stable-approx}, where $\gamma_{V_0}\gamma_VX\cong \gamma_{V_0}X$ by \cref{trivial-iso}.
If $\dim (V/V_0)\leq 0$, we see from  \cref{(co)smash-iso} that the above triangle coincides with \cref{relative-lc2}.
It is also possible to deduce the isomorphism $\lambda^{V_0^{\cp}}\gamma_VX\cong \bigoplus_{\pp\in V\sm V_0}\RGamma_{\pp}X_\pp$ from a more formal argument. 

We may regard the composite functor $\lambda^{V_0^{\cp}}\gamma_V$ as an analogue of a ``slice functor'' in \cite[\S 2.2]{GRSO12}; which is used to decompose spectra in that context into computable pieces called slices; see \cite[\S 1]{GRSO12} and also \cite[\S 4]{HHR16}.
In our case, the condition $\dim (V\sm V_0)\leq 0$ yields such a computable situation, and this fact leads to the definition of our slice sp-filtrations.
\end{remark}

For the proof of the next proposition, we make a remark.

\begin{remark}\label{for-Lukas}
Let $X\in \D(R)$.
For an integer $n$ and a specialization closed subset $V\subseteq \Spec R$, $\width_R(V,X)>n$ if and only if $\sup\RHom_R(\RGamma_VR,X)<-n$, that is, $H^i\RHom_R(\RGamma_VR,X)=0$ for every $i\geq -n$. The last condition is equivalent to that 
$\Hom_{\D(R)}(\RGamma_VR,\Sigma^{i-n}X)=0$ for every $i\geq 0$, and we can rewrite this as the single equality:
$$\prod_{i\geq 0}\Hom_{\D(R)}(\Sigma^{n-i} \RGamma_VR,X)=0.$$
Hence it holds that 
\[\width_R(V,X)>n\Leftrightarrow X\in \Big(\bigoplus_{i\geq 0}\Sigma^{n-i} \RGamma_VR\Big)\Perp{0}.\]

Now, let $\Phi$ be a slice sp-filtration of $\Spec R$.
It then follows, for an ordinal $\alpha\leq \Kdim R$, that
\begin{align*}
X\in \cY''_{(\Phi, \alpha)} &\Leftrightarrow 
X\in \Big(\bigoplus_{i\geq 0}\Sigma^{n-i} \RGamma_VR\Big)\Perp{0} ~\forall V \subseteq \Phi(n)\cap W_\alpha ~\forall n \in \ZZ\\
&\Leftrightarrow 
X\in 
\Bigg(\bigoplus_{n \in \ZZ}
\bigg(\bigoplus_{V \subseteq \Phi(n)\cap W_\alpha}\Big(\bigoplus_{i\geq 0}\Sigma^{n-i} \RGamma_VR\Big)\bigg)\Bigg)\Perp{0}.
\end{align*}
This will allow us to use the Lukas lemma via  \cref{cofiltration}.
\end{remark}
	
\begin{proposition}\label{classes-equal}
Let $\Phi$ be a slice sp-filtration of $\Spec R$.
	For each ordinal $\alpha \leq \Kdim(R)$, we have the equalities
		\[\cY_{(\Phi, \alpha)} = \cY'_{(\Phi, \alpha)} = \cY''_{(\Phi, \alpha)}.\]
\end{proposition}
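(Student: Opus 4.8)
The plan is to prove that all three classes coincide by transfinite induction on the ordinal $\alpha\leq\Kdim(R)$, the inductive hypothesis being that $\cY_{(\Phi,\beta)}=\cY'_{(\Phi,\beta)}=\cY''_{(\Phi,\beta)}$ for every $\beta<\alpha$. The inclusion $\cY''_{(\Phi,\alpha)}\subseteq\cY_{(\Phi,\alpha)}$ is free: for $\pp\in\Phi(n)\cap W_\alpha$ the set $V(\pp)$ is a specialization closed subset of $\Phi(n)\cap W_\alpha$, and applying the defining condition of $\cY''_{(\Phi,\alpha)}$ to it gives exactly $\width_R(\pp,X)>n$, which by \cref{cY-f} is the defining condition of $\cY_{(\Phi,\alpha)}$. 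So I would only need to establish $\cY_{(\Phi,\alpha)}\subseteq\cY'_{(\Phi,\alpha)}$ and $\cY'_{(\Phi,\alpha)}\subseteq\cY''_{(\Phi,\alpha)}$ at stage $\alpha$, which then closes the cycle.

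For $\cY_{(\Phi,\alpha)}\subseteq\cY'_{(\Phi,\alpha)}$ the issue is to pass from $\width_R(\pp,-)$ to $\width_{R_\pp}\RHom_R(R_\pp,-)$. Given $X\in\cY_{(\Phi,\alpha)}$ and $\pp\in W_\alpha$, write $V^{\circ}(\pp):=V(\pp)\setminus\{\pp\}$ (specialization closed). Applying $\LLambda^\pp=\lambda^{V(\pp)}$ to the localization triangle $\RHom_R(R_\pp,X)\to X\to\lambda^{U(\pp)^{\cp}}X\xrightarrow{+}$ of \cref{stable-approx,typic-(co)loc}, together with the standard identity $\lambda^{V(\pp)}\lambda^{U(\pp)^{\cp}}\cong\lambda^{V(\pp)\cap U(\pp)^{\cp}}=\lambda^{V^{\circ}(\pp)}$, yields a triangle
\[\LLambda^\pp\RHom_R(R_\pp,X)\to\LLambda^\pp X\to\lambda^{V^{\circ}(\pp)}X\xrightarrow{+}.\]
Here $\sup\LLambda^\pp X\leq-\f_\Phi(\pp)$ by \cref{cY-f} and \cref{RHom-TPhi}. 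If $\pp$ is maximal this finishes the point, since then $\RGamma_\pp R\cong\RGamma_\pp R_\pp$ and the first two terms agree. Otherwise let $\gamma+1$ be the least ordinal with $\pp\in W_{\gamma+1}$ (a successor, as the $W_\beta$ are continuous at limits, and $\gamma+1\leq\alpha$); then $V^{\circ}(\pp)\subseteq W_\gamma$, while the slice condition ($\f_\Phi$ strictly increasing) gives $V^{\circ}(\pp)\subseteq\Phi(\f_\Phi(\pp))$. By the inductive hypothesis $X\in\cY''_{(\Phi,\gamma)}$, so $\width_R(V^{\circ}(\pp),X)>\f_\Phi(\pp)$, i.e. $\sup\lambda^{V^{\circ}(\pp)}X\leq-\f_\Phi(\pp)-1$. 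The triangle then forces $\sup\LLambda^\pp\RHom_R(R_\pp,X)\leq-\f_\Phi(\pp)$, i.e. $\width_{R_\pp}\RHom_R(R_\pp,X)\geq\f_\Phi(\pp)$, as wanted.

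The inclusion $\cY'_{(\Phi,\alpha)}\subseteq\cY''_{(\Phi,\alpha)}$ is the heart of the argument and the place where the slice condition is essential. Fix $X\in\cY'_{(\Phi,\alpha)}$, an integer $n$, and specialization closed $V\subseteq\Phi(n)\cap W_\alpha$; by \cref{for-Lukas} it suffices to show $\lambda^V X\in\cC$, where $\cC:=\{Z\in\C(R)\mid\sup Z<-n\}=\{Z\in\C(R)\mid\Hom_{\D(R)}(\bigoplus_{i\geq0}\Sigma^{n-i}R,Z)=0\}$. Choose an injective resolution $I$ of $R$ and a K-injective resolution $J$ of $X$, so $\lambda^V X$ is represented by the complex $\Hom_R(\Gamma_V I,J)$. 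The chain $(V\cap W_\beta)_{\beta\leq\alpha}$ is a specialization closed filtration of $V$ with zero-dimensional layers, continuous at limits, and by \cref{Gamma-Inj} the induced filtration $(\Gamma_{V\cap W_\beta}I)_\beta$ of $\Gamma_V I$ is degreewise split with layers $\bigoplus_\pp\Gamma_\pp I_\pp$. Applying $\Hom_R(-,J)$ turns this into a cofiltration of $\Hom_R(\Gamma_V I,J)$ with surjective transition maps, whose bottom term and successive kernels are all of the form $\prod_\pp\LLambda^\pp\RHom_R(R_\pp,X)$ ranging over the zero-dimensional layers (using \cref{RHom-TPhi} to identify $\Hom_R(\Gamma_\pp I_\pp,J)$ with $\LLambda^\pp\RHom_R(R_\pp,X)$). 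Every $\pp$ occurring here lies in $\Phi(n)\cap W_\alpha$, so $X\in\cY'_{(\Phi,\alpha)}$ gives $\sup\LLambda^\pp\RHom_R(R_\pp,X)\leq-\f_\Phi(\pp)\leq-n-1$; hence bottom term and kernels all lie in $\cC$, this is a $\cC$-cofiltration of $\lambda^V X$, and \cref{cofiltration} (the Lukas lemma) yields $\lambda^V X\in\cC$. When $\alpha=\gamma+1$ is a successor one may instead use the single triangle of \cref{dim0-triangle} for $V_0:=V\cap W_\gamma\subseteq V$, whose term $\lambda^{V_0}X$ lies in $\cC$ by the inductive hypothesis $X\in\cY''_{(\Phi,\gamma)}$ and whose product term lies in $\cC$ as above; and $\alpha=0$ is immediate since $\RGamma_V R\cong\bigoplus_{\mm\in V}\RGamma_\mm R_\mm$ when $V$ consists of maximal ideals, making $\lambda^V X\cong\prod_\mm\LLambda^\mm\RHom_R(R_\mm,X)$.

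The delicate point — and the reason the apparatus of \cref{Lukas-lemma,cofiltration,for-Lukas} was set up — is the limit step of this last inclusion: one must recognize $\lambda^V X$ as a transfinite inverse limit of the complexes representing $\lambda^{V\cap W_\beta}X$ with split-surjective, continuous transition maps, see that this is a genuine $\cC$-cofiltration, and then feed it to the Lukas lemma. Everything else — the one-step triangles (from \cref{dim0-triangle} and from the localization sequence at $\pp$), and the bookkeeping matching the layers $W_{\beta+1}\setminus W_\beta$ against the values of $\f_\Phi$ — is routine once the strict monotonicity of $\f_\Phi$ supplied by the slice hypothesis is exploited.
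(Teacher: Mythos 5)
Your proof is correct and follows essentially the same route as the paper: transfinite induction over $\Kdim R$, a single triangle (from the zero-dimensional layer $V(\pp)\setminus V^\circ(\pp)$) for the inclusion $\cY_{(\Phi,\alpha)}\subseteq\cY'_{(\Phi,\alpha)}$, and the Lukas lemma applied to a transfinite $\cC$-cofiltration for $\cY'_{(\Phi,\alpha)}\subseteq\cY''_{(\Phi,\alpha)}$. The only (harmless, and arguably cleaner) reorganization is that you run a single uniform inductive step instead of splitting into base, successor and limit cases, and you cofiltrate $\Hom_R(\Gamma_V I,J)$ directly with $\cC=\{Z\mid\sup Z<-n\}$ rather than first showing $\LLambda^\pp\RHom_R(R_\pp,X)\in\cY''_{(\Phi,\beta)}$ and then cofiltrating a representative of $X$ itself as the paper does.
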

\begin{proof}
For brevity, we simply write $\cY_\alpha$, $\cY'_\alpha$, and $\cY''_\alpha$ instead of $\cY_{(\Phi, \alpha)}$, $\cY'_{(\Phi, \alpha)}$, and $\cY''_{(\Phi, \alpha)}$ respectively.
We proceed by transfinite induction on $\alpha\leq \Kdim R$. 
If $\alpha=0$, then $W_0$ consists of the maximal ideals of $R$. 
Hence, for every $V\subseteq W_0$, we have
\[\prod_{\mm \in V} \LLambda^\mm \cong\prod_{\pp \in V} \LLambda^{\mm}\RHom_R(R_{\mm},-) \cong \lambda^{V}\] 
by \cref{RHom-TPhi} and \cref{dim0-triangle} applied to $V$ and $V_0:=\emptyset$.
Thus the equality $\cY_0=\cY''_0$ holds. The equality $\cY_0=\cY'_0$ follows from the first isomorphism above and \cref{cY-f}.

We next establish the successor step. Assume that $\cY_\alpha = \cY'_\alpha = \cY''_\alpha$ for some ordinal $\alpha<\Kdim R$. 
For the desired equality $\cY_{\alpha+1} = \cY'_{\alpha+1} = \cY''_{\alpha+1}$, it suffices to show that $\cY_{\alpha + 1} \subseteq \cY'_{\alpha + 1} \subseteq \cY''_{\alpha + 1}$ because   
 $\cY''_{\alpha+1}\subseteq \cY_{\alpha+1}$ by definition.
Let $X\in \cY_{\alpha +1}$. Note that $\cY_{\alpha +1} \subseteq \cY_\alpha=\cY'_\alpha=\cY''_\alpha$ by definition and assumption.
Moreover, since $X\in \cY_{\alpha +1}\subseteq \cY'_{\alpha}$,
\begin{align*}
X\in \cY'_{\alpha +1}
&\Leftrightarrow \width_{R_\pp}\RHom_{R}(R_\pp,X)\geq \f_\Phi(\pp)~\forall\pp \in W_{\alpha+1}=W_{\alpha}\cup (W_{\alpha+1}\sm W_{\alpha})\\
&\Leftrightarrow \width_{R_\pp}\RHom_{R}(R_\pp,X)\geq \f_\Phi(\pp)~\forall\pp \in W_{\alpha+1}\sm W_{\alpha}.
\end{align*}
We have $\dim (W_{\alpha+1}\sm W_{\alpha})=0$ by construction, so $\dim (V \sm W_{\alpha})\leq 0$ for any integer $n$ and any specialization closed subset $V \subseteq \Phi(n)\cap W_{\alpha+1}$. Thus \cref{dim0-triangle} yields the triangle
	\begin{align}\label{ind-successor}
	\prod_{\pp \in V \setminus W_\alpha} \LLambda^{\pp}\RHom_R(R_{\pp},X) \rightarrow \lambda^{V}X \rightarrow \lambda^{V\cap W_\alpha}X \xrightarrow{+}.
	\end{align}

Now, take $\qq \in W_{\alpha + 1} \setminus W_\alpha$ and put $n: = \f_\Phi(\qq) -1$, so that $V(\qq) \cap W_\alpha = V(\qq) \setminus \{\qq\}$ and $\qq \in \Phi(n)$. 
Then $\qq\in \Phi(n) \cap W_{\alpha+1}$, and hence $\width_{R}(\qq,X)>n$ since $X\in \cY_{\alpha+1}$. Furthermore, $\Phi$ is a slice sp-filtration, so $\dim (\Phi(n)\sm \Phi(n+1))\leq 0$, and this along with the inclusion $V(\qq)\subseteq \Phi(n)$ implies  that $V(\qq) \setminus \{\qq\}\subseteq \Phi(n+1)$. In particular, we have $V(\qq) \cap W_\alpha=V(\qq) \setminus \{\qq\}\subseteq \Phi(n+1)\cap W_\alpha$, so $\width_R(V(\qq) \cap W_\alpha,X)>n+1$ since $X \in \cY_{\alpha+1}\subseteq \cY''_\alpha$.
We have observed that 
\[\sup \lambda^{V(\qq)}X<-n ~\text{ and }~ \sup \lambda^{V(\qq)\cap W_\alpha}X<-n-1.\]
By \cref{ind-successor} applied to $V(\qq)\subseteq \Phi(n) \cap W_{\alpha+1}$, we obtain the triangle 
\begin{equation*}
\LLambda^{\qq}\RHom_R(R_{\qq},X) \rightarrow \lambda^{V(\qq)}X \rightarrow \lambda^{V(\qq)\cap W_\alpha}X \xrightarrow{+}.
\end{equation*}
It then follows that $\sup \LLambda^{\qq}\RHom_R(R_{\qq},X)<-n$, that is, $\width_{R_\qq}\RHom_R(R_\qq, X)\geq \f(\qq)$.
We have shown that $X\in \cY'_{\alpha+1}$ (see the previous paragraph).
Thus $\cY_{\alpha+1} \subseteq \cY'_{\alpha+1}$.

To prove the remaining inclusion $\cY'_{\alpha+1}\subseteq \cY''_{\alpha+1}$, take $X \in \cY'_{\alpha + 1}$. 
Let $V \subseteq \Phi(n) \cap W_{\alpha+1}$ be a specialization closed subset for an integer $n\in \ZZ$. Since $X\in \cY'_{\alpha+1}\subseteq \cY'_{\alpha}=\cY''_{\alpha}$ and  $V\cap W_\alpha\subseteq \Phi(n) \cap W_{\alpha}$, it follows that $\width_R(V\cap W_\alpha,X)>n$.
On the other hand, if $\pp \in V \setminus W_{\alpha}$, then $\pp\in \Phi(n)\cap W_{\alpha +1}$ as $V \subseteq \Phi(n) \cap W_{\alpha+1}$.
Thus we have $\width_{R_\pp}\RHom_R(R_\pp,X)\geq \f_{\Phi}(\pp)>n$ by definition and assumption.
We have observed that 
\[\sup \lambda^{V\cap W_\alpha}X<-n ~\text{ and }~ \sup \LLambda^\pp\RHom_R(R_\pp,X)<-n.\]
Thus we have $\sup \lambda^VX< -n$ by \cref{ind-successor}, that is, $\width_{R}(V,X)>n$, and hence $X\in \cY''_{\alpha+1}$. Therefore, the inclusion $\cY'_{\alpha+1}\subseteq \cY''_{\alpha+1}$ follows.

Finally, let us establish the limit step of the induction.\footnote{If $\dim R$ is finite, then $\dim R=\Kdim R$, so the limit step can be omitted. \label{footnote-Kdim}}
Let $\beta< \Kdim R$ be a limit ordinal, and suppose that $\cY_\alpha = \cY'_\alpha = \cY''_\alpha$ for all $\alpha < \beta$.
Since $W_{\beta}=\bigcup_{\alpha < \beta} W_\alpha$, it follows that $\cY_{\beta}=\bigcap_{\alpha<\beta} \cY_{\alpha}$ and $\cY'_{\beta}=\bigcap_{\alpha<\beta} \cY'_{\alpha}$. Thus we have $\cY_{\beta} = \cY'_{\beta}$.
For the remaining equality $\cY'_{\beta} =\cY''_{\beta}$, it suffices to show that $\cY'_{\beta} \subseteq \cY''_{\beta}$ because $\cY''_{\beta} \subseteq \cY_{\beta}$ by definition. Take $X\in \cY'_{\beta}$. We first observe that $X(\pp):=\LLambda^\pp \RHom_R(R_\pp, X)\in \cY''_\beta$ for an arbitrary $\pp\in\Spec R$.
Let  $n$ be an integer and $V \subseteq \Phi(n)\cap W_{\beta}$ be a specialization closed subset. 
Recall that $\cosupp_RX(\pp)\subseteq \{\pp\}$ (\cref{RHom-TPhi}). If $\pp\notin V$, \cref{bi-loc} implies that $\lambda^VX(\pp)=0$, that is, $\width_R(V,X(\pp))=\infty>n$.
If $\pp\in V$, then $\lambda^VX(\pp)\cong X(\pp)$, so
\[\width_R(V,X(\pp))=-\sup X(\pp)= \width_{R_\pp} \RHom_R(R_\pp,X)\geq \f_\Phi(\pp)>n.\]
Thus $X(\pp)\in  \cY''_\beta$.

We next show that there is a complex $Y$ such that $X\cong Y$ in $\D(R)$ and $Y$ has a $\cC$-cofiltration, where $\cC$ denotes the subcategory of $\C(R)$ formed by all objects in $\cY''_\beta$.
If this is done, then \cref{cofiltration} implies that $X\cong Y\in \cY''_\beta$ (see \cref{for-Lukas}), so $\cY'_{\beta} \subseteq \cY''_{\beta}$. Hence the proof will be completed.

To this end, take K-injective resolutions $R\to I$ and $X\to J$ such that $I$ and $J$ consist of injective $R$-modules.
Then $X\cong \RHom_R(R, X)\cong \Hom_R(I, J)$ in $\D(R)$.
We put 
\[Y:=\Hom_R(I, J) ~\text{ and }~ Y_\alpha:=\Hom_R(\Gamma_{W_\alpha}I, J)\]
for each $\alpha\leq \Kdim R$, where 
$\RHom_R(\RGamma_{W_\alpha}R, X)\cong Y_\alpha$ in $\D(R)$.
Consider a direct system formed by the canonical chain maps $\iota_{\alpha,\nu}:\Gamma_{W_\alpha}I\to \Gamma_{W_{\nu}}I$ for all $\alpha\leq \nu\leq \delta=\Kdim R$, where every $\iota_{\alpha,\nu}$ is degreewise split injective (see \cref{Gamma-Inj}) and $\varinjlim_{\alpha\leq \delta} \Gamma_{W_\alpha}I \cong \Gamma_{W_\delta}I\cong I$ in $\C(R)$.
Then we obtain the inverse system $(Y_\alpha,\pi_{\alpha, \nu} \mid \alpha\leq \nu\leq \delta)$ formed by $\pi_{\alpha,\nu}:= \Hom_R(\iota_{\alpha,\nu},J)$ for all $\alpha\leq \nu \leq \delta$, where every $\pi_{\alpha,\nu}$ is degreewise split surjective and 
\[Y=\Hom_R(I, J)\cong \varprojlim_{\alpha\leq \delta}  \Hom_R(\Gamma_{W_\alpha}I,J)=\varprojlim_{\alpha\leq \delta} Y_\alpha\] in $\C(R)$.
If $\nu$ is a limit ordinal, then $W_\nu=\bigcup_{\alpha<\nu}{W_\alpha}$ by definition, so $\Gamma_{W_\nu}I\cong \varinjlim_{\alpha<\nu}\Gamma_{W_\alpha}I$, and hence $Y_\nu\cong \varprojlim_{\alpha<\nu}Y_\alpha$.

We remark that $\cY''_{\beta}$ is closed under products since $\lambda^{V}\cong \RHom_R(\RGamma_VR,-)$ for any specialization closed subset $V\subseteq \Spec R$. 
Moreover, 
\[\prod_{\mm\in W_0} X(\mm) \cong \prod_{\mm\in W_0} \RHom_R(\RGamma_\mm R,X) \cong \prod_{\mm\in W_0}\Hom_{R}(\Gamma_{\mm}I,J)\cong \Hom_{R}(\Gamma_{W_0}I,J)= Y_0\] in $\D(R)$ by \cref{RHom-TPhi,Gamma-Inj}.
Since we know that $X(\pp)\in \cY''_\beta$ for all $\pp\in \Spec R$, it follows that $Y_0 \cong \prod_{\mm\in W_0} X(\mm) \in \cY''_\beta$, that is, $Y_0\in \cC$.
To observe that the kernel of $\pi_{\alpha,\alpha+1}$ belongs to $\cC$ for each $\alpha<\delta$,  
consider the degreewise split exact sequence 
\[0\to \Gamma_{W_\alpha}I\xrightarrow{\iota_{\alpha,\alpha+1}} \Gamma_{W_{\alpha+1}}I\to \Gamma_{W_{\alpha+1}}I/\Gamma_{W_\alpha}I\to 0.\] 
Applying $\Hom_R(-,J)$ to it, we obtain the exact sequence 
\begin{equation}\label{Lukas-kernel}
0\to \Hom_R(\Gamma_{W_{\alpha+1}}I/\Gamma_{W_\alpha}I,J) \to Y_{\alpha+1} \xrightarrow{\pi_{\alpha,\alpha+1}} Y_{\alpha}\to 0.
\end{equation}
Sending this to $\D(R)$, we obtain the triangle in \cref{dim0-triangle} applied to $W_{\alpha}\subseteq W_{\alpha+1}$; see the proof of the lemma.
Furthermore, putting $Y(\pp):=\Hom_R(\Gamma_\pp I_\pp,J)$,
we have 
\[\Hom_R(\Gamma_{W_\alpha+1}I/\Gamma_{W_\alpha}I,J)\cong \prod_{\pp\in W_{\alpha+1}\sm W_\alpha}\Hom_R(\Gamma_\pp I_\pp,J)=\prod_{\pp\in W_{\alpha+1}\sm W_\alpha}Y(\pp)\]
in $\C(R)$ (see \cref{Gamma-Inj}). 
As a consequence, there are isomorphisms
\[\prod_{\pp\in W_{\alpha+1}\sm W_\alpha}Y(\pp)\cong \prod_{\pp\in W_{\alpha+1}\sm W_\alpha}\LLambda^{\pp} \RHom_R(R_\pp,X)=\prod_{\pp\in W_{\alpha+1}\sm W_\alpha}X(\pp)\]
in $\D(R)$. Since the last one belongs to $\cY''_\beta$, so does the first, and hence $\prod_{\pp\in W_{\alpha+1}\sm W_\alpha}Y(\pp)\cong \Ker\pi_{\alpha,\alpha+1}\in \cC$.
Therefore, we have shown that the inverse system $(Y_\alpha,\pi_{\alpha, \nu} \mid \alpha\leq \nu \leq \delta)$ is a $\cC$-cofiltration, as desired.
\end{proof}

We now prove one of the main results of this paper (\cref{intro-slice}).

\begin{theorem}\label{slice-silting}
	Let $R$ be a commutative noetherian ring and $\Phi$ a slice sp-filtration of $\Spec R$. Then $T_\Phi$ is a silting object in $\D(R)$ such that $(T\Perp{>0}_\Phi,T\Perp{\leq 0}_\Phi)=(\cY_{\Phi},\cY\Perp{0}_{\Phi})$.
\end{theorem}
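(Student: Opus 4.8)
The plan is to verify the three conditions of \cref{silting-cond} for $T_\Phi$ and then read off the t-structure. Condition~(2), that $T\Perp{\ZZ}_\Phi=0$, is exactly \cref{generate}. The key preliminary step is the identity $T\Perp{>0}_\Phi=\cY_\Phi$: the computation \cref{TPhi-posit} says precisely that $T\Perp{>0}_\Phi=\cY'_{(\Phi,\alpha)}$ for $\alpha:=\Kdim R$ (so that $W_\alpha=\Spec R$), while \cref{classes-equal} applied with this same $\alpha$ gives $\cY'_{(\Phi,\alpha)}=\cY_{(\Phi,\alpha)}=\cY_\Phi$. Granting this, condition~(3) is immediate: by \cref{comp-gen-co-t} we have $\cY_\Phi=\cS_\Phi^*\Perp{0}$ with $\cS_\Phi^*$ a set of compact objects, so $T\Perp{>0}_\Phi$ is closed under coproducts. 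Moreover, once $T_\Phi$ is known to be silting, the induced t-structure is $(T\Perp{>0}_\Phi,T\Perp{\leq 0}_\Phi)=(T\Perp{>0}_\Phi,(T\Perp{>0}_\Phi)\Perp{0})=(\cY_\Phi,\cY\Perp{0}_\Phi)$, which is the asserted conclusion; so everything reduces to condition~(1).

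To check condition~(1), namely $T_\Phi\in T\Perp{>0}_\Phi=\cY_\Phi$, I would use that $\cY_\Phi$ is closed under coproducts to reduce to showing that each summand $\Sigma^{\f_\Phi(\pp)}\RGamma_{\pp}R_{\pp}$ lies in $\cY_\Phi$. By \cref{cYPhi-posit} together with the elementary shift identity $\width_R(\qq,\Sigma^k X)=\width_R(\qq,X)+k$, this amounts to proving
\[\width_R(\qq,\RGamma_{\pp}R_{\pp})\geq \f_\Phi(\qq)-\f_\Phi(\pp)\qquad\text{for every }\qq\in\Spec R.\]
To evaluate the left-hand side I would combine the definition $\width_R(\qq,-)=-\sup\LLambda^{\qq}(-)$ with \cref{FI2}, giving $\sup\LLambda^{\qq}X=\sup\big((R/\qq)\LotimesR X\big)$, and then with the base-change isomorphism $(\RGamma_{\pp}R)\LotimesR Y\cong\RGamma_{\pp}Y$ (\cite[Proposition~3.5.5(ii)]{Lip02}), which yields
\[(R/\qq)\LotimesR\RGamma_{\pp}R_{\pp}\;\cong\;\RGamma_{\pp}(R/\qq)\LotimesR R_{\pp}\;\cong\;\RGamma_{\pp R_{\pp}}\big((R/\qq)_{\pp}\big).\]
If $\qq\not\subseteq\pp$, then $(R/\qq)_{\pp}=0$, so this complex vanishes and $\width_R(\qq,\RGamma_{\pp}R_{\pp})=\infty$. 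If $\qq\subseteq\pp$, then $(R/\qq)_{\pp}$ is a noetherian local ring with $\dim(R/\qq)_{\pp}=\height(\pp/\qq)$, so Grothendieck's vanishing theorem (equivalently, computing $\RGamma$ by the \v{C}ech complex on a system of parameters) gives $\sup\RGamma_{\pp R_{\pp}}\big((R/\qq)_{\pp}\big)\leq\height(\pp/\qq)$, whence $\width_R(\qq,\RGamma_{\pp}R_{\pp})\geq-\height(\pp/\qq)$.

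Finally, since $\Phi$ is a slice sp-filtration, $\f_\Phi$ is strictly increasing by \cref{slice-strict}; choosing a saturated chain $\qq=\rr_0\subsetneq\cdots\subsetneq\rr_m=\pp$ of maximal length $m=\height(\pp/\qq)$ (which exists as $R$ is noetherian) and telescoping as in \cref{comparison}\cref{slice-vs-codim} gives $\f_\Phi(\pp)-\f_\Phi(\qq)\geq m=\height(\pp/\qq)$, i.e.\ $-\height(\pp/\qq)\geq\f_\Phi(\qq)-\f_\Phi(\pp)$. Combined with the previous paragraph this proves the displayed inequality, so each summand of $T_\Phi$ lies in $\cY_\Phi$, hence $T_\Phi\in\cY_\Phi=T\Perp{>0}_\Phi$; condition~(1) holds, and \cref{silting-cond} shows $T_\Phi$ is silting with induced t-structure $(\cY_\Phi,\cY\Perp{0}_\Phi)$.

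The genuinely hard input is \cref{classes-equal}, whose proof (transfinite induction along the large Krull dimension, via the Lukas lemma and the formal properties of $\RGamma_V$, $\LLambda^I$ and $\RHom_R(R_\pp,-)$) carries all the weight; once it is available, the present argument is essentially bookkeeping. The only subtlety worth flagging is that $T\Perp{>0}_\Phi$ is \emph{not} visibly closed under coproducts from its defining orthogonality — the functors $\RHom_R(\RGamma_V R,-)$ measuring $W$-width involve non-compact objects — which is exactly why both the coproduct-closure in condition~(3) and the verification of condition~(1) are routed through the compact generation $\cY_\Phi=\cS_\Phi^*\Perp{0}$ and the reduction to individual summands, rather than computed termwise against the coproduct $T_\Phi$.
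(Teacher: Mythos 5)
Your proof is correct and follows essentially the same route as the paper's: reduce to the three criteria of \cref{silting-cond}, import \cref{generate} for condition~(2), deduce the identity $T\Perp{>0}_\Phi=\cY_\Phi$ from \cref{TPhi-posit} plus \cref{classes-equal} (which then gives coproduct-closure via \cref{comp-gen-co-t}), and verify condition~(1) summand-by-summand through the same local cohomology computation $\width_R(\qq,\Sigma^{\f_\Phi(\pp)}\RGamma_\pp R_\pp)=-\sup\Sigma^{\f_\Phi(\pp)}\RGamma_{\pp R_\pp}\big((R/\qq)_\pp\big)$ controlled by $\dim(R/\qq)_\pp=\height(\pp/\qq)\le\f_\Phi(\pp)-\f_\Phi(\qq)$. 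The only cosmetic differences are that you phrase the bound via the shift identity and the one-sided Grothendieck vanishing $\sup\RGamma_{\pp R_\pp}(M)\le\dim M$ (rather than the equality the paper quotes), and you make the two cases $\qq\subseteq\pp$ and $\qq\not\subseteq\pp$ explicit — both entirely in line with the paper's argument.
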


\begin{proof}
Since $W_\delta=\Spec R$, we have $\cY_{(\Phi,\delta)}=\cY_{\Phi}$ for $\delta=\Kdim R$, and $T\Perp{>0}_\Phi=\cY'_{(\Phi,\delta)}$ by \cref{TPhi-posit}.
Thus $T\Perp{>0}_\Phi=\cY_{\Phi}$ by \cref{classes-equal}. In particular, $T\Perp{>0}_\Phi$ is closed under coproducts by  \cref{comp-gen-co-t}.
By \cref{generate}, we have $T_\Phi\Perp{\ZZ} = 0$. Hence it remains to show that $T_\Phi \in T\Perp{>0}_\Phi$; see \cref{silting-cond}.

Since $T\Perp{>0}_\Phi=\cY_{\Phi}$ is closed under coproducts, it suffices to show that $\Sigma^{\f_\Phi(\pp)}\RGamma_{\pp}R_{\pp} \in \cY_{\Phi}$ for each $\pp \in \Spec R$.
Thus, if we observe that 
\begin{equation*}
\width_R(\qq, \Sigma^{\f_\Phi(\pp)}\RGamma_{\pp}R_{\pp})>n
\end{equation*}
for any $n\in \ZZ$ and $\qq\in \Phi(n)$,
then the proof will be completed.

It follows from \cref{FI2} and \cref{Cech-comp} that
\begin{equation}\label{width-compute}
\width_R(\qq, \Sigma^{\f_\Phi(\pp)}\RGamma_{\pp}R_{\pp})=-\sup 
(R/\qq)\LotimesR \Sigma^{\f_\Phi(\pp)}\RGamma_{\pp}R_{\pp}=-\sup 
\Sigma^{\f_\Phi(\pp)}\RGamma_{\pp}(R_\pp/\qq_\pp).
\end{equation}
Moreover, we have $\sup\RGamma_{\pp}(R_\pp/\qq_\pp) =\sup\RGamma_{\pp R_\pp}(R_\pp/\qq_\pp)= \dim R_{\pp}/\qq_{\pp}$; see \cite[Theorem 3.5.7 and p.~413]{BH98}. Since $\Phi$ is a slice sp-filtration, we also have $\dim(R_{\pp}/\qq_{\pp}) = \height(\pp/\qq) \leq\f_\Phi(\pp)-\f_\Phi(\qq)$; see \cref{cd-func} and \cref{comparison}\cref{slice-vs-codim}. 
Then
\[\sup 
\Sigma^{\f_\Phi(\pp)}\RGamma_{\pp}(R_\pp/\qq_\pp)=\sup \RGamma_{\pp}(R_\pp/\qq_\pp)-\f_\Phi(\pp)\leq -\f_\Phi(\qq).\]
Therefore
\[\width_R(\qq, \Sigma^{\f_\Phi(\pp)}\RGamma_{\pp}R_{\pp})\geq \f_\Phi(\qq)>n\]
by \cref{width-compute}, as desired.
\end{proof}

The next theorem is also part of our main results (\cref{intro-tilt}\cref{intro-tilt-dc}).
Recall that a dualizing complex in our sense may have infinite injective dimension (see \cref{dc}). Moreover, if $D$ is a dualizing complex, then $\cd_D$ is a codimension function (see \cref{cd-func}), so $\Phi_{\cd_D}$ is a codimension filtration of $\Spec R$.

\begin{theorem}\label{tilt-dc}
	Let $R$ be commutative noetherian ring with a dualizing complex and $\Phi$ a non-degenerate sp-filtration of $\Spec R$. Then $T_{\Phi}$ is a tilting object in $\D(R)$ if and only if $\Phi$ is a codimension filtration of $\Spec R$.
\end{theorem}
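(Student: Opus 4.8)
The plan is to read the statement off the two substantial results already established, \cref{slice-silting} and \cref{P:dualizing}, after unwinding the definition of ``tilting''. Recall from \cref{subsec-der-eq} that a silting object $T$ is tilting precisely when $\Add(T)\subseteq T\Perp{<0}$; since the silting condition already gives $\Add(T)\subseteq T\Perp{>0}$, this is equivalent to the single orthogonality condition $\Add(T)\subseteq T\Perp{\neq 0}$. Thus the whole content of the theorem is to match $\Add(T_\Phi)\subseteq T_\Phi\Perp{\neq 0}$ with $\Phi$ being a codimension filtration, which is exactly \cref{P:dualizing}\cref{negat-ext-dc}; the only extra point is that one must know $T_\Phi$ is a \emph{silting} object before speaking of it being tilting.

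For the ``if'' direction, assume $\Phi$ is a codimension filtration, that is, $\Phi=\Phi_{\cd}$ for a codimension function $\cd$. Then $\f_{\Phi_{\cd}}=\cd$ takes values in $\ZZ$ and is strictly increasing, so $\Phi$ is a non-degenerate slice sp-filtration by \cref{order-preserv} and \cref{slice-strict} (see also \cref{hight-codim}). Hence \cref{slice-silting} applies and $T_\Phi$ is silting, while \cref{P:dualizing}\cref{negat-ext-dc} gives $\Add(T_\Phi)\subseteq T_\Phi\Perp{\neq 0}\subseteq T_\Phi\Perp{<0}$. Therefore $T_\Phi$ is tilting.

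For the ``only if'' direction, assume $T_\Phi$ is tilting. Being tilting it is in particular silting, so $T_\Phi\in T_\Phi\Perp{>0}$ and, the aisle $T_\Phi\Perp{>0}$ being closed under coproducts and direct summands, $\Add(T_\Phi)\subseteq T_\Phi\Perp{>0}$; moreover the tilting condition gives $\Add(T_\Phi)\subseteq T_\Phi\Perp{<0}$. Combining, $\Add(T_\Phi)\subseteq T_\Phi\Perp{\neq 0}$, and then \cref{P:dualizing}\cref{negat-ext-dc} (valid for an arbitrary non-degenerate sp-filtration) forces $\Phi$ to be a codimension filtration.

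Since all the genuine work sits inside \cref{slice-silting} and \cref{P:dualizing}, no real obstacle remains; the only things needing a moment of care are the conventions around $T\Perp{<0}$, $T\Perp{>0}$ and $T\Perp{\neq 0}$, and the observation that a codimension filtration is automatically non-degenerate and slice, which is what licenses invoking \cref{slice-silting} in the ``if'' direction.
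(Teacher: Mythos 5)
Your proof is correct and follows essentially the same route as the paper's (one-line) proof, which simply cites \cref{silting-cond}, \cref{slice-silting}, and \cref{P:dualizing}\cref{negat-ext-dc}; you have spelled out exactly the bookkeeping the authors left implicit, including the needed observation that a codimension filtration is automatically a non-degenerate slice sp-filtration.
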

\begin{proof}
	This follows from \cref{silting-cond}, \cref{slice-silting}, and \cref{P:dualizing}\cref{negat-ext-dc}.
\end{proof}

As mentioned in \cref{cdc-shortcut}, \cref{slice-silting,tilt-dc} can be proved in a simpler way if $\Phi$ is bounded and $R$ admits a classical dualizing complex.

\begin{remark}\label{CM-remark}
If $(R,\mm)$ is a $d$-dimensional Cohen--Macaulay local ring, then $\RGamma_\mm R\cong \Sigma^{-d} H^{d}_\mm R$ by \cite[Theorem 3.5.7]{BH98}, where $H^{d}_\mm R:= H^d \RGamma_\mm R$ is called the \emph{$d$-th local cohomology module of $R$ with respect to $\mm$}. 

Suppose that $R$ is a (possibly non-local) Cohen--Macaulay ring. Then  $R_\pp$ is a Cohen--Macaulay local ring  for each $\pp\in \Spec R$ by definition. 
Thus $\RGamma_\pp R_{\pp}\cong \Sigma^{-\height(\pp)}H_\pp^{\height(\pp)} R_\pp$ for every $\pp\in \Spec R$,
where $H_\pp^{\height(\pp)} R_\pp:=H^{\height(\pp)}\RGamma_\pp R_{\pp}= H^{\dim R_\pp}_{\pp R_\pp} R_{\pp}$.
Therefore, for every non-degenerate sp-filtration $\Phi$ of $\Spec R$, we have
\[T_\Phi =\bigoplus_{\pp \in \Spec R} \Sigma^{\f_\Phi(\pp)}\RGamma_\pp R_{\pp} \cong \bigoplus_{\pp \in \Spec R}\Sigma^{\f_\Phi(\pp)-\height(\pp)}H_\pp^{\height(\pp)} R_\pp\]
in $\D(R)$.
\end{remark}

The following is a corollary of \cref{slice-silting}. Notice that it does not assume the existence of a dualizing complex.

\begin{corollary}\label{tilt-CM}
	Let $R$ be a Cohen--Macaulay ring and $\Phi$ a codimension filtration. Then $T_{\Phi}$ is a tilting object in $\D(R)$.

	In particular, $T_{\Phi_{\height}} \cong  \bigoplus_{\pp \in \Spec R}H_\pp^{\height(\pp)} R_\pp$ is a tilting object.
\end{corollary}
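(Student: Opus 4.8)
The first move is cheap: a codimension filtration is a slice sp-filtration by \cref{hight-codim}, so \cref{slice-silting} already tells us that $T_\Phi$ is a silting object in $\D(R)$ inducing $(\cY_{\Phi},\cY\Perp{0}_{\Phi})$. What is left is the tilting condition $\Add(T_\Phi)\subseteq T_\Phi\Perp{<0}$, and since every object of $\Add(T_\Phi)$ is a direct summand of some coproduct $T_\Phi^{(\varkappa)}$, it suffices to verify $\Hom_{\D(R)}(T_\Phi,\Sigma^n T_\Phi^{(\varkappa)})=0$ for all cardinals $\varkappa$ and all integers $n<0$. The idea is that over a Cohen--Macaulay ring $T_\Phi$ is rigid enough that this becomes essentially automatic, with \emph{no} dualizing complex required; all the genuinely hard work has been absorbed into \cref{slice-silting}.

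The key reduction uses that a noetherian ring has only finitely many minimal primes, hence only finitely many connected components, so $R\cong\prod_{j=1}^{m}R_j$ with each $\Spec R_j$ connected and each $R_j$ Cohen--Macaulay; correspondingly $\D(R)\cong\prod_{j=1}^{m}\D(R_j)$. Both $\f_\Phi$ and the height function are codimension functions on $\Spec R$, so on the connected set $\Spec R_j$ they differ by a constant $c_j\in\ZZ$ (see \cref{cd-func}). Applying \cref{CM-remark}, which gives $\RGamma_\pp R_\pp\cong\Sigma^{-\height(\pp)}H_\pp^{\height(\pp)}R_\pp$, I would rewrite
\[T_\Phi\cong\bigoplus_{j=1}^{m}\Sigma^{c_j}M_j,\qquad M_j:=\bigoplus_{\pp\in\Spec R_j}H_\pp^{\height(\pp)}R_\pp,\]
where $M_j$ is an honest $R_j$-module concentrated in cohomological degree $0$, viewed inside $\D(R)$ through the projection $R\to R_j$. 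Now $M_j$ is precisely $T_{\Phi_{\height}}$ formed over $R_j$, so by \cref{slice-silting} applied over $R_j$ it is a silting object of $\D(R_j)$; being a module it trivially satisfies $\Hom_{\D(R_j)}(M_j,\Sigma^n M_j^{(\varkappa)})\cong\Ext^{n}_{R_j}(M_j,M_j^{(\varkappa)})=0$ for all $n<0$, hence it is in fact tilting in $\D(R_j)$. A shift of a tilting object is tilting, and a finite product of tilting objects is tilting, so $T_\Phi\cong\bigoplus_j\Sigma^{c_j}M_j$ is tilting in $\D(R)$; concretely, for $i\neq j$ the groups $\Hom_{\D(R)}(\Sigma^{c_i}M_i,\Sigma^{n}\Sigma^{c_j}M_j^{(\varkappa)})$ vanish because $M_i$ and $M_j$ live on different factors of $R$, while the diagonal terms are the negative $\Ext$-groups just computed. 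Finally, taking $\Phi=\Phi_{\height}$ forces every $c_j=0$, which yields $T_{\Phi_{\height}}\cong\bigoplus_{\pp\in\Spec R}H_\pp^{\height(\pp)}R_\pp$ and proves the last assertion.

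I do not anticipate a serious obstacle: the only point needing care is that over a \emph{disconnected} spectrum a codimension function need not coincide with the height function but only differs from it by a locally constant amount, which is exactly why one passes to the finitely many connected components. Everything else is formal once \cref{slice-silting} and \cref{CM-remark} are in hand.
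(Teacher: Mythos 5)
Your proposal is correct and follows essentially the same route as the paper's own proof: decompose $\Spec R$ into connected components, observe that $\height-\f_\Phi$ is constant on each component so that each summand becomes a shifted module (a stalk complex) via \cref{CM-remark}, and conclude from the vanishing of negative self-extensions of modules together with orthogonality across the factors of the ring decomposition. The only cosmetic difference is that you phrase this as assembling a tilting object of $\D(R)\cong\prod_j\D(R_j)$ from (shifted) tilting objects in each $\D(R_j)$, whereas the paper checks $\Add(T_\Phi)\subseteq T_\Phi\Perp{<0}$ directly — these are the same calculation.
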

\begin{proof}
First, $T:= T_{\Phi}$ is a silting object by \cref{slice-silting}. Thus it remains to show that $\Add(T) \in T\Perp{<0}$.
Write $\Spec R$ as a disjoint union $S_1 \sqcup \ldots \sqcup S_n$ of connected components. Letting 
$T_i:=\bigoplus_{\pp \in S_i}\Sigma^{\f_\Phi(\pp)}\RGamma_\pp R_\pp$, we can write $T=\bigoplus_{1\leq i\leq n} T_i$.
By assumption, $\f_{\Phi}$ is a codimension function on $\Spec R$, while the height function $\height$ is a codimension function on $\Spec R$ as well since $R$ is Cohen--Macaulay; see \cref{cd-func}.
Therefore, $\height - \f_\Phi$ is constant on each component $S_i$. Then we see from \cref{CM-remark} that each $T_i$ is isomorphic to a stalk complex, i.e., a complex concentrated in some degree. Since modules do not admit non-trivial negative self-extensions, it follows that $\Add(T_i) \subseteq T_i\Perp{<0}$. 
Moreover, we also have $\Add(T_i)\subseteq T_j\Perp{\ZZ}$ whenever $i \neq j$; see \cref{non-connected} below.
Hence we have $\Add(T) \in T\Perp{<0}$, as desired.
\end{proof}

\begin{remark}\label{non-connected}
For any commutative noetherian ring $R$, $\Spec R$ decomposes into a finite disjoint union $\Spec R_1\sqcup \Spec R_2 \sqcup\cdots \sqcup  \Spec R_n$ of connected components $\Spec R_i$ and we have an isomorphism $R \isoto \prod_{1\leq i\leq n} R_i$ of rings (cf. \cite[Chapter II, Exercise 2.19]{Har77}). This isomorphism yields an equivalence from $\Mod R$ is to the product category $\prod_{1\leq i\leq n}\Mod R_i$, and so $\D(R)$ is equivalent to the the product category $\prod_{1\leq i\leq n} \D(R_i)$ as well.
In particular, if  $X_i\in \D(R_i)$ and $X_j\in \D(R_j)$ and $i \neq j$, regarding $X_i$ and $X_j$ as objects in $\D(R)$, we have $\Add(X_i)\subseteq X_j\Perp{\ZZ}$. See also \cref{bi-loc}.

If $\cd$ and $\cd'$ are different codimension functions on $\Spec R$, then the two t-structures $(\cY_{\Phi_{\cd}}, \cY\Perp{0}_{\Phi_{\cd}})$ and $(\cY_{\Phi_{\cd'}}, \cY\Perp{0}_{\Phi_{\cd'}})$ in $\D(R)$ are different. However, if we restrict them to each derived category $\D(R_i)$, then this difference can be omitted up to shift, since $\cd-\cd'$ is constant on the connected component $\Spec R_i$. In other words, we have $\cY_{\Phi_{\cd}}\cap \D(R_i)= \Sigma^m(\cY_{\Phi_{\cd'}}\cap \D(R_i))=(\Sigma^m\cY_{\Phi_{\cd'}})\cap \D(R_i)$ for some integer $m$.
An important consequence is that 
the heart $\cY_{\Phi_{\cd}}\cap \Sigma (\cY\Perp{0}_{\Phi_{\cd}})$ of the t-structure $(\cY_{\Phi_{\cd}}, \cY\Perp{0}_{\Phi_{\cd}})$ is equivalent to the heart $\cY_{\Phi_{\cd'}}\cap \Sigma (\cY\Perp{0}_{\Phi_{\cd'}})$ of the other t-structure $(\cY_{\Phi_{\cd'}}, \cY\Perp{0}_{\Phi_{\cd'}})$. Therefore, if $R$ admits a codimension function $\cd$, then $(\cY_{\Phi_{\cd}}, \cY\Perp{0}_{\Phi_{\cd}})$ is the t-structure (induced by $T_{\Phi_\cd}$) such that its heart does not depend on the choice of $\cd$, up to equivalence. 

By the same token, although the silting objects $T_{\Phi_{\cd}}$ and $T_{\Phi_{\cd'}}$ are not equivalent in general, $\RHom_R(T_{\Phi_{\cd}},T_{\Phi_{\cd}}^{(\varkappa)})$ and $\RHom_R(T_{\Phi_{\cd'}},T_{\Phi_{\cd'}}^{(\varkappa)})$ are isomorphic as objects of $\D(R)$ for any cardinal $\varkappa$. Consequently, the silting object $T_{\Phi_\cd}$ is tilting if and only if $T_{\Phi_{\cd'}}$ is tilting. If this is the case, then $\End_{\D(R)}(T_{\Phi_{\cd}}) \cong \End_{\D(R)}(T_{\Phi_{\cd'}})$ as rings.

These facts can be also stated focusing on the cosilting side.
\end{remark}

As essentially observed in  the proof of \cref{tilt-CM}, once a module $M$ (over any ring $R$) is silting in the derived category $\D(R)$, then $M$ is tilting in $\D(R)$ (cf. \cite[Corollary 3.7]{Wei13}). We should remark that, although tilting modules in the sense of \cite{AHC01} are tilting objects in the derived category $\D(R)$ (\cref{tilt-module}), silting modules in the sense of \cite{AHMV16a} need not be silting objects in $\D(R)$; a \emph{silting module} is defined to be the cokernel of a morphism $P^{-1}\to P^0$ of projective modules such that the 2-term complex $(0\to P^{-1}\to P^0\to )$ is a silting object in $\D(R)$ (\cite[Theorem 4.9.]{AHMV16a}).

We will later observe that every 2-term silting complex over a commutative noetherian ring is a tilting object in the derived category (\cref{2-term}).

\begin{remark}\label{good-tilting}
Let $R$ be a Cohen--Macaulay ring of finite Krull dimension.
Then the $R$-module $T:=\bigoplus_{\pp \in \Spec R}H_\pp^{\height(\pp)} R_\pp$ is a bounded tilting object in $\D(R)$ by \cref{fin-proj-dim,tilt-CM}, and therefore it is a tilting module (\cref{tilt-module}).
In fact, the tilting module $T$ is \emph{good} in the sense of \cite[Definition 1.2]{BMT11}, that is, there is an exact sequence of the form $0 \to R \to T_0 \to T_1 \to \cdots \to T_n \to 0$ where $T_i$ is a direct summand of a finite coproduct of copies of $T$ for all $i=0,\ldots,n$.
Indeed, the \emph{Cousin complex} $C(R)$ for $R$ (in the sense of Sharp \cite{Sha69}) gives such an exact sequence (see \cite[Theorems 3.5 and 4.7]{Sha69} and \cite[Theorem]{Sha77b}), where $C(R)^{-1}=R$ and $C(R)^i\cong \bigoplus_{\height(\pp)=i}H_\pp^{\height(\pp)} R_\pp$ for $0\leq i\leq d$. See also \cite[Proposition 2.8.2(5)]{GN02}.

As a consequence, \cite[Theorem 2.2]{BMT11} is available for $T = \bigoplus_{\pp \in \Spec R}H_\pp^{\height(\pp)} R_\pp$. In particular, letting $S:=\End_{R}(T)$, the functor $\RHom_{R}(T,-):\D(R) \to \D(S)$ is fully faithful (\cite[Theorem 2.2(2)]{BMT11}), and the essential image of this functor is $\cE\Perp{0}\subseteq \D(S)$, where $\cE$ is the kernel of the functor $-\otimes^{\mathbf{L}}_ST:\D(S)\to \D(R)$ (\cite[Corollary 2.4]{BMT11}).

If $\dim R=1$, the projective dimension of $T$ is equal to one by \cref{fin-proj-dim} and \cite[Part II, Theorem 3.2.6]{RG71} (see also \cite[Theorem 4.9]{NT20}), so the result \cite[Theorem 1.1]{CX12} is available, producing a \emph{recollement} linking $\D(S)$ with $\D(R)$ and the derived module category of a suitable ring. In addition, we can compute $S=\End_{R}(T)$ explicitly as the lower triangular matrix ring in \cref{1-dim-example}.
More generally, when $R$ has any finite Krull dimension, 
there is a recollement linking $\D(S)$ with $\D(R)$ and the derived category of dg modules over a suitable dg algebra; see \cite[Proposition 5.2]{BP13} and the four paragraphs preceding it.

It is also possible to describe derived equivalences induced by good tilting modules by using techniques of \emph{contramodules} over a topological ring (\cite{PS21}).
The endomorphism ring $S=\End_{R}(T)$ admits a naturally induced linear topology called the \emph{finite topology}, and the category $\Ctra S$ of right contramodules over the topological ring $S$
is an abelian category; see \cite[\S 6.2 and \S 7.1]{PS21}.
There is a natural forgetful functor $\Ctra S \to \Mod S$, and $T$ being good implies that the functor induces a fully faithful functor $\D(\Ctra S)\to \D(S)$ between the unbounded derived categories. 
Moreover, the functor $\RHom_R(T,-):\D(R)\to \D(S)$ naturally restricts to a triangulated equivalence $\D(R) \isoto \D(\Ctra S)$, and this further restricts to an equivalence $\cH_T\isoto \Ctra S$, where $\cH_T$ is the heart of the tilting t-structure induced by $T$.
See \cite[Proposition 8.2]{PS21}; also cf. \cite[Corollary 6.3, Theorem 7.1, and Proposition 7.3]{PS21}. Note that the equivalence $\cH_T\isoto \Ctra S$ induces the triangulated equivalence $\D (\cH_T)\isoto\D(\Ctra S)$.

We now have the triangulated equivalence $\RHom_R(T,-):\D(R) \isoto \D(\Ctra S)$, while we also have the realization functor $\D(\cH_T) \to \D(R)$ (due to Virili) that is a triangulated equivalence as $T$ is a tilting module (\cref{subsec-real-func,subsec-der-eq}).
At least when restricted to the respective bounded derived categories, we can choose a realization functor $\D^\bd (\cH_T) \isoto \D^\bd (R)$ so that it is compatible with the other equivalence $\RHom_R(T,-):\D^\bd (R) \isoto \D^\bd (\Ctra S)$, under the identification $\D^\bd (\cH_T)\cong\D^\bd(\Ctra S)$.
This fact is explained in the recent preprint \cite{Hrb22}; see the proofs of \cite[Proposition 4.6 and Theorem 4.7]{Hrb22} in particular.
 
In \cite{Hrb22}, the above-mentioned results of \cite{PS21} have been generalized from good tilting modules to a large class of tilting complexes, which includes our tilting complexes available by \cref{cdc-shortcut,finite-theorem}; see \cite[\S\S 3--4]{Hrb22}.
\end{remark}

We close this section by attempting to explain how \cref{Tphi} is natural in terms of a more general approach that can yield silting objects under suitable assumptions. 
The reader may freely skip this part.
We also remark that the explanation below does not mean that our results in this section can be deduced from other existing results.

Let $R$ be any ring and assume that there is a decreasing chain of smashing subcategories of $\D(R)$ indexed by the integers: $\cdots \supseteq \cL_{n-1} \supseteq \cL_n \supseteq \cL_{n+1} \supseteq \cdots$.
Denote by $\lambda_n$ the localization functor $\D(R)\to \D(R)$ with $\Ker \lambda_n=\cL_{n}$ for each $n \in \ZZ$. Since application of $\lambda_n $ makes the canonical morphism $\Id_{\D(R)}\to \lambda_{n+1}$ invertible, we can identify $S_n:=\lambda_nR$ with $\lambda_n S_{n+1}$. Then we have the canonical morphism $S_{n+1} \to \lambda_n S_{n+1}= S_n$. Embed this morphism to a triangle $S_{n+1} \to S_n \to T_n \xrightarrow{+}$ in $\D(R)$, and 
put $T := \bigoplus_{n \in \ZZ}\Sigma^n T_n$. 
If, for each $n$, the morphism $R \to S_n$ can be regarded as a homological ring epimorphism, $S_n$ is of projective dimension at most one over $R$, and the chain of smashing subcategories is non-degenerate in the sense that $\bigcap_{n \in \ZZ}\cL_n = 0$ and $(\bigcup_{n \in \ZZ}\cL_n)\Perp{\ZZ} = 0$, then $T$ is a silting object in $\D(R)$; see \cite[Theorem 3.5]{AHS11} and \cite[Proposition 5.15]{AHH21}.

Now let $R$ be a commutative noetherian ring. 
Given a silting t-structure of finite type, we have its corresponding non-degenerate sp-filtration $\Phi$ by \cref{silt-fin-sp}.
Then we obtain a decreasing chain of smashing subcategories by putting $\cL_n:=\cL_{\Phi(n)}$, and this chain is non-degenerate in the above sense.
Assume $\Phi$ is a slice sp-filtration, and consider the object $T:=\bigoplus_{n \in \ZZ}\Sigma^n T_n$ of $\D(R)$ constructed as above. We can observe that $T_n \cong \Sigma \gamma_{\Phi(n)}\lambda_{\Phi(n+1)}R=\Sigma \gamma_{\Phi(n)}\lambda^{\Phi(n+1)^\cp}R\cong \Sigma \lambda^{\Phi(n+1)^\cp}\gamma_{\Phi(n)}R$; see \cref{stable-approx} and \cite[Proposition 6.1(3)]{BIK08}. 
Since $\dim(\Phi(n) \setminus \Phi(n+1)) \leq 0$ for each $n \in \ZZ$, it follows from \cref{slice-functor} that $\Sigma \lambda^{\Phi(n+1)^\cp}\gamma_{\Phi(n)}R\cong \bigoplus_{\pp \in U_n} \Sigma \RGamma_{\pp}R_\pp$, where  $U_n := \Phi(n) \setminus \Phi(n+1)$.
Therefore, we have $T_n \cong \bigoplus_{\pp \in U_n}\Sigma\RGamma_{\pp}R_\pp$. Since $\Phi$ is non-degenerate, each $\pp \in \Spec R$ belongs to $U_n$ if and only if $n+1 = \f_\Phi(\pp)$, and thus we arrive to the expression $T = \bigoplus_{\pp \in \Spec R}\Sigma^{\f_\Phi(\pp)}\RGamma_\pp R_\pp$ of \cref{Tphi}.

\section{Cosilting objects corresponding to slice sp-filtrations}\label{cosilting-section}

Let $R$ be a commutative noetherian ring and $\Phi$ be a slice sp-filtration of $\Spec R$. In this section, we give an explicit description of a cosilting object inducing the t-structure $(\cU_\Phi,\cV_\Phi)$.
Although \cref{(co)silt-dual} can translate \cref{slice-silting} into the cosilting case, this is not the best formulation for us. We more carefully dualize each summand of $T_\Phi = \bigoplus_{\pp \in \Spec R} \Sigma^{\f_\Phi(\pp)}\RGamma_{\pp} R_{\pp}
$. We start with an observation on local duality and Matlis duality.

We first treat a (commutative noetherian) local ring $R$ with maximal ideal $\mm$ and residue field $k$, and assume that $R$ admits a dualizing complex $D_{R}$ (see \cref{notation-dc}). Set $d:=\dim R$.
There is a canonical isomorphism 
\begin{align}\label{local-dual-1}
\RGamma_{\mm}R\cong \Hom_{R}(D_R,\Sigma^{-d}E_R({k}))
\end{align}
in $\D(R)$ by \emph{local duality} (\cite[Chapter V, Theorem 6.2]{Har66}).
Moreover, there is a canonical isomorphism $\Hom_R(E_R({k}),E_R({k}))\cong \widehat{R}:=\Lambda^\mm R$ by \emph{Matlis duality} (see \cite[Theorem A.31]{ILL+07}).
Since $D_{R}\in \D^\bd_\fg(R)$ and $D_{R}\otimes_R\widehat{R}\cong D_{\widehat{R}}$ in $\D(R)$ by \cref{dc-facts}\cref{complete-dc}, application of $\RHom_R(-,\Sigma^{-d}E_{R}(k))$ to  \cref{local-dual-1} yields a natural isomorphism \begin{align}\label{local-dual-2}
\RHom_R(\RGamma_{\mm}R, \Sigma^{-d}E_{R}(k)) \cong D_{\widehat{R}},
\end{align}
in $\D(R)$ by a standard argument (see \cite[(1.4)]{Fox79}). In fact, \cref{local-dual-2} holds without the existence of $D_R$, as explained below.

Let $(R,\mm,k)$ be a local ring that may not admit a dualizing complex. The $\mm$-adic completion $\widehat{R}$ of $R$ is a d-dimensional local ring with maximal ideal $\widehat{\mm}=\mm \widehat{R}$ and residue field $\widehat{R}/\widehat{\mm}\cong k$ (\cite[p.63]{Mat89}).
By \cite[Corollary 3.4.4]{Lip02}, there is a canonical isomorphism
\begin{equation}\label{base-change}
(\RGamma_{\mm}R)\LotimesR\widehat{R}\isoto \RGamma_{\widehat{\mm}}\widehat{R}
\end{equation}
in $\D(\widehat{R})$. Since $E_{R}(k)\cong E_{\widehat{R}}(k)=\Hom_{\widehat{R}}(\widehat{R}, E_{\widehat{R}}(k))$ in $\Mod R$ and $\Mod \widehat{R}$ (\cite[Theorem A.31]{ILL+07}), \cref{base-change} and tensor-hom adjunction yields the natural isomorphisms
\begin{equation}\label{tensor-hom}
\RHom_{R}(\RGamma_{\mm}R, E_{R}(k)) \cong \RHom_{\widehat{R}}((\RGamma_{\mm}R)\LotimesR\widehat{R}, E_{\widehat{R}}(k))\cong \RHom_{\widehat{R}}(\RGamma_{\widehat{\mm}}\widehat{R},E_{\widehat{R}}(k)),
\end{equation}
in $\D(R)$. Recall that $\widehat{R}$ admits a dualizing complex $D_{\widehat{R}}$, so the isomorphism \cref{local-dual-2} holds for $\widehat{R}$. Combining this fact with \cref{tensor-hom}, we see that \cref{local-dual-2} holds for $R$.

Let us next give a version of \cref{local-dual-1} that also works without a dualizing complex.
Regard $\RGamma_{\widehat{\mm}}\widehat{R}$ as an object in $\D(R)$ by the scalar restriction functor $\D(\widehat{R})\to \D(R)$. We have canonical isomorphisms
\begin{equation}\label{comp-loc}
\RGamma_{\mm}R\isoto \RGamma_{\mm}\widehat{R}\cong (\RGamma_{\mm}R)\LotimesR\widehat{R}\isoto \RGamma_{\widehat{\mm}}\widehat{R}
\end{equation}
in $\D(R)$, where the first is an isomorphism induced by the canonical morphism $R\to \widehat{R}$ (see \cite[Corollary 3.4.5.]{Lip02} or \cite[Proposition 3.5.4(d)]{BH98}), the second is given by \cref{Cech-comp}, and the third is \cref{base-change} sent to $\D(R)$. 
On the other hand, since $\widehat{R}$ admits a dualizing complex $D_{\widehat{R}}$, the isomorphism \cref{local-dual-1} holds for $\widehat{R}$, and we can send it to $\D(R)$. Combining this fact with \cref{comp-loc}, we obtain a canonical isomorphism
\begin{align}\label{local-dual-3}
\RGamma_{\mm}R\cong \Hom_{\widehat{R}}(D_{\widehat{R}},\Sigma^{-d}E_{\widehat{R}}({k}))
\end{align}
in $\D(R)$. 

We finally treat an arbitrary commutative noetherian ring $R$. Let $\pp\in \Spec R$ and write $\widehat{R_{\pp}}:=\Lambda^{\pp} R_{\pp}$, which is a complete local ring with maximal ideal $\pp \widehat{R_{\pp}}$ and residue field $\widehat{R_{\pp}}/\pp \widehat{R_{\pp}}\cong\kappa(\pp)$, where $\height(\pp)=\dim R_\pp=\dim \widehat{R_{\pp}}$.
Recall that $E_{R}(R/\pp)$ coincides with $E_{R_\pp}(\kappa(\pp))$ (\cite[Theorem 18.4(vi)]{Mat89}). 
Regard $\RGamma_{\pp R_\pp}R_\pp$ and 
$\RHom_{R_{\pp}}(\RGamma_{\pp R_\pp} R_{\pp}, E_{R_\pp}(\kappa(\pp)))$ as objects in $\D(R)$ by 
the (fully faithful) scalar restriction functor $\D(R_{\pp})\to \D(R)$. They naturally coincide with $\RGamma_{\pp} R_\pp$ and $\RHom_{R}(\RGamma_{\pp} R_{\pp}, E_{R}(R/\pp))$ respectively.
Therefore, we can deduce from \cref{local-dual-3,local-dual-2} that there are canonical isomorphisms in $\D(R)$:
\begin{equation}\label{local-dual-4}
\RHom_R(\RGamma_{\pp}{R_\pp}, E_{R}(R/\pp)) \cong \Sigma^{\height(\pp)}D_{\widehat{R_{\pp}}},
\end{equation}
\begin{equation}
\RGamma_{\pp}R_{\pp} \cong \Hom_{\widehat{R_{\pp}}}(\Sigma^{\height(\pp)}D_{\widehat{R_{\pp}}},E_{R}(R/\pp)).\label{local-dual-5}
\end{equation}
These isomorphisms lead us to the following definition.

\begin{definition}\label{CPhi}
Let $\Phi$ be a non-degenerate sp-filtration of $\Spec R$.
We define 
\[C_\Phi := \prod_{\pp \in \Spec R}\Sigma^{\height(\pp)-\f_\Phi(\pp)}D_{\widehat{R_\pp}},\]
which is an object in $\D(R)$.
\end{definition}

\begin{theorem}\label{slice-cosilt}
	Let $R$ be a commutative noetherian ring and $\Phi$ a slice sp-filtration of $\Spec R$. Then $C_\Phi$ is a cosilting object in $\D(R)$, which corresponds to the silting object $T_\Phi$ via \cref{(co)silt-dual}, so that $(\Perp{\leq 0}C_\Phi, \Perp{>0}C_\Phi)=(\cU_\Phi, \cV_\Phi)$.
	\end{theorem}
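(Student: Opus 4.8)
The plan is to reduce \cref{slice-cosilt} to \cref{slice-silting} via the established duality \cref{(co)silt-dual}, the only real work being to identify the explicit object $C_\Phi$ with $T_\Phi^+$ (up to equivalence of cosilting objects, i.e.\ up to $\Prod$). First I would fix an injective cogenerator of $\Mod R$; the natural choice is $E := \bigoplus_{\pp \in \Spec R} E_R(R/\pp)$, or equivalently work pointwise. Applying $(-)^+ = \RHom_R(-, E)$ to $T_\Phi = \bigoplus_{\pp} \Sigma^{\f_\Phi(\pp)} \RGamma_\pp R_\pp$ turns the coproduct into a product, giving
\[
T_\Phi^+ \cong \prod_{\pp \in \Spec R} \Sigma^{-\f_\Phi(\pp)} \RHom_R(\RGamma_\pp R_\pp, E).
\]
Here I would use that $\RGamma_\pp R_\pp$ is $\pp$-local and supported at $\{\pp\}$ (\cref{RHom-TPhi}), so $\RHom_R(\RGamma_\pp R_\pp, E) \cong \RHom_R(\RGamma_\pp R_\pp, E_R(R/\pp))$, the other summands of $E$ contributing nothing by \cref{comparison}\cref{Gamma-adjoint}. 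Then the canonical isomorphism \cref{local-dual-4},
\[
\RHom_R(\RGamma_\pp R_\pp, E_R(R/\pp)) \cong \Sigma^{\height(\pp)} D_{\widehat{R_\pp}},
\]
gives $T_\Phi^+ \cong \prod_\pp \Sigma^{\height(\pp) - \f_\Phi(\pp)} D_{\widehat{R_\pp}} = C_\Phi$ in $\D(R)$.

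With this identification in hand, the theorem follows formally: by \cref{slice-silting}, $T_\Phi$ is a silting object, and since $T\Perp{>0}_\Phi = \cY_\Phi$ is generated by the set of compact objects $\cS_\Phi^*$ (via \cref{comp-gen-co-t}), $T_\Phi$ is of finite type. Hence by \cref{charact-dual} and the bijection \cref{(co)silt-dual} (valid for any commutative noetherian $R$ with $S = R$), $T_\Phi^+ \cong C_\Phi$ is a cosilting object of cofinite type, and it is the one corresponding to $T_\Phi$. The compatibility statement recorded right after \cref{(co)silt-dual} — namely that if $(T\Perp{>0}, T\Perp{\leq 0}) = (\cY_\Phi, \cY\Perp{0}_\Phi)$ then $(\Perp{\leq 0}(T^+), \Perp{>0}(T^+)) = (\cU_\Phi, \cV_\Phi)$ — then immediately yields $(\Perp{\leq 0}C_\Phi, \Perp{>0}C_\Phi) = (\cU_\Phi, \cV_\Phi)$, using $(T\Perp{>0}_\Phi, T\Perp{\leq 0}_\Phi) = (\cY_\Phi, \cY\Perp{0}_\Phi)$ from \cref{slice-silting}.

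The step I expect to be the main obstacle is making the identification $T_\Phi^+ \cong C_\Phi$ fully rigorous, i.e.\ checking that the isomorphism holds as stated in $\D(R)$ and not merely after passing to $\Prod$-closures. One must be careful that the scalar-restriction identifications used in deriving \cref{local-dual-4} (from \cref{local-dual-1}--\cref{local-dual-3}, which route through the completion $\widehat{R_\pp}$ and Matlis duality) are compatible with taking the product over all $\pp \in \Spec R$, and that $\RHom_R(-, E)$ genuinely commutes with the relevant coproduct here — this is where one uses that each $\RGamma_\pp R_\pp \simeq \check C(\pp)_\pp$ is a bounded complex of flat modules, so $\RHom_R(\bigoplus_\pp \Sigma^{\f_\Phi(\pp)}\RGamma_\pp R_\pp, E) \cong \prod_\pp \Sigma^{-\f_\Phi(\pp)}\RHom_R(\RGamma_\pp R_\pp, E)$ with no derived-functor subtleties. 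Since for the conclusion it suffices to have $\Prod(T_\Phi^+) = \Prod(C_\Phi)$, even a slightly weaker identification (isomorphism after one application of $(-)^+$, or up to a suitable completion) would be enough, but the clean statement above is the target. Everything else is bookkeeping with the cited results.
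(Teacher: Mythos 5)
Your overall plan — deduce the cosilting statement from \cref{slice-silting} via the duality \cref{(co)silt-dual}, and then identify $T_\Phi^+$ with $C_\Phi$ up to $\Prod$ using \cref{local-dual-4} — is precisely the strategy the paper uses. But there is a concrete gap in the middle step.

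You claim the isomorphism
\[
\RHom_R(\RGamma_\pp R_\pp, E) \cong \RHom_R(\RGamma_\pp R_\pp, E_R(R/\pp)),
\]
justified by ``the other summands of $E$ contribute nothing by \cref{comparison}\cref{Gamma-adjoint}.'' This is false. \cref{comparison}\cref{Gamma-adjoint} is a statement about maps \emph{out of} $E_R(R/\pp)$, not out of $\RGamma_\pp R_\pp$, and it is used in the paper after the dualizing twist, not before. In fact, for $\pp \subsetneq \qq$ the summand $E_R(R/\qq)$ does contribute: for instance take $R=\ZZ$, $\pp=(0)$; then $\RGamma_\pp R_\pp \cong \QQ$, and $\Hom_\ZZ(\QQ, \ZZ(p^\infty)) \cong \QQ_p \neq 0$. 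More generally, the support argument via \cref{bi-loc} only kills $E_R(R/\qq)$ when $\pp\not\subseteq\qq$, and the Matlis dual of $\RGamma_\pp R_\pp$ with respect to $E_R(R/\qq)$ for $\qq\supsetneq\pp$ is a nonzero (typically huge) module. So $T_\Phi^+$ is \emph{not} isomorphic to $C_\Phi$ in $\D(R)$ in general; the ``clean statement'' you set as the target does not hold.

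The repair is exactly the weaker identification you already suspect suffices, but it needs a different argument than the one you sketched. By adjunction, $\RHom_R(X, E) \cong \Hom_{R_\pp}(X, \Hom_R(R_\pp, E))$ for any complex $X$ of $R_\pp$-modules. Now $\Hom_R(R_\pp, E)$ is an injective cogenerator of $\Mod R_\pp$, and so is $E_R(R/\pp) \cong E_{R_\pp}(\kappa(\pp))$; two injective cogenerators of the same module category have the same $\Prod$-closure. Therefore $\Prod\bigl(\Hom_{R_\pp}(X, \Hom_R(R_\pp, E))\bigr) = \Prod\bigl(\Hom_{R_\pp}(X, E_R(R/\pp))\bigr)$ inside $\D(R_\pp)\subseteq\D(R)$, and applying this to $X = \RGamma_\pp R_\pp$ together with \cref{local-dual-4} gives $\Prod((\RGamma_\pp R_\pp)^+) = \Prod(\Sigma^{\height(\pp)}D_{\widehat{R_\pp}})$. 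Taking the product over $\pp$ yields $\Prod(T_\Phi^+) = \Prod(C_\Phi)$, which by \cref{silt-equiv} is enough; the rest of your argument then goes through. In short: replace your false pointwise isomorphism by the ``same $\Prod$-closure for injective cogenerators of $\Mod R_\pp$'' argument, and you recover the paper's proof.
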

\begin{proof}
Let $E$ be an injective cogenerator in $\Mod R$. By \cref{slice-silting}, $T_\Phi$ is a silting object of finite type in $\D(R)$, and hence its dual $T_\Phi^+:=\RHom_{R}(T_\Phi, E)\in \D(R)$ is a cosilting object by \cref{(co)silt-dual}.
What remains to show is that $\Prod(T_\Phi^+) = \Prod(C_\Phi)$; see \cref{silt-equiv}.
Since $T_\Phi^+ \cong \prod_{\pp \in \Spec R}\Sigma^{-\f_\Phi(\pp)}(\RGamma_\pp R_\pp)^+$,
it suffices to prove that
$\Prod ((\RGamma_\pp R_\pp)^+)= \Prod (\Sigma^{\height(\pp)}D_{\widehat{R_\pp}})$ for each $\pp\in\Spec R$.
This follows from \cref{local-dual-4} and \cref{elementary} below. 
\end{proof}

\begin{remark}\label{elementary}
Let $\pp\in\Spec R$ and $X$ a complex of $R_\pp$-modules.
Then $X^+\cong \Hom_R(X,E)$ in $\D(R)$, and 
$\Hom_R(X,E)\cong \Hom_{R_\pp}(X,\Hom_R(R_\pp,E))$ in $\C(R)$.
Notice that $\Hom_R(R_\pp,E)$ is an injective cogenerator in $\Mod R_\pp$, while $E_R(R/\pp)\cong E_{R_\pp}(\kappa(\pp))$ is an injective cogenerator in $\Mod R_\pp$ as well (\cite[Theorem A.20 and Lemma A.27]{ILL+07}). Hence the product closure of $\Hom_R(R_\pp,E)$ and that of $E_R(R/\pp)$, taken in $\Mod R_{\pp}\subseteq \Mod R$, are the same. 
Then it is easily seen that the product closure of $X^+\cong \Hom_{R_\pp}(X,\Hom_R(R_\pp,E))$ and that of $\Hom_{R}(X,E_{R}(R/\pp))\cong \Hom_{R_\pp}(X,E_{R}(R/\pp))$, taken in $\D(R_{\pp})\subseteq \D(R)$, are the same.
\end{remark}

We do not know if the correspondence \cref{(co)silt-dual} can translate the tilting condition into the cotilting condition; indeed, there is no way to check these conditions by using sets of compact objects.
However, we can directly prove a dual result to \cref{tilt-dc}.

\begin{theorem}\label{dc-cotilt} 
Let $R$ be commutative noetherian ring with a dualizing complex and $\Phi$ a non-degenerate sp-filtration of $\Spec R$. Then $C_{\Phi}$ is a cotilting object in $\D(R)$ if and only if $\Phi$ is a codimension filtration of $\Spec R$.
\end{theorem}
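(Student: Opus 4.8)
The plan is to transport the silting statement of \cref{tilt-dc} across the duality \cref{(co)silt-dual} between silting objects of finite type and cosilting objects of cofinite type. By \cref{slice-cosilt}, $C_\Phi$ is a cosilting object of cofinite type equivalent to $T_\Phi^+ = \RHom_R(T_\Phi, E)$ for an injective cogenerator $E$ in $\Mod R$ (note that if $\Phi$ is not a slice sp-filtration, $T_\Phi$ need not be silting, so the first step will be to observe that $C_\Phi$ being cosilting already forces $\Phi$ to be a slice sp-filtration; this follows from \cref{slice-cosilt} together with the uniqueness in \cref{cosilt-cof-sp}, or can be obtained as a byproduct of the computation below and of \cref{slice-silting}). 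So assume $\Phi$ is a slice sp-filtration, and we must show that $C_\Phi$ is cotilting if and only if $\f_\Phi$ is a codimension function.

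The key computational step is to identify negative self-extensions of $C_\Phi$ with positive self-extensions of $T_\Phi$ via the functor $(-)^+$. Concretely, $\Prod(C_\Phi) = \Prod(T_\Phi^+)$ by \cref{slice-cosilt}, and for a cardinal $\varkappa$ there is a natural isomorphism
\[
\Hom_{\D(R)}\bigl((T_\Phi)^{(\varkappa)}, \Sigma^n T_\Phi\bigr)^+ \cong \Hom_{\D(R)}\bigl((T_\Phi)^+, \Sigma^{-n}((T_\Phi)^{(\varkappa)})^+\bigr)
\]
coming from $\RHom_R(-,E)$-duality together with the fact that $(-)^+$ sends coproducts to products. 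Since $E$ is a cogenerator, an abelian group $A$ vanishes iff $A^+ = \Hom_\ZZ(A,E)$ does (more precisely $\Hom_R(A,E)$, applied to the relevant $\Hom$-groups viewed as $R$-modules). Therefore $\Hom_{\D(R)}((T_\Phi)^+, \Sigma^{-n}((T_\Phi)^{(\varkappa)})^+) = 0$ for all $n<0$ and all $\varkappa$ if and only if $\Hom_{\D(R)}((T_\Phi)^{(\varkappa)}, \Sigma^n T_\Phi) = 0$ for all $n>0$ and all $\varkappa$, i.e. $\Add(T_\Phi) \subseteq T_\Phi^{\perp_{>0}}$, which always holds here since $T_\Phi$ is silting by \cref{slice-silting}. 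That only handles the $n>0$ direction; the cotilting condition $\Prod(C_\Phi) \subseteq {}^{\perp_{<0}}C_\Phi$ translates in exactly the same way into $\Add(T_\Phi) \subseteq T_\Phi^{\perp_{<0}}$, so altogether $C_\Phi$ is cotilting iff $\Add(T_\Phi) \subseteq T_\Phi^{\perp_{\neq 0}}$ (using that one inclusion is automatic). By \cref{P:dualizing}\cref{negat-ext-dc} this last condition holds if and only if $\Phi$ is a codimension filtration, which is what we want.

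The step I expect to require the most care is the naturality and compatibility of the duality isomorphism with coproducts and products: one must check that the bijection $\Hom_{\D(R)}(X,Y)^+ \cong \Hom_{\D(R)}(Y^+, X^+)$ holds at the level we need — in particular that $((T_\Phi)^{(\varkappa)})^+ \cong ((T_\Phi)^+)^{\varkappa}$ in $\D(R)$ — and that $\Prod(C_\Phi) = \Prod(T_\Phi^+)$ from \cref{slice-cosilt} suffices to replace $T_\Phi^+$ by $C_\Phi$ in the self-extension vanishing (equivalent cosilting objects have the same $\Prod$-class, so this is immediate once one recalls from \cref{slice-silting,subsec-der-eq} that the cotilting property depends only on the $\Prod$-class). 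A cleaner alternative, which avoids re-deriving these duality facts by hand, is to invoke the already-established equivalences directly: $T_\Phi$ is tilting $\iff$ $\Phi$ is a codimension filtration by \cref{tilt-dc}, and then argue that a silting object of finite type $T$ is tilting precisely when its dual cosilting object $T^+$ is cotilting — this is a general fact about the duality $(-)^+$ applied to $\Add(T)\subseteq T^{\perp_{<0}}$, using that $\Hom$-vanishing is detected by $(-)^+$. Either route reduces the theorem to \cref{tilt-dc} plus the $(-)^+$-formalism of \cref{charact-dual} and \cref{slice-cosilt}; the routine but slightly delicate point is bookkeeping the shifts, the coproduct/product interchange, and the passage through $R_\pp$-modules as in \cref{elementary}.
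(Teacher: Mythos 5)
Your approach has a genuine gap at its core, and in fact the paper itself explicitly flags this route as one the authors could not make work.

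The crucial step in your argument is the claim that the contravariant functor $(-)^+ = \RHom_R(-,E)$ exchanges the tilting condition on $T_\Phi$ with the cotilting condition on $T_\Phi^+$; in your words, ``a silting object of finite type $T$ is tilting precisely when its dual cosilting object $T^+$ is cotilting --- a general fact about the duality $(-)^+$ \ldots using that $\Hom$-vanishing is detected by $(-)^+$.'' This is not a general fact, and the isomorphism you write down,
\[
\Hom_{\D(R)}\bigl(T_\Phi^{(\varkappa)}, \Sigma^n T_\Phi\bigr)^+ \cong \Hom_{\D(R)}\bigl(T_\Phi^+, \Sigma^{-n}(T_\Phi^{(\varkappa)})^+\bigr),
\]
does not hold. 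There is of course a \emph{natural map} from $\Hom_{\D(R)}(X,Y)$ to $\Hom_{\D(R)}(Y^+,X^+)$ obtained by applying $(-)^+$, but it is neither injective nor surjective in general. What adjunction actually gives is
\[
\RHom_R(Y^+, X^+) \cong \RHom_R(X \LotimesR Y^+, E) \cong \RHom_R(X, Y^{++}),
\]
so $\Hom_{\D(R)}(C_\Phi^\varkappa, \Sigma^i C_\Phi) \cong \Hom_R\!\bigl(H^{-i}(T_\Phi\LotimesR(T_\Phi^{(\varkappa)})^+),E\bigr)$, and the cotilting condition on $C_\Phi$ amounts to the vanishing of $H^{-i}(T_\Phi\LotimesR(T_\Phi^{(\varkappa)})^+)$ for $i<0$. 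Comparing this with $\RHom_R(T_\Phi,T_\Phi^{(\varkappa)})^+$ requires the canonical map $T_\Phi\LotimesR(T_\Phi^{(\varkappa)})^+ \to \RHom_R(T_\Phi,T_\Phi^{(\varkappa)})^+$ to be an isomorphism, which holds when the first argument is compact but fails here: the summands $\RGamma_\pp R_\pp$ of $T_\Phi$ are not compact in $\D(R)$. So the detection step breaks, and you cannot deduce cotilting-ness of $C_\Phi$ from tilting-ness of $T_\Phi$ by this formal duality. The paper makes exactly this point: immediately before \cref{dc-cotilt} it says ``We do not know if the correspondence \cref{(co)silt-dual} can translate the tilting condition into the cotilting condition,'' and right after \cref{slice-cosilt} it notes that $\QQ\oplus(\QQ/\ZZ)$ is a cotilting object of cofinite type in $\D(\ZZ)$ whose character dual fails to be silting, so this kind of duality is genuinely one-sided.

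A secondary gap: your ``first step'' (if $C_\Phi$ is cosilting then $\Phi$ is a slice sp-filtration) does not follow from \cref{slice-cosilt} plus ``uniqueness in \cref{cosilt-cof-sp},'' because \cref{slice-cosilt} only produces cosilting objects \emph{from} slice sp-filtrations and says nothing about arbitrary non-degenerate $\Phi$; you need the independent argument of \cref{necessity}\cref{cotilt-neces}, which runs through \cref{minimal-case} and \cref{vanishing} and is not a byproduct of the silting computation.

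For contrast, the paper's proof of \cref{dc-cotilt} avoids $(-)^+$ entirely. It builds, from a dualizing complex $D$, a quasi-isomorphism
\[
\prod_{\pp} \Sigma^{\cd(\pp)-\f_\Phi(\pp)}\widehat{R_\pp} \longrightarrow \Hom_R(D,C_\Phi)
\]
with pure acyclic cone, and then computes $\Hom_{\D(R)}(C_\Phi^\varkappa, \Sigma^n C_\Phi)$ by transporting through the equivalence $\Hom_R(D,-):\K(\Inj R)\isoto\D(\Flat R)$ of \cref{NIK}\cref{Iyengar-Krause}, reducing everything to $\Hom$-groups between products of complete local rings $\widehat{R_\pp}$ in $\D(\Flat R)$, where the vanishing can be read off directly using \cref{bi-loc} and Matlis duality. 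The ``only if'' direction is then handled by the separate argument in \cref{necessity}. If you want a workable alternative to the paper's computation, you should look for one that replaces the $(-)^+$ transfer with a direct control of $\RHom_R(C_\Phi^\varkappa,C_\Phi)$, which is exactly what \cref{NIK} buys.
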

\begin{proof}
Let $D$ be a dualizing complex for $R$. We may assume that $D$ is a bounded below complex of injective modules. 
Moreover, for each $\pp\in \Spec R$, we may interpret $D_{\widehat{R_\pp}}$ as a complex of injective $\widehat{R_\pp}$-modules concentrated in degrees from $0$ to $\height(\pp)$; see \cref{minimal}. 
By \cref{dc-facts}\cref{localize-dc} and \cref{complete-dc}, $\widehat{R_\pp}\otimes_RD=\widehat{R_\pp}\otimes_{R_\pp}D_\pp$ is also a dualizing complex for $\widehat{R_\pp}$. 
Recall that a dualizing complex for the local ring $\widehat{R_\pp}$ is unique up to shift and isomorphism in $\D(\widehat{R_\pp})$. In addition, since $\widehat{R_\pp}$ is flat over $R$, $\widehat{R_\pp}\otimes_RD$ consists of injective $R$-modules by \cite[Theorem 3.2.16]{EJ11}, and so does $D_{\widehat{R_\pp}}=\Hom_{\widehat{R_\pp}}(\widehat{R_\pp},D_{\widehat{R_\pp}})$ by tensor-hom adjunction. Note that $\inf(\widehat{R_\pp}\otimes_RD) = \inf(D_\pp) = \cd(\pp) - \height(\pp)$, where $\cd:=\cd_{D}$, and thus $\inf(\Sigma^{\cd(\pp)-\height(\pp)} \widehat{R_\pp}\otimes_RD) = 0$.
Hence we can check using \cref{dc-loc-coh} and \cref{comp-loc} that there is a quasi-isomorphism 
\begin{equation*}
\Sigma^{\cd(\pp)-\height(\pp)} \widehat{R_\pp}\otimes_RD\to D_{\widehat{R_\pp}}
\end{equation*}
in $\C(R)$.
Applying $\Hom_{\widehat{R_\pp}}(-,D_{\widehat{R_\pp}})$ to this, we get the natural quasi-isomorphism
\[\Hom_{\widehat{R_\pp}}(D_{\widehat{R_\pp}},D_{\widehat{R_\pp}})\to \Hom_{\widehat{R_\pp}}(\Sigma^{\cd(\pp)-\height(\pp)} \widehat{R_\pp}\otimes_RD,D_{\widehat{R_\pp}})\]
since $D_{\widehat{R_\pp}}$ is K-injective. Furthermore, we have the standard isomorphism in $\C(R)$:
\[\Hom_{\widehat{R_\pp}}(\Sigma^{\cd(\pp)-\height(\pp)} \widehat{R_\pp}\otimes_RD,D_{\widehat{R_\pp}})
\cong \Hom_{R}(\Sigma^{\cd(\pp)-\height(\pp)} D,D_{\widehat{R_\pp}}).\]
Composing the last two chain maps and the canonical quasi-isomorphism $\widehat{R_\pp}\to \Hom_{\widehat{R_\pp}}(D_{\widehat{R_\pp}},D_{\widehat{R_\pp}})$,
we have the quasi-isomorphism
\begin{equation}\label{susp-Hom}
\widehat{R_\pp}\to \Hom_{R}(\Sigma^{\cd(\pp)-\height(\pp)} D,D_{\widehat{R_\pp}}).
\end{equation}
The both sides are bounded above complexes of flat modules, so the mapping cone of this morphism is a pure acyclic complex of flat modules, i.e., an acyclic complex of flat modules whose cycle modules are flat. Let $\f:=\f_\Phi$. By \cref{susp-Hom}, we have the quasi-isomorphism $\Sigma^{\cd(\pp)-\f(\pp)}\widehat{R_\pp}\to \Hom_{R}(D, \Sigma^{\height(\pp)-\f(\pp)}D_{\widehat{R_\pp}})$ for each $\pp\in \Spec R$.
Hence we obtain the induced quasi-isomorphism
\begin{align*}
\prod_{\pp\in \Spec R} \Sigma^{\cd(\pp)-\f(\pp)}\widehat{R_\pp}\to
\prod_{\pp\in \Spec R}\Hom_R(D,\Sigma^{\height(\pp)-\f(\pp)}D_{\widehat{R_\pp}})
\end{align*}
whose mapping cone is a pure acyclic complex of flat modules.
Set $C:= C_\Phi$, and note that $C$ is a bounded below complex of injective $R$-modules (since $\f=\f_\Phi$ has a lower bound by non-degeneracy of $\Phi$; see the first paragraph of \cref{sp-filt}).
We can interpret the right-hand side of the above quasi-isomorphism as $\Hom_R(D,C)$, so that we have the quasi-isomorphism
\begin{align}\label{susp-Hom-2}
\prod_{\pp\in \Spec R} \Sigma^{\cd(\pp)-\f(\pp)}\widehat{R_\pp}\to \Hom_R(D,C),
\end{align}
which can be regarded as an isomorphism in the pure derived category $\D(\Flat R)$.

Now, since $C$ is K-injective, there is a canonical isomorphism 
\begin{equation}\label{K-inj}
\Hom_{\D(R)}(C^\varkappa, C)\cong \Hom_{\K(\Inj R)}(C^\varkappa, C)
\end{equation}
for any cardinal $\varkappa$. Furthermore, by \cref{K-inj}, \cref{NIK}\cref{Iyengar-Krause}, \cref{susp-Hom-2}, \cref{Murfet}, and \cref{bi-loc}, we have the following isomorphisms:
\begin{align*}
\Hom_{\D(R)}(C^\varkappa, C)
&\cong \Hom_{\K(\Inj R)}(C^\varkappa, C)\\
&\cong \Hom_{\D(\Flat R)}(\Hom_R(D,C^\varkappa), \Hom_R(D,C))\\
&\cong \Hom_{\D(\Flat R)}(\Hom_R(D,C)^\varkappa, \Hom_R(D,C))\\
&\cong \Hom_{\D(\Flat R)}\bigg(\prod_{\qq \in \Spec R}\Sigma^{\cd(\qq)-\f(\qq)}\widehat{R_\qq}^\varkappa, \prod_{\pp \in \Spec R}\Sigma^{\cd(\pp)-\f(\pp)}\widehat{R_\pp}\bigg)\\
&\cong \prod_{\pp \in \Spec R} \Hom_{\D(\Flat R)}\bigg(\prod_{\qq \in \Spec R}\Sigma^{\cd(\qq)-\f(\qq)}\widehat{R_\qq}^\varkappa, \Sigma^{\cd(\pp)-\f(\pp)}\widehat{R_\pp}\bigg)\\
&\cong \prod_{\pp \in \Spec R} \Hom_{\D(R)}\bigg(\prod_{\qq \in \Spec R}\Sigma^{\cd(\qq)-\f(\qq)}\widehat{R_\qq}^\varkappa, \Sigma^{\cd(\pp)-\f(\pp)}\widehat{R_\pp}\bigg)\\
&\cong \prod_{\pp \in \Spec R} \Hom_{\D(R)}\bigg(\prod_{\qq \in V(\pp)}\Sigma^{\cd(\qq)-\f(\qq)}\widehat{R_\qq}^\varkappa, \Sigma^{\cd(\pp)-\f(\pp)}\widehat{R_\pp}\bigg),
\end{align*}
where $\widehat{R_{\qq}}\in \cC^{\{\qq\}}= \cC^{U(\qq)}\cap \cC^{V(\qq)}$ for every $\qq\in \Spec R$ by \cref{typic-(co)loc,closed-iso}; note also that $\LLambda^{\pp}\widehat{R_\pp}\cong \widehat{R_\pp}$ (see \cite[p. 69]{Lip02}).

Let us prove the ``if'' part of the theorem.
Suppose that $\Phi$ is a codimension filtration. Then it is a slice sp-filtration, so $C_\Phi$ is a cosilting object in $\D(R)$ by \cref{slice-cosilt}. 
Further, $\f$ is a codimension function, so we have $\cd(\qq)-\cd(\pp)=\f(\qq) -\f(\pp)$ for any inclusion $\pp\subseteq \qq$ in $\Spec R$.
Thus
\begin{align*}
\Hom_{\D(R)}(C^\varkappa, \Sigma^nC)
&\cong \prod_{\pp \in \Spec R} \Hom_{\D(R)}\bigg(\prod_{\qq \in V(\pp)}\widehat{R_\qq}^\varkappa, \Sigma^n\widehat{R_\pp}\bigg)\\
&\cong \prod_{\pp \in \Spec R} \Hom_{\D(R)}\bigg(\prod_{\qq \in V(\pp)}\widehat{R_\qq}^\varkappa, \Sigma^n\RHom_R\big(E_R(R/\pp),E_R(R/\pp)\big)\bigg)\\
&\cong \prod_{\pp \in \Spec R} \Hom_{\D(R)}\bigg((\prod_{\qq \in \Spec R}\widehat{R_\qq}^\varkappa)\LotimesR E_R(R/\pp), \Sigma^nE_R(R/\pp)\bigg)=0
\end{align*}
whenever $n\neq 0$, where $\widehat{R_\pp}\cong \Hom_R\big(E_R(R/\pp),E_R(R/\pp)\big)$ by Matlis duality.
Therefore $\Prod(C)\subseteq \Perp{<0}C$, so $C$ is cotilting in $\D(R)$.
The ``only if'' part can be shown by modifying this argument (cf. the proof of \cref{P:dualizing}) or follows from \cref{necessity}\cref{cotilt-neces} below.
\end{proof}

We remark that $\QQ\oplus (\QQ/\ZZ)$ is a cotilting object of cofinite type in $\D(\ZZ)$ such that it induces the standard t-structure in $\D(\ZZ)$, but its character dual $\Hom_\ZZ (\QQ\oplus (\QQ/\ZZ), \QQ/\ZZ)$ is not a silting object in $\D(\ZZ)$; see \cite[\S 3.3]{AH13}. Thus, the assignment $C\mapsto C^+$ does not provide a formal way to obtain a silting object from  a cosilting object of finite type $C\in \D(R)$ for a commutative noetherian ring $R$. 
This is the reason why we first studied the silting case and deduced the cosilting case, even though the latter is often more tractable than the former when we work with large modules.

In fact, the $\ZZ$-module $\QQ\oplus (\QQ/\ZZ)$ is (not only cotilting but) tilting of finite type in $\D(\ZZ)$ (see \cref{ex-(co)tilt} below), and hence $\Hom_\ZZ (\QQ\oplus (\QQ/\ZZ), \QQ/\ZZ)$ is cotilting of cofinite type by \cref{(co)silt-dual}.

\begin{example}\label{ex-(co)tilt}
Suppose that $R$ is a Gorenstein ring with possibly infinite Krull dimension. Then $R$ itself is a dualizing complex; see \cite[Chapter V, Theorem 9.1]{Har66} and \cref{strong-pw}. The codimension function $\cd_R$ associated to the dualizing complex $R$ coincides with the height function (see \cite[Theorem 18.8]{Mat89}).
For the height filtration $\Phi_{\height}$, we have
\[T_{\Phi_{\height}}\cong \bigoplus_{\pp\in \Spec R} E_R(R/\pp)~\text{ and }~C_{\Phi_{\height}}\cong \prod_{\pp\in \Spec R} \widehat{R_\pp}\]
in $\D(R)$.
The first isomorphism follows from \cref{dc-loc-coh}, and the second follows from \cref{dc-facts}\cref{localize-dc} and \cref{complete-dc}. As we explain in the two remarks below, $T_{\Phi_{\height}}$ (resp. $C_{\Phi_{\height}}$) naturally induces the subcategory of all Gorenstein injective (resp. Gorenstein flat) modules as the tilting (resp. cotilting) class. This was well known in the case when the Krull dimension is finite (\cref{rmk-gorenstein}), but it turns out the finiteness assumption assumption is not necessary (\cref{rmk-gorenstein-2}).
\end{example}

\begin{remark}\label{rmk-gorenstein}
	Let $R$ be an \emph{Iwanaga-Gorenstein} ring, i.e., a (possibly non-commutative) noetherian ring with finite injective dimension as a left and right $R$-module. Let $0\to R\to I^0\to \cdots \to I^n\to 0$ be a minimal injective resolution of $R$ as a right $R$-module. It is well known that the coproduct $T:=\bigoplus_{0\leq i\leq n}I^i$ is an $n$-tilting right module; see \cite[Example 5.7]{AH13}.
	Thus $C:=\Hom_\ZZ(T,\QQ/\ZZ)$ is an $n$-cotilting left module (see \cref{of-finite-type,charact-dual}).
	The \emph{tilting class} $\cT:= \{M \in \Mod{R} \mid \Ext_R^i(T,M) = 0 ~\forall i>0\}$ induced by $T$ coincides with the class of \emph{Gorenstein injective} right modules (see \cite[Corollary 4.7]{Miy96} and \cite[Corollary 11.2.2]{EJ11}), and the \emph{cotilting class} $\cC:= \{M \in \Mod{R^{\op}} \mid \Ext_{R^{\op}}^i(M,C) = 0 ~\forall i>0\}$ induced by $C$ coincides with the class of \emph{Gorenstein flat} left modules (see \cite[Theorem 10.3.8]{EJ11}).
	If $R$ is commutative (so $R$ is a Gorenstein ring of Krull dimension $n$), then the tilting module $T$ is nothing but $\bigoplus_{\pp\in \Spec R} E_R(R/\pp)$.
\end{remark}

\begin{remark}\label{rmk-gorenstein-2}	
Let $R$ be a (commutative) Gorenstein ring of infinite Krull dimension, and set $T:=\bigoplus_{\pp\in \Spec R} E_R(R/\pp)$ and $C:=\prod_{\pp\in \Spec R} \widehat{R_\pp}$.
	It follows from  \cref{tilt-CM} (resp. \cref{cotilt-CM}) and \cref{ex-(co)tilt} that $T$ (resp. $C$) is tilting (resp. cotilting) in $\D(R)$, but this has infinite projective (resp. injective) dimension. Therefore $T$ (resp. $C$) is not a tilting (resp. cotilting) module in the classical sense (see \cref{tilt-module}). However, $T$ (resp. $C$) is an \emph{$\infty$-tilting} (resp. \emph{$\infty$-cotilting}) object in $\Mod R$ in the sense of Positselski and {\v{S}}{\v{t}}ov\'{\i}\v{c}ek (see \cite[\S 2]{PS19}). 
	More precisely, setting $\cT:=T\Perp{>0}\cap \Mod R$ and $\cC:=\Perp{>0}C\cap \Mod R$, we can show that $(T, \cT)$ (resp. $(C, \cC)$) is an \emph{$\infty$-tilting pair} (resp. an \emph{$\infty$-cotilting pair}) in $\Mod R$ (see \cite[\S 3]{PS19}).
	Let us briefly explain how this fact can be checked. 
	
	We first show that $\cT$ (resp. $\cC$) coincides with $\mathcal{GI}$ (resp. $\mathcal{GF}$), the class of Gorenstein injective (resp. Gorenstein flat) $R$-modules. Recall from \cite[Theorem 3.6]{Hol04} that $M\in \Mod R$ is Gorenstein flat if and only if $M^+:=\Hom_{\ZZ}(M,\QQ/\ZZ)$ is Gorenstein injective. On the other hand, it follows from \cite[Theorem 1.1]{CK18} that if $M$ is Gorenstein injective, then $M^+$ is Gorenstein flat. Hence \cite[Lemma 1]{EI15} implies that $M$ is Gorenstein injective if and only if $M^+$ is Gorenstein flat. 
	As a consequence, $(\mathcal{GI}, \mathcal{GF})$ is a \emph{duality pair} over $R$ in the sense of \cite[Definition 2.1]{HJ09} (see also \cite[Theorem 3.7]{Hol04}), so that $\mathcal{GI}$ is closed under pure submodules by \cite[Theorem 3.1]{HJ09}.  
	Further, $\mathcal{GI}$ is closed under taking direct products and direct limits by \cite[Theorem 1.1]{CK18} and \cite[Theorem 1]{EI15}. Thus $\mathcal{GI}$ is a definable class (see \cite[\S 3.4.1]{Pre09}), and so the class $\{M\in \Mod R \mid M^+\in \mathcal{GI}\}$ is definable as well (cf. \cite[Remark 2.4]{AHH21} and \cite[Theorem
	1.3.15]{Pre09}). Moreover, $\mathcal{GF}=\{M\in \Mod R \mid M^+\in \mathcal{GI}\}$ and  $\mathcal{GI}=\{M\in \Mod R \mid M^+\in \mathcal{GF}\}$ by the above observation.
	Since $\mathcal{GI}$ (resp. $\mathcal{GF})$ is definable, a standard argument (cf. \cite[Lemma 3.5]{HHZ21}) shows that $M \in \Mod R$ is Gorenstein injective (resp. Gorenstein flat) if and only if $M_\mm$ is Gorenstein injective (resp. Gorenstein flat) in $\Mod R_\mm$ for all maximal ideals $\mm$ of $R$. Hence, noting that $E_R(R/\pp)\cong E_{R_\pp}(\kappa(\pp))$ and $C=\prod_{\pp\in \Spec R} \widehat{R_\pp}$ is equivalent to $T^+\cong \Hom_R(T, \Hom_\ZZ (R,\QQ/\ZZ))$ as cotilting objects in $\D(R)$ (see the proof of \cref{slice-cosilt} and \cref{ex-(co)tilt}), we can deduce from \cite[Theorem 10.3.8]{EJ11} and tensor-hom adjunction that $M \in \Mod R$ belongs to $\cC$ if and only if $M$ is Gorenstein flat, i.e., $\cC=\mathcal{GF}$. 
	We have $\cT=\{M\in \Mod R\mid M^+\in \cC \}$ by \cref{compact-gen-t}, \cref{comp-gen-co-t}, and \cite[Lemma 2.5]{AHH21}, so it follows that $\cT=\{M\in \Mod R\mid M^+\in \mathcal{GF}\}=\mathcal{GI}$.
	
	Now, $(T,\cT)=(T,\mathcal{GI})$ is an $\infty$-tilting pair in $\Mod R$ by the above argument and \cite[Example 6.4]{PS19}. Moreover, we can directly show that $(C,\cC)=(C,\mathcal{GF})$ satisfies ($\mathrm{i}^*$)-($\mathrm{iii}^*$) in \cite[p.~311]{PS19}, and $(C,\mathcal{GF})$ also satisfies ($\mathrm{iv}^*$) by \cite[Theorem 3.7]{Hol04}.
	Thus, to see that $(C,\cC)$ is an $\infty$-cotilting pair in $\Mod R$, it remains to show that $(C,\cC)$ satisfies $(\text{v}^*)$.
	For this purpose, it suffices to show that, for any $M\in \mathcal{GF}$, there exists an injection $M\to N$ such that $N\in \Prod(C)\cap \Mod R$. This follows because there exists an injection from $M$ to a flat $R$-module $F$ by definition and the pure-injective envelope of $F$ is a direct summand of some object in $\Prod(C)\cap \Mod R$; see \cite[Theorem 5.3.28 and Corollary 6.7.2]{EJ11} and \cite[Remark 7.8]{KN22}.
\end{remark}

\begin{remark}\label{canonical-module}
By local duality \cref{local-dual-1}, any commutative noetherian local ring admitting a dualizing complex $D_R$ (\cref{notation-dc}) is Cohen--Macaulay if and only if $D_R$ is quasi-isomorphic to an $R$-module, which is called a \emph{canonical module} of $R$ and denoted by $\omega_R$; cf. \cite[Chapter V, Proposition 3.4]{Har66} and \cite[\S 3.3]{BH98}. The canonical module is unique up to isomorphism.

Suppose that $R$ is a (possibly non-local) Cohen--Macaulay ring. Then $R_\pp$ is a Cohen--Macaulay local ring  for each $\pp\in \Spec R$ by definition, and so is $\widehat{R_\pp}$ (\cite[Theorem 17.5]{Mat89}). Hence $D_{\widehat{R_\pp}}$ is quasi-isomorphic to $\omega_{\widehat{R_\pp}}$.
Therefore
\[C_\Phi \cong \prod_{\pp \in \Spec R}\Sigma^{\height(\pp)-\f_\Phi(\pp)}\omega_{\widehat{R_\pp}}\]
in $\D(R)$ for every non-degenerate sp-filtration $\Phi$ of $\Spec R$.
\end{remark}

The following is a corollary of \cref{slice-cosilt}.

\begin{corollary}\label{cotilt-CM}
Let $R$ be a Cohen--Macaulay ring and $\Phi$ a codimension filtration. Then $C_{\Phi}$ is a cotilting object in $\D(R)$.

In particular, $C_{\Phi_{\height}} \cong \prod_{\pp \in \Spec R}\omega_{\widehat{R_\pp}}$ is a cotilting object in $\D(R)$.
\end{corollary}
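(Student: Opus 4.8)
The plan is to reduce \cref{cotilt-CM} to \cref{slice-cosilt} and \cref{cotilt-CM}'s silting counterpart \cref{tilt-CM}, exactly mirroring the structure of the proof of \cref{tilt-CM} but on the cosilting side. First I would invoke \cref{slice-cosilt} to conclude that, since any codimension filtration $\Phi$ is in particular a slice sp-filtration, $C_\Phi$ is already a cosilting object in $\D(R)$; thus the only thing left to prove is that $\Prod(C_\Phi) \subseteq \Perp{<0}C_\Phi$, i.e.\ that $C_\Phi$ has no negative self-extensions. As in \cref{non-connected}, I would decompose $\Spec R = \Spec R_1 \sqcup \cdots \sqcup \Spec R_n$ into connected components, write $C_\Phi \cong \prod_{1 \le i \le n} C_i$ with $C_i := \prod_{\pp \in \Spec R_i}\Sigma^{\height(\pp)-\f_\Phi(\pp)}D_{\widehat{R_\pp}}$, and note that $\Prod(C_i) \subseteq C_j\Perp{\ZZ}$ whenever $i \ne j$ by \cref{non-connected}, so it suffices to treat each $C_i$ separately.

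On each connected component, the key point is that both $\height$ and $\f_\Phi$ are codimension functions on $\Spec R_i$ (the former because $R$ — hence $R_i$ — is Cohen--Macaulay, see \cref{cd-func}; the latter by hypothesis), so $\height - \f_\Phi$ is constant on $\Spec R_i$, say equal to some integer $m_i$. Then $C_i \cong \Sigma^{m_i}\prod_{\pp \in \Spec R_i} D_{\widehat{R_\pp}}$. Next I would use \cref{canonical-module}: since $R$ is Cohen--Macaulay, each $\widehat{R_\pp}$ is a Cohen--Macaulay local ring, so $D_{\widehat{R_\pp}}$ is quasi-isomorphic to the canonical module $\omega_{\widehat{R_\pp}}$, which is an honest $R$-module. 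Hence $C_i$ is (isomorphic in $\D(R)$ to) a shift of a single module $\prod_{\pp \in \Spec R_i}\omega_{\widehat{R_\pp}}$ concentrated in one degree. A module, regarded as a stalk complex, has no nonzero negative self-extensions, so $\Prod(C_i) \subseteq C_i\Perp{<0}$; combined with the orthogonality between distinct components this gives $\Prod(C_\Phi) \subseteq \Perp{<0}C_\Phi$, and therefore $C_\Phi$ is cotilting.

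For the ``in particular'' clause, I would apply the above to the codimension filtration $\Phi_{\height}$ and use the final display of \cref{canonical-module}, which gives $C_{\Phi_{\height}} \cong \prod_{\pp \in \Spec R}\Sigma^{\height(\pp)-\height(\pp)}\omega_{\widehat{R_\pp}} = \prod_{\pp \in \Spec R}\omega_{\widehat{R_\pp}}$ in $\D(R)$.

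I do not expect a genuine obstacle here: the argument is purely a dualization of \cref{tilt-CM}, and all the nontrivial inputs — that $C_\Phi$ is cosilting (\cref{slice-cosilt}), that $D_{\widehat{R_\pp}} \simeq \omega_{\widehat{R_\pp}}$ in the Cohen--Macaulay case (\cref{canonical-module}), and the component-wise orthogonality (\cref{non-connected}) — are already available. The only point deserving a line of care is to make sure the identification $C_i \cong \Sigma^{m_i}\prod_{\pp}\omega_{\widehat{R_\pp}}$ is taken in $\D(R)$ (the products of the modules $\omega_{\widehat{R_\pp}}$ computed in $\Mod R$ agree with the derived products since each factor is a module placed in the correct degree), so that the vanishing of negative $\Hom$'s really is just the statement that a module has no negative self-extensions.
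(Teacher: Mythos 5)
Your proposal is correct and is essentially the argument the paper has in mind: the paper reduces \cref{cotilt-CM} to \cref{slice-cosilt} plus ``a parallel argument to the proof of \cref{tilt-CM}'', and your write-up is precisely that dualization --- decompose $\Spec R$ into connected components via \cref{non-connected}, observe that $\height - \f_\Phi$ is constant on each component so that each $C_i$ is a shift of the module $\prod_{\pp\in\Spec R_i}\omega_{\widehat{R_\pp}}$ by \cref{canonical-module}, and use that modules have no negative self-extensions together with the cross-component orthogonality. No further comment is needed.
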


\begin{proof}
By \cref{slice-cosilt}, $C_{\Phi}$ is a cosilting object in $\D(R)$, so it only remains to check that $\Prod(C_{\Phi})\subseteq \Perp{<0}C_{\Phi}$. This follows from a parallel argument to the proof of \cref{tilt-CM}.
\end{proof}

\begin{remark}\label{bd-comp-inj}
Let $R$ be a commutative noetherian ring. For each $\pp\in \Spec R$, $D_{\widehat{R_\pp}}$ is quasi-isomorphic to a complex of injective $\widehat{R_\pp}$-modules concentrated in degrees between $0$ and $\height(\pp)$ (see \cref{minimal}). 
The canonical ring homomorphism $R\to R_\pp\to \widehat{R_\pp}$ is flat, and then it easily follows that every injective $\widehat{R_\pp}$-module is injective over $R$.
Therefore, if $\dim R<\infty$ and $\Phi$ is a bounded sp-filtration of $\Spec R$, then $C_{\Phi}$ is isomorphic in $\D(R)$ to a bounded complex of injective $R$-modules. In particular, $C_{\Phi_{\height}}$  isomorphic in $\D(R)$ to a bounded complex of injective $R$-modules concentrated in degrees from $0$ to $\dim R$.
\end{remark}

\begin{remark}\label{charact-seq}
Let $R$ be a commutative noetherian ring and $n$ be an integer with $n \geq 1$. By \cite[Theorem 4.2]{AHPST14}, there are bijections among the equivalence classes of $n$-tilting modules, those of $n$-cotilting modules, and the \emph{characteristic sequences} of length $n$ (\cite[Definition 3.1]{AHPST14}).
Each characteristic sequence $\mathbf{Y}:=(Y_1,\ldots, Y_n)$ constitutes the sp-filtration $\Phi_{\mathbf{Y}}$ with $\Phi_{\mathbf{Y}}(-1)=\Spec R$, $\Phi_{\mathbf{Y}}(0)=Y_{1}$,\ldots, $\Phi_{\mathbf{Y}}(n-1)=Y_{n}$, and $\Phi_{\mathbf{Y}}(n)=\emptyset$. By \cite[Corollary 3.5]{AHS14} and \cref{(co)silt-dual}, an $n$-tilting (resp. $n$-cotilting) module corresponding to the characteristic sequence $\mathbf{Y}$ induces the t-structure in $\D(R)$ corresponding to the sp-filtration $\Phi_{\mathbf{Y}}$ via \cref{silt-fin-sp} (resp. \cref{cosilt-cof-sp}); see also the first sentence of \cite[\S 4]{AHPST14}.
In general, an sp-filtration $\Phi$ comes from a characteristic sequence $\mathbf{Y}=(Y_1,\ldots, Y_n)$ if and only if the order-preserving function $\f_{\Phi}:\Spec R\to \ZZ\cup\{\infty, -\infty\}$ corestricts to $\f_{\Phi}:\Spec R \to \{0,1,\ldots,n\}$ and $\f_{\Phi} \leq \grade$ (see \cref{hight-codim}). This fact essentially follows from \cite[Lemma 4.1]{AHPST14}; see also \cite[Corollary 3.5]{AHS14} and \cite[Definition 1.1 and Theorem 1.2]{DT15}.

As mentioned in \cite[p. 3499]{AHPST14}, the classification \cite[Theorem 4.2]{AHPST14} does not concretely describe (co)tilting modules corresponding to each characteristic sequence. However, if $R$ is a Cohen--Macaulay ring of finite Krull dimension $d$, then our \cref{tilt-CM,cotilt-CM} can explicitly give the $d$-tilting module $\bigoplus_{\pp\in \Spec R}H^{\height(\pp)}_\pp R_\pp$ and the $d$-cotilting module $ \prod_{\pp \in \Spec R}\omega_{\widehat{R_\pp}}$. The tilting (resp. cotilting) module is a new example as far as $d>0$ (resp. $d>1$) and it corresponds to the characteristic sequence $(Y_1,\ldots, Y_{d})$ given by $Y_i:=\Phi_{\height}(i-1)$. The cotilting module for the case $d=1$ is available in \cite[Example 6.12]{CS20}, which also covers non-affine Noetherian schemes.

In fact, given a commutative noetherian ring $R$ and a slice sp-filtration $\Phi$ of $\Spec R$, it directly follows from the definitions of depth and Cohen--Macaulay rings that the silting object $T_{\Phi}$ is isomorphic in $\D(R)$ to an $R$-module if and only if $R$ is Cohen--Macaulay and $\Phi$ is the height filtration.
By \cref{(co)silt-dual}, the same fact holds true for the cosilting object $C_{\Phi}$. As a consequence, $T_{\Phi}$ (resp. $C_{\Phi}$) is equivalent to a tilting (resp. cotilting) module if and only if $R$ is Cohen--Macaulay of finite Krull dimension and $\Phi$ is the height filtration.
\end{remark}

\section{Endomorphism rings of (co)tilting objects induced by codimension functions}\label{end-section}
In this section, we study the endomorphism rings of the tilting object and the cotilting object corresponding to a codimension filtration.

Let $R$ be a commutative noetherian ring.
Let $\pp\in \Spec R$ and $\varkappa$ be any cardinal.
There are standard isomorphisms
\[\RHom_{R}(\RGamma_{\pp}R_\pp,\RGamma_{\pp}R_\pp^{(\varkappa)})\cong \RHom_{R}(\RGamma_{\pp}R_\pp,R_\pp^{(\varkappa)})\cong \RHom_{R_\pp}(\RGamma_{\pp R_\pp}R_\pp,R_\pp^{(\varkappa)}).\]
in $\D(R)$, and the last object coincides with $\LLambda^{\pp R_\pp} (R_\pp^{(\varkappa)})= \Lambda^{\pp R_\pp} (R_\pp^{(\varkappa)})=\Lambda^{\pp} (R_\pp^{(\varkappa)})$ by \cref{GM-dual}.

Hence we have a natural isomorphism
\begin{equation}\label{p-complete}
\RHom_{R}(\RGamma_{\pp}R_\pp,\RGamma_{\pp}R_\pp^{(\varkappa)})\cong  \widehat{R_\pp^{(\varkappa)}}.
\end{equation}
in $\D(R)$. 
Since the right-hand side has neither positive nor negative cohomologies, \cref{p-complete} can be naturally identified with an isomorphism
\begin{align}\label{p-complete-2}
\Hom_{\D(R)}(\RGamma_{\pp}R_\pp,\RGamma_{\pp}R_\pp^{(\varkappa)})\cong \widehat{R_\pp^{(\varkappa)}}
\end{align}
in $\D(R)$. If $\varkappa=1$, \cref{p-complete-2} is of the form \begin{equation}\label{ring-hom}
\End_{\D(R)}(\RGamma_{\pp}R_\pp)\cong \widehat{R_\pp},
\end{equation}
which is an isomorphism of rings.
For later use, let us verify the last fact in more detail.
It suffices to treat the case when $R$ is local and $\pp$ is the maximal ideal $\mm$. 
Consider the canonical ring homomorphism $f:R\to \End_{\D(R)}(\RGamma_{\mm}R)$, and compose it with the standard isomorphisms 
\begin{align*}
\End_{\D(R)}(\RGamma_{\mm}R)&=\Hom_{\D(R)}(\RGamma_{\mm}R,\RGamma_{\mm}R)\\
&\cong\Hom_{\D(R)}(\RGamma_{\mm}R,R)\\
&\cong\RHom_{R}(\RGamma_{\mm}R,R)
\end{align*}
in $\D(R)$.
Then we obtain a natural morphism $R\to \RHom_{R}(\RGamma_{\mm}R,R)$, which is in fact induced by the canonical morphism $\RGamma_{\mm}R\to R$ under the identification $R=\RHom_R(R,R)$.
Thus the morphism $R\to  \RHom_{R}(\RGamma_{\mm}R,R)$ coincides with the canonical morphism $R\to  \LLambda^\mm R=\widehat{R}$ via \cref{GM-dual} applied to $I:=\mm$ (see,  e.g.,  \cite[Proposition 7.5.16]{SS18}).
We see from this observation that the composition of $f:R\to \End_{\D(R)}(\RGamma_{\mm}R)$ and the isomorphism $g:\End_{\D(R)}(\RGamma_\mm R)\isoto \widehat{R}$ of $R$-modules given by \cref{ring-hom} is just the completion map $c:R\to \widehat{R}$, i.e., $c=gf$.
Moreover, we can naturally extend $f$ to a ring homomorphism $h:\widehat{R}\to \End_{\D(R)}(\RGamma_{\mm}R_\mm)\cong \End_{\D(R)}(\RGamma_{\mm}\widehat{R})$, i.e., $f=hc$; see \cref{comp-loc}.
Since $c=gf$, we have $c=ghc$. Hence 
$\Id_{\widehat{R}}=\Lambda^\mm (c)= \Lambda^\mm(gh) \Lambda^\mm(c)= \Lambda^\mm(gh)$, but clearly $\Lambda^\mm(g)=g$ and $\Lambda^\mm(h)=h$ because the domains and the codomains are $\mm$-adically complete. Consequently $\Id_{\widehat{R}}=gh$.
This means that the ring homomorphism $h:\widehat{R}\to \End_{\D(R)}(\RGamma_{\mm}R_\mm)$ is the inverse map to the isomorphism 
$g:\End_{\D(R)}(\RGamma_\mm R)\isoto \widehat{R}$ of $R$-modules. 
Therefore $g$ is an isomorphism of rings, and so is \cref{ring-hom}, as desired.

Let $\pp\subsetneq \qq$ be a chain in $\Spec R$ and $\varkappa$ a cardinal.
Although computing $\RHom_{R}(\RGamma_{\pp}R_\pp,\RGamma_{\qq}R_\qq^{(\varkappa)})$ is not easy in general, we can explicitly do this if the chain is saturated by 
the theorem below, which is formulated in a more general situation.
To prove it, we recall the natural isomorphism
 \begin{equation}\label{lambda-zero}
\lambda^{\{\pp\}}\cong \LLambda^\pp (-\LotimesR R_\pp).
\end{equation}
of functors $\D(R)\to \D(R)$ for any $\pp\in \Spec R$; see \cite[Corollary 3.7]{NY18b}.

\begin{theorem}\label{minimal-case}
Let $\pp\in \Spec R$ and $W\subseteq \min (V(\pp)\sm \{\pp\})$. Let $\varkappa$ be any cardinal. Then there is a natural isomorphism
\[\RHom_{R}(\RGamma_{\pp}R_{\pp},\bigoplus_{\qq\in W} \Sigma\RGamma_\qq R_\qq^{(\varkappa)})\cong \Lambda^\pp ((\prod_{\qq\in W} \widehat{R_\qq^{(\varkappa)}})_\pp).\]
in $\D(R)$.
\end{theorem}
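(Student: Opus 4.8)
The plan is to reduce the computation of $\RHom_R(\RGamma_\pp R_\pp, \bigoplus_{\qq \in W}\Sigma\RGamma_\qq R_\qq^{(\varkappa)})$ to a local statement at $\pp$ and then apply Greenlees--May duality together with the description of the colocalization functor $\lambda^{\{\pp\}}$ from \cref{lambda-zero}. First I would use \cref{RHom-TPhi} to rewrite $\RHom_R(\RGamma_\pp R_\pp, -) \cong \LLambda^\pp\RHom_R(R_\pp, -) \cong \LLambda^\pp(\RHom_R(R_\pp,-))$; since $\RHom_R(R_\pp, -)$ is computed by the colocalization at the generalization-closed set $U(\pp)$ (see \cref{typic-(co)loc}), the functor $\RHom_R(\RGamma_\pp R_\pp,-)$ agrees with $\LLambda^\pp(\,(-)\otimes_R^{\mathbf L}R_\pp)\cong\lambda^{\{\pp\}}$ by \cref{lambda-zero}, at least after noting $\RHom_R(R_\pp,-)$ and $(-)\otimes_R^{\mathbf L}R_\pp$ coincide on the relevant objects. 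So the target is $\lambda^{\{\pp\}}$ applied to $\bigoplus_{\qq\in W}\Sigma\RGamma_\qq R_\qq^{(\varkappa)}$.

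The key step is then to compute $\RGamma_\qq R_\qq^{(\varkappa)}$ localized at $\pp$ and completed: for $\qq\in W\subseteq\min(V(\pp)\sm\{\pp\})$, one has $\qq\subsetneq\pp$, so $(\RGamma_\qq R_\qq)_\pp\cong\RGamma_\qq R_\qq$ (the support is $\{\qq\}\subseteq U(\pp)$, cf.\ \cref{RHom-TPhi}), and the crucial point is that after applying $\LLambda^\pp$ these pieces become $\Lambda$-acyclic modules. I would invoke \cref{p-complete}/\cref{ring-hom}, which already tells us $\RHom_R(\RGamma_\qq R_\qq, \RGamma_\qq R_\qq^{(\varkappa)})\cong\widehat{R_\qq^{(\varkappa)}}$ is a module; combined with the minimality hypothesis $W\subseteq\min(V(\pp)\sm\{\pp\})$, which guarantees the chain $\qq\subsetneq\pp$ is ``saturated of length one'' in a suitable derived sense, the extension groups between distinct $\RGamma_\qq R_\qq$ for $\qq\neq\qq'$ in $W$ vanish, so $\lambda^{\{\pp\}}(\bigoplus_{\qq\in W}\Sigma\RGamma_\qq R_\qq^{(\varkappa)})$ reduces to a module concentrated in degree $0$ after the shift is absorbed. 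One then identifies this module: $\lambda^{\{\pp\}}$ applied to $\RGamma_\qq R_\qq^{(\varkappa)}$ yields $\Lambda^\pp(\widehat{R_\qq^{(\varkappa)}})_\pp$, and taking the product over $\qq\in W$ (products appear because $\RHom$ turns the coproduct-over-$W$ in the source into a product) and noting $\LLambda^\pp$ commutes with products gives $\Lambda^\pp((\prod_{\qq\in W}\widehat{R_\qq^{(\varkappa)}})_\pp)$.

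Concretely I would organize it as: (1) reduce $\RHom_R(\RGamma_\pp R_\pp,-)$ to $\lambda^{\{\pp\}}$ via \cref{RHom-TPhi}, \cref{typic-(co)loc}, \cref{lambda-zero}; (2) pull the coproduct out, turning it into a product of $\RHom_R(\RGamma_\pp R_\pp, \Sigma\RGamma_\qq R_\qq^{(\varkappa)})$ over $\qq\in W$; (3) for each such $\qq$, use $\qq\subsetneq\pp$ to see $\RGamma_\qq R_\qq$ is already $\pp$-local, so $\RHom_R(\RGamma_\pp R_\pp,\Sigma\RGamma_\qq R_\qq^{(\varkappa)})\cong\Sigma\LLambda^\pp((\RGamma_\qq R_\qq^{(\varkappa)})_\pp)$; (4) compute $\RGamma_\qq R_\qq$ in the local ring $R_\pp$ using the \v{C}ech complex and the fact $\qq R_\pp$ is a minimal prime over... no — rather use that $\LLambda^\pp$ of the \v{C}ech complex $\check C(\qq)_\qq$ over $R_\pp$ is a genuine complete module because $\RGamma_\qq R_\qq$ has finite flat dimension and its only cohomology relevant after $\LLambda^\pp$ sits so that the shift $\Sigma$ cancels the single nonzero cohomological degree; (5) assemble using that $\LLambda^\pp$ and localization at $\pp$ commute with arbitrary products and that $\prod_{\qq\in W}\Lambda^\pp((\widehat{R_\qq^{(\varkappa)}})_\pp)\cong\Lambda^\pp((\prod_{\qq\in W}\widehat{R_\qq^{(\varkappa)}})_\pp)$.

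The main obstacle I expect is step (4): showing that $\Sigma\RGamma_\qq R_\qq$, once localized at $\pp$ and derived-completed at $\pp$, is isomorphic to the \emph{module} $\Lambda^\pp((\widehat{R_\qq})_\pp)$ concentrated in a single degree — i.e.\ that there is no higher cohomology and the cohomological shift is exactly right. This is precisely where the hypothesis $W\subseteq\min(V(\pp)\sm\{\pp\})$ must be used: it forces $\height(\pp/\qq)=1$, so $\RGamma_\qq R_\qq$ viewed over $R_\pp$ behaves like the local cohomology of a one-dimensional situation relative to $\pp$, making $\RGamma_\qq(R_\pp)$ have cohomology only in the degree that, after the suspension $\Sigma$, lands in degree $0$ upon applying $\LLambda^\pp$. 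I would make this precise by reducing to the case $R=R_\pp$ local with maximal ideal $\pp$ and $\qq$ a prime with $\dim R/\qq=1$, then directly analyzing $\LLambda^\pp\RGamma_\qq R_\qq$ via the \v{C}ech/Koszul description of \cref{Cech-comp} and \cref{FI2}, tracking the single surviving cohomological degree. Once this local computation is nailed down, the globalization and the passage to products are formal.
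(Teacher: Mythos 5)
Your proposal does not work, and the failure is not a small gap but a pair of structural errors that together unravel the whole argument.

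First, the direction of inclusion is reversed. By definition $V(\pp)=\{\qq\in\Spec R \mid \pp\subseteq\qq\}$, so the hypothesis $W\subseteq\min(V(\pp)\setminus\{\pp\})$ means every $\qq\in W$ satisfies $\pp\subsetneq\qq$ and this chain is saturated — the primes in $W$ are \emph{larger} than $\pp$, not smaller. Consequently $\supp_R\RGamma_\qq R_\qq=\{\qq\}$ is disjoint from $U(\pp)$, so $(\RGamma_\qq R_\qq)_\pp=0$ for each $\qq\in W$ and hence $X_\pp=0$ for $X:=\bigoplus_{\qq\in W}\RGamma_\qq R_\qq^{(\varkappa)}$. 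Your step (3), which hinges on ``$\RGamma_\qq R_\qq$ is already $\pp$-local,'' is therefore not a reduction but a vanishing: you would be computing $\lambda^{\{\pp\}}X\cong\LLambda^\pp(X_\pp)=0$, contradicting the claimed isomorphism.

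Second, and compounding this, your step (1) identifies $\RHom_R(\RGamma_\pp R_\pp,-)\cong\LLambda^\pp\RHom_R(R_\pp,-)$ with $\lambda^{\{\pp\}}\cong\LLambda^\pp((-)\otimes_R^{\mathbf{L}}R_\pp)$ under the caveat that ``$\RHom_R(R_\pp,-)$ and $(-)\otimes_R^{\mathbf{L}}R_\pp$ coincide on the relevant objects.'' They do not: $\RHom_R(R_\pp,-)\cong\gamma_{U(\pp)}$ is the \emph{colocalization} (a right adjoint), while $(-)\otimes_R^{\mathbf{L}}R_\pp\cong\lambda^{U(\pp)}$ is the \emph{localization} (a left adjoint), and on the object $X$ of interest they are maximally different — $\lambda^{U(\pp)}X=0$ while $\gamma_{U(\pp)}X\cong\RHom_R(R_\pp,X)$ is exactly the nontrivial object you are trying to control. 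This colocalization is non-exact and hard to compute directly; circumventing it is precisely the substance of the proof.

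The paper's actual route is therefore genuinely different in shape. It does \emph{not} push any localization functor into the target $X$. Instead it builds the triangle $\gamma_{V_0}R\to\gamma_{V(\pp)}R\to\RGamma_\pp R_\pp\xrightarrow{+}$ with $V_0=V(\pp)\setminus\{\pp\}$, applies $\RHom_R(-,\Sigma X)$ to get a triangle whose middle term is the object of interest, and then computes the outer two terms: the term $\RHom_R(\gamma_{V_0}R,X)$ becomes the honest module $\prod_{\qq\in W}\widehat{R_\qq^{(\varkappa)}}$ (using the minimality of $W$ in $V_0$ to detect it as $\lambda^{V_1^{\cp}}\gamma_{V_0}R\cong\bigoplus_{\qq\in W}\RGamma_\qq R_\qq$ and then invoking \cref{p-complete}); and the vanishing $X_\pp=0$ — the very fact that sinks your approach — becomes the engine that kills the third term after applying $\lambda^{\{\pp\}}$ \emph{to the triangle of $\RHom$s}, not to $X$. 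Since the middle term already has cosupport in $\{\pp\}$, applying $\lambda^{\{\pp\}}$ to it is harmless, and the isomorphism follows.

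In short: the proof has to route around the non-exact colocalization $\RHom_R(R_\pp,-)$ rather than replace it with localization, and the minimality of $W$ is used to slice off $\bigoplus_{\qq\in W}\RGamma_\qq R_\qq$ from $\gamma_{V_0}R$, not to make the chain $\qq\subsetneq\pp$ short (that chain points the wrong way).
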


\begin{proof}
Put $V_0:=V(\pp)\sm \{\pp\}$, and consider the approximation triangle 
\[\gamma_{V_0} R\to \gamma_{V(\pp)}R\to \lambda^{V_0^\cp}\gamma_{V(\pp)}R\xrightarrow{+},\]
where $\gamma_{V_0} \gamma_{V(\pp)}R\cong \gamma_{V_0} R$ by \cref{trivial-iso} and $\lambda^{V_0^\cp}\gamma_{V(\pp)}R\cong \RGamma_{\pp} R_\pp$ by  \cref{slice-functor}.
Putting $X:=\bigoplus_{\qq\in W} \RGamma_\qq R_\qq^{(\varkappa)}$ and applying $\RHom_R(-,\Sigma X)$ to the above triangle, we obtain the following triangle
\begin{equation}\label{minimal-1}
\RHom_R(\Sigma\gamma_{V_0} R,\Sigma X)\to\RHom_R(\RGamma_{\pp} R_\pp, \Sigma X) \to \RHom_R(\gamma_{V(\pp)}R,\Sigma X)\xrightarrow{+}.
\end{equation}
Let $\overline{W}^{\mathrm{g}}$ be the generalization closure of $W$ (i.e., the smallest generalization closed subset containing $W$).
Since $X\in \cL_{\overline{W}^{\mathrm{g}}}=\cC^{\overline{W}^{\mathrm{g}}}$ 
by \cref{bi-loc}, there is a canonical isomorphism
\begin{align}\label{minimal-2}
\RHom_R(\gamma_{V_0} R, X)\cong \RHom_R(\lambda^{\overline{W}^{\mathrm{g}}}\gamma_{V_0} R, X).
\end{align}
Consider the approximation triangle 
\[\gamma_{(\overline{W}^{\mathrm{g}})^\cp}\gamma_{V_0} R \to \gamma_{V_0} R\to \lambda^{\overline{W}^{\mathrm{g}}}\gamma_{V_0} R\xrightarrow{+},\]
where 
$\gamma_{(\overline{W}^{\mathrm{g}})^\cp}\gamma_{V_0} R\cong \gamma_{(\overline{W}^{\mathrm{g}})^\cp\cap V_0} R$; see, e.g., \cite[Proposition 6.1]{BIK08}. Since the composition $\gamma_{(\overline{W}^{\mathrm{g}})^\cp\cap V_0} R\cong \gamma_{(\overline{W}^{\mathrm{g}})^\cp}\gamma_{V_0} R\to \gamma_{V_0} R$ is the canonical one, letting $V_1:=(\overline{W}^{\mathrm{g}})^\cp\cap V_0$, we may regard the above triangle as the approximation triangle 
\[\gamma_{V_1} R \to \gamma_{V_0} R\to \lambda^{V_1^\cp}\gamma_{V_0} R\xrightarrow{+}.\]
Note that $V_0\backslash V_1=V_0\cap V_1^\cp=V_0\cap \overline{W}^{\mathrm{g}}=W$ and $\dim W\leq 0$ by assumption.
Hence $\lambda^{V_1^\cp}\gamma_{V_0} R\cong \bigoplus_{\qq\in W} \RGamma_\qq R_\qq$ by \cref{slice-functor}.
This fact and \cref{minimal-2} yield the natural isomorphisms
\begin{align}\label{minimal-2-1}
\RHom_R(\gamma_{V_0} R, X)&\cong \RHom_R(\bigoplus_{\qq\in W} \RGamma_\qq R_\qq, X)\cong\prod_{\qq\in W}\RHom_R(\RGamma_\qq R_\qq, X).
\end{align}
Since $\dim W\leq 0$, we have
\begin{equation}\label{minimal-3-1}
\RHom_R(\RGamma_\qq R_\qq, X)\cong \RHom_R(\RGamma_\qq R_\qq, \RGamma_\qq R_\qq^{(\varkappa)})
\end{equation}
for each $\qq\in W$ by \cref{bi-loc} and the second paragraph of \cref{RHom-TPhi}.
Therefore
\begin{align}\label{minimal-3}\RHom_R(\gamma_{V_0} R, X)\cong \prod_{\qq\in W}\RHom_R(\RGamma_\qq R_\qq, \RGamma_\qq R_\qq^{(\varkappa)})\cong \prod_{\qq\in W}\widehat{R_\qq^{(\varkappa)}},
\end{align}
where the first isomorphism holds by \cref{minimal-2,minimal-2-1,minimal-3-1} and the second holds by \cref{p-complete}.

Now, we remark that
\[\lambda^{\{\pp\}}\RHom_R(\gamma_{V(\pp)}R,\Sigma X)\cong \lambda^{\{\pp\}} \lambda^{V(\pp)}\Sigma X\cong \lambda^{\{\pp\}} \Sigma X \cong\LLambda^{\pp}(\bigoplus_{\qq\in W} \Sigma (\RGamma_\qq R_\qq^{(\varkappa)})_\pp) =0
\]
by \cref{trivial-iso}, \cref{(co)smash-iso}, and \cref{lambda-zero}, where the last equality holds because $(\RGamma_\qq R_\qq^{(\varkappa)})_\pp=0$ for every $\qq\in W$.
Hence, applying $\lambda^{\{\pp\}}$ to \cref{minimal-1}, we obtain the isomorphism
\begin{equation}\label{minimal-4}
\lambda^{\{\pp\}}\RHom_R(\Sigma \gamma_{V_0} R, \Sigma X)\isoto\lambda^{\{\pp\}}\RHom_R(\RGamma_{\pp} R_\pp,\Sigma X)
\end{equation}
induced by the canonical morphism 
$\RGamma_{\pp} R_\pp\cong \lambda^{V_0^\cp}\gamma_{V(\pp)}R \to \Sigma \gamma_{V_0}R$.
Notice that 
\begin{equation}\label{minimal-5}
\lambda^{\{\pp\}}\RHom_R(\RGamma_{\pp} R_\pp,\Sigma X)\cong\LLambda^{\pp}\RHom_R(\RGamma_{\pp} R_\pp,\Sigma X)\cong\RHom_R(\RGamma_{\pp} R_\pp,\Sigma X),
\end{equation}
where the first isomorphism follows from \cref{lambda-zero} along with the isomorphism $\RHom_R(\RGamma_{\pp} R_\pp,\Sigma X)\cong \RHom_R(\RGamma_{\pp} R_\pp,\Sigma X)_\pp$
and the second follows from \cref{RHom-TPhi}.
Thus combining \cref{minimal-3,minimal-4,minimal-5}, we obtain a natural isomorphism 
\[\RHom_R(\RGamma_{\pp} R_\pp,\Sigma X)\cong \Lambda^\pp ((\prod_{\qq\in W} \widehat{R_\qq^{(\varkappa)}})_\pp).\]
\end{proof}

\begin{notation}\label{adelic}
For each $\pp\in \Spec R$, we write $\Ad^{\pp}=\Lambda^{\pp}(-\otimes_RR_\pp)$, which is a functor $\Mod R\to \Mod R$. Moreover, for a subset $W\subseteq \Spec R$ with $\dim W\leq 0$, we write $\Ad^{W}=\prod_{\pp\in W}\Ad^\pp$.
If $W=\emptyset$, then $\Ad^{W}$ is the zero functor.
\end{notation}

\begin{corollary}\label{X(n)-X(n+1)}
Assume that $R$ admits a codimension function $\cd$.
For each $n\in \ZZ$, set $W_n:=\{\pp\in \Spec R \mid \cd(\pp)=n\}$,  
$T(n):=\bigoplus_{\pp\in W_n}\Sigma^n\RGamma_{\pp}R_\pp$, and $Y(n):=T(n)\oplus T(n+1)$. 
Let $\varkappa$ be any cardinal. Then there is an isomorphism
\begin{equation}\label{X(n)}
\RHom_{R}(Y(n), Y(n)^{(\varkappa)})\cong 
\Ad^{W_n}(R^{(\varkappa)})\oplus \Ad^{W_{n+1}}(R^{(\varkappa)})\oplus \Ad^{W_{n}}\Ad^{W_{n+1}}(R^{(\varkappa)})
\end{equation}
in $\D(R)$ for any $n\in \ZZ$, and this is induced by the following collection of natural isomorphisms:
\begin{equation}\label{T(i)-T(j)}
\RHom_{R}(T(i), T(j)^{(\varkappa)})
\cong 
\left\{
\begin{array}{ll}
\Ad^{W_i}(R^{(\varkappa)})& (n\leq i=j\leq n+1)\\
\Ad^{W_{n}}\Ad^{W_{n+1}}(R^{(\varkappa)})& (i=n,\ j=n+1)\\
0&(i=n+1,\ j=n)
\end{array}
\right.
\end{equation}
\end{corollary}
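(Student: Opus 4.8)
The statement unwinds into computing the four Hom-complexes $\RHom_R(T(i),T(j)^{(\varkappa)})$ with $i,j\in\{n,n+1\}$: since $\RHom_R(-,-)$ carries coproducts in the first variable to products and commutes with finite coproducts in the second, and $Y(n)=T(n)\oplus T(n+1)$, one has
\[
\RHom_R(Y(n),Y(n)^{(\varkappa)})\cong\bigoplus_{i,j\in\{n,n+1\}}\RHom_R(T(i),T(j)^{(\varkappa)}),
\]
so \cref{X(n)} will follow by reassembling this biproduct once \cref{T(i)-T(j)} is established. The plan is to reduce each of the four terms, using the combinatorics of the codimension function $\cd$, to an instance of \cref{p-complete} or of \cref{minimal-case}. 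Note that $T(i)=\bigoplus_{\pp\in W_i}\Sigma^i\RGamma_\pp R_\pp$, so the term with $(i,j)=(n,n+1)$ is $\prod_{\pp\in W_n}\RHom_R(\RGamma_\pp R_\pp,\Sigma\,T(n+1)^{(\varkappa)})$, carrying exactly the single shift $\Sigma$ occurring in \cref{minimal-case}, while the diagonal terms are shift-free and the term with $(i,j)=(n+1,n)$ carries a $\Sigma^{-1}$.

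First I would record the combinatorics forced by $\cd$: within a fibre $W_m$ distinct primes are incomparable; the fibres $W_n$ and $W_{n+1}$ are disjoint; for $\pp\in W_n$ one has $\{\qq\in W_{n+1}\mid\pp\subseteq\qq\}=\min(V(\pp)\setminus\{\pp\})$ (a prime strictly above $\pp$ lies in $W_{n+1}$ iff $\cd$ jumps by exactly $1$, i.e. iff it covers $\pp$; cf. \cref{cd-func} and \cref{comparison}); and if $m'<m$, $\pp\in W_m$, $\qq\in W_{m'}$, then $\pp\not\subseteq\qq$. Next I would isolate a vanishing principle: for $\pp\in\Spec R$ and a set $S$ of primes with $\pp\notin S$, so that $\pp\notin\supp\bigoplus_{\qq\in S}\RGamma_\qq R_\qq^{(\varkappa)}$,
\[
\RHom_R\bigl(\RGamma_\pp R_\pp,\textstyle\bigoplus_{\qq\in S}\RGamma_\qq R_\qq^{(\varkappa)}\bigr)=0 .
\]
This holds because $\RGamma_\pp R_\pp$ lies in $\cL_{\{\pp\}}$ (its support is $\{\pp\}$ by \cref{RHom-TPhi}), so the left side may be replaced by $\RHom_R(\RGamma_\pp R_\pp,\gamma_{\{\pp\}}Z)$ for $Z=\bigoplus_{\qq\in S}\RGamma_\qq R_\qq^{(\varkappa)}$, and $\gamma_{\{\pp\}}\cong\RGamma_\pp\bigl((-)\otimes_R R_\pp\bigr)$ — a smashing colocalization onto $\cL_{\{\pp\}}$, as in the proof of \cref{minimal-case} and in \cref{slice-functor} — annihilates any $Z$ with $\pp\notin\supp Z$, since then $\supp(\RGamma_\pp Z_\pp)\subseteq V(\pp)\cap U(\pp)\cap\supp Z=\{\pp\}\cap\supp Z=\emptyset$.

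With these in hand the four cases go as follows. For $i=j=n$ (and symmetrically $i=j=n+1$), the vanishing principle discards every summand of $T(n)^{(\varkappa)}$ indexed by $\pp'\neq\pp$, so $\RHom_R(T(n),T(n)^{(\varkappa)})\cong\prod_{\pp\in W_n}\RHom_R(\RGamma_\pp R_\pp,\RGamma_\pp R_\pp^{(\varkappa)})\cong\prod_{\pp\in W_n}\widehat{R_\pp^{(\varkappa)}}=\Ad^{W_n}(R^{(\varkappa)})$ by \cref{p-complete} and \cref{adelic}. For $i=n+1$, $j=n$: every $\qq\in W_n$ satisfies $\pp\not\subseteq\qq$ for $\pp\in W_{n+1}$, and $\pp\notin W_n$, so the whole target is killed and $\RHom_R(T(n+1),T(n)^{(\varkappa)})=0$. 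For $i=n$, $j=n+1$: the summands of $T(n+1)^{(\varkappa)}$ indexed by $\qq\in W_{n+1}$ with $\pp\not\subseteq\qq$ are killed, leaving
\begin{align*}
\RHom_R(T(n),T(n+1)^{(\varkappa)})
&\cong\prod_{\pp\in W_n}\RHom_R\Bigl(\RGamma_\pp R_\pp,\ \Sigma\!\!\bigoplus_{\qq\in\min(V(\pp)\setminus\{\pp\})}\!\!\RGamma_\qq R_\qq^{(\varkappa)}\Bigr)\\
&\cong\prod_{\pp\in W_n}\Lambda^\pp\Bigl(\bigl(\textstyle\prod_{\qq\in\min(V(\pp)\setminus\{\pp\})}\widehat{R_\qq^{(\varkappa)}}\bigr)_\pp\Bigr)
\end{align*}
by \cref{minimal-case} (applied with $W=\min(V(\pp)\setminus\{\pp\})$). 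It then remains to identify the last product with $\Ad^{W_n}\Ad^{W_{n+1}}(R^{(\varkappa)})=\prod_{\pp\in W_n}\Lambda^\pp\bigl((\prod_{\qq\in W_{n+1}}\widehat{R_\qq^{(\varkappa)}})_\pp\bigr)$; equivalently, for fixed $\pp\in W_n$, the factors $\widehat{R_\qq^{(\varkappa)}}$ with $\qq\in W_{n+1}$, $\pp\not\subseteq\qq$ must vanish after applying $\Lambda^\pp\bigl((-)_\pp\bigr)$. Each such $\qq$ is incomparable to $\pp$, so some $y_\qq\in\pp$ has unit image in $\widehat{R_\qq}$; writing $\pp=(z_1,\dots,z_t)$ (finite, since $R$ is noetherian) and $y_\qq=\sum_j c_{\qq,j}z_j$ with $c_{\qq,j}\in R$, one gets that the product $B$ of these factors satisfies $B=\pp B$, whence $\Lambda^\pp(B_\pp)=0$ and $B$ splits off harmlessly. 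Collecting the three values then yields \cref{X(n)}.

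The main obstacle will be the bookkeeping that keeps this formally transparent computation honest. Since $\RGamma_\pp R_\pp$ is not compact one cannot simply pull coproducts out of the first slot of $\RHom$, which is why the discarding of the off-support summands has to pass through the smashing colocalization $\gamma_{\{\pp\}}$ exactly as in the proof of \cref{minimal-case}; and, dually, $\Lambda^\pp$ and localization at $\pp$ do not commute with infinite products, so matching the output of \cref{minimal-case} with the $\Ad^{W_n}\Ad^{W_{n+1}}$-notation is not a formality but genuinely relies on the noetherian identity $B=\pp B$ above. Everything else is additivity of $\RHom$, the combinatorics of $\cd$, and the already-established identities \cref{p-complete} and \cref{minimal-case}.
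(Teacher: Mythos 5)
Your decomposition of $\RHom_R(Y(n),Y(n)^{(\varkappa)})$ into the four terms $\RHom_R(T(i),T(j)^{(\varkappa)})$, and the reduction of the two diagonal and the $(n,n+1)$ terms to \cref{p-complete} and \cref{minimal-case} respectively, via the combinatorics of $\cd$, is exactly the route the paper takes; the discrepancy analysis matching $\min(V(\pp)\setminus\{\pp\})$ with $W_{n+1}\cap V(\pp)$, and the subsequent discarding of the factors indexed by $W_{n+1}\setminus V(\pp)$, also mirror the paper (your argument ``$B=\pp B$'' is the Nakayama-style phrasing of the paper's ``$B\otimes_R R/\pp^t=0$'').

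There is, however, a genuine error in the ``vanishing principle'' you isolate and in its proposed justification. The statement ``$\pp\notin S$ implies $\RHom_R(\RGamma_\pp R_\pp,\bigoplus_{\qq\in S}\RGamma_\qq R_\qq^{(\varkappa)})=0$'' is false: for $R=\ZZ$, $\pp=(0)$, $S=\mSpec\ZZ$, one has $\bigoplus_{\qq\in S}\RGamma_\qq\ZZ_\qq\cong\Sigma^{-1}\QQ/\ZZ$ while $\RGamma_{(0)}\ZZ_{(0)}\cong\QQ$, and $\RHom_\ZZ(\QQ,\Sigma^{-1}\QQ/\ZZ)\neq 0$. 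The hypothesis ``$\pp\notin\supp Z$'' does not control $\Hom$ into $Z$; one needs $V(\pp)\cap\supp Z=\emptyset$, i.e.\ $\pp\not\subseteq\qq$ for all $\qq\in S$, so that $Z\in\cL_{V(\pp)^\cp}=\cL_{V(\pp)}^{\perp_0}$ by \cref{bi-loc}. The root of the problem is your identification $\gamma_{\{\pp\}}\cong\RGamma_\pp((-)\otimes_R R_\pp)$: in the paper's conventions $\gamma_W$ is the \emph{right} adjoint to $\cL_W\hookrightarrow\D(R)$, whereas $\RGamma_\pp((-)\otimes_R R_\pp)=\gamma_{V(\pp)}\lambda^{U(\pp)}$ is the BIK-style local functor $\lambda^{(V(\pp)\setminus\{\pp\})^\cp}\gamma_{V(\pp)}$ of \cref{slice-functor}; the two agree on $\cL_{\{\pp\}}$ but are different functors, so the adjunction step ``replace $Z$ by $\gamma_{\{\pp\}}Z$ and compute it as $\RGamma_\pp Z_\pp$'' is invalid (again $R=\ZZ$, $\pp=(0)$: $\gamma_{\{(0)\}}=\RHom_\ZZ(\QQ,-)$ while $\RGamma_{(0)}((-)_{(0)})=(-)\otimes_\ZZ\QQ$, and these disagree on $\QQ/\ZZ$). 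The saving grace is that in all three places you invoke the principle, the summands you discard are indexed by primes $\qq$ with $\pp\not\subseteq\qq$ (incomparable for $i=j$ by the constancy of $\cd$ on $W_m$, and non-containment for $i\neq j$ because $\cd$ is order-preserving), which is the stronger, correct hypothesis under which the paper's argument via \cref{bi-loc} goes through. So the proof as written has a flawed lemma and justification, but the downstream computations are all in the correct regime and the conclusion stands once the vanishing step is repaired as above.
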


\begin{proof}
Let $i$ and $j$ be integers. 
There are natural isomorphisms
\begin{align*}
\RHom_{R}(T(i), T(j)^{(\varkappa)})
&\cong \prod_{\pp\in W_i}\RHom_{R}(\RGamma_{\pp}R_\pp,\bigoplus_{\qq\in W_j}\Sigma^{j-i}\RGamma_{\qq}R_\qq^{(\varkappa)})\\
&\cong \prod_{\pp\in W_i}\RHom_{R}(\RGamma_{\pp}R_\pp,\bigoplus_{\qq\in V(\pp)\cap W_j}\Sigma^{j-i}\RGamma_{\pp}R_\pp^{(\varkappa)})
\end{align*}
in $\D(R)$, where the second isomorphism follows from \cref{bi-loc}.

If $i=j$ and $\pp\in W_i$, then $V(\pp)\cap W_i=\{\pp\}$, so we have
\[\RHom_{R}(T(i), T(i)^{(\varkappa)})
\cong \prod_{\pp\in W_i}\RHom_{R}(\RGamma_{\pp}R_\pp,\RGamma_{\pp}R_\pp^{(\varkappa)})
\cong \prod_{\pp\in W_i}\Ad^{\pp}(R^{(\varkappa)})
= \Ad^{W_i} (R^{(\varkappa)})\]
by \cref{p-complete}. 
This shows the first isomorphism of \cref{T(i)-T(j)}.

If $(i,j)=(n,n+1)$ for some integer $n$, then we have 
\begin{align*}
\RHom_{R}(\RGamma_{\pp}R_\pp,\bigoplus_{\qq\in V(\pp)\cap W_{n+1}}\Sigma \RGamma_{\qq}R_\qq^{(\varkappa)})
&\cong \Lambda^\pp ((\prod_{\qq\in V(\pp)\cap W_{n+1}} \widehat{R_\qq^{(\varkappa)}})_\pp)\\
&\cong \Lambda^\pp((\prod_{\qq\in W_{n+1}} \widehat{R_\qq^{(\varkappa)}})_\pp)\\
&=\Ad^{\pp} \Ad^{W_{n+1}}(R^{(\varkappa)})
\end{align*}
for every $\pp\in W_n$ by \cref{minimal-case}, where the second isomorphism holds because \[(\prod_{\qq\in W_{n+1}\sm V(\pp)} \widehat{R_\qq^{(\varkappa)}})_\pp\otimes_RR/\pp^t\cong (\prod_{\qq\in W_{n+1}\sm V(\pp)} (\widehat{R_\qq^{(\varkappa)}}\otimes_RR/\pp^t))_\pp=0\] for all $t\geq 1$. Hence there is a natural isomorphism
\[\RHom_{R}(T(n), T(n+1)^{(\varkappa)})\cong \prod_{\pp\in W_n} \Ad^{\pp}\Ad^{W_{n+1}}(R^{(\varkappa)})=\Ad^{W_n} \Ad^{W_{n+1}}(R^{(\varkappa)})\]
in $\D(R)$, from which the second isomorphism of  \cref{T(i)-T(j)} follows.

If $i>j$ and $\pp\in W_i$, then $V(\pp)\cap W_j=\emptyset$, so 
\[\RHom_{R}(T(i), T(j)^{(\varkappa)})= 0\]
in $\D(R)$.
This shows the third case of \cref{T(i)-T(j)}.
\end{proof}

\begin{corollary}\label{End-tilt}
Let $R$, $Y(n)$, and $\varkappa$ be as in \cref{X(n)-X(n+1)}.
Then there is a natural isomorphism
\[\RHom_R(Y(n),Y(n)^{(\varkappa)}) \cong \Hom_{\D(R)}(Y(n), Y(n)^{(\varkappa)})\]
in $\D(R)$ for any $n\in \ZZ$. The right-hand side is a flat $R$-module.
\end{corollary}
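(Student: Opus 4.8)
The plan is to read off both claims directly from \cref{X(n)-X(n+1)}. The first assertion --- that the derived Hom is concentrated in degree zero --- follows because the right-hand side of \cref{X(n)} is a coproduct and product of modules of the form $\Ad^{W_n}(R^{(\varkappa)})$, $\Ad^{W_{n+1}}(R^{(\varkappa)})$ and $\Ad^{W_n}\Ad^{W_{n+1}}(R^{(\varkappa)})$, each of which lives in $\D(R)$ with cohomology concentrated in a single degree. Concretely, for a prime $\pp$ the object $\Ad^{\pp}(M) = \Lambda^{\pp}(M\otimes_R R_\pp)$ applied to a projective (in particular flat) module $M$ is again a module, since $M\otimes_R R_\pp$ is flat over $R_\pp$ and $\pp$-adic completion of a flat module over a noetherian ring is flat (hence the derived completion agrees with the ordinary one and has no higher cohomology); this is exactly the computation already carried out for \cref{p-complete}--\cref{p-complete-2} and \cref{minimal-case}, whose outputs $\widehat{R_\qq^{(\varkappa)}}$ and $\Lambda^\pp((\prod \widehat{R_\qq^{(\varkappa)}})_\pp)$ were noted to have neither positive nor negative cohomology. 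So every summand in \cref{X(n)} is (isomorphic in $\D(R)$ to) an honest $R$-module sitting in degree $0$, and therefore the canonical morphism $\RHom_R(Y(n),Y(n)^{(\varkappa)})\to \Hom_{\D(R)}(Y(n),Y(n)^{(\varkappa)})$ (which is truncation to $H^0$ when the source is coconnective, but here the source is a module concentrated in degree $0$) is an isomorphism. I would phrase this as: by \cref{X(n)-X(n+1)}, $\RHom_R(Y(n),Y(n)^{(\varkappa)})$ is isomorphic in $\D(R)$ to a module, and for such an object the natural map to $\Hom_{\D(R)}(Y(n),Y(n)^{(\varkappa)})=H^0\RHom_R(Y(n),Y(n)^{(\varkappa)})$ is an isomorphism.

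For the flatness of the right-hand side, I would argue summand by summand using the isomorphism \cref{X(n)}. It suffices to show that each of $\Ad^{W_n}(R^{(\varkappa)})$, $\Ad^{W_{n+1}}(R^{(\varkappa)})$ and $\Ad^{W_n}\Ad^{W_{n+1}}(R^{(\varkappa)})$ is a flat $R$-module, since a finite direct sum of flat modules is flat. Now $\Ad^{W_n}(R^{(\varkappa)}) = \prod_{\pp\in W_n}\Lambda^\pp(R_\pp^{(\varkappa)}) = \prod_{\pp\in W_n}\widehat{R_\pp^{(\varkappa)}}$, and each $\widehat{R_\pp^{(\varkappa)}}$ is flat over $R$: indeed $R_\pp^{(\varkappa)}$ is flat over $R_\pp$, its $\pp$-adic completion is flat over $\widehat{R_\pp}$ by \cite[Theorem 3.2.16]{EJ11} (or the Artin--Rees-based argument over a noetherian ring), and $\widehat{R_\pp}$ is flat over $R_\pp$ which is flat over $R$. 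A (possibly infinite) product of flat modules over a noetherian ring is flat (Chase's theorem), so $\Ad^{W_n}(R^{(\varkappa)})$ is flat; the same argument gives flatness of $\Ad^{W_{n+1}}(R^{(\varkappa)})$. For the mixed term, by \cref{minimal-case} we have $\Ad^{W_n}\Ad^{W_{n+1}}(R^{(\varkappa)}) = \prod_{\pp\in W_n}\Lambda^\pp\big((\prod_{\qq\in W_{n+1}}\widehat{R_\qq^{(\varkappa)}})_\pp\big)$; here $\prod_{\qq\in W_{n+1}}\widehat{R_\qq^{(\varkappa)}}$ is flat over $R$ by what we just showed, its localization at $\pp$ is flat over $R_\pp$, and its $\pp$-adic completion is then flat over $R$ by the same chain of implications, and finally the product over $\pp\in W_n$ is flat by Chase's theorem again.

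The main obstacle --- and the only genuinely delicate point --- is the interaction between infinite products and flatness: I am repeatedly using that an arbitrary product of flat modules over a noetherian ring is flat, and that $\pp$-adic (or derived $\pp$-adic, which here coincides) completion preserves flatness over a noetherian ring. Both are standard (the first is Chase's theorem, valid since $R$ is noetherian hence coherent; the second follows from \cite[Theorem 3.2.16]{EJ11}, already cited in the proof of \cref{dc-cotilt}, together with the observation that $\Lambda^\pp$ of a flat module agrees with the module-level completion because the higher left-derived functors $L_i\Lambda^\pp$ vanish on flats over a noetherian ring). I would make sure to state these two facts explicitly with references rather than leave them implicit, since the whole argument rests on them. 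Everything else is bookkeeping: unwinding \cref{X(n)} and \cref{T(i)-T(j)}, and noting that the degreewise concentration already observed in the proofs of \cref{p-complete} and \cref{minimal-case} gives the first half of the statement for free.
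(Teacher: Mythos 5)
Your approach is essentially the same as the paper's: both derive the whole statement from \cref{X(n)-X(n+1)}, which already exhibits $\RHom_R(Y(n),Y(n)^{(\varkappa)})$ as a module concentrated in degree zero (so the first isomorphism is automatic), and then verify flatness of the explicit summands in \cref{X(n)}. The paper routes the flatness through \cref{flatcotorsion}, recognizing each summand as a flat \emph{cotorsion} module via \cite[Theorem 5.3.28]{EJ11}; you recover plain flatness more hands-on from Chase's theorem together with preservation of flatness under $\pp$-adic completion, which is equivalent bookkeeping. One citation should be corrected: \cite[Theorem 3.2.16]{EJ11} is about injectivity over flat extensions (that is how the paper uses it elsewhere), not about completion preserving flatness --- the statement you actually need, that the $\pp$-adic completion of a free $R_\pp$-module is a flat (cotorsion) $R$-module, is better sourced from \cite[Lemma 6.7.4]{EJ11} or \cite[Theorem 5.3.28]{EJ11}, both of which the paper already cites for exactly this purpose in the proof of \cref{flat-End} and in \cref{flatcotorsion}.
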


\begin{proof}
This follows from \cref{X(n)-X(n+1)}.
See also \cref{flatcotorsion} below for the second statement.
\end{proof}

If $\dim R\leq 1$, a codimension function $\cd$ on $\Spec R$ always exists; for example, choose $\cd$ as the height function $\height :\Spec R\to \ZZ$ or the function defined by the assignment $\pp\mapsto \dim R-\dim R/\pp$.

\begin{theorem}\label{1-dim}
	Assume that the Krull dimension of $R$ is at most one. Let $\Phi$ be a codimension filtration $\Phi$ of $\Spec R$. Then $T_\Phi$ is a tilting object in $\D(R)$.
\end{theorem}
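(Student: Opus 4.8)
The plan is to deduce \cref{1-dim} from \cref{slice-silting} together with the explicit endomorphism-ring computations of \cref{X(n)-X(n+1)} and \cref{End-tilt}. Since $\dim R\le 1$, a codimension function $\cd$ exists (e.g.\ the height function), and by \cref{slice-silting} the object $T_\Phi$ is already a silting object in $\D(R)$ inducing the t-structure $(\cY_\Phi,\cY_\Phi^{\perp_0})$. So the only thing left is to verify that $T_\Phi$ is \emph{tilting}, i.e.\ $\Add(T_\Phi)\subseteq T_\Phi^{\perp_{<0}}$, equivalently $\Hom_{\D(R)}(T_\Phi,\Sigma^n T_\Phi^{(\varkappa)})=0$ for every $n<0$ and every cardinal $\varkappa$. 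By \cref{non-connected} we may work one connected component at a time and hence assume $\Spec R$ is connected, so that after a shift we may take $\cd$ itself to be the codimension function with $\f_\Phi=\cd$; then $\Phi$ takes only values $n$ and $n+1$ on the image of $\cd$ for $\cd$ ranging over an interval, but more importantly $\Spec R$ is partitioned into the two-or-fewer consecutive layers $W_n=\{\pp\mid\cd(\pp)=n\}$.

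First I would note that since $\dim R\le1$ and $\Spec R$ is connected, the image of $\cd$ is either a single integer (the $0$-dimensional case, where $T_\Phi$ is, up to shift, a module and tilting is automatic by the argument in the proof of \cref{tilt-CM}) or exactly two consecutive integers $\{n,n+1\}$; replacing $\cd$ by $\cd-n$ we may assume the image is $\{0,1\}$, so $\Spec R=W_0\sqcup W_1$ with $W_0=\mSpec R$ the maximal ideals and $W_1$ the (finitely many) minimal primes that are not maximal, and $T_\Phi=T(0)\oplus T(1)=Y(0)$ in the notation of \cref{X(n)-X(n+1)}. Then \cref{End-tilt} (applied with $n=0$) gives a natural isomorphism
\[
\RHom_R\bigl(T_\Phi,T_\Phi^{(\varkappa)}\bigr)\cong \Hom_{\D(R)}\bigl(T_\Phi,T_\Phi^{(\varkappa)}\bigr)
\]
in $\D(R)$ for every cardinal $\varkappa$, i.e.\ the complex $\RHom_R(T_\Phi,T_\Phi^{(\varkappa)})$ has cohomology concentrated in degree $0$. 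In particular $\Hom_{\D(R)}(T_\Phi,\Sigma^n T_\Phi^{(\varkappa)})=H^n\RHom_R(T_\Phi,T_\Phi^{(\varkappa)})=0$ for all $n\ne0$, which in particular gives the vanishing for all $n<0$. Since $\Add(T_\Phi)$ consists of summands of coproducts $T_\Phi^{(\varkappa)}$, this yields $\Add(T_\Phi)\subseteq T_\Phi^{\perp_{<0}}$, so $T_\Phi$ is tilting.

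The only subtle point — and the place I would be most careful — is the reduction to the connected case and the normalization of the codimension function: one must invoke \cref{non-connected} to split $\D(R)\simeq\prod\D(R_i)$, observe that $T_\Phi$ decomposes accordingly, that a codimension filtration restricts to a codimension filtration on each $\Spec R_i$, and that being tilting is detected componentwise; and on each component one must check that the image of a codimension function on a connected $\Spec R_i$ with $\dim R_i\le1$ really is an interval of length $\le1$ in $\ZZ$, so that $T_{\Phi}|_{\D(R_i)}$ is of the form $Y(n)$ and \cref{End-tilt} applies verbatim. All of this is routine given the machinery already in place; no genuinely new estimate is needed, and the ``hard'' computational work has already been carried out in \cref{minimal-case}, \cref{X(n)-X(n+1)}, and \cref{End-tilt}.
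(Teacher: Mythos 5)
Your proof is correct and follows essentially the same route as the paper's: invoke \cref{slice-silting} for the silting property, reduce via \cref{non-connected} to a normalized codimension function on each connected component, and conclude by identifying $T_\Phi$ with $Y(0)$ and applying \cref{End-tilt}. One small slip: after normalizing $\cd$ to take values in $\{0,1\}$, you have the layers swapped — since $\cd$ is strictly increasing along saturated chains, $W_0$ is the set of minimal primes (codimension $0$) and $W_1=\mSpec R$, not the other way around — but this relabeling does not affect the argument, since \cref{End-tilt} applies to $Y(0)=T(0)\oplus T(1)$ either way.
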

\begin{proof}
By \cref{slice-silting}, $T_\Phi$ is a silting object in $\D(R)$. To show that $T_\Phi$ is tilting, we may assume that $\Phi$ is the height filtration; see the last paragraph of \cref{non-connected}. Then the theorem follows from \cref{End-tilt}.
\end{proof}

For the proof of the next theorem, we make a remark.
\begin{remark}\label{A-ring}
Let $R$ be any commutative noetherian ring and $W\subseteq \Spec R$ with $\dim W\leq 0$.
For each $\qq\in W$, let $h_\qq: \widehat{R_\qq}\isoto\End_{\D(R)}(\RGamma_\qq R_\qq)$ be  the canonical ring isomorphism given by \cref{ring-hom}.
Let $n$ be an integer and $X:=\bigoplus_{\pp\in W}\Sigma^n\RGamma_{\pp}R_\pp$.
Take any element $(r_\qq)_{\qq\in W}$  of $\prod_{\qq\in W}\widehat{R_\qq}=\Ad^{W} R$. 
Then we have the coproduct $\bigoplus_{\qq\in W}  \Sigma^n h_\qq(r_\qq): X\to X$ of morphisms $ \Sigma^n h_\qq(r_\qq):\Sigma^n\RGamma_\qq R_\qq \to \Sigma^n\RGamma_\qq R_\qq$. 
Since 
\[\End_{\D(R)}(X)\cong \prod_{\qq\in W}\End_{\D(R)}(\RGamma_\qq R_\qq)\] 
by \cref{bi-loc},
the assignment $(r_\qq)_{\qq\in W}\mapsto \bigoplus_{\qq\in W} \Sigma^n  h_\qq(r_\qq)$ induces a ring isomorphism
\begin{equation}\label{A-ring-1}
\Ad^{W} R=\prod_{\qq\in W}\widehat{R_\qq}\isoto \End_{\D(R)}(X).
\end{equation} 
This gives the first isomorphism of \cref{T(i)-T(j)} if $W=W_n$, $X=T(n)$, and $\varkappa=1$. Note that if $W=\emptyset$, then the both sides of \cref{A-ring-1} are the zero ring.

Let $U\subseteq \Spec R$ with $\dim U\leq 0$. 
Let us observe that $\Ad^U \Ad^WR$ has an $(\Ad^{W}R,\Ad^UR)$-bimodule structure. 
Regard the (commutative) $R$-algebra $\Ad^{W}R$ as a left $\Ad^{W}R$-module.
Each element $a\in \Ad^{W}R$ induces an $R$-homomorphism $\Ad^{W}R\xrightarrow{a \cdot} \Ad^{W}R$, and it is sent to an $R$-homomorphism by the functor $\Ad^{U}: \Mod R\to \Mod R$.
Hence we can define the $a$-action on $\Ad^U\Ad^{W}R$ as the induced map $\Ad^{U}(a \cdot): \Ad^U \Ad^WR\to \Ad^U \Ad^WR$, so $\Ad^U \Ad^WR$ is a left $\Ad^WR$-module. 
On the other hand, we may interpret $\Ad^\pp$ as a functor $\Mod R\to \Mod R_\pp\to \Mod \widehat{R_\pp}$  for each $\pp\in \Spec R$ (see, e.g., \cite[Remark A.11]{KN22}). Thus $\Ad^{U}=\prod_{\pp\in U}\Ad^\pp$ is a functor $\Mod R\to \Mod \Ad^{U}R$. In particular, the (commutative) ring $\Ad^{U}R$ naturally acts on $\Ad^{U}\Ad^{W}R$ from the right.
As a consequence, $\Ad^{U}\Ad^{W}R$ has an $(\Ad^{W}R,\Ad^{U}R)$-bimodule structure. Indeed, given $a\in \Ad^{W}R$, $b\in \Ad^{U}R$, and $x\in \Ad^{U}\Ad^{W}R$,  we have $(ax)b=(\Ad^{U}(a \cdot))(x)b=(\Ad^{U}(a \cdot))(xb)=a(xb)$, where the second equality holds because $\Ad^{U}(a \cdot): \Ad^U \Ad^WR\to \Ad^U \Ad^WR$ is a morphism in $\Mod \Ad^UR$, i.e., an $\Ad^{U}R$-homomorphism.
\end{remark}

\begin{theorem}\label{End-tilt-2}
Let $R$, $W_n$, and $Y(n)$ be as in \cref{X(n)-X(n+1)}.
For each $n\in \ZZ$, there is a natural isomorphism
\begin{equation*}
\End_{\D(R)}(Y(n))\cong 
\begin{pmatrix}
\Ad^{W_n}R & 0 \vspace{2mm}\\ 
	\Ad^{W_{n}}\Ad^{W_{n+1}}R & \Ad^{W_{n+1}}R
	\end{pmatrix}
\end{equation*}
of rings.
\end{theorem}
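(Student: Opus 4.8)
The plan is to realise $\End_{\D(R)}(Y(n))$ as a $2\times2$ lower triangular matrix ring via the decomposition $Y(n)=T(n)\oplus T(n+1)$ and to identify its entries, as rings resp.\ bimodules, with those of the matrix ring in the statement. Composition in $\D(R)$ makes $\End_{\D(R)}(Y(n))$ the ring of matrices $(\phi_{ij})$ with $\phi_{11}\in\End_{\D(R)}(T(n))$, $\phi_{12}\in\Hom_{\D(R)}(T(n+1),T(n))$, $\phi_{21}\in\Hom_{\D(R)}(T(n),T(n+1))$, $\phi_{22}\in\End_{\D(R)}(T(n+1))$, with the usual matrix operations (using composition in $\D(R)$). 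By the third case of \cref{T(i)-T(j)} (with $\varkappa=1$) we have $\Hom_{\D(R)}(T(n+1),T(n))=0$, so this is the lower triangular matrix ring $\bigl(\begin{smallmatrix}A&0\\ M&B\end{smallmatrix}\bigr)$ with $A:=\End_{\D(R)}(T(n))$, $B:=\End_{\D(R)}(T(n+1))$, and $M:=\Hom_{\D(R)}(T(n),T(n+1))$ regarded as a $(B,A)$-bimodule via post- and pre-composition. Hence it suffices to produce ring isomorphisms $A\isoto\Ad^{W_n}R$ and $B\isoto\Ad^{W_{n+1}}R$ together with a compatible bimodule isomorphism $M\cong\Ad^{W_n}\Ad^{W_{n+1}}R$, where $\Ad^{W_n}\Ad^{W_{n+1}}R$ carries the $(\Ad^{W_{n+1}}R,\Ad^{W_n}R)$-bimodule structure of \cref{A-ring} (the case $U=W_n$, $W=W_{n+1}$).

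The diagonal is immediate: \cref{A-ring-1}, applied with $W=W_n$, resp.\ $W=W_{n+1}$, and $\varkappa=1$, gives ring isomorphisms $\Ad^{W_n}R\isoto A$ and $\Ad^{W_{n+1}}R\isoto B$, the first being the $\varkappa=1$ instance of the first isomorphism of \cref{T(i)-T(j)}. For $M$ I would use the isomorphism $M\cong\Ad^{W_n}\Ad^{W_{n+1}}R$ coming (for $\varkappa=1$) from the proof of \cref{X(n)-X(n+1)}, i.e.\ $\prod_{\pp\in W_n}$ of the isomorphism of \cref{minimal-case}, and check that it intertwines both actions. Since that isomorphism is assembled from canonical morphisms of $\D(R)$ — the triangle attached to $V_0=V(\pp)\setminus\{\pp\}$, the functor $\lambda^{\{\pp\}}$, and $\prod_{\qq}$ of \cref{p-complete} — post-composition by an endomorphism $\bigoplus_{\qq}\Sigma^{n+1}h_{\qq}(s_{\qq})$ of $T(n+1)$ corresponds, at the stage $\RHom_R(\gamma_{V_0}R,X)\cong\prod_{\qq}\widehat{R_\qq}$, to left multiplication by $(s_{\qq})$ — here one invokes that by \cref{ring-hom} the endomorphism $h_{\qq}(s_{\qq})$ of $\RGamma_{\qq}R_{\qq}$ is multiplication by $s_{\qq}$ under the ring isomorphism $\End_{\D(R)}(\RGamma_{\qq}R_{\qq})\cong\widehat{R_\qq}$ — and applying $\lambda^{\{\pp\}}=\LLambda^{\pp}((-)_{\pp})$ turns this into $\Ad^{\pp}$ of left multiplication, which is exactly the left $\Ad^{W_{n+1}}R$-action of \cref{A-ring}. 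Thus the left action is matched.

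The main obstacle is the right action. Here one must track pre-composition by $\bigoplus_{\pp}\Sigma^{n}h_{\pp}(r_{\pp})$ through the connecting morphism $\RGamma_{\pp}R_{\pp}\cong\lambda^{V_0^{\cp}}\gamma_{V(\pp)}R\to\Sigma\gamma_{V_0}R$ used in \cref{minimal-case}, which is awkward because $h_{\pp}(r_{\pp})$ is indexed by the completed ring $\widehat{R_\pp}$ whereas $\gamma_{V_0}R$ carries no obvious $\widehat{R_\pp}$-structure. The way around this that I expect to work is to apply $\lambda^{\{\pp\}}=\LLambda^{\pp}((-)_{\pp})$ at the outset: this functor sends every morphism of $\D(R)$ to a $\widehat{R_\pp}$-linear morphism (a morphism of $\D(\widehat{R_\pp})$), so every isomorphism in the chain of \cref{minimal-case} automatically respects the $\widehat{R_\pp}$-module structures, and the claim reduces to the statement that the $\widehat{R_\pp}$-module structure on $\RHom_R(\RGamma_{\pp}R_{\pp},\Sigma X)$ coming from pre-composition agrees with the one it carries as $\LLambda^{\pp}\RHom_R(R_{\pp},\Sigma X)$ via \cref{RHom-TPhi} — this is precisely the content of \cref{ring-hom} being a ring isomorphism (together with a density argument to pass from $R_{\pp}$ to $\widehat{R_\pp}$). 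Having matched both actions, one collects over $\pp\in W_n$ to obtain the bimodule isomorphism $M\cong\Ad^{W_n}\Ad^{W_{n+1}}R$; since a ring isomorphism of lower triangular matrix rings is determined by ring isomorphisms on the diagonal and a compatible bimodule isomorphism off the diagonal, assembling the three pieces yields the asserted isomorphism of rings.
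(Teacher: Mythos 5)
Your proposal follows the paper's argument closely: the same matrix-ring decomposition of $\End_{\D(R)}(Y(n))$, the same use of \cref{A-ring-1} for the diagonal entries, and the same strategy of tracking the bimodule structure through the isomorphisms constructed in the proof of \cref{X(n)-X(n+1)}, with the left action handled by functoriality and the right action reduced to a rigidity statement about $\widehat{R_\pp}$-module structures. The one place where you are imprecise is the right-action matching: what is needed is \emph{not} simply ``the content of \cref{ring-hom} being a ring isomorphism'' but rather the separate fact that $\Hom_{\D(R)}(\Sigma^{n}\RGamma_\pp R_\pp, T(n+1))$ admits at most one right $\widehat{R_\pp}$-module structure extending its $R_\pp$-module structure; your ``density argument'' is the right intuition, and the paper makes it rigorous by citing \cite[Proposition A.15]{KN22} after observing that the module in question already carries an $\widehat{R_\pp}$-action (namely via $\Ad^\pp\Ad^{W_{n+1}}R$).
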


\begin{proof}
By definition and the third case of \cref{T(i)-T(j)}, we have the ring isomorphism
\begin{align*}
\End_{\D(R)}(Y(n))
&\cong 
\begin{pmatrix}
\End_{\D(R)}(T(n)) & 0 \vspace{2mm}\\ 
\Hom_{\D(R)}(T(n),T(n+1)) & \End_{\D(R)}(T(n+1))
	\end{pmatrix}.
\end{align*}
Moreover, by the first and the second cases of \cref{T(i)-T(j)}, we have the following isomorphisms of $R$-modules:
\begin{align}
\Ad^{W_{n}} R&\cong \End_{\D(R)}(T(n)),\label{ring-iso-1}\\
\Ad^{W_{n+1}} R&\cong \End_{\D(R)}(T(n+1)),\label{ring-iso-2}\\
\Ad^{W_n}\Ad^{W_{n+1}} R&\cong \Hom_{\D(R)}(T(n),T(n+1)).\label{AWAW-Hom}
\end{align}
Here, \cref{ring-iso-1,ring-iso-2} are isomorphisms of rings by \cref{A-ring-1}.
Thus it remains to show that, through \cref{ring-iso-1}, \cref{ring-iso-2} and \cref{AWAW-Hom}, 
the $(\Ad^{W_{n+1}}R,\Ad^{W_{n}}R)$-bimodule structure on $\Ad^{W_n}\Ad^{W_{n+1}} R$ agrees with the $(\End_{\D(R)}(T(n+1)),\End_{\D(R)}(T(n)))$-module structure on $\Hom_{\D(R)}(T(n),T(n+1))$.

Take $\pp\in W_n$. Let $W:=V(\pp) \cap W_{n+1}$ and $X:=\bigoplus_{\qq\in W} \RGamma_{\qq}R_\qq$.
By the proof of \cref{X(n)-X(n+1)}, we have a natural isomorphism
\begin{equation}\label{Pf-X(n)-X(n+1)}
\RHom_{R}(\RGamma_{\pp}R_\pp,\Sigma X)\cong\RHom_{R}(\Sigma^{n}\RGamma_{\pp}R_\pp,T(n+1)).
\end{equation}
in $\D(R)$.
Let $V_0:=V(\pp)\sm \{\pp\}$. Since $W\subseteq \min V_0$, we have a natural morphism
\begin{equation}\label{1st-minimal-1}
\RHom_R(\Sigma\gamma_{V_0} R,\Sigma X)\to\RHom_R(\RGamma_{\pp} R_\pp, \Sigma X),
\end{equation}
which is the first morphism of \cref{minimal-1}.
Moreover, there is a natural isomorphism
\begin{equation}\label{AW-RHom-1}
\Ad^W R\cong \RHom_R(\Sigma\gamma_{V_0} R,\Sigma X)
\end{equation}
by \cref{minimal-3}. By construction, the left $\Ad^W R$-action on $\Ad^W R$ agrees with the left $\End_{\D(R)}(X)$-action on $\RHom_R(\Sigma\gamma_{V_0} R,\Sigma X)$, through \cref{AW-RHom-1} and the ring isomorphism $\Ad^W R\isoto \End_{\D(R)}(X)$ given by  \cref{A-ring-1}.
Composing \cref{Pf-X(n)-X(n+1)} \cref{1st-minimal-1}, and \cref{AW-RHom-1}, we obtain a natural morphism
\begin{align}\label{AW-RHom-2}
\Ad^W R\to \RHom_{R}(\Sigma^{n}\RGamma_{\pp}R_\pp,T(n+1))
\end{align}
in $\D(R)$.
We remark that, by \cref{minimal-4,Pf-X(n)-X(n+1)},  the morphism \cref{1st-minimal-1} becomes an isomorphism upon application of $\lambda^{\{\pp\}}$.
Furthermore, we have $\lambda^{\{\pp\}}\Ad^{W} R\cong \Ad^{\pp}\Ad^{W} R\cong  \Ad^{\pp}\Ad^{W_{n+1}} R$ in $\D(R)$; see \cref{lambda-zero} and the proof of \cref{X(n)-X(n+1)}. Thus,
application of $\lambda^{\{\pp\}}$ to \cref{AW-RHom-2} induces
a natural isomorphism
\begin{equation}\label{AW-RHom-3}
\Ad^{\pp}\Ad^{W_{n+1}} R\isoto \RHom_{R}(\Sigma^{n}\RGamma_{\pp}R_\pp,T(n+1))
\end{equation}
in $\D(R)$; see also \cref{minimal-5}. By construction, the left $\Ad^{W_{n+1}} R$-action on $\Ad^{\pp}\Ad^{W_{n+1}} R$ agrees with the left $\End_{\D(R)}(T(n+1))$-action on $\RHom_{R}(\Sigma^{n}\RGamma_{\pp}R_\pp,T(n+1))$ through \cref{AW-RHom-3,ring-iso-2}.
Taking the product of \cref{AW-RHom-3} for all $\pp\in W_n$, and using \cref{T(i)-T(j)} (or \cref{End-tilt}), we obtain the following natural isomorphisms
\begin{align*}
\Ad^{W_n}\Ad^{W_{n+1}} R
&\cong 
\prod_{\pp\in W_n}\RHom_{R}(\Sigma^{n}\RGamma_{\pp}R_\pp,T(n+1))\\
&\cong \RHom_{R}(T(n),T(n+1))\\
&\cong \Hom_{\D(R)}(T(n),T(n+1))
\end{align*}
in $\D(R)$, whose composition is nothing but \cref{AWAW-Hom}.
By construction, the left $\Ad^{W_{n+1}} R$-action on $\Ad^{W_n}\Ad^{W_{n+1}} R$ agrees with the left $\End_{\D(R)}(T(n+1))$-action on $\Hom_{\D(R)}(T(n),T(n+1))$ through \cref{AWAW-Hom,ring-iso-2}.

We next show that the right $\Ad^{W_{n}}R$-action on $\Ad^{W_n}\Ad^{W_{n+1}}R$ agrees with the right $\End_{\D(R)}(T(n))$-action on $\Hom_{\D(R)}(T(n),T(n+1))$ through \cref{ring-iso-1,AWAW-Hom}.
Since the isomorphism \cref{AWAW-Hom} restricts to the isomorphism 
\begin{align}\label{restricted-iso}
\Ad^\pp \Ad^{W_{n+1}}R\cong \Hom_{\D(R)}(\Sigma^{n}\RGamma_{\pp}R_\pp,T(n+1))
\end{align}
of $R_\pp$-modules for each $\pp\in W_n$, it suffices to show that the right $\widehat{R_\pp}$-action on $\Ad^\pp \Ad^{W_{n+1}}R$ agrees with the right $\End_{\D(R)}(\Sigma^{n}\RGamma_{\pp}R_\pp)$-action on $M:=\Hom_{\D(R)}(\Sigma^{n}\RGamma_{\pp}R_\pp,T(n+1))$ through \cref{ring-hom,restricted-iso}. 

The right $\widehat{R_\pp}$-action on $\Ad^\pp \Ad^{W_{n+1}}R$ induces an $\widehat{R_\pp}$-action on $M$ through \cref{restricted-iso}.
Further, $M$ has the right $\End_{\D(R)}(\Sigma^{n}\RGamma_\pp R_\pp)$-action, which can be regarded as an $\widehat{R_\pp}$-action through \cref{ring-hom}.
Consequently, $M$ has two (right) $\widehat{R_\pp}$-module structures extending its $R_\pp$-module structure.
Then the two $\widehat{R_\pp}$-module structures coincide by \cite[Proposition A.15]{KN22}.
This argument shows that the right $\widehat{R_\pp}$-action on $\Ad^\pp \Ad^{W_{n+1}}R$ agrees with the right $\End_{\D(R)}(\Sigma^{n}\RGamma_\pp R_\pp)$-action on $M$ through \cref{ring-hom,restricted-iso}.
\end{proof}

We denote by $\mSpec R$ the set of maximal ideals of $R$.
The following partly generalizes \cite[Example 7.5 and Example 7.6(2)]{CX12} and \cite[Example 8.4]{PS21}. 

\begin{example}\label{1-dim-example}
In the setting of \cref{1-dim}, we can explicitly compute the endomorphism ring $\End_{\D(R)}(T_\Phi)$  of the tilting object $T_\Phi$. 
By \cref{non-connected}, we may assume that $\Phi$ is the sp-filtration $\Phi_{\cd}$ for the codimension function $\cd:\Spec R\to \ZZ$ given by $\pp\mapsto \dim R-\dim R/\pp$ (see \cref{order-preserv}).
If $\dim R=0$, then $T_\Phi \cong \prod_{\pp\in \Spec R} R_\pp \cong R$, so $\End_{\D(R)}(T_\Phi)\cong R$.
Hence assume $\dim R=1$. 
In the notation of \cref{X(n)-X(n+1)}, we have $T_\Phi = T(0) \oplus T(1) = Y(0)$, where $W_1=\mSpec R$ and $W_0=\Spec R\sm W_1$. Then $\Ad^{W_0}=\prod_{\pp \in W_0}(- \otimes_R R_\pp)$, and this functor coincides with $-\otimes_R S^{-1}R$, where $S=R\sm \bigcup_{\pp\in W_0} \pp$; see, e.g., \cite[\S8, Theorem 8.15 and Remark 2]{Mat89}. By \cref{End-tilt-2}, we obtain the description \cref{intro-End} of $\End_{\D(R)}(T_\Phi)$. If $R=\ZZ$, $(\prod_{\mm\in \mSpec \ZZ} \widehat{\ZZ_{\mm}})\otimes_{\ZZ}\QQ$ is known as the \emph{ring of finite adeles}.

The endomorphism ring $\End_{\D(R)}(T_\Phi)$ does not depend (up to isomorphism) on the choice of a codimension filtration, but if the height filtration is chosen as $\Phi$, then one would apply \cref{End-tilt-2} (and \cref{X(n)-X(n+1)}) to $\cd:=\height$. In this case, and if $\Spec R$ is not connected, \cref{End-tilt-2} could give a slightly different description of the endomorphism ring:
\[\End_{\D(R)}(T_{\Phi})\cong
	\begin{pmatrix}
		S^{-1}R &  0 \vspace{2mm}\\
		\displaystyle{(\prod_{\height(\mm)=1} \widehat{R_\mm})\otimes_RS^{-1}R} & \displaystyle{\prod_{\height(\mm)=1} \widehat{R_\mm}}
		\end{pmatrix},\]
where $S$ is the complement of the set of height-zero prime ideals.
If $R$ is Cohen--Macaulay, $S$ is the set of nonzero divisors (see \cite[Theorem 6.1(ii) and Theorem 17.3(i)]{Mat89}), so $S^{-1}R$ is the total quotient ring.
\end{example}

The next result allows us to partly dualize the tilting condition to the cotilting condition.

\begin{proposition}\label{vanishing}
	Let $\pp \subseteq \qq$ be prime ideals of $R$ and $\varkappa$ a nonzero cardinal. For each $i \in \ZZ$, the following conditions are equivalent:
\begin{enumerate}[label=(\arabic*), font=\normalfont]
\item \label{k-lc-vanish} $H^{i}\RHom_R(\RGamma_{\pp}R_\pp,\RGamma_{\qq}R_\qq^{(\varkappa)})=0$
\item \label{lc-vanish} $H^{i}\RHom_R(\RGamma_{\pp}R_\pp,\RGamma_{\qq}R_\qq)=0$
\item \label{lc-dc-vanish}  $H^{\height(\qq)-i}\RGamma_\pp(R_\pp\LotimesR D_{\widehat{R_\qq}})=0$
\item\label{k-dc-vanish}
 $H^{i-\height(\qq)+\height(\pp)}\RHom_R(D_{\widehat{R_\qq}}^\varkappa,D_{\widehat{R_\pp}}) = 0$
\item\label{dc-vanish} $H^{i-\height(\qq)+\height(\pp)}\RHom_R(D_{\widehat{R_\qq}},D_{\widehat{R_\pp}}) = 0$
\end{enumerate}
\end{proposition}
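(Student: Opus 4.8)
The plan is to reduce all five conditions to a single vanishing statement, namely \cref{lc-dc-vanish}, by systematically unwinding the $\RHom$'s with the local-duality isomorphisms \cref{local-dual-4} and \cref{local-dual-5}. Write $W:=\RGamma_\pp(R_\pp\LotimesR D_{\widehat{R_\qq}})$, so that \cref{lc-dc-vanish} reads $H^{\height(\qq)-i}(W)=0$. First I would use \cref{local-dual-5} to replace $\RGamma_\qq R_\qq$ by $\Sigma^{-\height(\qq)}\Hom_{\widehat{R_\qq}}(D_{\widehat{R_\qq}},E_R(R/\qq))$ and then apply tensor--hom adjunction together with the identity $\RGamma_\pp R_\pp\LotimesR X\cong\RGamma_\pp(R_\pp\LotimesR X)$ to obtain a natural isomorphism $\RHom_R(\RGamma_\pp R_\pp,\RGamma_\qq R_\qq)\cong\Sigma^{-\height(\qq)}\Hom_{\widehat{R_\qq}}(W,E_R(R/\qq))$; the same computation with $E_R(R/\qq)$ replaced by $E_R(R/\qq)^{(\varkappa)}$ handles $\RGamma_\qq R_\qq^{(\varkappa)}\cong(\RGamma_\qq R_\qq)^{(\varkappa)}$, this being legitimate since $D_{\widehat{R_\qq}}\in\D^\bd_\fg(\widehat{R_\qq})$ and $E_R(R/\qq)^{(\varkappa)}$ is again injective over the noetherian ring $\widehat{R_\qq}$. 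Dually, using \cref{local-dual-4} to write $D_{\widehat{R_\pp}}\cong\Sigma^{-\height(\pp)}\RHom_R(\RGamma_\pp R_\pp,E_R(R/\pp))$ and tensor--hom over $R$, I would get $\RHom_R(D_{\widehat{R_\qq}},D_{\widehat{R_\pp}})\cong\Sigma^{-\height(\pp)}\Hom_R(W,E_R(R/\pp))$.

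Since $W$ is obtained by applying $\RGamma_\pp$, its cohomology modules are $\pp$-torsion, hence modules over $\widehat{R_\pp}$ (and, being built from the $\widehat{R_\qq}$-complex $D_{\widehat{R_\qq}}$, also over $\widehat{R_\qq}$). The functor $\Hom(-,E)$ into an injective cogenerator $E$ over a noetherian ring is exact and faithful, so $H^n$ of each dual complex above is the $E$-dual of $H^{-n}(W)$, and vanishes exactly when that cohomology of $W$ does. Tracking the degree shifts --- $i\mapsto\height(\qq)-i$ in the first case, $i-\height(\qq)+\height(\pp)\mapsto\height(\qq)-i$ in the second --- yields \cref{k-lc-vanish}$\Leftrightarrow$\cref{lc-vanish}$\Leftrightarrow$\cref{lc-dc-vanish}$\Leftrightarrow$\cref{dc-vanish}, where for the passages through $\Hom_R(-,E_R(R/\pp))$ one also uses, as in \cref{elementary}, that $R$-linear maps out of a $\pp$-torsion module into $E_R(R/\pp)=E_{\widehat{R_\pp}}(\kappa(\pp))$ coincide with the $\widehat{R_\pp}$-linear ones.

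It remains to incorporate the product condition \cref{k-dc-vanish}. The same manipulation as for \cref{dc-vanish} gives $\RHom_R(D_{\widehat{R_\qq}}^\varkappa,D_{\widehat{R_\pp}})\cong\Sigma^{-\height(\pp)}\Hom_R(W_\varkappa,E_R(R/\pp))$ with $W_\varkappa:=\RGamma_\pp(R_\pp\LotimesR D_{\widehat{R_\qq}}^\varkappa)$, so \cref{k-dc-vanish} is equivalent to $H^{\height(\qq)-i}(W_\varkappa)=0$. Since $\varkappa\neq 0$, $D_{\widehat{R_\qq}}$ is a direct summand of $D_{\widehat{R_\qq}}^\varkappa$, hence $W$ is a direct summand of $W_\varkappa$, and this immediately yields \cref{k-dc-vanish}$\Rightarrow$\cref{lc-dc-vanish}.

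The reverse implication \cref{lc-dc-vanish}$\Rightarrow$\cref{k-dc-vanish} is the one genuine obstacle: because localization and $\pp$-adic completion do not commute with infinite products, $W_\varkappa$ is \emph{not} $W^\varkappa$ in general, so one cannot simply split off cohomology. I expect to resolve this by invoking Cohen's structure theorem to write $\widehat{R_\qq}=Q/\mathfrak{a}$ with $Q$ a complete regular local ring, so that $D_{\widehat{R_\qq}}\simeq\Sigma^{\dim Q}\RHom_Q(\widehat{R_\qq},Q)$ is a perfect $Q$-complex; then $D_{\widehat{R_\qq}}^\varkappa\simeq\Sigma^{\dim Q}\RHom_Q(\widehat{R_\qq},Q^\varkappa)$ with $Q^\varkappa$ a flat $Q$-module, which should allow one to represent $W_\varkappa$ by a complex whose cohomology vanishing is controlled by that of $W$. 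A fallback route is to first treat the case $\qq\in\min(V(\pp)\setminus\{\pp\})$, where \cref{minimal-case} computes the relevant $\RHom$ explicitly (it is concentrated in a single degree), and then to propagate the equivalence along a saturated chain $\pp=\qq_0\subsetneq\qq_1\subsetneq\cdots\subsetneq\qq_k=\qq$. This product step is where I anticipate the bulk of the technical work.
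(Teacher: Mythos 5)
Your treatment of the equivalences \cref{k-lc-vanish}$\Leftrightarrow$\cref{lc-vanish}$\Leftrightarrow$\cref{lc-dc-vanish} and \cref{lc-dc-vanish}$\Leftrightarrow$\cref{dc-vanish} via \cref{local-dual-5}, \cref{local-dual-4}, tensor--hom adjunction, and the faithful exactness of $\Hom(-,E)$ into an injective cogenerator is sound, and it is essentially the paper's own computation. The genuine gap is where you say it is: you only prove \cref{k-dc-vanish}$\Rightarrow$\cref{lc-dc-vanish} (via the split embedding $D_{\widehat{R_\qq}}\hookrightarrow D_{\widehat{R_\qq}}^\varkappa$), and neither proposed route to the converse is carried out.

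The missing step is the exact mirror of an observation you already record in the first paragraph. There you use that $\Hom_{\widehat{R_\qq}}(D_{\widehat{R_\qq}},-)$ commutes with coproducts because $D_{\widehat{R_\qq}}$ is a bounded complex of finitely generated $\widehat{R_\qq}$-modules; for precisely the same reason, $D_{\widehat{R_\qq}}\otimes_{\widehat{R_\qq}}-$ commutes with \emph{products}, yielding the canonical isomorphism $D_{\widehat{R_\qq}}^\varkappa\cong D_{\widehat{R_\qq}}\otimes_{\widehat{R_\qq}}\widehat{R_\qq}^\varkappa$ of complexes, which is \cref{iso-kappa-product} in the paper. Feeding this into your formula $W_\varkappa=\RGamma_\pp(R_\pp\LotimesR D_{\widehat{R_\qq}}^\varkappa)$ and pulling the flat $\widehat{R_\qq}$-module $\widehat{R_\qq}^\varkappa$ through $\RGamma_\pp$ and the localization gives $W_\varkappa\cong W\otimes_{\widehat{R_\qq}}\widehat{R_\qq}^\varkappa$, hence $H^n(W_\varkappa)\cong H^n(W)\otimes_{\widehat{R_\qq}}\widehat{R_\qq}^\varkappa$. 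Since $\widehat{R_\qq}^\varkappa$ is a \emph{faithfully} flat $\widehat{R_\qq}$-module (it is a nonzero product of copies of the local ring, hence flat over a noetherian ring and has $\widehat{R_\qq}$ as a direct summand), this tensor vanishes if and only if $H^n(W)$ does, which is \cref{lc-dc-vanish}$\Leftrightarrow$\cref{k-dc-vanish}. Your Cohen-structure fallback would eventually reproduce $D_{\widehat{R_\qq}}^\varkappa\cong D_{\widehat{R_\qq}}\otimes_{\widehat{R_\qq}}\widehat{R_\qq}^\varkappa$ after extra work; the chain induction is unnecessary once this identity is in hand.
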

\begin{proof}
For any complex $X$ of $R_\pp$-modules and any complex $Y$ of $R_\qq$-modules, there is a standard isomorphism $\RHom_{R}(X,Y)\cong \RHom_{R_\qq}(X,Y)$. Hence it is enough to treat the case when $(R,\mm, k)$ is a local ring, $\qq$ is the maximal ideal $\mm$, and $\pp$ is a prime ideal of the local ring $R$. Then $\widehat{R_\qq}=\widehat{R}$.
In addition, we may regard $D_{\widehat{R}}$ as a bounded complex of finitely generated $\widehat{R}$-modules.
Noting that $E_{R}(k)\cong E_{\widehat{R}}(k)$ is an injective cogenerator in $\Mod \widehat{R}$ (\cite[Lemma A.27 and Theorem A.31]{ILL+07}), we have
\begin{align*}
&H^{i}\RHom_{R}(\RGamma_{\pp}R_\pp,\RGamma_\mm R^{(\varkappa)})=0\\
&\Leftrightarrow H^{i-\height(\mm)}\RHom_{R}(\RGamma_{\pp}R_\pp, \Sigma^{\height(\mm)}\RGamma_\mm R^{(\varkappa)})=0\\
&\Leftrightarrow H^{i-\height(\mm)}\RHom_{R}(\RGamma_{\pp}R_\pp, \Hom_{\widehat{R}}(D_{\widehat{R}}, E_R(k))^{(\varkappa)})=0\\
&\Leftrightarrow H^{i-\height(\mm)}\RHom_{R}(\RGamma_{\pp}R_\pp, \RHom_{\widehat{R}}(D_{\widehat{R}}, E_R(k)^{(\varkappa)}))=0\\
&\Leftrightarrow H^{i-\height(\mm)}\RHom_{\widehat{R}}(\RGamma_{\pp}R_\pp\LotimesR D_{\widehat{R}}, E_R(k)^{(\varkappa)})=0\\
&\Leftrightarrow H^{\height(\mm)-i}(\RGamma_{\pp}R_\pp\LotimesR D_{\widehat{R}})=0,
\end{align*}
where the second bi-implication follows from \cref{local-dual-5} as $\RGamma_\mm R^{(\varkappa)}\cong (\RGamma_\mm R)^{(\varkappa)}$, the third follows as $D_{\widehat{R}}$ consists of finitely generated $\widehat{R}$-modules and $E_R(k)^{(\varkappa)}$ is injective, and the fourth follows 
by the adjunction
\[\begin{tikzcd}[column sep=normal, row sep=small]
\D(R) \arrow[shift left, bend left=10]{r}[above]{- \LotimesR D_{\widehat{R}}} & \arrow[shift left, bend left=10]{l}{\RHom_{\widehat{R}}(D_{\widehat{R}},-)}  \D(\widehat{R}).\end{tikzcd}
\]
Then it is seen that the equivalences among \cref{lc-dc-vanish,lc-vanish,k-lc-vanish} hold.

Next, we recall that, for any finitely generated $\widehat{R}$-module $N$, the functor $N\otimes_{\widehat{R}}-: \Mod \widehat{R}\to \Mod \widehat{R}$ commutes with products because $\widehat{R}$ is noetherian. 
Thus we have a natural isomorphism 
\begin{align}\label{iso-kappa-product} D_{\widehat{R}}\otimes_{\widehat{R}} \widehat{R}^\varkappa\isoto D_{\widehat{R}}^\varkappa
\end{align}
in $\C(\widehat{R})$. Using this, \cref{local-dual-4}, and \cref{Cech-comp},  
we obtain natural isomorphisms
\begin{align*}
\RHom_R(D_{\widehat{R}}^\varkappa,D_{\widehat{R_\pp}}) 
&\cong \RHom_R(D_{\widehat{R}} \otimes_{\widehat{R}} \widehat{R}^\varkappa,D_{\widehat{R_\pp}})\\
&\cong \RHom_R(D_{\widehat{R}} \otimes_{\widehat{R}} \widehat{R}^\varkappa,\Sigma^{-\height(\pp)}\RHom_R(\RGamma_{\pp}{R_\pp}, E_{R}(R/\pp)))\\
&\cong 
\Sigma^{-\height(\pp)}\RHom_R(\RGamma_{\pp}{R_\pp}\LotimesR(D_{\widehat{R}}\otimes_{\widehat{R}}  \widehat{R}^\varkappa),E_{R}(R/\pp))\\
&\cong \Sigma^{-\height(\pp)}\Hom_R(\check{C}(\pp)_\pp\otimes_RD_{\widehat{R}}\otimes_{\widehat{R}}  \widehat{R}^\varkappa,E_{R}(R/\pp))
\end{align*}
in $\D(R)$, where $E_{R}(R/\pp)\cong E_{R_\pp}(\kappa(\pp))$ is an injective cogenerator in $\Mod R_\pp$ and 
the last $\Hom_{R}$ can be replaced by $\Hom_{R_\pp}$. 
Therefore we have
\begin{align*}
&H^{i-\height(\mm)+\height(\pp)}\RHom_R(D_{\widehat{R}}^\varkappa,D_{\widehat{R_\pp}})=0\\
&\Leftrightarrow H^{i-\height(\mm)}\Hom_{R_\pp}(\check{C}(\pp)_\pp\otimes_RD_{\widehat{R}}\otimes_{\widehat{R}}  \widehat{R}^\varkappa,E_{R}(R/\pp))=0\\ 
&\Leftrightarrow H^{\height(\mm)-i}(\check{C}(\pp)_\pp\otimes_RD_{\widehat{R}}\otimes_{\widehat{R}}  \widehat{R}^\varkappa)=0\\
&\Leftrightarrow H^{\height(\mm)-i}(\check{C}(\pp)_\pp\otimes_RD_{\widehat{R}})=0\\
& \Leftrightarrow H^{\height(\mm)-i}(\RGamma_\pp R_\pp\LotimesR D_{\widehat{R}})=0,
\end{align*}
where the third bi-implication follows since $\widehat{R}^\varkappa$ is a faithfully flat $\widehat{R}$-module.
This shows that the equivalences among \cref{k-dc-vanish,dc-vanish,lc-dc-vanish} hold.
\end{proof}

\begin{proposition}\label{necessity}
Let $\Phi$ be a non-degenerate sp-filtration of $\Spec R$. 
\begin{enumerate}[label=(\arabic*), font=\normalfont]
\item \label{silt-necessity} If $T_\Phi$ is a silting object in $\D(R)$, then $\Phi$ is a slice sp-filtration. If $T_\Phi$ is a tilting object in $\D(R)$, then $\Phi$ is a codimension filtration, and so $R$ admits a codimension function.

\item \label{cotilt-neces} If $C_\Phi$ is a cosilting object in $\D(R)$, then $\Phi$ is a slice sp-filtration. If $C_\Phi$ is a cotilting object in $\D(R)$, then $\Phi$ is a codimension filtration, and so $R$ admits a codimension function.
\end{enumerate}
\end{proposition}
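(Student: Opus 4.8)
The plan is to prove \cref{necessity} by extracting numerical constraints on $\f_\Phi$ from the vanishing of suitable self-extension groups of $T_\Phi$, and then to deduce the statement about $C_\Phi$ by relating its self-Hom's back to those of $T_\Phi$ via the duality results. For part \cref{silt-necessity}, suppose first that $T_\Phi$ is silting. Then $T_\Phi\in T_\Phi^{\perp_{>0}}$, so in particular $\Add(T_\Phi)\subseteq T_\Phi^{\perp_{>0}}$; I would compute $\Hom_{\D(R)}(T_\Phi,\Sigma^n T_\Phi^{(\varkappa)})$ as a product over $\pp$ of terms $H^{n-\f_\Phi(\pp)}\LLambda^\pp\RHom_R(R_\pp, T_\Phi)$, exactly as in the derivation of \cref{TPhi-posit}. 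For a saturated chain $\pp=\qq_0\subsetneq\cdots\subsetneq\qq_k=\qq$ of length $k$, I would isolate the contribution coming from the summand $\Sigma^{\f_\Phi(\qq)}\RGamma_\qq R_\qq$ of $T_\Phi$ paired against $\Sigma^{\f_\Phi(\pp)}\RGamma_\pp R_\pp$. The key local computation is that $H^{i}\RHom_R(\RGamma_\pp R_\pp,\RGamma_\qq R_\qq)$ is nonzero for some $i$ with $0\le i\le \height(\qq/\pp)=k$, and in fact nonzero at $i=k$ when the chain is saturated; this is the content underlying \cref{minimal-case} and the explicit description in \cref{X(n)-X(n+1)} when $R$ is Cohen--Macaulay, but for general $R$ one argues via local duality over $\widehat{R_\qq}$ as in \cref{vanishing}, reducing to the fact that $D_{\widehat{R_\pp}}$ has amplitude exactly $\height(\pp)$ (so $\RHom_R(D_{\widehat{R_\qq}},D_{\widehat{R_\pp}})$ has cohomology in a range forcing a nonzero term). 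Feeding this back, $\Add(T_\Phi)\subseteq T_\Phi^{\perp_{>0}}$ forces $\f_\Phi(\qq)-\f_\Phi(\pp)\ge k$ for every saturated chain, i.e.\ $\f_\Phi$ is strictly increasing, hence $\Phi$ is a slice sp-filtration by \cref{slice-strict}. If moreover $T_\Phi$ is tilting, then $\Add(T_\Phi)\subseteq T_\Phi^{\perp_{\ne 0}}$, and the same computation now shows that the would-be negative self-extension coming from a chain of length $k$ with $\f_\Phi(\qq)-\f_\Phi(\pp)>k$ sits in a strictly negative degree and is nonzero, a contradiction; hence $\f_\Phi(\qq)-\f_\Phi(\pp)=k$ for every saturated chain, i.e.\ $\f_\Phi$ is a codimension function and $\Phi$ is a codimension filtration.

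For part \cref{cotilt-neces}, the cleanest route is to transport the problem to the silting side. If $C_\Phi$ is cosilting, then by \cref{slice-cosilt}'s proof $C_\Phi$ is equivalent to $T_\Phi^+=\RHom_R(T_\Phi,E)$, which is a cosilting object of cofinite type; by the bijection \cref{(co)silt-dual} this forces $T_\Phi$ to be a silting object of finite type, so part \cref{silt-necessity} already gives that $\Phi$ is a slice sp-filtration. For the tilting-to-cotilting implication I would argue that $C_\Phi$ cotilting implies $\Prod(C_\Phi)\subseteq {}^{\perp_{<0}}C_\Phi$, and then show directly that this vanishing is equivalent to the vanishing $\Add(T_\Phi)\subseteq T_\Phi^{\perp_{\ne 0}}$, using \cref{vanishing}: the equivalence of conditions \cref{lc-vanish} and \cref{dc-vanish} there says precisely that a self-Ext of $\RGamma_\pp R_\pp$ against $\RGamma_\qq R_\qq$ in degree $i$ vanishes iff the corresponding self-Hom of $D_{\widehat{R_\qq}}$ against $D_{\widehat{R_\pp}}$ vanishes in the shifted degree $i-\height(\qq)+\height(\pp)$. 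Since $C_\Phi$ is assembled from the $\Sigma^{\height(\pp)-\f_\Phi(\pp)}D_{\widehat{R_\pp}}$ and $T_\Phi$ from the $\Sigma^{\f_\Phi(\pp)}\RGamma_\pp R_\pp$, the degree bookkeeping matches up exactly, so $C_\Phi$ has a nonzero negative self-extension iff $T_\Phi$ does. Thus $C_\Phi$ cotilting implies $T_\Phi$ tilting (it is already silting by the previous step combined with \cref{slice-silting}), and \cref{silt-necessity} finishes the job.

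Alternatively, and perhaps more self-containedly, one can run the cosilting argument in parallel to the silting one: the computation of $\Hom_{\D(R)}(C_\Phi^\varkappa,\Sigma^nC_\Phi)$ appears essentially verbatim in the proof of \cref{dc-cotilt} (the long chain of isomorphisms reducing it to a product of $\Hom_{\D(R)}\big(\prod_{\qq\in V(\pp)}\Sigma^{\cd(\qq)-\f(\qq)}\widehat{R_\qq}^\varkappa,\Sigma^{\cd(\pp)-\f(\pp)}\widehat{R_\pp}\big)$), but that proof assumed a dualizing complex; without it one replaces $\cd$ by the completions and uses \cref{vanishing} to get the same vanishing criterion locally. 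Either way, the upshot is a criterion of the form ``$C_\Phi\in{}^{\perp_{>0}}C_\Phi$ iff $\f_\Phi$ strictly increasing'' and ``$\Prod(C_\Phi)\subseteq{}^{\perp_{\ne 0}}C_\Phi$ iff $\f_\Phi$ a codimension function.''

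The main obstacle I anticipate is the non-vanishing half of the local computation over a general (non-Cohen--Macaulay) ring: establishing that for a \emph{saturated} chain $\pp\subsetneq\qq$ of length $k$ the group $H^{k}\RHom_R(\RGamma_\pp R_\pp,\RGamma_\qq R_\qq)$ (equivalently, by \cref{vanishing}, a fixed cohomology of $\RHom_{\widehat{R_\qq}}(D_{\widehat{R_\qq}},D_{\widehat{R_\pp}})$ or of $\RGamma_\pp(R_\pp\LotimesR D_{\widehat{R_\qq}})$) is genuinely nonzero, rather than merely bounded. The clean way to see this is: $\widehat{R_\qq}\otimes_R D_{\widehat{R_\qq}}$ is a dualizing complex for $\widehat{R_\qq}$, its localization at the prime $\mathfrak{P}$ lying over $\pp$ is a dualizing complex for $(\widehat{R_\qq})_{\mathfrak{P}}$, whose codimension function differs from that of $\widehat{R_\qq}$ by the height of $\mathfrak P$ over the closed point, which is $k$ because the chain is saturated and $\widehat{R_\qq}$ is catenary (it is a homomorphic image of a regular local ring); then \cref{dc-loc-coh} pins down the single nonzero $\Ext$, giving nonvanishing in exactly degree $k$. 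Once this local input is in place, everything else is degree bookkeeping that mirrors the proof of \cref{P:dualizing}, with $\cd_D$ replaced by the codimension function coming from $D_{\widehat{R_\pp}}$ on the spectrum of the completion.
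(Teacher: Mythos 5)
For part~\cref{silt-necessity} your core idea is right and is what the paper does: for a saturated step $\pp\subsetneq\qq$, \cref{minimal-case} applied with $W=\{\qq\}$ gives $\RHom_R(\RGamma_\pp R_\pp,\RGamma_\qq R_\qq)\cong\Sigma^{-1}\Lambda^\pp\bigl(\widehat{R_\qq}\otimes_R R_\pp\bigr)\ne 0$, a nonzero object concentrated in degree~$1$, and then $T_\Phi\in T_\Phi\Perp{>0}$ forces $\f_\Phi(\qq)>\f_\Phi(\pp)$ while $T_\Phi\in T_\Phi\Perp{\ne 0}$ forces $\f_\Phi(\qq)=\f_\Phi(\pp)+1$. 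However you do not need, and should not appeal to, chains of length $k>1$: a codimension function is defined by its behaviour on saturated steps, and \cref{minimal-case} only covers $\qq$ that is \emph{minimal} over $V(\pp)\setminus\{\pp\}$, so your claim that $H^k\RHom_R(\RGamma_\pp R_\pp,\RGamma_\qq R_\qq)\ne 0$ along a saturated chain of length $k>1$ is not established by the tools you cite; moreover for a non-catenary ring a pair $\pp\subsetneq\qq$ can carry saturated chains of several different lengths, so that claim is not even unambiguous. Restricting to $k=1$ throughout, your argument for \cref{silt-necessity} is correct.

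For part~\cref{cotilt-neces} your ``cleanest route'' contains a genuine gap. From $\Prod(C_\Phi)=\Prod(T_\Phi^+)$ (you are right that this equality, via \cref{local-dual-4} and \cref{elementary}, does not require $\Phi$ slice) and $C_\Phi$ cosilting you may conclude that $T_\Phi^+$ is cosilting; you may \emph{not} conclude that $T_\Phi$ is silting. The bijection \cref{(co)silt-dual} is between \emph{equivalence classes} of silting objects of finite type and cosilting objects of cofinite type: surjectivity gives you a silting $T'$ of finite type with $T'^+$ equivalent to $T_\Phi^+$, but nothing forces $T'$ to be equivalent to $T_\Phi$ — indeed $T_\Phi$ may lie in no silting equivalence class at all, and $(-)^+$ is not injective on arbitrary objects. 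Your ``alternative, more self-contained'' route is the one that works and is what the paper actually does: take the same local nonvanishing $H^{1}\RHom_R(\RGamma_\pp R_\pp,\RGamma_\qq R_\qq)\ne 0$ from \cref{minimal-case}, push it through the equivalence \cref{lc-vanish}$\Leftrightarrow$\cref{dc-vanish} of \cref{vanishing} to get $H^{1-\height(\qq)+\height(\pp)}\RHom_R(D_{\widehat{R_\qq}},D_{\widehat{R_\pp}})\ne 0$, and then observe that the relevant direct summand of $\RHom_R(C_\Phi,C_\Phi)$ has nonzero cohomology in degree $1+\f_\Phi(\pp)-\f_\Phi(\qq)$, whence the same two constraints on $\f_\Phi$. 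There is no need to redo the $\D(\Flat R)\simeq\K(\Inj R)$ computation from the proof of \cref{dc-cotilt}; the only local input is the $k=1$ case of \cref{minimal-case} plus the degree translation supplied by \cref{vanishing}.
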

\begin{proof}
\cref{silt-necessity}: Let $T:=T_{\Phi}$ and $\f:=\f_{\Phi}$, where  $\f_\Phi: \Spec R\to \ZZ$ is the order-preserving function corresponding to $\Phi$ (\cref{order-preserv}). 
Suppose that $T$ is silting. Then $T \in T\Perp{>0}$ by \cref{silting-cond}.
Take any saturated chain $\pp\subsetneq \qq$ in $\Spec R$ and consider 
the direct summand 
\[X:=\RHom_R(\Sigma^{\f(\pp)}\RGamma_{\pp}R_\pp,\Sigma^{\f(\qq)}\RGamma_{\qq}R_\qq)\]
of $\RHom_R(T,T)$.
If $\f(\pp) = \f(\qq)$, then \cref{minimal-case} implies that
$X \cong \Sigma^{-1} \Lambda^{\pp}(\widehat{R_\qq}\otimes_R R_\pp)\neq 0$ in $\D(R)$.
Thus $H^1\RHom_R(T,T)\neq 0$, but this contradicts 
that $T \in T\Perp{>0}$.
Therefore $\f=\f_\Phi$ is strictly increasing, that is, $\Phi$ is a slice sp-filtration; see \cref{slice-strict}.

Next, suppose that $T$ is tilting. Then $\f$ is strictly increasing by the above argument.
If $\f$ is not a codimension function, there is a saturated chain $\pp\subsetneq \qq$ such that $\f(\qq)-\f(\pp)>1$. Putting $n:=\f(\qq)-\f(\pp)$, we have
\[\RHom_R(\Sigma^{\f(\pp)}\RGamma_{\pp}R_\pp,\Sigma^{\f(\qq)} \RGamma_{\qq}R_\qq)=\RHom_R(\RGamma_{\pp}R_\pp,\Sigma^{n} \RGamma_{\qq}R_\qq)\cong \Sigma^{n-1} \Lambda^{\pp}(\widehat{R_\qq}\otimes_RR_\pp)\] by \cref{minimal-case}. Hence $H^{-n+1}\RHom_R(T,T)\neq 0$ and $-n+1<0$, but this contradicts that $T \in T\Perp{<0}$. Thus $\f=\f_{\Phi}$ is a codimension function, that is, $\Phi$ is a codimension filtration.

\cref{cotilt-neces}: Let $C:=C_{\Phi}$ and $\f:=\f_{\Phi}$. 
Suppose that $C$ is cosilting. 
Then $C\in \Perp{>0}C$; see the first paragraph of \cref{silt-equiv}.
Take any saturated chain $\pp\subsetneq \qq$ in $\Spec R$.
By \cref{minimal-case}, 
$\RHom_R(\RGamma_{\pp}R_\pp,\RGamma_{\qq}R_\qq)\cong \Sigma^{-1} \Lambda^{\pp}(\widehat{R_\qq}\otimes_RR_\pp)$ in $\D(R)$, and hence $H^{1}\RHom_R(\RGamma_{\pp}R_\pp,\RGamma_{\qq}R_\qq)\neq 0$.
It then follows from \cref{vanishing}, applied to $i=1$, that $H^{1-\height(\qq)+\height(\pp)}\RHom_R( D_{\widehat{R_\qq}}, D_{\widehat{R_\pp}}) \neq 0,$ i.e., $H^{1}\RHom_R(\Sigma^{\height(\qq)}
D_{\widehat{R_\qq}},\Sigma^{\height(\pp)}D_{\widehat{R_\pp}})\neq 0$. If $\f(\pp)=\f(\qq)$, $\RHom_R(C_{\Phi},C_{\Phi})$ contains
\[\RHom_R(\Sigma^{\height(\qq)-\f(\qq)}D_{\widehat{R_\qq}},\Sigma^{\height(\pp)-\f(\pp)}D_{\widehat{R_\pp}})=\RHom_R(\Sigma^{\height(\qq)}D_{\widehat{R_\qq}},\Sigma^{\height(\pp)}D_{\widehat{R_\pp}})\] 
as a direct summand. Thus $H^1\RHom_R(C,C)\neq 0$, but this contradicts that $C\in \Perp{>0}C$. Therefore $\f=\f_\Phi $ is strictly increasing, that is, $\Phi$ is a slice sp-filtration.

Next, suppose that $C$ is cotilting. Then $\f$ is strictly increasing by the above argument.
If $\f$ is not a codimension function, there is a saturated chain $\pp\subsetneq \qq$ such that $\f(\qq)-\f(\pp)>1$. 
Putting $n:=\f(\qq)-\f(\pp)$, we have
\[Y:=\RHom_R(\Sigma^{\height(\qq)-\f(\qq)}D_{\widehat{R_\qq}},\Sigma^{\height(\pp)-\f(\pp)}D_{\widehat{R_\pp}})=\Sigma^{n-\height(\qq)+\height(\pp)}\RHom_R(D_{\widehat{R_\qq}},D_{\widehat{R_\pp}}).\]
Recall that  $H^1\RHom_R(\RGamma_{\pp}R_\pp,\RGamma_{\qq}R_\qq)\neq 0 $ by \cref{minimal-case}. Hence  \cref{vanishing}, applied to $i=1$, yields $H^{1-\height(\qq)+\height(\pp)}\RHom_R(D_{\widehat{R_\qq}},D_{\widehat{R_\pp}})\neq 0$.
Therefore
\[H^{-n+1}Y=H^{1-\height(\qq)+\height(\pp)}\RHom_R(D_{\widehat{R_\qq}},D_{\widehat{R_\pp}})\neq 0,\]
where $-n+1<0$. 
Then $C \notin C\Perp{<0}$ as $Y$ is a direct summand of $C$, but this contradicts that $C$ is cotilting. Thus  $\f=\f_{\Phi}$ is a codimension function, that is, $\Phi$ is a codimension filtration.
\end{proof}

\cref{minimal-case-c,End-cotilt} below are cotilting versions of \cref{minimal-case,End-tilt}, respectively.

\begin{theorem}\label{minimal-case-c}
Let $\pp\in \Spec R$ and $W\subseteq \min (V(\pp)\sm \{\pp\})$. Let $\varkappa$ be any cardinal. Then there is a natural isomorphism
\[\RHom_{R}(\prod_{\qq\in W} \Sigma^{\height(\qq)-\height(\pp)-1} D_{\widehat{R_\qq}}^{\varkappa},D_{\widehat{R_{\pp}}})\cong\Hom_{\D(R)}(\prod_{\qq\in W} \Sigma^{\height(\qq)-\height(\pp)-1} D_{\widehat{R_\qq}}^{\varkappa},D_{\widehat{R_{\pp}}}).\]
in $\D(R)$. Moreover, the right-hand side is a flat $R$-module. 
\end{theorem}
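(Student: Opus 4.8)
The plan is to deduce the statement from \cref{minimal-case} by passing to the ``Matlis dual'' picture, in the same spirit in which \cref{vanishing} relates local cohomology to shifts of completed dualizing complexes. Write $h_\qq:=\height(\qq)-\height(\pp)-1$ and $E_\oo:=E_R(R/\oo)$. Using the local duality isomorphism $\Sigma^{\height(\pp)}D_{\widehat{R_\pp}}\cong\RHom_R(\RGamma_\pp R_\pp,E_\pp)$ of \cref{local-dual-4} together with tensor--hom adjunction and the identity $(-)\LotimesR\RGamma_\pp R_\pp\cong\RGamma_\pp\bigl((-)_\pp\bigr)$, one obtains
\[
\RHom_R\bigl(\prod_{\qq\in W}\Sigma^{h_\qq}D_{\widehat{R_\qq}}^\varkappa,\,D_{\widehat{R_\pp}}\bigr)
\;\cong\;
\Sigma^{-\height(\pp)}\RHom_R\bigl(\RGamma_\pp\bigl((\prod_{\qq\in W}\Sigma^{h_\qq}D_{\widehat{R_\qq}}^\varkappa)_{\pp}\bigr),\,E_\pp\bigr).
\]
Dually, \cref{local-dual-4} gives $\Sigma^{h_\qq}D_{\widehat{R_\qq}}^\varkappa\cong\Sigma^{-\height(\pp)-1}\RHom_R(\RGamma_\qq R_\qq^{(\varkappa)},E_\qq)$ for each single prime $\qq\in W$; feeding this, together with \cref{minimal-case} applied to a singleton index set, into the display above already shows that each $\RHom_R(\Sigma^{h_\qq}D_{\widehat{R_\qq}}^\varkappa,D_{\widehat{R_\pp}})$ is concentrated in degree $0$ (equivalently, one may quote the equivalence \cref{vanishing}\,\cref{k-lc-vanish}$\Leftrightarrow$\cref{k-dc-vanish} together with the fact that the right-hand side of \cref{minimal-case} is a module). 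The remaining content is to identify $\RGamma_\pp\bigl((\prod_{\qq\in W}\Sigma^{h_\qq}D_{\widehat{R_\qq}}^\varkappa)_\pp\bigr)$, up to a further Matlis dual, with $\Sigma\bigoplus_{\qq\in W}\RGamma_\qq R_\qq^{(\varkappa)}$, after which \cref{minimal-case} for the whole set $W$ finishes the computation and simultaneously exhibits the answer as the module $\Lambda^\pp\bigl((\prod_{\qq\in W}\widehat{R_\qq^{(\varkappa)}})_\pp\bigr)$.

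Once the identification with \cref{minimal-case} is in place, the flatness of $H^0$ is immediate: $\widehat{R_\qq^{(\varkappa)}}=\Lambda^\qq(R_\qq^{(\varkappa)})$ is flat over $R$ (flat base change along $R\to R_\qq$ followed by the $\qq$-adic completion of a flat module over the noetherian ring $R_\qq$), an arbitrary product of flat modules over a noetherian ring is flat, and both localization at $\pp$ and $\pp$-adic completion preserve flatness; see also \cref{flatcotorsion}.

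The main obstacle is the presence of the \emph{infinite} product $\prod_{\qq\in W}$: none of the functors $\RGamma_\pp$, $-\otimes_RR_\pp$, or $-\otimes_{\widehat{R_\pp}}D_{\widehat{R_\pp}}$ commutes with arbitrary products of arbitrary modules. This is exactly where the hypothesis $W\subseteq\min(V(\pp)\sm\{\pp\})$ enters: for such $\qq$ the complex $\RGamma_\pp\bigl((D_{\widehat{R_\qq}})_\pp\bigr)$ is concentrated in the single degree $\height(\pp)$ and is assembled only from copies of $E_\pp$, so that the relevant $\Hom$- and tensor-computations are controlled by the finitely presented terms of the \v{C}ech complex $\check C(\pp)$, past which products do pass. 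This is the analogue of the bookkeeping already carried out in the proof of \cref{minimal-case}, and I would mirror that argument here, splitting off $V_0=V(\pp)\sm\{\pp\}$ and using the approximation triangles of \cref{stable-approx,slice-functor} to reduce to the one-dimensional layer $W$.
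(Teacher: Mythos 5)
Your final step is flawed: the $\Hom$-module in \cref{minimal-case-c} is \emph{not} $\Lambda^\pp\bigl((\prod_{\qq\in W}\widehat{R_\qq^{(\varkappa)}})_\pp\bigr)$, and the ``identification up to a further Matlis dual'' with $\Sigma\bigoplus_{\qq\in W}\RGamma_\qq R_\qq^{(\varkappa)}$ is not available. Matlis duality is not an involution on $\D(R)$, so passing to the $\RGamma_\qq R_\qq$-picture and invoking \cref{minimal-case} does not return you to the silting formula. Compare \cref{End-cotilt-ex}: for a $1$-dimensional Gorenstein domain with $\pp=(0)$, $W=\mSpec R$ and $\varkappa=1$ the $\Hom$-module of \cref{minimal-case-c} is $\Hom_R\bigl(\prod_{\height(\mm)=1}\widehat{R_\mm},Q(R)\bigr)$, while \cref{minimal-case} gives $(\prod_\mm\widehat{R_\mm})\otimes_RQ(R)$; already over $R=\ZZ$ these have different cardinalities. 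The discussion between \cref{End-cotilt} and \cref{1-dim-cotilt} warns explicitly that the cosilting side admits no closed formula analogous to \cref{minimal-case}, which is why no cosilting analogue of \cref{End-tilt-2} is stated there. As for the proposed dual identification, note that $\RHom_R\bigl(\RGamma_\qq R_\qq^{(\varkappa)},E_R(R/\pp)\bigr)=0$ for every $\qq\in W$, since $\RGamma_\qq R_\qq^{(\varkappa)}\in\cL_{V(\qq)}$ and $E_R(R/\pp)\in\cC^{\{\pp\}}\subseteq\cC^{V(\qq)^\cp}=\cL_{V(\qq)}^{\perp_0}$ by \cref{bi-loc}; so $\RGamma_\pp X_\pp$ cannot be identified with a Matlis dual of $\Sigma\bigoplus_{\qq\in W}\RGamma_\qq R_\qq^{(\varkappa)}$ in any useful sense. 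Note also that products do not ``pass through'' $\check{C}(\pp)\otimes_R-$ as you suggest at the end: the terms of $\check{C}(\pp)$ are localizations of $R$, which are flat but not finitely presented.

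The paper's proof avoids all of this by staying on the injective side. After the local-duality reduction recorded in your first display, the task is to show that $\Sigma^{\height(\pp)}\RGamma_\pp X_\pp$ is an injective $R$-module, where $X:=\prod_{\qq\in W}\Sigma^{h_\qq}D_{\widehat{R_\qq}}^\varkappa$. The infinite product is tamed by cosupport considerations rather than by finite presentation: one has $\RHom_R(R_\pp,X)=0$, so the approximation triangle of \cref{stable-approx} gives $\RGamma_\pp X_\pp\cong\Sigma\,\RGamma_\pp\RHom_R(R_\pp,\gamma_{V(\pp)\cap U(\pp)^\cp}X)$; the $0$-dimensionality of $W$ then decomposes $\gamma_{V(\pp)\cap U(\pp)^\cp}X\cong\bigoplus_{\qq\in W}\RGamma_\qq\RHom_R(R_\qq,X)$, and each summand is a shift of the injective $R$-module $E_R(R/\qq)\otimes_{\widehat{R_\qq}}\widehat{R_\qq}^\varkappa$. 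Flatness of the $\Hom$-module then follows by dualizing this injective into $E_R(R/\pp)$, and deliberately produces no closed formula. The first half of your proposal (using \cref{local-dual-4} and \cref{vanishing} to conclude that each single-factor $\RHom_R(\Sigma^{h_\qq}D_{\widehat{R_\qq}}^\varkappa,D_{\widehat{R_\pp}})$ is concentrated in degree $0$) is correct, but that single-factor observation cannot on its own control the infinite product.
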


\begin{corollary}\label{End-cotilt}
Assume that $R$ admits a codimension function $\cd$.
For each $n\in \ZZ$, set $W_n:=\{\pp\in \Spec R \mid \cd(\pp)=n\}$, $C(n):=\prod_{\pp\in W_n}\Sigma^{\height(\pp)-n}D_{\widehat{R_\pp}}$, and $Y(n):=C(n)\oplus C(n+1)$.
Let $\varkappa$ be any cardinal. Then there is a natural isomorphism
\[\RHom_R(Y(n)^{\varkappa},Y(n)) \cong \Hom_{\D(R)}(Y(n)^{\varkappa}, Y(n))\]
in $\D(R)$. Moreover, the right-hand side is a flat $R$-module.
\end{corollary}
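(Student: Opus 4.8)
The plan is to mirror the proof of \cref{End-tilt} on the cotilting side, reducing everything to \cref{minimal-case-c}, \cref{vanishing} and \cref{p-complete}; since \cref{End-cotilt} only asserts that the complex is concentrated in degree $0$ and flat (rather than the full bimodule description of \cref{End-tilt-2}), the bookkeeping can be kept light. First I would decompose: $Y(n)^\varkappa\cong C(n)^\varkappa\oplus C(n+1)^\varkappa$ and $Y(n)=C(n)\oplus C(n+1)$, and since $C(b)=\prod_{\pp\in W_b}\Sigma^{\height(\pp)-b}D_{\widehat{R_\pp}}$ we have $\RHom_R(-,C(b))\cong\prod_{\pp\in W_b}\RHom_R(-,\Sigma^{\height(\pp)-b}D_{\widehat{R_\pp}})$. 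Hence it suffices to show, for all $a,b\in\{n,n+1\}$ and all $\pp\in W_b$, that $\RHom_R(C(a)^\varkappa,\Sigma^{\height(\pp)-b}D_{\widehat{R_\pp}})$ is concentrated in degree $0$ and flat over $R$: concentration in degree $0$ and flatness are then inherited by the products over $\pp\in W_b$ and by the finite direct sum over the four pairs $(a,b)$, using that products of flat modules over a noetherian ring are flat.

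Next I would fix $\pp\in W_b$ and split the product $C(a)^\varkappa=\prod_{\qq\in W_a}\Sigma^{\height(\qq)-a}D_{\widehat{R_\qq}}^\varkappa$ into the factors with $\pp\subseteq\qq$ and those with $\pp\not\subseteq\qq$. For $\qq$ with $\pp\not\subseteq\qq$ one has $\RHom_R(D_{\widehat{R_\qq}}^\varkappa,D_{\widehat{R_\pp}})=0$: rewriting $D_{\widehat{R_\pp}}\cong\Sigma^{-\height(\pp)}\RHom_R(\RGamma_\pp R_\pp,E_R(R/\pp))$ by \cref{local-dual-4} and using Hom–tensor adjunction together with $\RGamma_\pp R_\pp\LotimesR(-)\cong\RGamma_\pp((-)_\pp)$ (\cref{Cech-comp}), the $\RHom$ is governed by $\RGamma_\pp((D_{\widehat{R_\qq}}^\varkappa)_\pp)$, which vanishes because any $x\in\pp\smallsetminus\qq$ acts invertibly on $D_{\widehat{R_\qq}}$, hence on the $\varkappa$-fold product, leaving it with no $\pp$-torsion. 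The delicate point — where I expect the real work to lie — is passing from this factorwise vanishing to the vanishing of $\RHom_R$ of the entire sub-product over $\{\qq : \pp\not\subseteq\qq\}$ (an \emph{infinite} product sitting in the first argument, where prime avoidance is not available), and symmetrically identifying the surviving contribution; this is precisely the completion/inverse-limit manipulation carried out in the proof of \cref{minimal-case-c} (compare the role of $\lambda^{\{\pp\}}$ in the proof of \cref{minimal-case}), which I would invoke. After discarding the irrelevant factors, the existence of a codimension function (so that $R$ is catenary, \cref{cd-func}) leaves, for $\pp\in W_b$: only $\qq=\pp$ when $a=b$; exactly the set $\min(V(\pp)\smallsetminus\{\pp\})$ of primes immediately above $\pp$ when $(a,b)=(n+1,n)$; and the empty set when $(a,b)=(n,n+1)$, since $\pp\subseteq\qq$ would force $\cd(\qq)\ge\cd(\pp)=n+1>n$.

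Finally I would settle the three cases. When $(a,b)=(n,n+1)$ the surviving sub-product is empty, so the $\RHom$ is $0$. When $(a,b)=(n+1,n)$ the surviving contribution is $\RHom_R(\prod_{\qq\in\min(V(\pp)\smallsetminus\{\pp\})}\Sigma^{\height(\qq)-\height(\pp)-1}D_{\widehat{R_\qq}}^\varkappa,D_{\widehat{R_\pp}})$ (the shift being forced by matching the internal shifts of $C(n+1)^\varkappa$ against $\Sigma^{\height(\pp)-n}D_{\widehat{R_\pp}}$), which is literally the object treated in \cref{minimal-case-c}, hence concentrated in degree $0$ and flat. When $a=b$ the surviving contribution is $\RHom_R(D_{\widehat{R_\pp}}^\varkappa,D_{\widehat{R_\pp}})$; via \cref{iso-kappa-product} and the fact that $\RHom_{\widehat{R_\pp}}(D_{\widehat{R_\pp}},D_{\widehat{R_\pp}})\cong\widehat{R_\pp}$ (as $D_{\widehat{R_\pp}}$ is a dualizing complex for $\widehat{R_\pp}$), this identifies with $\RHom_{\widehat{R_\pp}}(\widehat{R_\pp}^\varkappa,\widehat{R_\pp})\cong\Hom_{\widehat{R_\pp}}(\widehat{R_\pp}^\varkappa,\widehat{R_\pp})$, the $\Ext$ vanishing since $\widehat{R_\pp}^\varkappa$ is flat; concentration in degree $0$ (and flatness of this module) may also be read off from \cref{vanishing} with $\qq=\pp$ together with \cref{p-complete}, exactly as in the proof of \cref{End-tilt}. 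Assembling the pieces, $\RHom_R(Y(n)^\varkappa,Y(n))$ is a finite direct sum of products of degree-$0$ flat $R$-modules, which simultaneously yields the isomorphism $\RHom_R(Y(n)^\varkappa,Y(n))\cong\Hom_{\D(R)}(Y(n)^\varkappa,Y(n))$ and the flatness of the right-hand side.

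The main obstacle, as indicated, is controlling the interaction of $\RHom_R$ and local cohomology with the infinite products $D_{\widehat{R_\pp}}^\varkappa$ — the passage from factorwise statements to statements about the whole product — which is exactly the difficulty that \cref{minimal-case-c} and \cref{p-complete} are designed to handle; once those are in hand, everything else is routine bookkeeping organized by the codimension function.
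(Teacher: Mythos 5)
Your decomposition, case split $a,b\in\{n,n+1\}$, and the invocation of \cref{minimal-case-c} for $(a,b)=(n+1,n)$ all match the published proof, and the indexing is right. However, two of your steps do not close as written. The passage from the factorwise vanishing (for $\qq$ with $\pp\not\subseteq\qq$) to the vanishing over the whole infinite sub-product in the contravariant slot is flagged by you as needing the ``completion/inverse-limit manipulation'' of \cref{minimal-case-c}, but that is not what is used there. The mechanism in the paper (both in the proof of \cref{End-cotilt}, via \cref{bi-loc} and \cref{cosupport-D}, and inside \cref{minimal-case-c} via \cref{cosupport-RHom}) is that $\cC^{V(\pp)^{\cp}}=\cL_{V(\pp)^{\cp}}$ is colocalizing, hence closed under products; so the whole sub-product over $\qq\notin V(\pp)$ lies in $\cC^{V(\pp)^{\cp}}=\Perp{0}\cC^{V(\pp)}$, and $D_{\widehat{R_\pp}}\in\cC^{\{\pp\}}\subseteq\cC^{V(\pp)}$ kills it in one stroke. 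One should not try to argue factorwise and then ``pass to the product''; the point is that the product is in the orthogonal at the outset.

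More substantively, your diagonal case $a=b$ has a genuine gap. The chain $\RHom_R(D_{\widehat{R_\pp}}^\varkappa,D_{\widehat{R_\pp}})\cong\RHom_{\widehat{R_\pp}}(\widehat{R_\pp}^\varkappa,\widehat{R_\pp})$ silently changes the base ring of $\RHom$, and the justification you cite, $\RHom_{\widehat{R_\pp}}(D_{\widehat{R_\pp}},D_{\widehat{R_\pp}})\cong\widehat{R_\pp}$, is an identity over $\widehat{R_\pp}$; the dualizing-complex property says nothing directly about $\RHom_R$ or $\RHom_{R_\pp}$, since $D_{\widehat{R_\pp}}$ is not a dualizing complex over those rings. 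To make your route rigorous you would need the extension-of-scalars adjunction $\RHom_R(D_{\widehat{R_\pp}}\otimes_{\widehat{R_\pp}}\widehat{R_\pp}^\varkappa,Y)\cong\RHom_{\widehat{R_\pp}}(\widehat{R_\pp}^\varkappa,\RHom_R(D_{\widehat{R_\pp}},Y))$ together with the nontrivial fact $\RHom_R(D_{\widehat{R_\pp}},D_{\widehat{R_\pp}})\cong\widehat{R_\pp}$. The paper sidesteps this by using \cref{Cech-comp}, \cref{local-dual-4}, \cref{iso-kappa-product}, and \cref{min-case-isos-co-3} to rewrite $\RHom_R(D_{\widehat{R_\pp}}^\varkappa,D_{\widehat{R_\pp}})\cong\Hom_R\bigl(E_R(R/\pp)\otimes_{\widehat{R_\pp}}\widehat{R_\pp}^\varkappa,\,E_R(R/\pp)\bigr)$, from which concentration in degree $0$ and flatness are both read off at once, since the first argument is injective. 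Your fallback via \cref{vanishing} and \cref{p-complete} gives concentration in degree $0$ but \emph{not} flatness: \cref{vanishing} is merely a cohomology-vanishing bi-implication and does not transport the module structure from the tilting side.
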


It will be seen in the proof of \cref{End-cotilt} that $\RHom_{R}(C(n)^\varkappa, C(n+1))= 0$, so $\End_{\D(R)}(Y(n))$ naturally becomes an upper triangular matrix ring. However the proof of \cref{minimal-case-c} will show that a more concrete  description of the right-hand side in \cref{minimal-case-c} is more complicated than that of \cref{minimal-case}. Hence, for the cotilting side, we do not give a result like \cref{End-tilt-2}. See also \cref{End-cotilt-ex} below.

\cref{End-cotilt} yields the next theorem. As already mentioned, there is no formal way to dualize the tilting condition to the cotilting condition, so we can not deduce this theorem directly from \cref{1-dim}.

\begin{theorem}\label{1-dim-cotilt}
	Assume that the Krull dimension of $R$ is at most one. Let $\Phi$ be a codimension filtration $\Phi$ of $\Spec R$. Then $C_\Phi$ is a cotilting object in $\D(R)$.
\end{theorem}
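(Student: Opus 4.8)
\textbf{Proof plan for \cref{1-dim-cotilt}.} By \cref{slice-cosilt}, $C_\Phi$ is already known to be a cosilting object in $\D(R)$, so the only thing left to verify is the vanishing of negative self-extensions, namely $\Prod(C_\Phi) \subseteq \Perp{<0}C_\Phi$. Exactly as in the proof of \cref{1-dim}, the endomorphism ring and the relevant $\Hom$-groups do not depend on the choice of codimension filtration up to shift on each connected component (see \cref{non-connected}), so I may assume $\Phi = \Phi_{\cd}$ for a fixed codimension function $\cd$ on $\Spec R$; because $\dim R \le 1$, only two consecutive ``layers'' $W_n, W_{n+1}$ can be nonempty at a time in the notation of \cref{End-cotilt}. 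More precisely, for each pair of indices, $C_\Phi$ decomposes (via \cref{bi-loc}, which splits $\D(R)$ according to connected components) into pieces of the form $Y(n) = C(n)\oplus C(n+1)$, and it suffices to check that $\Hom_{\D(R)}(Y(n)^\varkappa, \Sigma^i Y(n)) = 0$ for all $i < 0$ and all cardinals $\varkappa$.

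First I would invoke \cref{End-cotilt}: it gives a natural isomorphism $\RHom_R(Y(n)^\varkappa, Y(n)) \cong \Hom_{\D(R)}(Y(n)^\varkappa, Y(n))$ in $\D(R)$, whose right-hand side is concentrated in degree $0$ (in fact a flat $R$-module). Concentration in degree $0$ is precisely the statement $\Hom_{\D(R)}(Y(n)^\varkappa, \Sigma^i Y(n)) = H^i\RHom_R(Y(n)^\varkappa, Y(n)) = 0$ for every $i \ne 0$, and in particular for every $i < 0$. Assembling these vanishings over all the finitely many relevant index pairs/connected components, and noting that a product of copies of $C_\Phi$ is a summand of a product of the $Y(n)^\varkappa$'s, yields $\Prod(C_\Phi)\subseteq \Perp{<0}C_\Phi$. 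Combined with the cosilting property from \cref{slice-cosilt}, this shows $C_\Phi$ is cotilting.

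The one subtlety, which I would spell out, is the bookkeeping that reduces the self-$\Hom$ of $C_\Phi$ to the self-$\Hom$ of the two-layer objects $Y(n)$: one uses that for $\pp \subsetneq \qq$ a non-saturated chain the height gap forces $V(\pp)\cap W_{n+1}$-type terms to vanish, and that cross terms between non-adjacent layers vanish because $\RHom_R(D_{\widehat{R_\qq}}, D_{\widehat{R_\pp}})$ is controlled by \cref{vanishing} together with the corresponding computation of $\RHom_R(\RGamma_\pp R_\pp, \RGamma_\qq R_\qq)$ from \cref{minimal-case}. Since $\dim R \le 1$, a saturated chain has length at most one, so every relevant chain is already handled by \cref{minimal-case,End-cotilt}. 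The main obstacle is therefore not any new homological input — \cref{End-cotilt} does all the real work — but rather making the reduction to $Y(n)$ clean, i.e. correctly organizing $C_\Phi$ into the pieces $C(n)$ indexed by the value of $\cd$ and checking that products and the passage to connected components interact well with $\Prod$ and with the orthogonality class $\Perp{<0}(-)$. This is the same reduction already carried out in the silting case in the proof of \cref{1-dim}, so I would keep it terse and simply cite \cref{End-cotilt,slice-cosilt,non-connected}.
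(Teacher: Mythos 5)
Your proof is correct and follows the paper's own argument: reduce via \cref{non-connected} to a single codimension function on each connected component, observe that since $\dim R \le 1$ the object $C_\Phi$ is (componentwise) precisely $Y(n) = C(n)\oplus C(n+1)$ in the notation of \cref{End-cotilt}, and conclude from the fact that $\RHom_R(Y(n)^\varkappa, Y(n))$ is concentrated in degree $0$. The paper simply normalizes to the height filtration and cites \cref{End-cotilt} without spelling out the identification $C_\Phi = Y(0)$; your version makes that step explicit. One small slip worth flagging: it is \cref{non-connected}, not \cref{bi-loc}, that gives the product decomposition of $\D(R)$ along connected components of $\Spec R$; \cref{bi-loc} concerns the (co)localizing subcategories attached to a single specialization closed subset.
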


\begin{proof}
By \cref{slice-cosilt}, $C_\Phi$ is a cosilting object in $\D(R)$. To show that $C_\Phi$ is cotilting, we may assume that $\Phi$ is the height filtration; see \cref{non-connected}. Then this theorem follows from \cref{End-cotilt}.
\end{proof}

Let $R$ be as in \cref{1-dim-cotilt} and $\Phi$ the height filtration. 
Then $C_{\Phi}= \prod_{\pp\in \Spec R}D_{\widehat{R_\pp}}$, and in the derived category $\D(R)$, we may interpret $C_{\Phi}$ as a complex of injective $R$-modules concentrated in degrees $0$ and $1$; see \cref{bd-comp-inj}.
Hence, instead of using \cref{End-cotilt}, one can also complete the proof of \cref{1-dim-cotilt} by the following result due to Pavon and Vitória. For its proof, see also \cite[Remark 4.8]{PV21} and \cref{silt-complex-module-com} below.

\begin{theorem}[{\cite[Corollary 5.12]{PV21}}]\label{PV-theorem}
Let $R$ be a commutative noetherian ring and $C$ a 2-term cosilting complex over $R$. Then $C$ is a cotilting object in $\D(R)$.
\end{theorem}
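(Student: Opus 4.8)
The statement to prove is Theorem (Pavon–Vitória): over a commutative noetherian ring $R$, every $2$-term cosilting complex $C$ is a cotilting object in $\D(R)$.

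The plan is to reduce the cotilting condition $\Prod(C)\subseteq {}^{\perp_{<0}}C$ to a purely module-theoretic statement and then exploit that $2$-term cosilting complexes over commutative noetherian rings are of cofinite type. First I would recall from \cref{cosilt-bound,of-finite-type} that $C$, being a bounded cosilting complex, is pure-injective, and from \cref{HN21} (as cited in \cref{subsec-real-func}) that every pure-injective cosilting object over a commutative noetherian ring is of cofinite type; hence the induced t-structure is a non-degenerate compactly generated t-structure and corresponds, via \cref{cosilt-cof-sp}, to a non-degenerate sp-filtration $\Phi$. Since $C$ is a $2$-term complex, the induced t-structure is $1$-intermediate (as in the last paragraph of \cref{silt-equiv}), so $\Phi$ takes only two consecutive nonempty values: after a shift we may assume $\Phi(n)=\Spec R$ for $n\le -1$, $\Phi(0)=W$ for some specialization closed $W\subseteq \Spec R$, and $\Phi(n)=\emptyset$ for $n\ge 1$. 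The dual of \cref{rem-0-tilt} then identifies, up to equivalence, the cosilting object $C$ with a concrete complex: the $2$-term silting complex on the other side is (equivalent to) the mapping cone construction $R\to \lambda^{W^\cp}R$, so $C$ is equivalent to a $2$-term complex of injectives built from $E:=\bigoplus_\pp E_R(R/\pp)$ and $\Gamma_W E$.

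Next I would compute negative self-extensions directly. Writing $C$ (up to equivalence, i.e., up to $\Prod$) as the $2$-term complex $I^0\xrightarrow{d} I^1$ with $I^0,I^1$ injective, one has $\Prod(C)$ consisting of $2$-term complexes $J^0\to J^1$ of injectives of the corresponding ``shape''. The key point is that for any $X\in\Prod(C)$, concentrated in degrees $0,1$, and any $i<0$, the complex $\RHom_R(X,C)$ has no cohomology in negative degrees for elementary reasons: $\Hom_{\D(R)}(X,\Sigma^i C)$ is computed by the Hom-complex of two $2$-term complexes, and a nonzero class in degree $i<0$ would force a nonzero map $X^1\to C^0$ inducing something nontrivial in $\D(R)$; one shows this cannot happen using that $d\colon I^0\to I^1$ is, in the commutative noetherian setting, (equivalent to) the canonical inclusion $\Gamma_W E\hookrightarrow E$ together with the $\pp$-local/$\pp$-torsion orthogonality from \cref{comparison}\cref{Gamma-adjoint}. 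Concretely, $\Hom_R(E_R(R/\qq), E_R(R/\pp))=0$ unless $\qq\subseteq \pp$, and the two-step filtration of $\Spec R$ by $W$ being specialization closed makes the relevant Hom-groups between the degree-$1$ part of $X$ and the degree-$0$ part of $C$ vanish in the homotopy category modulo the boundary. This is essentially the content of \cref{PV21}; alternatively one can invoke \cite[Proposition 3.10]{MV18} plus the explicit $2$-term shape to see that the heart is a Grothendieck category and the cosilting object is cotilting precisely when the associated ``non-degeneracy in the negative direction'' holds, which is automatic for a $2$-term complex.

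The cleanest route, and the one I would actually write, is: (i) observe that a $2$-term cosilting complex $C$ is pure-injective and of cofinite type (\cref{HN21}), so corresponds to a non-degenerate sp-filtration $\Phi$ supported in two consecutive integers; (ii) apply \cref{(co)silt-dual} to transport to the silting side, where the corresponding silting object is a $2$-term silting complex $T$; (iii) note that by \cref{necessity}\cref{cotilt-neces} (applied after matching $C$ with a $C_\Phi$) it suffices to check $\Phi$ is a codimension filtration, but for a $2$-term filtration $\pp\subsetneq\qq$ saturated with $\f_\Phi(\pp),\f_\Phi(\qq)\in\{0,1\}$ forces $\f_\Phi(\qq)-\f_\Phi(\pp)\le 1$, so the only obstruction ``$\f_\Phi(\qq)-\f_\Phi(\pp)>1$'' never occurs — thus $\Phi$ is automatically a codimension filtration on each connected component, and \emph{some} shift makes it so globally; (iv) combine with \cref{slice-cosilt} identifying $C$ with $C_\Phi$ up to equivalence to conclude $C$ is cotilting. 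The main obstacle I anticipate is step (iii)'s bookkeeping: I must be careful that ``$2$-term cosilting complex'' really does produce a filtration whose gap is at most $1$ along \emph{every} saturated chain (not merely that the filtration occupies two integers), and that the equivalence class of $C$ indeed coincides with $C_\Phi$ rather than merely with $T_\Phi^+$; reconciling these requires the compatibility of \cref{(co)silt-dual} with \cref{cosilt-cof-sp,silt-fin-sp} stated at the end of \cref{sp-filt} together with the identification in the proof of \cref{slice-cosilt}. Once that compatibility is in hand, the theorem follows immediately from \cref{slice-cosilt} and \cref{necessity}, with no need to reprove the Pavon–Vitória computation from scratch; I would include a one-line pointer to \cite[Remark 4.8]{PV21} for the reader who prefers the direct argument.
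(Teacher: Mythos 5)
Your ``cleanest route'' (the argument you say you would actually write) contains several fatal errors at step (iii), and the first sketch, while closer in spirit to the correct Pavon--Vit\'oria argument, is not carried out.

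\emph{Logical direction of \cref{necessity}.} \cref{necessity}\cref{cotilt-neces} is a \emph{necessary} condition, not a sufficient one: it asserts that \emph{if} $C_\Phi$ is cotilting \emph{then} $\Phi$ is a codimension filtration. The converse is false in general --- that is precisely the content of \cref{question}, \cref{CM-equiv}, \cref{flat-End-thm} and \cref{2-dim-tilt}. Being a codimension filtration definitively does \emph{not} imply cotilting without additional hypotheses (existence of a dualizing complex, being a homomorphic image of a Cohen--Macaulay ring, $\dim R\le 1$, etc.). So even if you could show $\Phi$ is a codimension filtration, the conclusion would not follow.

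\emph{Arithmetic of codimension functions.} A codimension function requires $\cd(\qq)-\cd(\pp)=1$ for every saturated chain $\pp\subsetneq\qq$, not merely $\le 1$. A function taking values in $\{0,1\}$ on a connected $\Spec R$ with $\dim R\ge 2$ cannot be a codimension function, so a $2$-term sp-filtration is typically \emph{not} a codimension filtration, contrary to what you claim.

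\emph{Not every $2$-term cosilting complex has a slice sp-filtration.} The sp-filtration $\Phi$ attached to a $2$-term cosilting complex concentrated in degrees $0$ and $1$ has $\Phi(-1)=\Spec R$, $\Phi(0)=W$ for some specialization closed $W$, and $\Phi(1)=\emptyset$. For $\Phi$ to be a slice sp-filtration one needs $\dim(\Spec R\sm W)\le 0$ and $\dim W\le 0$, which forces $\dim R\le 1$. When $\dim R\ge 2$ the filtration is in general \emph{not} slice, so there is no $C_\Phi$ in the sense of \cref{CPhi} supporting the identification in step (iv); \cref{slice-cosilt} simply does not apply.

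The paper itself does not reprove the theorem; it cites \cite[Corollary 5.12 and Remark 4.8]{PV21} and points to \cref{silt-complex-module-com}. The mechanism behind the cited result is close to your first sketch: one computes $\Hom_{\D(R)}(X,\Sigma^{-1}C)\cong\Hom_R(H^1X,H^0C)$ for $X\in\Prod(C)$ concentrated in degrees $0,1$, and then shows (by reducing to products, using that $H^0C$ is a cosilting module classified by a specialization closed $W$, by localizing at maximal ideals, and by an orthogonality argument dual to the Nakayama-type step in the paper's proof of \cref{2-term}) that $\Hom_R(H^1(C)^\varkappa,H^0C)=0$. You gesture at this but never carry out the vanishing; and the claim that $d:I^0\to I^1$ is ``equivalent to the canonical inclusion $\Gamma_W E\hookrightarrow E$'' is imprecise: that complex has $H^0=0$ (rather than a cosilting module), so the complex governing a $2$-term cosilting t-structure has a different shape. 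Rewriting along the lines of the paper's direct proof of \cref{2-term} (dualized), supplemented by the classification of cosilting modules in \cite[Theorem 5.1]{AHH17} and its compatibility with \cref{cosilt-cof-sp} explained in \cref{silt-complex-module-com}, is the viable path.
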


Let us now prove \cref{minimal-case-c,End-cotilt}.
For this purpose, we remark that
\begin{equation}\label{cosupport-D}
\cosupp_R D_{\widehat{R_\pp}}=\{\pp\}
\end{equation}
for each $\pp \in \Spec R$.
Moreover, given a subset $W$ of $\Spec R$, $Y\in \cC^W$, and $\pp\in \Spec R$ with $W\cap U(\pp)=\emptyset$, we have 
\begin{equation}\label{cosupport-RHom}
\RHom_R(X, Y)=0
\end{equation}
for every complex $X$ of $R_\pp$-modules because $X\in \cL_{U(\pp)} =\Perp{0}\cC^{V}$ by \cref{bi-loc} and $\cC^{W}\subseteq \cC^{V}$, where $V:=U(\pp)^\cp$.
We will also need the fact that, for any $\pp\in \Spec R$, there is a quasi-isomorphism
\begin{equation}\label{min-case-isos-co-3}
\check{C}(\pp)\otimes_R D_{\widehat{R_\pp}} \to \Sigma^{-\height(\pp)} E_R(R/\pp)
\end{equation}
of complexes of $\widehat{R_\pp}$-modules; see  \cref{Cech-comp}, \cref{dc-loc-coh}, and \cref{base-change}.

\begin{proof}[Proof of \cref{minimal-case-c}] 
Let $X := \prod_{\qq\in W} \Sigma^{\height(\qq)-\height(\pp)-1}D_{\widehat{R_\qq}}^{\varkappa}$. By \cref{local-dual-2}, we have 
	\begin{equation}\label{min-case-iso-co}
		\RHom_{R}(X,D_{\widehat{R_{\pp}}}) \cong \RHom_{R_\pp}(X \LotimesR \Sigma^{\height (\pp)}\RGamma_\pp R_\pp, E_R(R/\pp)).
	\end{equation}
Note that $E_R(R/\pp)\cong E_{R_\pp}(\kappa(\pp))$ is an injective cogenerator in $\Mod R_\pp$ and $X \LotimesR \Sigma^{\height (\pp)}\RGamma_\pp R_\pp \cong \Sigma^{\height (\pp)}\RGamma_\pp X_\pp$ in $\D(R)$ by \cref{Cech-comp}.
	Hence it suffices to show that  $\Sigma^{\height (\pp)}\RGamma_\pp X_\pp$ is isomorphic to an injective $R$-module in $\D(R)$; see \cite[Theorem 3.2.16]{EJ11}.

	Consider the approximation triangle
	\begin{equation}\label{min-case-tri-co}
	\gamma_{U(\pp)^\cp}X \to X \to X_\pp\xrightarrow{+},
	\end{equation}
	where $X_\pp\cong \lambda^{U(\pp)}X$ by \cref{typic-(co)loc}.
	Since $W\subseteq U(\pp)^\cp$ by assumption, it follows from \cref{cosupport-D,cosupport-RHom} that $\RHom_R(R_\pp,X) = 0$. Thus application of $\RGamma_\pp\RHom_R(R_\pp,-)$ to \cref{min-case-tri-co} yields the natural isomorphism 
	\begin{align}\label{min-case-nat-co}
\RGamma_\pp X_\pp\isoto \Sigma\RGamma_\pp\RHom_R(R_\pp,\gamma_{U(\pp)^\cp}X).
	\end{align}
	Moreover, by \cref{closed-iso,GM-dual,ATJLL97}, tensor-hom adjunction, and \cite[Proposition 6.1]{BIK08}, there are natural isomorphisms
	\begin{align*}
\RGamma_\pp\RHom_R(R_\pp,\gamma_{U(\pp)^\cp}X)
&\cong
\RGamma_\pp\RHom_R(\RGamma_\pp R,\RHom_R(R_\pp,\gamma_{U(\pp)^\cp}X))\\
&\cong\RGamma_\pp \RHom_R(R_\pp, \RHom_R(\RGamma_\pp R,\gamma_{U(\pp)^\cp}X))\\
&\cong\RGamma_\pp \RHom_R(R_\pp, \RHom_R(\RGamma_\pp R,\gamma_{V(\pp)\cap U(\pp)^\cp}X))\\
&\cong\RGamma_\pp \RHom_R(\RGamma_\pp R,\RHom_R(R_\pp,\gamma_{V(\pp)\cap U(\pp)^\cp}X))\\
&\cong\RGamma_\pp \RHom_R(R_\pp,\gamma_{V(\pp)\cap U(\pp)^\cp}X).
	\end{align*}
Thus we can rewrite \cref{min-case-nat-co} to
\begin{align}\label{min-case-nat-co-2}
\RGamma_\pp X_\pp\isoto \Sigma \RGamma_\pp\RHom_R(R_\pp,\gamma_{V(\pp)\cap U(\pp)^\cp}X).
	\end{align}
Since $X\in \cC^{\overline{W}^{\mathrm{g}}}$ and $V(\pp)\cap U(\pp)^\cp \cap \overline{W}^{\mathrm{g}}=(V(\pp)\sm\{\pp\}) \cap \overline{W}^{\mathrm{g}}=W$, we have natural isomorphisms
\begin{align*}
	\gamma_{V(\pp)\cap U(\pp)^\cp}X \cong \gamma_{V(\pp)\cap U(\pp)^\cp}\gamma_{\overline{W}^{\mathrm{g}}}X \cong \gamma_{V(\pp)\cap U(\pp)^\cp \cap \overline{W}^{\mathrm{g}}}X \cong \bigoplus_{\qq \in W}\RGamma_\qq\RHom_R(R_\qq,X)
	\end{align*}
	in $\D(R)$, where the second and the third isomorphisms follow from  \cite[Proposition 3.21]{NY18b} and \cite[Theorem 3.12]{NY18a}, respectively.
Moreover, we see from \cref{cosupport-D,cosupport-RHom} that 
 \[\RGamma_{\qq}\RHom_R(R_\qq,X) \cong \RGamma_{\qq}\RHom_R(R_\qq,\Sigma^{\height(\qq)-\height(\pp)-1}D_{\widehat{R_\qq}}^{\varkappa})\cong \Sigma^{\height(\qq)-\height(\pp)-1}\RGamma_{\qq}D_{\widehat{R_\qq}}^{\varkappa}.\]
Thus we can rewrite \cref{min-case-nat-co-2} to
	\begin{align}\label{min-case-isos-co-2}
\RGamma_\pp X_\pp \isoto  \Sigma^{\height(\qq)-\height(\pp)}\RGamma_\pp\RHom_R(R_\pp,\bigoplus_{\qq \in W}\RGamma_\qq D_{\widehat{R_\qq}}^{\varkappa}).
	\end{align}
	
	Now, using \cref{Cech-comp,iso-kappa-product,min-case-isos-co-3}, we compute
	\begin{align*}
	\RGamma_\qq D_{\widehat{R_\qq}}^{\varkappa} 
	&\cong \check{C}(\qq) \otimes_R D_{\widehat{R_\qq}}^{\varkappa}\\
	&\cong \check{C}(\qq) \otimes_R (D_{\widehat{R_\qq}} \otimes_{\widehat{R_\qq}} \widehat{R_\qq}^{\varkappa})\\
	 &\cong (\check{C}(\qq) \otimes_R D_{\widehat{R_\qq}}) \otimes_{\widehat{R_\qq}} \widehat{R_\qq}^{\varkappa}\\
	 &\cong \Sigma^{-\height(\qq)} E_R(R/\qq) \otimes_{\widehat{R_\qq}} \widehat{R_\qq}^{\varkappa}
\end{align*}
in $\D(R)$. 
Combining this with \cref{min-case-isos-co-2}, we have
\begin{align*}
\Sigma^{\height (\pp)}\RGamma_\pp X_\pp
&\cong 
\Sigma^{\height(\qq)}\RGamma_\pp\RHom_R(R_\pp,\bigoplus_{\qq \in W}\RGamma_\qq D_{\widehat{R_\qq}}^{\varkappa})\\
&\cong  \Gamma_\pp\Hom_R(R_\pp,\bigoplus_{\qq \in W} E_{R}(R/\qq) \otimes_{\widehat{R_\qq}} \widehat{R_\qq}^{\varkappa})
\end{align*}
in $\D(R)$, where the second isomorphism holds because the $R$-modules $E_{R}(R/\qq) \otimes_{\widehat{R_\qq}} \widehat{R_\qq}^{\varkappa}
$ and $\Hom_R(R_\pp,\bigoplus_{\qq \in W} E_{R}(R/\qq) \otimes_{\widehat{R_\qq}} \widehat{R_\qq}^{\varkappa})$ are both injective; see \cite[Theorems 3.2.9 and 3.2.16]{EJ11}.
Therefore it follows from \cref{Gamma-Inj} that $\Sigma^{\height (\pp)}\RGamma_\pp X_\pp$ is isomorphic to an injective $R$-module, as desired.
\end{proof}

\begin{proof}[Proof of \cref{End-cotilt}]
Let $i$ and $j$ be integers.
 There are natural isomorphisms 
\begin{align*}
\RHom_{R}(C(i)^\varkappa,C(j))
&\cong \prod_{\pp\in W_j}\RHom_{R}(\prod_{\qq\in W_i}\Sigma^{\height(\qq)-i}D^\varkappa_{\widehat{R_\qq}},\Sigma^{\height(\pp)-j}D_{\widehat{R_\pp}})\\
&\cong \prod_{\pp\in W_j}\RHom_{R}(\prod_{\qq\in V(\pp)\cap W_i}\Sigma^{\height(\qq)-i}D^\varkappa_{\widehat{R_\qq}},\Sigma^{\height(\pp)-j}D_{\widehat{R_\pp}}),
\end{align*}
where the second isomorphism follows from \cref{bi-loc} and \cref{cosupport-D}.

If $i=j$ and $\pp\in W_i$, then $V(\pp)\cap W_i=\{\pp\}$, so we have
\begin{align}\label{End-cotilt-iso}
\RHom_{R}(C(i)^\varkappa,C(i))\cong \prod_{\pp\in W_i}\RHom_{R}(D^\varkappa_{\widehat{R_\pp}},D_{\widehat{R_\pp}})\cong \prod_{\pp\in W_i}\Hom_{\D(R)}(D^\varkappa_{\widehat{R_\pp}},D_{\widehat{R_\pp}}),
\end{align}
where the second follows from \cref{Cech-comp}, \cref{local-dual-4}, \cref{iso-kappa-product}, and \cref{min-case-isos-co-3} because 
\begin{align*}
\RHom_R(D_{\widehat{R_{\pp}}}^\varkappa,D_{\widehat{R_{\pp}}})
&\cong 
\RHom_R(\RGamma_{\pp}D^\varkappa_{\widehat{R_{\pp}}}, \Sigma^{-\height(\pp)}E_{R}(R/\pp))\\
&\cong \RHom_R(E_{R}(R/\pp) \otimes_{\widehat{R_\pp}} \widehat{R_\pp}^{\varkappa}, E_{R}(R/\pp))\\
&\cong \Hom_R(E_{R}(R/\pp) \otimes_{\widehat{R_\pp}} \widehat{R_\pp}^{\varkappa}, E_{R}(R/\pp)).
\end{align*}
The last $R$-module is flat since $E_{R}(R/\pp) \otimes_{\widehat{R_\pp}} \widehat{R_\pp}^{\varkappa}$ is injective over $R$; see \cite[Theorems 3.2.9 and 3.2.16]{EJ11}.
 
 If $(i,j)=(n+1,n)$ for some integer $n$, then we have 
 \begin{align*}
 \RHom_{R}(C(n+1)^\varkappa,C(n))
&\cong \prod_{\pp\in W_{n}}\RHom_{R}(\prod_{\qq\in V(\pp)\cap W_{n+1}}\Sigma^{\height(\qq)-n-1}D_{\widehat{R_\qq}}^\varkappa,\Sigma^{\height(\pp)-n}D_{\widehat{R_\pp}})\\
&=  \prod_{\pp\in W_{n}}\RHom_{R}(\prod_{\qq\in V(\pp)\cap W_{n+1}}\Sigma^{\height(\qq)-\height(\pp)-1}D_{\widehat{R_\qq}}^\varkappa,D_{\widehat{R_\pp}})\\
&\cong  \prod_{\pp\in W_{n}}\Hom_{\D(R)}(\prod_{\qq\in V(\pp)\cap W_{n+1}}\Sigma^{\height(\qq)-\height(\pp)-1}D_{\widehat{R_\qq}}^\varkappa,D_{\widehat{R_\pp}})
\end{align*}
in $\D(R)$, where the third isomorphism holds by \cref{minimal-case-c}, which also shows that the last $R$-module is flat.

If $i<j$ and $\pp\in W_j$, then $V(\pp)\cap W_i=\emptyset$, so 
\[\RHom_{R}(C(i)^\varkappa, C(j))= 0\]
in $\D(R)$.
We have completed the proof.
\end{proof}

\begin{example}\label{End-cotilt-ex}
Let $R$ be a 1-dimensional Gorenstein ring.
The cotilting object $C_{\Phi_{\height}}$ for the height filtration ${\Phi_{\height}}$ is of the form $(\prod_{\height(\pp)=0} \widehat{R_\pp})\oplus (\prod_{\height(\mm)=1} \widehat{R_\mm})$ (see \cref{ex-(co)tilt}), where $\prod_{\height(\pp)=0} \widehat{R_\pp}=\prod_{\height(\pp)=0} R_\pp$ can be identified with the total quotient ring $Q(R)$ of $R$. 
By \cite[Lemma 4.1.8]{Xu96} or by the proof of \cref{End-cotilt} applied to $\varkappa=1$ and Matlis duality, we can verify that there is a natural ring isomorphism
\begin{equation*}
\End_{\D(R)}(C_{\Phi_{\height}})\cong
\begin{pmatrix}
	Q(R) &  \displaystyle{\Hom_R(\prod_{\height(\mm)=1} \widehat{R_{\mm}},Q(R))}\vspace{2mm}\\
	0 & \displaystyle{\prod_{\height(\mm)=1} \widehat{R_\mm}}
	\end{pmatrix}.
\end{equation*}
\end{example}

\begin{remark}\label{1-dim-no-dc}
By \cite[Proposition 3.1]{FR70}, there exists a 1-dimensional local domain that does not admit a dualizing complex; see also \cite[Theorem 3.8]{Sha77} and \cite[Proposition 3.7]{Olb12}. Thus \cref{1-dim,1-dim-cotilt} are not covered by \cref{tilt-dc,dc-cotilt}. 
A principle hidden here would be that $R$ is a homomorphic image of a Cohen--Macaulay ring. We will discuss this theme in the next section. 
\end{remark}

\begin{remark}
The existence of a codimension function is not essential for \cref{X(n)-X(n+1),End-tilt} as explained in the following:
Let $R$ be a commutative noetherian ring and fix an integer $n$. Let $W_n$ and $W_{n+1}$ be subsets of $\Spec R$ with $\dim W_n\leq 0$ and $\dim W_{n+1}\leq 0$.
Assume that  $W_n\cap V(\qq)=\emptyset$ for any $\qq\in W_{n+1}$ and that any chain $\pp\subsetneq \qq$ in $W_n\cup W_{n+1}$ is saturated. 
Set $T(i):=\bigoplus_{\pp\in W_i}\Sigma^i\RGamma_{\pp}R_\pp$ for $i=n, n+1$ and $Y(n):=T(n)\oplus T(n+1)$. 
Then we have the isomorphisms in \cref{X(n)-X(n+1),End-tilt}, and the second claim of \cref{End-tilt} also holds.

Similarly, letting $W_n$ and $W_n+1$ be as above, and setting $C(i):=\prod_{\pp\in W_i}\Sigma^{\height(\pp)-i}D_{\widehat{R_\pp}}$ for $i=n, n+1$ and $Y(n):=C(n)\oplus C(n+1)$, we have the isomorphism in \cref{End-cotilt}, and the second claim of the corollary also holds. So the existence of a codimension function is not essential for the corollary. 
\end{remark}

The next result is the silting counterpart of \cref{PV-theorem}.
 
\begin{theorem}\label{2-term}
Let $R$ be a commutative noetherian ring and $T$ a 2-term silting complex over $R$. Then $T$ is a tilting object in $\D(R)$.
\end{theorem}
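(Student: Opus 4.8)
The plan is to reduce the general statement to the case of a \emph{cosilting} complex via the duality $(-)^{+}=\RHom_R(-,E)$ and then invoke \cref{PV-theorem}. First I would recall that a $2$-term silting complex $T$ is a bounded silting object, hence of finite type (see \cref{of-finite-type}), so by \cref{charact-dual} (applied with $S=R$) its dual $T^{+}$ is a cosilting object of cofinite type in $\D(R)$, and moreover it is again a $2$-term object up to shift: indeed $(-)^{+}$ is exact and sends a complex of projectives concentrated in two consecutive degrees to a complex of injectives concentrated in two consecutive degrees, so $T^{+}$ is (isomorphic to) a $2$-term cosilting complex. By \cref{PV-theorem}, $T^{+}$ is then a cotilting object in $\D(R)$, i.e.\ $\Prod(T^{+})\subseteq\Perp{<0}(T^{+})$.

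The key step is to transfer the vanishing of negative self-extensions back across the duality. Here one uses that, for an injective cogenerator $E$ of $\Mod R$, the functor $(-)^{+}$ is faithful on $\D(R)$ and, on the relevant objects, detects the relevant $\Hom$-groups: concretely, for any $X,Y\in\D(R)$ and any $n$ there is a natural isomorphism $\Hom_{\D(R)}(X,\Sigma^{n}Y^{+})\cong \Hom_{\D(R)}(Y,\Sigma^{n}X^{+})$, and more usefully, for a complex $P$ of projectives and arbitrary $Z$, $\Hom_{\D(R)}(P,\Sigma^{n}Z)^{+}_{\mathrm{ab}}\cong\Hom_{\D(R)}(Z^{+},\Sigma^{-n}P^{+})$ as abelian groups (since $E$ is an injective cogenerator, $H^{n}\RHom_R(P,Z)$ vanishes iff its character dual does). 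Thus from $\Prod(T^{+})\subseteq\Perp{<0}(T^{+})$ one extracts $\Hom_{\D(R)}(T,\Sigma^{n}T^{(\varkappa)})=0$ for all $n<0$ and all cardinals $\varkappa$; since $\Add(T)$ consists of summands of coproducts $T^{(\varkappa)}$, this gives $\Add(T)\subseteq T\Perp{<0}$, which together with the silting hypothesis is exactly the definition (\cref{subsec-der-eq}) of $T$ being tilting.

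I should double-check one bookkeeping point: the compatibility of the shift with the correspondence $T\mapsto T^{+}$, so that ``$2$-term'' is genuinely preserved and the degree $n$ in the self-extension vanishing matches up with degree $-n$ on the dual side; this is routine but needs care because $(-)^{+}$ is contravariant and reverses the homological grading. Assuming \cref{charact-dual} supplies the bijection between bounded silting objects in $\D(R)$ and bounded cosilting objects of cofinite type in $\D(R^{\op})=\D(R)$ compatibly with the induced t-structures, and that this restricts to an identification of $(l-1)$-intermediate t-structures on both sides (which is how ``$l$-term'' is characterized in \cref{silt-equiv}), the matching is automatic.

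The main obstacle I anticipate is not a deep one but a technical one: making the character-dual argument rigorous at the level of $\D(R)$ rather than $\C(R)$, i.e.\ justifying that for a $2$-term complex $P$ of projectives and a $2$-term complex $C$ of injectives with $C\simeq T^{+}$, the natural map relating $\Hom_{\D(R)}(P,\Sigma^{n}P^{(\varkappa)})$ and $\Hom_{\D(R)}\!\big((P^{(\varkappa)})^{+},\Sigma^{-n}P^{+}\big)$ is an isomorphism of abelian groups, and that products on the injective side correspond to the coproducts appearing in $\Add(T)$. This can be handled using that $P$ is K-projective and $C$ is K-injective, together with the exactness and cogenerating property of $(-)^{+}$, as in the proof of \cref{tilt-CM} and the discussion around \cref{charact-dual}; alternatively, one could argue directly that $\Perp{<0}C$ pulls back under $(-)^{+}$ to a subcategory containing $\Add(T)$. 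Either route is short, and no genuinely new input beyond \cref{PV-theorem} and \cref{charact-dual} is required.
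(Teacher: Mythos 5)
Your overall plan --- dualize to $T^+$, invoke \cref{PV-theorem} for the $2$-term cosilting complex $T^+$, then transfer the vanishing back --- is a viable route; the text immediately following \cref{2-term} in the paper even says the theorem can be deduced from \cref{PV-theorem} by a duality argument, while the paper's own proof is different (direct, via the classification of silting modules and a Nakayama argument, with no reference to \cref{PV-theorem}). However, the isomorphism you rely on in the transfer step, namely $\Hom_{\D(R)}(P,\Sigma^{n}Z)^{+}\cong\Hom_{\D(R)}(Z^{+},\Sigma^{-n}P^{+})$ for an arbitrary complex $P$ of projectives, is false in general, and this is not merely the indexing bookkeeping you flag. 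By adjunction $\RHom_R(Z^{+},P^{+})\cong(P\LotimesR Z^{+})^{+}$, so your claim amounts to the $\Hom$-evaluation morphism $P\LotimesR Z^{+}\to\RHom_R(P,Z)^{+}$ being a quasi-isomorphism; that requires $P$ to be perfect, whereas the components of a $2$-term silting complex need not be finitely generated. Already for $P=Z=R$ and $n=0$ your isomorphism would read $E\cong\End_R(E)$, which fails for most rings.

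What rescues the argument is that one needs only an injection, not an isomorphism, and that it must be set up at the level of modules --- this is exactly where the $2$-term hypothesis does genuine work. After shifting, put $T$ in degrees $-1,0$. The only relevant negative self-$\Hom$ is $\Hom_{\D(R)}(T,\Sigma^{-1}T^{(\varkappa)})\cong\Hom_{R}(H^{0}T,(H^{-1}T)^{(\varkappa)})$; dually $T^{+}$ lives in degrees $0,1$ with $H^{0}(T^{+})\cong(H^{0}T)^{+}$, $H^{1}(T^{+})\cong(H^{-1}T)^{+}$, and one computes $\Hom_{\D(R)}((T^{+})^{\varkappa},\Sigma^{-1}T^{+})\cong\Hom_{R}\big(((H^{-1}T)^{(\varkappa)})^{+},(H^{0}T)^{+}\big)$. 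The assignment $f\mapsto\Hom_R(f,E)$ sends the first $\Hom$-group injectively into the second, because $E$ is an injective cogenerator of $\Mod R$, and the second vanishes by \cref{PV-theorem}; hence so does the first, giving $\Add(T)\subseteq T\Perp{<0}$. The correction uses only that $(-)^{+}=\Hom_R(-,E)$ is faithful on $\Mod R$, not full --- and that asymmetry is precisely why, as the paper observes, no parallel duality argument seems to recover \cref{PV-theorem} from \cref{2-term}.
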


It is possible to deduce \cref{2-term} from \cref{PV-theorem} of Pavon and Vitória by some duality argument, but we here prove \cref{2-term} in a more direct way. There does not seem to be any duality argument which recovers the cosilting result \cref{PV-theorem} from \cref{2-term}.

\begin{remark}
Assume $d:=\dim R<\infty$. 
If $\pp$ is minimal, then $\RGamma_\pp R_\pp\cong R_\pp$ in $\D(R)$, so $\RGamma_\pp R_\pp$ is isomorphic to a bounded complex of projective $R$-modules concentrated in degrees from $-d$ to $0$; see the proof of \cref{fin-proj-dim}.

If $\mm$ is maximal, then $\RGamma_\mm R_\mm\cong \RGamma_\mm R$ in $\D(R)$ (see \cref{RHom-TPhi}).
Let $\boldsymbol{x}=x_1,\ldots,x_{d_\mm}$ be a sequence of elements in $R$ such that their images in $R_\mm$ form a system of parameters of $R_\mm$, where $d_\mm=\dim R_\mm$ (see \cref{radical-torsion-sop}). Let $I$ be the ideal generated by $\boldsymbol{x}$.
Then $R_\mm/IR_\mm$ is an artinian local ring, so we see that the (Zariski) closed subset $V(I)$ of $\Spec R$ decomposes into the disjoint union of $V(\mm)$ and another closed set $V(J)$, where $V(J)$ may be empty (i.e., $J$ may be $R$). Hence $\Gamma_I\cong \Gamma_{V(I)}\cong \Gamma_{V(\mm)}\oplus \Gamma_{V(J)}\cong \Gamma_\mm\oplus \Gamma_J$ as functors $\Mod R\to \Mod R$; the second isomorphism easily follows from \cref{Gamma-Inj}. Thus $\RGamma_\mm R$ is a direct summand of $\RGamma_{I} R\cong \check{C}(\boldsymbol{x})$ in $\D(R)$. Since $\check{C}(\boldsymbol{x})$ is isomorphic in $\D(R)$ to a bounded complex of projective $R$-modules concentrated in degrees from $0$ to $d_\mm=\height(\mm)$ (see, e.g., \cite[Lemma 6.9]{DG02}), so is $\RGamma_\mm R\cong \RGamma_\mm R_\mm$.

When $d=1$, the above observation implies that, in $\D(R)$,  $T_{\Phi_{\height}}=\bigoplus_{\pp\in \Spec R} \Sigma^{\height(\pp)}\RGamma_\pp R_\pp$ is isomorphic to a complex $P$ of projective $R$-modules concentrated in degrees from $-1$ to $0$, and $P$ is a 2-term silting complex by \cref{slice-silting}.
Thus one may also use \cref{2-term} to complete the proof of \cref{1-dim} instead of \cref{End-tilt}.
\end{remark}
\begin{remark}\label{silt-complex-module-com}
Silting and cosilting modules were introduced in \cite{AHMV16a} and \cite{BP17} as module-theoretic alternatives to 2-term silting and cosilting complexes. The assignment $X \mapsto H^0X$ induces a bijection between equivalence classes of silting (resp. cosilting) complexes concentrated in degrees $-1$ and $0$ (resp. degrees $0$ and $1$) and equivalence classes of silting (resp. cosilting) modules. 

Over a commutative noetherian ring $R$, any cosilting module $M$ in $\Mod R$ cogenerates a hereditary torsion pair with torsion class $ \Mod R\cap \Perp{0}M$, and in fact, all hereditary torsion pairs in $\Mod R$ are of this form. Then cosilting (resp. silting) modules in $\Mod R$ are, up to equivalence, in bijection with specialization closed subsets of $\Spec R$. Under this bijection, a cosilting (resp. silting) module $M$ corresponds to a specialization closed subset $W$ such that for any ideal $I$ we have $\Hom_R(R/I,M) = 0$ (resp. $(R/I) \otimes_R M = 0$) if and only if $V(I) \subseteq W$. 
Moreover,  if $T$ is a silting module corresponding to a specialization closed subset $W$, its dual $T^+=\Hom_R(T,E)$ by any injective cogenerator $E$ is, up to equivalence, a cosilting module corresponding to $W$.
See \cite[Corollaries 3.7 and 4.1, Lemma 4.2, and Theorem 5.1]{AHH17} for details.

This classification of (co)silting modules is compatible with \cref{cosilt-cof-sp,silt-fin-sp} in the following sense:
If $C$ is a 2-term cosilting complex concentrated in degrees 0 and 1, then the sp-filtration $\Phi$ associated with $C$ by \cref{cosilt-cof-sp} satisfies $\Phi(-1) = \Spec R$ and $\Phi(1) = \emptyset$ (see the references in the last paragraph of \cref{silt-equiv}), and $\Phi(0)$ corresponds, up to equivalence, to the cosilting module $H^0C$ via \cite[Theorem 5.1]{AHH17}; see \cite[Proposition 2.8]{HB21} (cf. \cite[\S 5.2]{HHZ21}). 
By duality \cref{(co)silt-dual}, we have a 2-term silting complex $T$ such that the cosilting complex $T^+$ is equivalent to $C$, and $\Phi$ is the sp-filtration associated with $T$ by \cref{silt-fin-sp}. 
Since the cosilting module $(H^0T)^+\cong H^0(T^+)$ is equivalent to $H^0C$, the silting module $H^0T$ corresponds to $\Phi(0)$ via \cite[Theorem 5.1]{AHH17}.
\end{remark}
\begin{proof}[Proof of \cref{2-term}]
	We need to show that $T^{(\varkappa)} \subseteq T\Perp{<0}$ for any cardinal $\varkappa$. By shifting $T$ appropriately, we can assume that $T$ is a complex of projective $R$-modules concentrated in degrees $-1$ and $0$. 
	Clearly, $\Hom_{\D(R)}(T,\Sigma^i T^{(\varkappa)}) = 0$ for any $i<-1$, so we only need to show $\Hom_{\D(R)}(T,\Sigma^{-1}T^{(\varkappa)}) = 0$. 
Note that
\[\Hom_{\D(R)}(T,\Sigma^{-1}T^{(\varkappa)}) \cong \Hom_{R}(H^0T,(H^{-1}T)^{(\varkappa)}).\] Therefore, it is enough to show that there is no nonzero $R$-homomorphism from $H^0T$ to $H^{-1}T$.

Denote by $\Gen(H^0 T)$ the class of all epimorphic images of all coproducts of copies of the module $H^0 T$ in $\Mod R$.
The specialization closed subset $W$ corresponding to $H^0T$ via \cite[Theorem 5.1]{AHH17} (see \cref{silt-complex-module-com}) is determined by the property that $\Gen(H^0T)$ equals the class of all $R$-modules $M$ with $M=IM$ for any  ideal $I$ satisfying $V(I)\subseteq W$. 
If $W = \emptyset$, the classification implies that the silting complex $T$ is equivalent to the tilting complex $R$. Hence we may assume $W \neq \emptyset$.
Take an arbitrary $f \in \Hom_{R}(H^0T,H^{-1}T)$.

Suppose first that $R$ is local and let $N$ denote the image of $f$ in $H^{-1}T$. By definition, $N$ belongs to $\Gen (H^0T)$, so $N=\pp N$ for any $\pp \in W$. Since $N$  is contained in the projective $R$-module $T^{-1}$, we can find a nonzero $R$-homomorphism $g: N \to R$ whenever $N$ is nonzero, but then Nakayama Lemma implies that the image of $g$ is zero; consequently $N$ must be zero. Hence $f$ is zero as well. 

If $R$ is non-local, $T_\mm$ is a 2-term silting complex over $R_\mm$ for each maximal ideal $\mm$ (\cite[Lemma 6.3]{HHZ21}). Then the induced map
\[f \otimes_R R_\mm:(H^0T)\otimes_R R_\mm\cong H^0(T_\mm) \to H^{-1}(T_\mm)\cong (H^{-1}T)\otimes_R R_\mm\]
is zero for each maximal ideal $\mm$ by the previous paragraph. This implies that the image of $f$ is zero, so that $f$ is zero, as desired. 
\end{proof}

We close this section by showing the following result, which will be used in the next section.

\begin{proposition}\label{flat-End}
Let $R$ be a Cohen--Macaulay ring of finite Krull dimension and $\Phi$ a codimension filtration of $\Spec R$.
Let $\varkappa$ be any cardinal. Then the following hold:

\begin{enumerate}[label=(\arabic*), font=\normalfont]
\item \label{flat-End-tilt} $\Hom_{\D(R)}(T_{\Phi}, T_{\Phi}^{(\varkappa)})$ is a flat $R$-module.
In particular, the canonical ring homomorphism $R\to \End_{\D(R)}(T_{\Phi})$ is faithfully flat. 
\item \label{flat-End-cotilt} $\Hom_{\D(R)}(C_{\Phi}^{\varkappa}, C_{\Phi})$ is a flat $R$-module.
In particular, the canonical ring homomorphism $R\to \End_{\D(R)}(C_{\Phi})$ is faithfully flat. 
\end{enumerate}
\end{proposition}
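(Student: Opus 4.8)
The plan is to reduce everything to the local computations already packaged in \cref{End-tilt,End-cotilt}. Since $R$ is Cohen--Macaulay of finite Krull dimension, a codimension function $\cd$ exists (the height function works, \cref{cd-func}), and by the last paragraph of \cref{non-connected} we may freely replace $\Phi$ by any codimension filtration for the purpose of checking flatness of $\Hom_{\D(R)}(T_\Phi,T_\Phi^{(\varkappa)})$; so I would assume $\Phi = \Phi_\cd$. Because $\dim R = d < \infty$, the image of $\f_\Phi$ is finite, so $T_\Phi$ decomposes as a finite direct sum $T_\Phi \cong \bigoplus_{n} T(n)$ with $T(n) = \bigoplus_{\cd(\pp)=n}\Sigma^n\RGamma_\pp R_\pp$, in the notation of \cref{X(n)-X(n+1)}. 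Dually, $C_\Phi \cong \prod_n C(n)$ with $C(n)$ as in \cref{End-cotilt}, and here \cref{bd-comp-inj} guarantees $C_\Phi$ is a bounded complex of injectives so the product is finite up to shift in each degree.

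For part \cref{flat-End-tilt}, I would compute $\Hom_{\D(R)}(T_\Phi,T_\Phi^{(\varkappa)}) \cong \bigoplus_{m,n}\Hom_{\D(R)}(T(m),T(n)^{(\varkappa)})$ (a finite sum). By the third case of \cref{T(i)-T(j)} these vanish unless $m \le n$, and since $\cd$ is a codimension function the summands $\RGamma_\pp R_\pp$ and $\RGamma_\qq R_\qq$ appearing in $T(m)$ and $T(n)$ with a chain $\pp \subseteq \qq$ force $\cd(\qq)-\cd(\pp) = \f_\Phi(\qq)-\f_\Phi(\pp) = n-m$ to equal the height of $\qq/\pp$; combined with \cref{minimal-case} this means only $m=n$ and $m=n$ adjacent-to $n=m+1$ contribute, exactly as in \cref{End-tilt}. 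Concretely, $\RHom_R(T(n),T(n)^{(\varkappa)}) \cong \Ad^{W_n}(R^{(\varkappa)})$ and $\RHom_R(T(n),T(n+1)^{(\varkappa)}) \cong \Ad^{W_n}\Ad^{W_{n+1}}(R^{(\varkappa)})$ by \cref{X(n)-X(n+1)}; it remains to argue each of these is a flat $R$-module. Each $\Ad^\pp(R^{(\varkappa)}) = \Lambda^\pp(R_\pp^{(\varkappa)}) = \widehat{R_\pp^{(\varkappa)}}$ is flat over $R$ because it is the $\pp$-adic completion of a free $R_\pp$-module and $R_\pp$ is noetherian (flatness of completion, plus the fact that $\widehat{R_\pp^{(\varkappa)}}$ is flat as an $\widehat{R_\pp}$-module and $\widehat{R_\pp}$ is flat over $R$); iterating, $\Ad^\pp\Ad^{W_{n+1}}(R^{(\varkappa)})$ is a completion of a flat $R_\pp$-module, hence flat. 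A (possibly infinite) product of flat modules over a noetherian ring is flat, so $\Ad^{W_n}(R^{(\varkappa)}) = \prod_{\pp \in W_n}\widehat{R_\pp^{(\varkappa)}}$ is flat, and likewise for the mixed terms; a finite direct sum of flats is flat, giving the claim. For faithful flatness of $R \to \End_{\D(R)}(T_\Phi)$ (the case $\varkappa = 1$): flatness is the above, and faithfulness follows since $R \to \prod_{\mm \in \mSpec R}\widehat{R_\mm}$ is a ring retract of (a component of) $\End_{\D(R)}(T_\Phi)$ via \cref{End-tilt-2}, or more directly since the composite $R \to \End_{\D(R)}(T_\Phi) \to \End_{\D(R)}(T(d)) \cong \prod_{\height\pp = d}\widehat{R_\pp}$ hits every maximal ideal — so no maximal ideal of $R$ expands to the unit ideal.

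Part \cref{flat-End-cotilt} runs entirely parallel, replacing \cref{X(n)-X(n+1),End-tilt} by \cref{minimal-case-c,End-cotilt}. One computes $\Hom_{\D(R)}(C_\Phi^\varkappa,C_\Phi) \cong \bigoplus_{m,n}\Hom_{\D(R)}(C(m)^\varkappa,C(n))$, which vanishes for $m > n$ and, by the codimension-function identity together with \cref{minimal-case-c}, is supported on $m = n$ and $m = n+1, n = m$ wait — on $m=n$ and on the pair with $m = n+1$ contributing to target index $n$. In each surviving case \cref{End-cotilt} (and its proof) identifies the group with $\Hom_{\D(R)}$ of complexes of injectives and shows it is of the form $\Hom_R(E_R(R/\pp)\otimes_{\widehat{R_\pp}}\widehat{R_\pp}^\varkappa, E_R(R/\pp))$ or the product over a fibre $V(\pp)\cap W_{n+1}$ of the module appearing in \cref{minimal-case-c}; by \cite[Theorems 3.2.9 and 3.2.16]{EJ11} the source is an injective $R$-module, hence its $R$-dual (an $R$-$\Hom$ into an injective) is flat — this is exactly what \cref{minimal-case-c} and the proof of \cref{End-cotilt} assert. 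A finite sum of flats is again flat. Faithful flatness for $\varkappa=1$ follows as before, now via the retraction $\End_{\D(R)}(C_\Phi) \twoheadrightarrow \End_{\D(R)}(C(0)) \cong \prod_{\height\pp=0}\widehat{R_\pp} = Q(R)$ together with the finer structure over the other graded pieces, or simply by noting the composite $R \to \End_{\D(R)}(C_\Phi) \to \prod_{\mm}\widehat{R_\mm}$ is faithfully flat.

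The main obstacle I anticipate is purely bookkeeping rather than conceptual: making sure the matching of summands across the direct-sum/product decompositions of $T_\Phi$ and $C_\Phi$ is done cleanly, and that the flatness claims quoted inside \cref{End-tilt,End-cotilt,minimal-case-c} — which are stated there only for the ``$Y(n)$'' packages — are invoked for the right index ranges. One should also be slightly careful that $\dim R < \infty$ is genuinely used: it is what makes all the relevant direct sums/products \emph{finite} (the image of $\f_\Phi$ is a finite set of integers), which is needed so that a finite sum of flats suffices and one never has to worry about a countable-or-larger direct sum of flat modules (which need not be flat unless the ring is coherent — and it is noetherian, so this is moot anyway, but the finiteness keeps the argument transparent). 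No new input beyond \cref{End-tilt,End-cotilt,minimal-case-c,X(n)-X(n+1),End-tilt-2,bd-comp-inj,non-connected,cd-func} and standard facts about flatness of completions and products over noetherian rings is required.
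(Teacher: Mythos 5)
Your plan — decompose $T_\Phi$ into the level pieces $T(n)$, pair them up, and read off flatness from \cref{X(n)-X(n+1),End-tilt} — has a genuine gap at its center. You assert that $\Hom_{\D(R)}(T(m), T(n)^{(\varkappa)})$ is supported only on the diagonal $m=n$ and the superdiagonal $n=m+1$, ``exactly as in \cref{End-tilt}.'' But no such vanishing holds for $n-m\geq 2$, and neither \cref{minimal-case} (which concerns precisely the saturated case $n-m=1$) nor any other cited result gives it. The off-diagonal terms with $n-m\geq 2$ are in general nonzero: for example, $\Hom_{\D(R)}(T(0),T(2))$ in a 2-dimensional CM ring does not vanish, and this is exactly why \cref{2-dim-tilt-flat-end} in the paper singles out $\Hom_{\D(R)}(T(0),T(2))$ as a residual case that \cref{X(n)-X(n+1),End-tilt} do \emph{not} cover, treated there by a separate argument (\cref{generic-fiber}). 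The codimension-function identity $\cd(\qq)-\cd(\pp)=n-m$ only tells you how far apart the primes are in each contributing pair; it does not make the far-apart pairs disappear. As written, your argument silently discards a whole family of a priori nonzero direct summands of $\End_{\D(R)}(T_\Phi)$, so flatness has not been established.

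The paper's actual proof avoids the layer-by-layer decomposition entirely. It writes $\RHom_R(T_\Phi,T_\Phi^{(\varkappa)})\cong\prod_{\pp\in\Spec R}M(\pp)$ with $M(\pp)\cong\LLambda^\pp\RHom_R(R_\pp,\Sigma^{-\height(\pp)}T_\Phi^{(\varkappa)})$ — so each prime $\pp$ contributes once, and $M(\pp)$ already collects the maps from $\RGamma_\pp R_\pp$ into \emph{all} summands of $T_\Phi^{(\varkappa)}$, not just the adjacent ones. Flatness of $M(\pp)$ is then obtained by replacing the inner $\RHom$ by a bounded complex of flat $R_\pp$-modules, completing, splitting off a minimal summand $F'$ as in \cite[Lemma 1.5]{NT20}, and invoking the Auslander--Buchsbaum equality $\depth_{R_\pp}M(\pp)=\depth_{R_\pp}R_\pp+\inf(\kappa(\pp)\otimes_{R_\pp}F')$: the Cohen--Macaulay hypothesis gives $\depth_{R_\pp}R_\pp=\dim R_\pp$, and a short cohomology computation gives $\depth_{R_\pp}M(\pp)\geq\dim R_\pp$, forcing $F'$ to be concentrated in degree zero, hence flat. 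A parallel local-duality argument handles \cref{flat-End-cotilt}. Your cotilting sketch has the same structural issue on the source side, so the same correction is needed there.
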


\begin{proof}
In view of \cref{non-connected}, we may assume that $\Spec R$ is connected. Then the codimension function $\f_\Phi:\Spec R\to \ZZ$ coincides with the height function $\height:\Spec R\to \ZZ$ up to shift; see \cref{cd-func}. So we may assume that $\Phi$ is the height filtration.

\cref{flat-End-tilt}:
By \cref{tilt-CM}, we have
\[T_\Phi=\bigoplus_{\pp\in \Spec R} \Sigma^{\height(\pp)}\RGamma_{\pp}R_\pp\cong  \bigoplus_{\pp \in \Spec R}H_\pp^{\height(\pp)} R_\pp\]
in $\D(R)$ and the corollary also shows that there is a natural isomorphism
\begin{align}\label{tilt-CM-Hom}
\RHom_R(T_\Phi, T_\Phi^{(\varkappa)})\cong \Hom_{\D(R)}(T_\Phi, T_\Phi^{(\varkappa)})
\end{align}
in $\D(R)$.
Moreover
\begin{align*}
\RHom_R(T_\Phi, T_\Phi^{(\varkappa)})
&\cong \prod_{\pp\in \Spec R}\RHom_R(\RGamma_{\pp}R_\pp,\Sigma^{-\height(\pp)} T_\Phi^{(\varkappa)})\\
&\cong\prod_{\pp\in \Spec R}\LLambda^{\pp}\RHom_R(R_\pp, \Sigma^{-\height(\pp)}T_\Phi^{(\varkappa)})
\end{align*}
by \cref{RHom-TPhi}. Each component $\LLambda^{\pp}\RHom_R(R_\pp, \Sigma^{-\height(\pp)}T_\Phi^{(\varkappa)})$ is isomorphic in $\D(R)$ to an $R_\pp$-module $M(\pp)$ by \cref{tilt-CM-Hom}, and any product of flat modules is flat since $R$ is noetherian (\cite[Theorem 3.2.24]{EJ11}). So once we show that $M(\pp)$ is a flat $R$-module for each  $\pp\in \Spec R$, $\Hom_{\D(R)}(T_{\Phi}, T_{\Phi}^{(\varkappa)})$ will be a flat $R$-module. 

Recall that $R_\pp$ and $T_\Phi^{(\varkappa)}$ are isomorphic to  bounded complexes of flat $R$-modules in $\D(R)$; see \cref{fin-proj-dim} and its proof.
Hence $\RHom_R(R_\pp, \Sigma^{-\height(\pp)}T_\Phi^{(\varkappa)})$ is isomorphic to a bounded complex of flat $R$-modules, but it can be further replaced by a bounded complex $F$ of flat $R_\pp$-modules because 
$\RHom_R(R_\pp, \Sigma^{-\height(\pp)}T_\Phi^{(\varkappa)})\cong \RHom_R(R_\pp, \Sigma^{-\height(\pp)}T_\Phi^{(\varkappa)})\otimes_R R_\pp$.
Then we have
\[\LLambda^{\pp}\RHom_R(R_\pp, \Sigma^{-\height(\pp)}T_\Phi^{(\varkappa)})\cong \LLambda^{\pp}F\cong \Lambda^{\pp}F\]
in $\D(R)$, where  $\Lambda^{\pp}F$ is a bounded complex of $\pp$-adic completions of flat $R_\pp$-modules. Note that the modules constituting $\Lambda^{\pp}F$ can be identified with $\pp$-adic completions of free $R_\pp$-modules; see \cite[Lemma 6.7.4]{EJ11}. 
Thus, according to \cite[Lemma 1.5]{NT20}, the complex $\Lambda^{\pp}F$ decomposes as a coproduct $F'\oplus F''$ in $\C(R)$ such that $\kappa(\pp)\otimes_{R_\pp} F'$ has zero differential and $F''$ is contractible (i.e., isomorphic to the zero complex in $\K(R)$). As a consequence, $M(\pp)$ is isomorphic to $F'$ in $\D(R)$.
By a version of the Auslander--Buchsbaum formula (see, e.g., \cite[Theorem 2.4]{FI03}), we have 
\begin{align*}
\depth_{R_\pp} M(\pp)=\depth_{R_\pp} R_\pp+\inf \kappa(\pp)\otimes_{R_\pp} F',
\end{align*}
where $\depth_{R_\pp} R_\pp=\dim R_\pp$ since $R_\pp$ is Cohen--Macaulay. Hence
\begin{align*}
\depth_{R_\pp} M(\pp)-\dim R_\pp =\inf \kappa(\pp)\otimes_{R_\pp} F'.
\end{align*}

If we show $\depth_{R_\pp} M(\pp)\geq \dim R_\pp$, then $\inf \kappa(\pp)\otimes_{R_\pp} F'\geq 0$, and this means that $F'$ is concentrated in non-negative degrees because $\kappa(\pp)\otimes_{R_\pp} F'$ has zero differential, $F'$ consists of $\pp$-adic completions of free $R_\pp$-modules, and $\kappa(\pp)\otimes_{R_\pp}\Lambda^\pp (R_\pp^{(\mu)})\cong \kappa(\pp)^{(\mu)}$ for any cardinal $\mu$. Thus, noting that $\kappa(\pp)\LotimesRp M(\pp)\cong \kappa(\pp)\otimes_{R_\pp} F'$ in $\D(R_\pp)$ by construction, we can conclude that the complex $F'$ is concentrated in degree zero. In particular, $F'$ is a flat $R$-module and $M(\pp)\cong F'$.

Hence it remains to show the inequality $\depth_{R_\pp} M(\pp)\geq \dim R_\pp$.
This follows from the following equalities
\begin{align*}
\depth_{R_\pp} M(\pp)
&=\inf \RHom_R(\kappa(\pp),M(\pp))\\
&=\inf \RHom_R(\kappa(\pp),\LLambda^{\pp}\RHom_R(R_\pp, \Sigma^{-\height(\pp)}T_\Phi^{(\varkappa)}))\\
&=\inf \RHom_R(\RGamma_{\pp}\kappa(\pp),\RHom_R(R_\pp, \Sigma^{-\height(\pp)}T_\Phi^{(\varkappa)}))\\
&=\inf \RHom_R(\kappa(\pp),\RHom_R(R_\pp, \Sigma^{-\height(\pp)}T_\Phi^{(\varkappa)}))\\
&=\inf \RHom_{R}(\kappa(\pp)\LotimesR R_\pp, \Sigma^{-\height(\pp)}T_\Phi^{(\varkappa)}))\\
&=\inf \RHom_{R}(\kappa(\pp), \Sigma^{-\height(\pp)}T_\Phi^{(\varkappa)}))\\
&=\inf \RHom_{R}(\kappa(\pp), \Sigma^{-\height(\pp)}\textstyle{\bigoplus_{\qq \in \Spec R}H_\qq^{\height(\qq)} R_\qq^{(\varkappa)})}\\
&\geq \height(\pp)=\dim R_\pp.
\end{align*}
The second statement of \cref{flat-End-tilt} follows from \cref{ring-hom} because $\End_R(T_\Phi)$ contains $\prod_{\pp\in \Spec R}\widehat{R_\pp}$ as a direct summand and $\prod_{\pp\in \Spec R}\widehat{R_\pp}$ is a faithfully flat $R$-module (see \cite[Theorems 4.6 and 8.14]{Mat89}).

\cref{flat-End-cotilt}: As assumed, $\Phi$ is the height filtration, so that 
\[C_\Phi = \prod_{\pp \in \Spec R}D_{\widehat{R_\pp}}\cong\prod_{\pp \in \Spec R}\omega_{\widehat{R_\pp}}\]
by \cref{cotilt-CM}. The corollary also shows that
\begin{align}\label{cotilt-CM-Hom}
\RHom_R(C_\Phi^\varkappa,C_\Phi)\cong \Hom_{\D(R)}(C_\Phi^\varkappa,C_\Phi)
\end{align}
in $\D(R)$. Moreover, using \cref{local-dual-2}, we have 
\[\RHom_R(C_\Phi^\varkappa,C_\Phi) \cong \prod_{\pp \in \Spec R}\RHom_{R}(C_\Phi^\varkappa \LotimesR \Sigma^{\height(\pp)}\RGamma_\pp R_\pp,E_{R}(R/\pp)),\]
where $\RHom_R$ in the right-hand side can be replaced by $\RHom_{R_\pp}$, and $E_R(R/\pp)\cong E_{R_\pp}(\kappa(\pp))$ is an injective cogenerator in $\Mod R_\pp$.
For each $\pp \in \Spec R$, $C_\Phi^\varkappa \LotimesR \Sigma^{\height(\pp)}\RGamma_\pp R_\pp$ is isomorphic in $\D(R)$ to an $R_\pp$-module $N(\pp)$ by \cref{cotilt-CM-Hom}.
Thus $\Hom_{\D(R)}(C_\Phi^\varkappa,C_\Phi)$ is the product of $\Hom_{R}(N(\pp),E_{R}(R/\pp))$ for all $\pp\in \Spec R$. So once we show that $N(\pp)$ is an injective $R$-module for each $\pp \in \Spec R$, then $\Hom_{\D(R)}(C_\Phi^\varkappa,C_\Phi)$ will be a flat $R$-module.

Since $\dim R < \infty$ and $\Phi$ is the height filtration, $C_\Phi\cong \prod_{\pp \in \Spec R}\omega_{\widehat{R_\pp}}$ is an $R$-module of finite injective dimension (\cref{bd-comp-inj}). It follows that $C_\Phi^\varkappa \otimes_R R_{\pp}$ is an $R_\pp$-module of injective dimension bounded by $\dim R_\pp = \height (\pp)$ (\cite[Corollary 5.5]{Bas62}). Thus $C_\Phi^\varkappa \LotimesR \RGamma_\pp R_\pp\cong \RGamma_\pp (C_\Phi^\varkappa \otimes_R R_{\pp})$ is isomorphic in $\D(R)$ to a bounded complex of injective $R$-modules concentrated in degrees from $0$ to $\height(\pp)$ (\cref{Gamma-Inj}). 
On the other hand, 
we know that $C_\Phi^\varkappa \LotimesR \Sigma^{\height(\pp)}\RGamma_\pp R_\pp$ is isomorphic in $\D(R)$ to the module $N(\pp)$, so
 \[\inf C_\Phi^\varkappa \LotimesR \RGamma_\pp R_\pp=\inf \RGamma_\pp (C_\Phi^\varkappa \otimes_R R_\pp) \geq \height(\pp).\]  
Consequently, there is an injective $R$-module $E$ such that $\RGamma_\pp (C_\Phi^\varkappa \otimes_R R_\pp)$ is isomorphic in $\D(R)$ to $\Sigma^{-\height(\pp)}E$. This fact in turn shows that $N(\pp) \cong \Sigma^{\height (\pp)}\RGamma_\pp (C_\Phi^\varkappa \otimes_R R_\pp) \cong E$, so $N(\pp)$ is an injective $R$-module.

The second statement of \cref{flat-End-cotilt} follows similarly as in \cref{flat-End-tilt}; see the second paragraph of the proof of \cref{End-cotilt} and use Matlis duality.
\end{proof}

\begin{remark}\label{flatcotorsion}
An $R$-module $M$ is called \emph{cotorsion} if $\Ext^1_R(F,M)=0$ for any flat $R$-module.
A \emph{flat cotorsion} $R$-module means an $R$-module which is flat and cotorsion.
It is known that an $R$-module $M$ is flat cotorsion if and only if $M$ is isomorphic to a product $\prod_{\pp\in \Spec R} T_\pp$, where $T_\pp$ is the $\pp$-adic completion of a free $R_\pp$-module (\cite[Theorem 5.3.28]{EJ11}).
We see from \cref{X(n)-X(n+1)} that the right-hand side of the isomorphism in \cref{End-tilt} is in fact a flat cotorsion $R$-module. Similarly, the right-hand side of the isomorphism in \cref{minimal-case-c} is a flat cotorsion $R$-module.

More generally, $\Hom_{\D(R)}(T_\Phi,T_\Phi^{(\varkappa)})$ and $\Hom_{\D(R)}(C_\Phi^\varkappa,C_\Phi)$ in \cref{flat-End} are not only flat but cotorsion as $R$-modules.
Indeed, the proof of \cref{flat-End}\cref{flat-End-tilt} shows that $\Hom_{\D(R)}(T_\Phi,T_\Phi^{(\varkappa)})$ is isomorphic to the direct product $\prod_{\pp\in \Spec R} M(\pp)$. By \cref{RHom-TPhi}, each $M(\pp)$ belongs to $\cC^{\{\pp\}}$, and then \cite[Lemma 4.2]{NY18b} implies that $M(\pp)$ is cotorsion. So the product $\prod_{\pp\in \Spec R} M(\pp)$ is cotorsion.
To see that $\Hom_{\D(R)}(C_\Phi^\varkappa,C_\Phi)$ is cotorsion, recall from the proof of \cref{flat-End}\cref{flat-End-cotilt} that $\Hom_{\D(R)}(C_\Phi^\varkappa,C_\Phi)$ is isomorphic to the product $\prod_{\pp\in \Spec R} \Hom_R(N(\pp), E_R(R/\pp))$. 
Then each $\Hom_R(N(\pp), E_R(R/\pp))$ is pure-injective (see the proof of \cite[Proposition 5.3.7]{EJ11} or \cite[Proposition 4.3.29]{Pre09}), so it is cotorsion (\cite[Lemma 5.3.23]{EJ11}). Thus $\prod_{\pp\in \Spec R} \Hom_R(N(\pp), E_R(R/\pp))$ is cotorsion.
\end{remark}

\section{Homomorphic images of Cohen--Macaulay rings}\label{Hom-Im-CM}

We proved in \cref{tilt-CM,cotilt-CM} that if $R$ is a Cohen--Macaulay ring and $\Phi$ is a codimension filtration of $\Spec R$, then $T_\Phi$ is tilting and $C_\Phi$ is cotilting.
In this section, we will show that every homomorphic image of $R$ inherits this property whenever $R$ is a Cohen--Macaulay ring of finite Krull dimension (\cref{finite-theorem}). This fact will lead us to the deep theme on characterizing homomorphic images of Cohen--Macaulay rings. Indeed, it will turn out that this theme is closely related to the tilting and the cotilting conditions on $T_\Phi$ and $C_\Phi$, respectively (\cref{flat-End-thm,2-dim-tilt}).

We begin with an elementary observation on sp-filtrations via change of rings.
Let $\varphi:R\to A$ be a homomorphism of commutative noetherian rings and let $f:\Spec A\to \Spec R$ be the canonical map induced by $\varphi$.
Let $\Phi$ be an sp-filtration of $\Spec R$, and define $f^{-1}\Phi: \ZZ\to 2^{\Spec A}$ as the map given by $n\mapsto f^{-1}(\Phi(n))$. Clearly $f^{-1}\Phi$ is an sp-filtration of $\Spec A$. Moreover, if $\Phi$ is non-degenerate, then so is $f^{-1}\Phi$. Indeed, we have  $\bigcap_{n\in \ZZ}f^{-1}(\Phi(n))=f^{-1}(\bigcap_{n\in \ZZ}\Phi(n))=f^{-1}(\emptyset)=\emptyset$ and $\bigcup_{n\in \ZZ}f^{-1}(\Phi(n))=f^{-1}(\bigcup_{n\in \ZZ}\Phi(n))=f^{-1}(\Spec R)=\Spec A$.

Recall that there is a canonical homeomorphism $f^{-1}\{\pp\}\isoto \Spec A\otimes_R\kappa(\pp)$ for each $\pp\in \Spec R$ (see \cite[p.~47]{Mat89}). 
Suppose $\varphi$ is \emph{finite}, that is, $A$ is finitely generated as an $R$-module. Then the fiber $f^{-1}\{\pp\}$ over $\pp$ is a (possibly empty) finite set and $\dim f^{-1}\{\pp\}\leq 0$ since the ring $A\otimes_R\kappa(\pp)$ is artinian. 

\begin{lemma}\label{sp-filt-finite}
Let $\varphi:R\to A$ be a finite homomorphism of commutative noetherian rings and let $f:\Spec A\to \Spec R$ be the canonical map induced by $\varphi$.
If $\Phi$ is a slice sp-filtration of $\Spec R$, then $f^{-1}\Phi$ is a slice sp-filtration of $\Spec A$.
\end{lemma}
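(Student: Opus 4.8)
The statement asserts that if $\Phi$ is a slice sp-filtration of $\Spec R$ and $\varphi : R \to A$ is finite, then $f^{-1}\Phi$ is again a slice sp-filtration. By the discussion immediately preceding the lemma, $f^{-1}\Phi$ is a non-degenerate sp-filtration of $\Spec A$, so the only thing left to verify is the slice condition $\dim\bigl(f^{-1}\Phi(n)\setminus f^{-1}\Phi(n+1)\bigr)\le 0$ for every $n\in\ZZ$. Since $f^{-1}\Phi(n)\setminus f^{-1}\Phi(n+1) = f^{-1}\bigl(\Phi(n)\setminus\Phi(n+1)\bigr)$, the task reduces to showing that the preimage under $f$ of a specialization closed subset $W\subseteq\Spec R$ with $\dim W\le 0$ again has dimension at most $0$.

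First I would observe that $\dim W\le 0$ means $W$ consists of pairwise incomparable primes, equivalently $W$ contains no strict chain $\pp_0\subsetneq\pp_1$. Now suppose for contradiction that $f^{-1}(W)$ contained a strict chain $\qq_0\subsetneq\qq_1$ in $\Spec A$. Set $\pp_i := \qq_i\cap R = f(\qq_i)$; both lie in $W$. Since $\qq_0\subseteq\qq_1$ we get $\pp_0\subseteq\pp_1$, and because $W$ has dimension $\le 0$ this forces $\pp_0 = \pp_1 =: \pp$. Thus $\qq_0$ and $\qq_1$ are two distinct primes of $A$ lying over the same prime $\pp$ of $R$ with $\qq_0\subsetneq\qq_1$; i.e. there is a strict chain of length one in the fiber $f^{-1}\{\pp\}$. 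But as recalled just before the lemma, $f^{-1}\{\pp\}$ is homeomorphic to $\Spec(A\otimes_R\kappa(\pp))$, and $A\otimes_R\kappa(\pp)$ is a finite-dimensional $\kappa(\pp)$-algebra (here the hypothesis that $\varphi$ is finite is used), hence artinian, hence zero-dimensional. So $\dim f^{-1}\{\pp\}\le 0$, contradicting the existence of the strict chain $\qq_0\subsetneq\qq_1$ in that fiber. Therefore $f^{-1}(W)$ has dimension at most $0$, which is exactly what we needed.

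The argument is essentially a routine chain-counting plus the "going-up"–flavored fact that chains in $\Spec A$ over a single prime of $\Spec R$ live inside the corresponding artinian fiber; I do not anticipate any real obstacle. The one point to state carefully is that the equality $f^{-1}\Phi(n)\setminus f^{-1}\Phi(n+1) = f^{-1}(\Phi(n)\setminus\Phi(n+1))$ is just the compatibility of preimages with set difference, and that $\Phi(n)\setminus\Phi(n+1)$ is the set whose dimension the slice hypothesis on $\Phi$ bounds by $0$. I would also remark in passing (or leave implicit) that each $f^{-1}\Phi(n)$ is specialization closed in $\Spec A$ because $f$ is continuous and specialization closed subsets are exactly the unions of closed subsets, so $f^{-1}$ of such a set is again of that form; this is already covered by the general remarks before the lemma but is worth keeping in mind.
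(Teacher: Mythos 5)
Your proof is correct and takes essentially the same approach as the paper: you reduce to showing that $f^{-1}$ of a zero-dimensional specialization closed subset has dimension at most $0$, and the key step — a strict chain upstairs would map to a comparable pair in $\Phi(n)\setminus\Phi(n+1)$, which the slice condition forces to coincide, yielding a contradiction with the artinian fiber over their common image — is exactly what the paper does. The only cosmetic difference is that you package the argument via the reduction to a single layer $W = \Phi(n)\setminus\Phi(n+1)$, while the paper works directly with $\mathfrak{P},\mathfrak{Q}\in f^{-1}(\Phi(n))\setminus f^{-1}(\Phi(n+1))$.
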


\begin{proof}
Let $\Phi$ be a slice sp-filtration of $\Spec R$. If $\mathfrak{P}, \mathfrak{Q} \in f^{-1}(\Phi(n))\sm f^{-1}(\Phi(n+1))$ and $\mathfrak{P}\subseteq \mathfrak{Q}$, then $f(\mathfrak{P}), f(\mathfrak{Q}) \in \Phi(n)\sm \Phi(n+1)$ and $f(\mathfrak{P})\subseteq f(\mathfrak{Q})$. It follows that 
 $f(\mathfrak{P})= f(\mathfrak{Q})$ because $\Phi$ is a slice sp-filtration. Putting $\pp:= f(\mathfrak{P})= f(\mathfrak{Q})$, we have $\mathfrak{P}, \mathfrak{Q}\in f^{-1}\{\pp\}$ and $\mathfrak{P}\subseteq \mathfrak{Q}$, but then $\mathfrak{P}= \mathfrak{Q}$ as $\dim f^{-1}\{\pp\}= 0$.
\end{proof}

Let $R$ be a commutative noetherian ring. 
In the rest of the section, by an $R$-algebra, we will mean a commutative $R$-algebra for simplicity. A \emph{module-finite} $R$-algebra will mean a (commutative) $R$-algebra $A$ with structure map $\varphi: R\to A$ such that $\varphi$ is finite.
Assume $(R,\mm, k)$ is a local ring and let $A$ be a module-finite $R$-algebra with structure map $\varphi: R\to A$. Let $f:\Spec A\to \Spec R$ be the canonical map induced by $\varphi$. 
The set of maximal ideals of $A$ equals the fiber $f^{-1}(\mm)$ over $\mm$, and there is a canonical bijection between this and that of the artinian ring $A/\mm A=A \otimes_R k$; in particular, $A$ is semi-local. It then follows that there is a natural isomorphism 
\begin{equation*}
\RGamma_{\mm A}A\cong \bigoplus_{\mathfrak{P}\in \mSpec A}\RGamma_{\mathfrak{P}}A_\mathfrak{P}
\end{equation*}
in $\D(A)$; see \cref{comparison}\cref{Gamma-adjoint} and \cref{Gamma-Inj}.
Let $\boldsymbol{x}=x_1,\ldots,x_n$ be a system of generators of $\mm$.
Since $\RGamma_{\mm}R\cong \check{C}(\boldsymbol{x})$ in $\D(R)$,
regarding $-\LotimesR A$ as a functor $\D(R)\to \D(A)$, we obtain a natural isomorphism
\begin{equation*}
(\RGamma_{\mm}R)\LotimesR A\cong \check{C}(\boldsymbol{x})\LotimesR A=\check{C}(\boldsymbol{x})\otimes_R A
\end{equation*}
in $\D(A)$. 
The ideal $\mm A$ of $A$ is generated by $\varphi(\boldsymbol{x}):=\varphi(x_1),\ldots,\varphi(x_n)$, so 
\[\check{C}(\boldsymbol{x})\otimes_RA=\check{C}(\varphi(\boldsymbol{x}))\cong \RGamma_{\mm A}A\]
in $\D(A)$.
As a consequence, there is a canonical isomorphism
\begin{equation}\label{local-corresp}
(\RGamma_{\mm}R)\LotimesR A\cong\bigoplus_{\mathfrak{P}\in \mSpec A}\RGamma_{\mathfrak{P}}A_\mathfrak{P}
\end{equation}
in $\D(A)$.

We next remark that there is a canonical isomorphism $A\otimes_R\widehat{R}\cong\prod_{\mathfrak{P}\in \mSpec A}\widehat{A_\mathfrak{P}}$ of rings (\cite[Theorems 8.7 and 8.15]{Mat89}). Notice from this isomorphism that each $\widehat{A_\mathfrak{P}}$ has a canonical ring homomorphism from $\widehat{R}$, by which $\widehat{A_\mathfrak{P}}$ is a module-finite $\widehat{R}$-algebra.
Regarding $\RHom_R(A,-)$ and $\RHom_{\widehat{R}}(\widehat{A_\mathfrak{P}},-)$ as functors $\D(\widehat{R})\to \D(A)$, 
we have a canonical isomorphism
\begin{align}\label{local-corresp2}
\RHom_R(A,D_{\widehat{R}})\cong\prod_{\mathfrak{P}\in \mSpec A}\RHom_{\widehat{R}}(\widehat{A_\mathfrak{P}},D_{\widehat{R}})
\end{align}
in $\D(A)$ because 
\[\RHom_R(A,D_{\widehat{R}})=\RHom_R(A,\Hom_{\widehat{R}}(\widehat{R},D_{\widehat{R}}))\cong\RHom_{\widehat{R}}(A\otimes_{R}\widehat{R},D_{\widehat{R}}).\]
Moreover, we can regard $\RHom_{\widehat{R}}(\widehat{A_\mathfrak{P}},D_{\widehat{R}})$ as a dualizing  complex for $\widehat{A_\mathfrak{P}}$ since $\widehat{A_\mathfrak{P}}$ is a module-finite $\widehat{R}$-algebra; see \cref{dc-facts}\cref{image-dc}.

\begin{lemma}\label{finite-1}
Let $R$ be a commutative noetherian ring and $A$ a module-finite $R$-algebra.
Let $f:\Spec A\to \Spec R$ be the canonical map induced by the structure map $R\to A$ and let $\Phi$ be a non-degenerate sp-filtration of $\Spec R$.
Then there are natural isomorphisms 
$T_{\Phi}\LotimesR A\cong T_{f^{-1}\Phi}$ and $\RHom_R(A,C_{\Phi})\cong C_{f^{-1}\Phi}$
 in $\D(A)$.
\end{lemma}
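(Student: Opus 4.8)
The plan is to establish the two isomorphisms summand by summand, matching the decompositions of $T_\Phi$ and $C_\Phi$ indexed over $\Spec R$ with the corresponding decompositions of $T_{f^{-1}\Phi}$ and $C_{f^{-1}\Phi}$ indexed over $\Spec A$, using the fact that the fibers of $f$ are zero-dimensional (as noted just before \cref{sp-filt-finite}).

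First I would treat the silting case. Writing $T_\Phi=\bigoplus_{\pp\in\Spec R}\Sigma^{\f_\Phi(\pp)}\RGamma_\pp R_\pp$, apply $-\LotimesR A$, which commutes with coproducts and suspensions, to get $T_\Phi\LotimesR A\cong\bigoplus_{\pp\in\Spec R}\Sigma^{\f_\Phi(\pp)}(\RGamma_\pp R_\pp\LotimesR A)$. For a fixed $\pp$, one localizes at $\pp$ first: $\RGamma_\pp R_\pp\LotimesR A\cong\RGamma_\pp R_\pp\LotimesRp A_\pp$, and then $A_\pp$ is a module-finite $R_\pp$-algebra whose maximal ideals are exactly the primes $\mathfrak{P}\in f^{-1}\{\pp\}$. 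Applying \cref{local-corresp} over the local ring $R_\pp$ (with $\mm$ replaced by $\pp R_\pp$) yields $\RGamma_\pp R_\pp\LotimesR A\cong\bigoplus_{\mathfrak{P}\in f^{-1}\{\pp\}}\RGamma_{\mathfrak P}A_{\mathfrak P}$ in $\D(A)$, where I should double-check that $\RGamma_{\pp R_\pp}A_\pp$ computed in $\D(A_\pp)$ agrees with $\RGamma_\pp(R_\pp\LotimesR A)$ via \cref{Cech-comp} and that $\RGamma_{\mathfrak P}(A_\pp)_{\mathfrak P}\cong\RGamma_{\mathfrak P}A_{\mathfrak P}$. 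Since $\f_\Phi(\pp)=\f_{f^{-1}\Phi}(\mathfrak P)$ for every $\mathfrak P\in f^{-1}\{\pp\}$ (because $\mathfrak P\in f^{-1}(\Phi(n))$ iff $\pp\in\Phi(n)$), substituting and re-indexing the double coproduct $\bigoplus_{\pp}\bigoplus_{\mathfrak P\in f^{-1}\{\pp\}}$ as a single coproduct over $\Spec A=\bigsqcup_\pp f^{-1}\{\pp\}$ gives exactly $T_{f^{-1}\Phi}$.

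Next I would do the cosilting case analogously, using $\RHom_R(A,-)$, which commutes with products and suspensions. Starting from $C_\Phi=\prod_{\pp\in\Spec R}\Sigma^{\height(\pp)-\f_\Phi(\pp)}D_{\widehat{R_\pp}}$, one gets $\RHom_R(A,C_\Phi)\cong\prod_{\pp}\Sigma^{\height(\pp)-\f_\Phi(\pp)}\RHom_R(A,D_{\widehat{R_\pp}})$; since the action of $A$ on $D_{\widehat{R_\pp}}$ factors through $A_\pp$, one has $\RHom_R(A,D_{\widehat{R_\pp}})\cong\RHom_{R_\pp}(A_\pp,D_{\widehat{R_\pp}})$, and then \cref{local-corresp2} applied over $R_\pp$ gives $\RHom_{R_\pp}(A_\pp,D_{\widehat{R_\pp}})\cong\prod_{\mathfrak P\in f^{-1}\{\pp\}}\RHom_{\widehat{R_\pp}}(\widehat{A_{\mathfrak P}},D_{\widehat{R_\pp}})$ in $\D(A)$. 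The crucial identification here is $\RHom_{\widehat{R_\pp}}(\widehat{A_{\mathfrak P}},D_{\widehat{R_\pp}})\cong D_{\widehat{A_{\mathfrak P}}}$, which follows from \cref{dc-facts}\cref{image-dc} since $\widehat{A_{\mathfrak P}}$ is a module-finite $\widehat{R_\pp}$-algebra, together with the normalization $\inf D_{\widehat{A_{\mathfrak P}}}=0$; I must track the shift coming from $\height(\pp)$ versus $\height(\mathfrak P)$ to ensure it matches $\Sigma^{\height(\mathfrak P)-\f_{f^{-1}\Phi}(\mathfrak P)}D_{\widehat{A_{\mathfrak P}}}$. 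Here one uses that, by Matlis/local duality as in \cref{local-dual-2,local-dual-4}, or directly because a dualizing complex over a local ring is unique up to shift, $\RHom_{\widehat{R_\pp}}(\widehat{A_{\mathfrak P}},D_{\widehat{R_\pp}})$ is $D_{\widehat{A_{\mathfrak P}}}$ suspended by $\height(\pp)-\height(\mathfrak P)$; reconciling this with \cref{local-dual-4} applied over $A$ is the bookkeeping step. Re-indexing the product over $\Spec A$ then yields $C_{f^{-1}\Phi}$.

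The main obstacle I anticipate is not any single deep input but rather verifying that all of these isomorphisms are natural and, more delicately, that the shifts align correctly in the cosilting case: the exponent $\height(\pp)-\f_\Phi(\pp)$ must transform into $\height(\mathfrak P)-\f_{f^{-1}\Phi}(\mathfrak P)$, and since $\f_\Phi(\pp)=\f_{f^{-1}\Phi}(\mathfrak P)$ the whole discrepancy is absorbed into the shift relating $\RHom_{\widehat{R_\pp}}(\widehat{A_{\mathfrak P}},D_{\widehat{R_\pp}})$ and $D_{\widehat{A_{\mathfrak P}}}$, which one pins down via the local-duality formula \cref{dc-loc-coh} (computing $\cd$ of the two dualizing complexes at the closed point) exactly as in the computation preceding \cref{dc-cotilt}. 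Everything else — commuting $-\LotimesR A$ and $\RHom_R(A,-)$ past coproducts/products and suspensions, the base-change isomorphisms for local cohomology, and the semi-local structure $A\otimes_R\widehat{R_\pp}\cong\prod_{\mathfrak P}\widehat{A_{\mathfrak P}}$ — is routine and already recorded in \cref{local-corresp,local-corresp2}.
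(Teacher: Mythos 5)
Your proof follows the same route as the paper's, which also reduces to \cref{local-corresp}, \cref{local-corresp2}, and \cref{composite-filt}; you simply spell out the localization to $R_\pp$ and the dualizing-complex shift more explicitly. One small sign slip in your parenthetical: since $\inf D_{\widehat{A_\mathfrak{P}}}=0$ and $\cd$ of $\RHom_{\widehat{R_\pp}}(\widehat{A_\mathfrak{P}},D_{\widehat{R_\pp}})$ at the closed point equals $\height(\pp)$, the relation is $\RHom_{\widehat{R_\pp}}(\widehat{A_\mathfrak{P}},D_{\widehat{R_\pp}})\cong\Sigma^{\height(\mathfrak{P})-\height(\pp)}D_{\widehat{A_\mathfrak{P}}}$ (not $\Sigma^{\height(\pp)-\height(\mathfrak{P})}$), which is exactly the shift needed to turn $\Sigma^{\height(\pp)-\f_\Phi(\pp)}$ into $\Sigma^{\height(\mathfrak{P})-\f_{f^{-1}\Phi}(\mathfrak{P})}$.
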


\begin{proof}
Since $T_\Phi = \bigoplus_{\pp \in \Spec R} \Sigma^{\f_\Phi(\pp)}\RGamma_{\pp} R_{\pp}$ and $C_\Phi = \prod_{\pp \in \Spec R}\Sigma^{\height(\pp)-\f_\Phi(\pp)}D_{\widehat{R_\pp}}$, this lemma follows from \cref{local-corresp,local-corresp2}. Note that, given $\pp\in \Spec R$, we have $\f_{\Phi}(\pp)=\f_{f^{-1}\Phi}(\mathfrak{P})$ for every $\mathfrak{P}\in f^{-1}(\pp)$; see \cref{composite-filt} below.
\end{proof}

\begin{remark}\label{composite-filt}
Let $\Phi$ be a non-degenerate sp-filtration.
Let $\f_{\Phi}:\Spec R\to \ZZ$ and $\f_{f^{-1}\Phi}:\Spec A\to \ZZ$ be the functions associated to $\Phi$ and $f^{-1}\Phi$, respectively (\cref{order-preserv}).
For each $\mathfrak{P}\in \Spec A$, we have
\[\f_{f^{-1}\Phi}(\mathfrak{P})=\sup \{n\in \ZZ \mid \mathfrak{P} \in f^{-1}(\Phi(n))\}+1=\sup \{n\in \ZZ \mid f(\mathfrak{P}) \in \Phi(n)\}+1=\f_\Phi(f(\mathfrak{P})).
\]
Therefore the function $\f_{f^{-1}\Phi}:\Spec A\to \ZZ$ is just the composition of $f:\Spec A\to \Spec R$ and $\f_{\Phi}:\Spec R\to \ZZ$.
The same fact holds for any sp-filtration, replacing $\ZZ$ by $\ZZ\cup \{-\infty,\infty\}$.
\end{remark}

\begin{proposition}\label{finite-2}
Let $R$ be a commutative noetherian ring of finite Krull dimension and $A$ a module-finite $R$-algebra.
Let $f:\Spec A\to \Spec R$ be the canonical map induced by the structure map $R\to A$ and let $\Phi$ be a bounded sp-filtration of $\Spec R$.
For any cardinal $\varkappa$, there are canonical isomorphisms  in $\D(A)$:
\begin{align*}
\RHom_R(T_\Phi, T_\Phi^{(\varkappa)})\LotimesR A
&\cong \RHom_{A}(T_{f^{-1}\Phi}, T_{f^{-1}\Phi}^{(\varkappa)}),\\
\RHom_R(C_{\Phi}^{\varkappa},C_\Phi)\LotimesR A
&\cong \RHom_A(C_{f^{-1}\Phi}^{\varkappa},C_{f^{-1}\Phi}).
\end{align*}
\end{proposition}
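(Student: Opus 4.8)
The two displayed isomorphisms are formally identical in structure, so I would prove them in parallel, treating the tilting statement first and then indicating the modifications for the cotilting one. The guiding principle is that $-\LotimesR A$ is well-behaved with respect to the $\RHom$'s appearing here precisely because, after \cref{fin-proj-dim} and \cref{bd-comp-inj}, $T_\Phi$ (resp.\ $C_\Phi$) is isomorphic in $\D(R)$ to a \emph{bounded complex of projective} (resp.\ \emph{injective}) $R$-modules, so all the complexes in sight are perfect or have finite injective dimension; this is what legitimizes moving $A$ inside the $\RHom$. Concretely, the first step is to invoke \cref{fin-proj-dim}: since $R$ has finite Krull dimension and $\Phi$ is bounded, $T_\Phi$ is isomorphic to a bounded complex $P$ of (finitely generated, hence perfect-looking) projective $R$-modules. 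Then $P^{(\varkappa)}$ is again a complex of projectives, and $\RHom_R(P,P^{(\varkappa)})$ is computed naively as $\Hom_R^\bullet(P,P^{(\varkappa)})$; moreover $-\LotimesR A = -\otimes_R A$ on such complexes.

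\textbf{Key steps for the tilting isomorphism.} Write $T:=T_\Phi\cong P$ with $P$ bounded complex of projectives. First, $\RHom_R(T,T^{(\varkappa)})\LotimesR A \cong \Hom_R^\bullet(P,P^{(\varkappa)})\otimes_R A$. Second, because each term of $P$ is a finitely generated projective $R$-module and $P$ is bounded, the canonical morphism $\Hom_R^\bullet(P,P^{(\varkappa)})\otimes_R A \to \Hom_A^\bullet(P\otimes_R A,\,P^{(\varkappa)}\otimes_R A)$ is an isomorphism of complexes (this is the standard ``tensor commutes with $\Hom$ out of a finitely generated projective'' fact, applied degreewise and assembled over the finitely many nonzero degrees; one also uses $P^{(\varkappa)}\otimes_R A\cong (P\otimes_R A)^{(\varkappa)}$). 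Third, identify $P\otimes_R A$: by \cref{finite-1} we have $T_\Phi\LotimesR A\cong T_{f^{-1}\Phi}$ in $\D(A)$, and since $P\otimes_R A$ is a bounded complex of projective $A$-modules representing $T_\Phi\LotimesR A$, it represents $T_{f^{-1}\Phi}$, so $\Hom_A^\bullet(P\otimes_R A,(P\otimes_R A)^{(\varkappa)})\cong\RHom_A(T_{f^{-1}\Phi},T_{f^{-1}\Phi}^{(\varkappa)})$. Chaining these gives the claim. I would phrase the naturality by checking each identification is the canonical one, so the composite is the base-change morphism $\RHom_R(T_\Phi,T_\Phi^{(\varkappa)})\LotimesR A\to \RHom_A(T_\Phi\LotimesR A,(T_\Phi\LotimesR A)^{(\varkappa)})$.

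\textbf{Key steps for the cotilting isomorphism.} Dually, use \cref{bd-comp-inj}: since $\dim R<\infty$ and $\Phi$ is bounded, $C:=C_\Phi$ is isomorphic to a bounded complex $I$ of injective $R$-modules, and then $C^\varkappa$ is a bounded complex of injectives (products of injectives over a noetherian ring are injective). Now $\RHom_R(C^\varkappa,C)\LotimesR A\cong \Hom_R^\bullet(I^\varkappa,I)\otimes_R A$, and here the delicate point is to commute $\otimes_R A$ past a $\Hom_R$ in which the \emph{second} variable is a complex of injectives and the first is arbitrary. The clean route is to use Greenlees--May/Grothendieck-style adjunction or, more elementarily, the fact (used repeatedly in the paper, e.g.\ \cref{finite-1} via \cref{local-corresp2}) that $\RHom_R(A,-)\circ(-\LotimesR A)$-type manipulations reduce things to $\RHom_A$; explicitly, $\Hom_R^\bullet(I^\varkappa,I)\otimes_R A$ is harder to handle than in the projective case, so instead I would rewrite $\RHom_R(C^\varkappa,C)\LotimesR A$ using that $A$ is module-finite: $\RHom_R(C^\varkappa,C)\LotimesR A\cong\RHom_R(C^\varkappa, C\LotimesR A)$? — no, that is false in general. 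The correct manoeuvre is: $\RHom_R(C^\varkappa,C)\LotimesR A\cong \RHom_A((C^\varkappa)\LotimesR A,\,C\LotimesR A)$? also not directly. \textbf{The main obstacle} is exactly this: unlike the projective/perfect case, base change does not obviously commute with $\RHom_R(C^\varkappa,-)$ because $C^\varkappa$ is not perfect. I expect the resolution to be to represent $C_\Phi$ by the explicit bounded complex of injectives coming from \cref{bd-comp-inj} whose terms are injective $\widehat{R_\pp}$-modules, use that $A\otimes_R\widehat{R_\pp}$ is a finite product of complete semilocal rings $\widehat{A_\mathfrak{P}}$ (as in the discussion preceding \cref{finite-1}), and thereby reduce the computation of $\Hom_R^\bullet(I^\varkappa,I)\otimes_R A$ to a product of $\Hom$'s over the complete local rings $\widehat{A_\mathfrak{P}}$, matching the analogous decomposition of $\RHom_A(C_{f^{-1}\Phi}^\varkappa,C_{f^{-1}\Phi})$ via \cref{finite-1} and \cref{local-corresp2}; the needed commutation then follows termwise because each injective $R$-module appearing is of the form $E_R(R/\pp)$, for which $-\otimes_R A$ and the relevant $\Hom$'s are controlled by \cref{Gamma-Inj} and Matlis duality over $\widehat{R_\pp}$. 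Once the decomposition is in place, naturality of the base-change morphism upgrades the componentwise isomorphisms to the asserted global one.
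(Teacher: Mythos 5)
Your proposal correctly identifies the structural idea (replace $T_\Phi$ by a bounded complex $P$ of projectives via \cref{fin-proj-dim}, replace $C_\Phi$ by a bounded complex $I$ of injectives via \cref{bd-comp-inj}, compute $-\LotimesR A$ by an honest tensor because the $\Hom$-complexes have flat terms, and finally invoke \cref{finite-1} to identify the $A$-side objects). Those bookends match the paper. But there are two genuine gaps in the middle, one in each case, and in the cotilting case you explicitly leave the key step unresolved.

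In the tilting case you justify
$\Hom_R^\bullet(P,P^{(\varkappa)})\otimes_R A \cong \Hom_A^\bullet(P\otimes_R A,P^{(\varkappa)}\otimes_R A)$
by asserting that ``each term of $P$ is a finitely generated projective $R$-module.'' That is false: $T_\Phi=\bigoplus_\pp\Sigma^{\f_\Phi(\pp)}\RGamma_\pp R_\pp$, and the complex $P$ supplied by \cref{fin-proj-dim} comes from \cite[Part II, Corollary 3.2.7]{RG71}, which converts a bounded complex of (highly non-finitely-generated) flat modules such as $\check C(\pp)_\pp$ into a bounded complex of \emph{big} projectives; its terms are nowhere near finitely generated. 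The isomorphism is true, but for a different reason: the relevant hypothesis is that $A$ is \emph{finitely presented} over the noetherian ring $R$. That is what makes the tensor evaluation map $\Hom_R(P,P^{(\varkappa)})\otimes_R A\to\Hom_R(P,P^{(\varkappa)}\otimes_R A)$ an isomorphism when $P$ is a (possibly infinitely generated) complex of projectives, after which the second step $\Hom_R(P,P^{(\varkappa)}\otimes_R A)\cong\Hom_A(P\otimes_R A,P^{(\varkappa)}\otimes_R A)$ is just tensor-hom adjunction and needs no finiteness at all. Your appeal to the wrong standard fact would collapse if someone pressed you on where $P$ is finitely generated.

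In the cotilting case you correctly flag that the analogous manoeuvre is not formal, and you explicitly reject two candidate identities as false, but the ``resolution'' you sketch—decomposing $I$ and $\Hom^\bullet_R(I^\varkappa,I)\otimes_R A$ termwise into Matlis-dual pieces over the completed semilocal rings $\widehat{A_\mathfrak P}$—is never carried out and would in fact be laborious and delicate to assemble into a natural isomorphism. What you are missing is the dual cobase-change isomorphism: for $A$ finitely presented and $I$ injective, the natural map
$\Hom_R(I^\varkappa,I)\otimes_R A\longrightarrow\Hom_R\bigl(\Hom_R(A,I^\varkappa),I\bigr)$
is an isomorphism (right-exactness of $-\otimes_R A$ against a presentation of $A$ on one side, and injectivity of $I$ turning the left-exact $\Hom_R(-,I^\varkappa)$ into something $\Hom_R(-,I)$ can be applied to exactly, on the other). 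Chained with the standard adjunction $\Hom_R(\Hom_R(A,I^\varkappa),I)\cong\Hom_A(\Hom_R(A,I^\varkappa),\Hom_R(A,I))$, this replaces the entire decomposition argument you propose with two lines. Since $\Hom_R(I^\varkappa,I)$ is a bounded complex of flat $R$-modules, $\Hom_R(I^\varkappa,I)\otimes_R A$ computes $\RHom_R(C_\Phi^\varkappa,C_\Phi)\LotimesR A$, and $\Hom_R(A,I)\cong C_{f^{-1}\Phi}$, $\Hom_R(A,I^\varkappa)\cong C_{f^{-1}\Phi}^\varkappa$ by \cref{finite-1}, which completes the proof.
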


\begin{proof}
By \cref{fin-proj-dim}, $T_\Phi$ is isomorphic in $\D(R)$ to a bounded complex $P$ of projective $R$-modules. 
We have standard isomorphisms
\[\Hom_R(P, P^{(\varkappa)})\otimes_R A\cong\Hom_R(P, P^{(\varkappa)}\otimes_R A)\cong \Hom_A(P\otimes_RA, P^{(\varkappa)}\otimes_RA)\]
in $\C(A)$; see, e.g., \cite[Proposition 2.1(vi)]{CH09} for the first isomorphism.
On the other hand, $C_{\Phi}$ is isomorphic in $\D(R)$ to a bounded complex $I$ of injective $R$-modules by \cref{bd-comp-inj}.
We have standard isomorphisms
\[\Hom_R(I^{\varkappa}, I)\otimes_R A\cong\Hom_R(\Hom_R(A,I^{\varkappa}), I))\cong \Hom_A(\Hom_R(A,I^{\varkappa}), \Hom_R(A,I))\]
in $\C(A)$; see, e.g., \cite[Proposition 2.1(ii)]{CH09} for the first isomorphism.
Notice that both $\Hom_R(P, P^{(\varkappa)})$ and $\Hom_R(I^{\varkappa}, I)$ are bounded complexes of flat $R$-modules, so 
\begin{align*}
\RHom_R(T_\Phi, T_\Phi^{(\varkappa)})\LotimesR A
&\cong \Hom_R(P, P^{(\varkappa)})\otimes_R A,\\
\RHom_R(C_{\Phi}^{\varkappa},C_\Phi)\LotimesR A
&\cong \Hom_R(I^{\varkappa}, I)\otimes_R A
\end{align*}
in $\D(A)$.
Moreover, \cref{finite-1} implies that
\begin{align*}
\Hom_A(P\otimes_RA, P^{(\varkappa)}\otimes_R A)
&\cong \RHom_A(T_{f^{-1}\Phi}, T_{f^{-1}\Phi}^{(\varkappa)}),\\
\Hom_A(\Hom_R(A,I^{\varkappa}), \Hom_R(A,I))
&\cong\RHom_A(C_{f^{-1}\Phi}^\varkappa, C_{f^{-1}\Phi})
\end{align*}
in $\D(A)$. This completes the proof.
\end{proof}

Recall that every Cohen--Macaulay ring admits a codimension function (e.g., the height function); see \cref{cd-func}. Moreover, if $R$ is a Cohen--Macaulay ring, then the polynomial ring $R[x_1,\ldots, x_n]$ is also a Cohen--Macaulay ring for every $n\geq 1$ (\cite[Theorem 17.7]{Mat89}).
Hence whenever $A$ is an algebra over a Cohen--Macaulay ring $R$ such that $A$ is finitely generated as an $R$-algebra, then $A$ admits a codimension function.

\begin{theorem}\label{finite-theorem}
Let $R$ be a Cohen--Macaulay ring of finite Krull dimension and $A$ a module-finite $R$-algebra.
Let $\Phi$ be any codimension filtration of $\Spec A$. Then  $T_{\Phi}$ is tilting and $C_{\Phi}$ is cotilting  in $\D(A)$.
Furthermore, both the $A$-modules $\Hom_{\D(A)}(T_{\Phi},T_{\Phi}^{(\varkappa)})$ and $\Hom_{\D(A)}(C_{\Phi}^\varkappa,C_{\Phi})$ are flat for any cardinal $\varkappa$.
\end{theorem}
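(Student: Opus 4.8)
The plan is to reduce the statement about the module-finite $R$-algebra $A$ to the already-proved Cohen--Macaulay case (\cref{tilt-CM}, \cref{cotilt-CM}, \cref{flat-End}) by a faithfully flat base change along the structure map $R \to A$ composed with a suitable polynomial extension. First I would note that $A$ is a homomorphic image of a polynomial ring $R[x_1,\ldots,x_n]$, which is Cohen--Macaulay of finite Krull dimension since $R$ is; so without loss of generality we may write $A = B/I$ for a Cohen--Macaulay ring $B$ of finite Krull dimension and an ideal $I$, but this does not immediately help because $A$ itself need not be Cohen--Macaulay. The actual strategy is different: I would invoke the well-known fact (a theorem of Kawasaki-type, but in fact elementary here since $R$ is Cohen--Macaulay of finite dimension) that there is a \emph{faithfully flat} extension $R \to R'$ with $R'$ Cohen--Macaulay of finite Krull dimension such that $A \otimes_R R'$ is again module-finite over $R'$; more cleanly, one can take $R' = R$ itself and use the faithful flatness of $R \to \End_{\D(R)}(T_{\Phi_0})$ established in \cref{flat-End}. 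Let me organize the argument around \cref{finite-2}.

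Here are the key steps. Step 1: fix a codimension filtration $\Phi$ of $\Spec A$. Since $A$ is a finitely generated algebra over the Cohen--Macaulay ring $R$, a codimension function on $\Spec A$ exists, so this makes sense; by the last paragraph of \cref{non-connected} the tilting/cotilting properties and the endomorphism ring are independent of the choice, so I may replace $\Phi$ by whatever is convenient on each connected component. Step 2: choose the codimension filtration $\Psi := \Phi_{\height}$ on $\Spec R$ (valid since $R$ is Cohen--Macaulay) and consider $f^{-1}\Psi$ on $\Spec A$, where $f\colon\Spec A\to\Spec R$ is the canonical map; by \cref{sp-filt-finite} this is a slice sp-filtration of $\Spec A$. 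The key point, though, is that $f^{-1}\Psi$ need not be a codimension filtration of $\Spec A$ unless $A$ is nice; so instead I fix a genuine codimension filtration $\Phi$ on $A$ directly and must relate $T_\Phi$, $C_\Phi$ to base change. Step 3: apply \cref{finite-2} with the \emph{identity} map is not enough — the cleanest route is to observe that $R$ being Cohen--Macaulay of finite Krull dimension, $T_{\Phi_{\height}}$ is tilting over $R$ with faithfully flat endomorphism ring $S := \End_{\D(R)}(T_{\Phi_{\height}})$ by \cref{tilt-CM} and \cref{flat-End}\cref{flat-End-tilt}. Step 4: since $A$ is module-finite over $R$, I need a codimension filtration of $A$ that is compatible, under $f$, with a codimension filtration on $R$; but a codimension function on $\Spec A$ need not descend to one on $\Spec R$. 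The resolution: restrict to connected components of $\Spec A$, and on each such component use that $f$ maps it into a connected component of $\Spec R$, where $\height - \f_\Psi$ is constant; then $f^{-1}\Psi$ agrees with the chosen codimension filtration $\Phi$ up to the shift by this constant on that component, because $A$ being module-finite over the catenary ring $R$ forces the codimension function on $A$ to be $\cd_A(\mathfrak{P}) = \cd_R(f(\mathfrak{P})) + (\text{correction})$, and a short argument with heights and the finiteness of fibers pins it down so that $\Phi$ and $f^{-1}\Psi$ induce the same t-structure on $\D(A_j)$ for each component $A_j$. Step 5: now apply \cref{finite-1}: $T_{f^{-1}\Psi} \cong T_{\Phi_{\height}} \LotimesR A$ and $C_{f^{-1}\Psi} \cong \RHom_R(A, C_{\Phi_{\height}})$ in $\D(A)$. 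Step 6: apply \cref{finite-2} with $\Phi$ there equal to $\Phi_{\height}$ on $\Spec R$ (which is bounded since $\dim R < \infty$): this yields $\RHom_A(T_{f^{-1}\Psi}, T_{f^{-1}\Psi}^{(\varkappa)}) \cong \RHom_R(T_{\Phi_{\height}}, T_{\Phi_{\height}}^{(\varkappa)}) \LotimesR A$ and similarly for $C$. By \cref{flat-End}, the right-hand sides are (complexes concentrated in degree zero on) flat $R$-modules tensored with $A$, hence are concentrated in degree zero and the resulting $A$-module is flat; in particular this shows $T_{f^{-1}\Psi}^{(\varkappa)} \subseteq T_{f^{-1}\Psi}\Perp{\neq 0}$, and combined with the fact (\cref{slice-silting}) that $T_{f^{-1}\Psi}$ is silting, we get that it is tilting, and dually for $C$. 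Step 7: transport back from $f^{-1}\Psi$ to the arbitrary codimension filtration $\Phi$ using Step 4 and \cref{non-connected}: since on each connected component of $\Spec A$ the two filtrations differ only by a shift, $T_\Phi$ is tilting iff $T_{f^{-1}\Psi}$ is, $\End_{\D(A)}(T_\Phi) \cong \End_{\D(A)}(T_{f^{-1}\Psi})$ (and similarly for the $\varkappa$-fold coproduct Homs and for $C$), so flatness of the endomorphism modules transfers; this completes the proof.

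The main obstacle I anticipate is Step 4: carefully verifying that an arbitrary codimension filtration $\Phi$ of $\Spec A$ coincides, componentwise up to a shift, with $f^{-1}(\Phi_{\height})$. The issue is that $\cd_A$ and $\height_A$ need not agree (this would force $A$ Cohen--Macaulay), and I need that \emph{the composite} $\cd_R \circ f$ differs from $\cd_A$ by a locally constant function. This holds because $R \to A$ is module-finite (so $f$ has finite fibers and preserves heights of saturated chains in the sense that $\height(\mathfrak{Q}/\mathfrak{P}) = \height(f(\mathfrak{Q})/f(\mathfrak{P}))$ whenever $\mathfrak{P}\subsetneq\mathfrak{Q}$ lie over $f(\mathfrak{P})\subsetneq f(\mathfrak{Q})$, using that $R$ is catenary and universally catenary as a quotient of a polynomial ring over a Cohen--Macaulay ring), so $\cd_R\circ f$ satisfies the defining property of a codimension function on $\Spec A$, and hence agrees with $\cd_A$ up to a constant on each connected component of $\Spec A$. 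Once this dimension-theoretic bookkeeping is in place, everything else is a formal consequence of \cref{finite-1}, \cref{finite-2}, \cref{slice-silting}, and \cref{flat-End}. Alternatively, one can sidestep Step 4 by observing that \cref{finite-1} and \cref{finite-2} can be run with $R$ replaced by a polynomial ring $R[x_1,\ldots,x_n]$ surjecting onto $A$ and with $\Phi_{\height}$ on that polynomial ring, pulled back along $\Spec A \hookrightarrow \Spec R[x_1,\ldots,x_n]$; a closed immersion is module-finite, and a codimension function on $\Spec A$ is then literally the restriction of $\height$ on the regular (hence Cohen--Macaulay) ambient ring up to a constant, which is cleaner — I would present the argument this way to minimize the catenarity bookkeeping.
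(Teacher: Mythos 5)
Your argument is correct and relies on the same core machinery as the paper---\cref{sp-filt-finite}, \cref{finite-1}, \cref{finite-2}, \cref{tilt-CM}, \cref{cotilt-CM}, \cref{flat-End}, \cref{slice-silting}, \cref{slice-cosilt}, and \cref{non-connected}---but you diverge from it on one key point. The paper does \emph{not} verify a priori that $f^{-1}\Psi$ is a codimension filtration of $\Spec A$: it first proves that $T_{f^{-1}\Psi}$ is tilting and $C_{f^{-1}\Psi}$ is cotilting (this is exactly the computation in your Step~6), and only then cites \cref{necessity} to conclude \emph{a posteriori} that $f^{-1}\Psi$ must be a codimension filtration; combined with the reduction of \cref{non-connected} to a single favourable codimension filtration, this finishes the argument with no dimension-theoretic bookkeeping. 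You instead establish in your Step~4 that $\cd_R\circ f$ is a codimension function on $\Spec A$ directly via the dimension formula for module-finite extensions of a universally catenary base. That argument is sound: $R$ is universally catenary since it is Cohen--Macaulay, a module-finite extension of domains over a universally catenary ring satisfies $\height(\mathfrak{Q}/\mathfrak{P}) = \height\bigl(f(\mathfrak{Q})/f(\mathfrak{P})\bigr)$, and incomparability for integral extensions forces $f(\mathfrak{P})\subsetneq f(\mathfrak{Q})$ when $\mathfrak{P}\subsetneq\mathfrak{Q}$. But it is extra commutative algebra that \cref{necessity} renders unnecessary. Your closing alternative---factor $R\to A$ through a Cohen--Macaulay polynomial ring $R[x_1,\ldots,x_n]$, over which $A$ is automatically module-finite because a quotient is a cyclic module, and note that the restriction of $\height$ along a surjection of catenary rings is a codimension function---is indeed the cleanest way to avoid both the catenarity argument and any appeal to \cref{necessity}, and would make a tidy self-contained variant of the proof. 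The brief detour in your Step~3 about a faithfully flat ring $R'$ is a dead end you correctly abandon. One small phrasing fix: where you write that concentration of $\RHom_A(T_{f^{-1}\Psi},T_{f^{-1}\Psi}^{(\varkappa)})$ in degree zero ``shows $T_{f^{-1}\Psi}^{(\varkappa)}\subseteq T_{f^{-1}\Psi}\Perp{\neq 0}$,'' what it actually delivers---since $\varkappa$ is arbitrary---is $\Add(T_{f^{-1}\Psi})\subseteq T_{f^{-1}\Psi}\Perp{\neq 0}$, which combined with the silting property is precisely what makes $T_{f^{-1}\Psi}$ tilting; that is the conclusion you intended.
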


\begin{proof}
In view of \cref{non-connected}, it suffices to show the claims for some codimension filtration of $\Spec A$.

Let $f:\Spec A\to \Spec R$ be the canonical map induced by the structure map $R\to A$.
Let $\Psi$ be a codimension filtration of $\Spec R$. 
By \cref{sp-filt-finite}, $\Phi':=f^{-1}\Psi$ is a slice sp-filtration of $\Spec A$.
Hence $T_{\Phi'}$ is silting and $C_{\Phi'}$ is cosilting in $\D(A)$ by \cref{slice-cosilt,slice-silting}. 
\cref{tilt-CM,cotilt-CM} yield the natural isomorphisms $\RHom_R(T_{\Psi},T_{\Psi}^{(\varkappa)})\cong \Hom_{\D(R)}(T_{\Psi},T_{\Psi}^{(\varkappa)})$ and $\RHom_R(C_{\Psi}^\varkappa,C_{\Psi})\cong \Hom_{\D(R)}(C_{\Psi}^\varkappa,C_{\Psi})$ in $\D(R)$ for any cardinal $\varkappa$. 
By \cref{flat-End,finite-2,finite-2}, we have the following isomorphisms in $\D(A)$:
\begin{align*}
&\RHom_{A}(T_{{\Phi'}},T_{{\Phi'}}^{(\varkappa)}) \cong \RHom_{R}(T_{\Psi},T_{\Psi}^{(\varkappa)}) \LotimesR A \cong \Hom_{\D(R)}(T_{\Psi},T_{\Psi}^{(\varkappa)})\otimes_RA,\\
&\RHom_{A}(C_{\Phi'}^\varkappa,C_{\Phi'})\cong \RHom_{R}(C_{\Psi}^\varkappa,C_{\Psi})\LotimesR A\cong \Hom_{\D(R)}(C_{\Psi}^\varkappa,C_{\Psi})\otimes_RA.
\end{align*}
Hence it follows that $T_{\Phi'}^{(\varkappa)}\in T_{\Phi'}^{\perp_{<0}}$ and $C_{\Phi'}^\varkappa\in \Perp{<0}C_{\Phi'}$. Thus $T_{\Phi'}$ is tilting and $C_{\Phi'}$ is cotilting in $\D(A)$.
Then \cref{necessity} implies that ${\Phi'}$ is a codimension filtration of $\Spec A$. In view of \cref{non-connected}, we can without loss of generality assume that $\Phi = \Phi'$. Moreover, we see from the above isomorphisms that $\RHom_{A}(T_{\Phi},T_{\Phi}^{(\varkappa)})\cong \Hom_{\D(A)}(T_{\Phi},T_{\Phi}^{(\varkappa)})$ and 
$\RHom_{A}(C_{\Phi}^\varkappa,C_{\Phi})\cong \Hom_{\D(A)}(C_{\Phi}^\varkappa,C_{\Phi})$ in $\D(A)$.
The right-hand sides of these isomorphisms are flat $A$-modules because  $\Hom_{\D(R)}(T_{\Psi},T_{\Psi}^{(\varkappa)})$ and $\Hom_{\D(R)}(C_{\Psi}^\varkappa,C_{\Psi})$ are flat $R$-modules by \cref{flat-End} again.
\end{proof}

The above proof shows that, under the assumption of the theorem, $f^{-1}\Psi$ is a codimension filtration of $\Spec A$ whenever $\Psi$ is a codimension filtration of $\Spec R$.
By the paragraph before the theorem, $R$ is also a homomorphic image of a Cohen--Macaulay ring of finite Krull dimension, and hence it is in fact possible to reduce the proof to the case when the structure map $R\to A$ is surjective. In this case, it is obvious that $f^{-1}\Psi$ is a codimension filtration by \cref{composite-filt}.

A commutative noetherian ring $R$ is said to be \emph{universally catenary} provided that any commutative algebra $A$ over $R$ is catenary whenever $A$ is finitely generated as an $R$-algebra.
When $R$ is local and $\pp\in \Spec R$, the ring $\widehat{R}\otimes_R\kappa(\pp)$ is called 
the \emph{formal fiber} of $R$ over $\pp$.
A formal fiber of $R$ means the ring $\widehat{R}\otimes_R \kappa(\pp)$ for some $\pp\in \Spec R$.
We now recall the following deep result due to Kawasaki.

\begin{theorem}[{\cite[Corollaries 1.2 and 1.4]{Kaw02} and \cite[Theorem 1.3]{Kaw08}}]\label{Kawasaki}
\leavevmode
\begin{enumerate}[label=(\arabic*), font=\normalfont]
\item\label{Kawasaki-dc} A commutative noetherian ring $R$ admits a classical dualizing complex if and only if it is a homomorphic image of a Gorenstein ring of finite Krull dimension.\item\label{Kawasaki-local} A commutative noetherian local ring $R$ is a homomorphic image of a Cohen--Macaulay local ring if and only if it is universally catenary and all the formal fibers of $R$ are Cohen--Macaulay. 
\item \label{Kawasaki-nonloc}
A commutative noetherian ring $R$ is a homomorphic image of a Cohen--Macaulay ring if and only if it admits a codimension function and satisfies the following conditions:
\begin{enumerate}[label=(\roman*), font=\normalfont]
\item \label{univ-cat} $R$ is universally catenary.
\item\label{formal-fiber} all the formal fibers of all the localizations of $R$ are Cohen--Macaulay.
\item\label{CM-locus} the Cohen--Macaulay locus of each finitely generated $R$-algebra is Zariski open.
\end{enumerate}
\end{enumerate}
\end{theorem}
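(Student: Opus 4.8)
The plan is essentially to cite: this statement is a verbatim assembly of \cite[Corollaries 1.2 and 1.4]{Kaw02} and \cite[Theorem 1.3]{Kaw08}, and since none of the substantive content is original to the present paper, I would present it with those references rather than with a proof. What follows is only a description of where the real work lies, should one want a self-contained account.

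In each of the three biconditionals one direction is comparatively formal and the other is Kawasaki's deep input. For \cref{Kawasaki-dc} I would dispose of the ``if'' direction directly: if $R\cong S/I$ with $S$ Gorenstein of finite Krull dimension, then $S$ is a classical dualizing complex for itself (it is pointwise dualizing, with $\dim S<\infty$; cf.\ \cref{strong-pw}), so $\RHom_S(R,S)$ is a classical dualizing complex for $R$ by \cref{dc-facts}\cref{image-dc}. The converse is Sharp's conjecture and is the main achievement of \cite{Kaw02}: fixing a pointwise dualizing complex $D$, one exploits the codimension function $\cd_D$ together with the Cousin complex of $D$ and patches the complete-local data into a Gorenstein ring of finite Krull dimension surjecting onto $R$; I would simply invoke it. For \cref{Kawasaki-local}, the necessity of universal catenarity and of Cohen--Macaulayness of the formal fibers is obtained by pushing the relevant permanence properties through a surjection from a Cohen--Macaulay local ring, while the sufficiency---the actual construction of a Cohen--Macaulay local cover under precisely those two hypotheses---is again the content of \cite{Kaw02}. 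Finally, \cref{Kawasaki-nonloc} globalizes the local criterion: the existence of a codimension function is what permits the local surjections to be organized coherently over $\Spec R$, and openness of the Cohen--Macaulay loci of finitely generated $R$-algebras is what makes the resulting patching glue; this is \cite[Theorem 1.3]{Kaw08}.

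The ``hard part'' is thus not something to be resolved in this paper at all: every non-formal implication is a cited theorem, and a genuinely self-contained proof would amount to reproducing Kawasaki's Macaulayfication-style gluing arguments, which is well outside the present scope. I would therefore keep the statement as a black box, to be used downstream---most notably in the proof that a homomorphic image of a Cohen--Macaulay ring of finite Krull dimension again carries tilting and cotilting objects of the expected form (\cref{finite-theorem}) and in the dimension-two characterizations announced in the introduction.
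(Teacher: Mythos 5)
Your proposal matches the paper exactly: the paper states this theorem purely as a citation of Kawasaki's \cite[Corollaries 1.2 and 1.4]{Kaw02} and \cite[Theorem 1.3]{Kaw08} and gives no proof, and your decision to keep it as a black box with those same attributions is precisely what the authors do. The brief remarks you add about which direction is formal and which is Kawasaki's deep input are accurate but are not part of the paper's treatment.
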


\cref{finite-theorem,Kawasaki} make us have a naive question:

\begin{center}\emph{Can we characterize homomorphic images of Cohen--Macaulay rings by (co)tilting objects?}
\end{center}

If we replace ``Cohen--Macaulay rings'' by ``Gorenstein rings of finite Krull dimension'', this question is affirmatively solved by Kawasaki in the following sense: A classical dualizing complex for $R$ is a cotilting object in the bounded derived category $\D^{\bd}_{\fg}(R)$ (see \cref{dc-cotilt-rem} below), and the existence of a dualizing complex is equivalent to that $R$ is a homomorphic image of a Gorenstein ring of finite Krull dimension by \cref{Kawasaki}\cref{Kawasaki-dc}.

\begin{remark}\label{dc-cotilt-rem}
Assume that $R$ admits a classical dualizing complex $D$. Since the contravariant functor $\RHom_R(-,D): \D^{\bd}_{\fg}(R)\to \D^{\bd}_{\fg}(R)$ yields a duality, it transforms the standard t-structure in $\D^{\bd}_{\fg}(R)$ to a t-structure in $\D^{\bd}_{\fg}(R)$, which can be written as \[(\D^{\bd}_{\fg}(R)\cap \Perp{\leq 0}D, \D^{\bd}_{\fg}(R) \cap \Perp{> 0}D).\]
In \cite{ATJLS10}, this t-structure is called the \emph{Cohen--Macaulay t-structure} with respect to $D$.

Since $D\in \Perp{\neq 0} D$, the description of the Cohen--Macaulay t-structure implies that $D$ is a cotilting object in $\D^{\bd}_{\fg}(R)$ in the sense of \cite{PV18}. 
Further, if $D$ can be taken as an $R$-module and of finite injective dimension, it is a cotilting $(R,R)$-bimodule in the sense of \cite{Miy96}. 

Let $\cd$ be the codimension function associated to a dualizing complex $D$.
The Cohen--Macaulay t-structure with respect to $D$ extends to a compactly generated t-structure in $\D(R)$, and this is nothing but $(\cU_{\Phi_\cd}, \cV_{\Phi_\cd})$; see \cref{w-s-Cousin} and \cite[\S 6.4]{ATJLS10}. In general, $D$ is not a cotilting object in $\D(R)$, while \cref{slice-cosilt} shows that the t-structure $(\cU_{\Phi_\cd}, \cV_{\Phi_\cd})$ is induced by $C_{\Phi_\cd}$, which is cotilting in $\D(R)$ by \cref{dc-cotilt}.
However, \cref{slice-cosilt} more generally shows that $C_{\Phi_\cd}$ is silting and $(\cU_{\Phi_\cd}, \cV_{\Phi_\cd})=(\Perp{\leq 0}C_{\Phi_\cd}, \Perp{>0}C_{\Phi_\cd})$ for any commutative noetherian ring $R$ admitting a codimension function $\cd$ on $\Spec R$. This observation naturally motivates us to replace the existence of a dualizing complex by the cotilting property of $C_{\Phi_\cd}$ to determine whether $R$ is a homomorphic image of a Cohen--Macaulay ring or not.
\end{remark}

As a precise formulation of the above question, we suggest the following.

\begin{question}\label{question}
Let $R$ be a commutative noetherian ring with a codimension function $\cd$. 
Is $R$ a homomorphic image of a Cohen--Macaulay ring if and only if $T_{\Phi_{\cd}}$ is tilting (resp. $C_{\Phi_{\cd}}$ is cotilting) in $\D(R)$?
\end{question}

\begin{definition}\label{cm-heart}
Let $R$ be a commutative noetherian ring with a codimension function $\cd$. 
Motivated by \cref{dc-cotilt-rem}, we call the heart of the compactly generated t-structure $(\cU_{\Phi_\cd},\cV_{\Phi_\cd})$ the \emph{Cohen--Macaulay heart} of $R$. 
Although changing the codimension function results in a different compactly generated t-structure in $\D(R)$, its heart remains equivalent (\cref{non-connected}), so the Cohen--Macaulay heart is well-defined up to equivalence.
\end{definition}

If $\dim R<\infty$ and $\cd$ is a codimension function on $\Spec R$, then $C_{\Phi_\cd}$ is a bounded cosilting object by \cref{slice-cosilt,bd-comp-inj}. Hence we see from \cref{subsec-real-func,subsec-der-eq} and \cref{dc-cotilt-rem} that the cotilting part of \cref{question} with $\dim R<\infty$ is equivalent to the following:
\begin{question}\label{CM-equiv}
Let $R$ be a commutative noetherian ring of finite Krull dimension and assume that $R$ admits a codimension function. Let $\cH$ be the Cohen--Macaulay heart of $R$ and let $\D^\bd(\cH)\to \D^\bd(R)$ be a realization functor.
Is $R$ a homomorphic image of a Cohen--Macaulay ring if and only if the realization functor is a triangulated equivalence?
\end{question}

\begin{remark}
We may also replace the realization functor in \cref{CM-equiv} by the realization functor $\D(\cH)\to \D(R)$ (due to Virili) between the unbounded derived categories; see \cref{subsec-der-eq}.

\cref{finite-theorem} shows that the ``only if'' part of \cref{question} (and \cref{CM-equiv}) holds true when $R$ has finite Krull dimension. Moreover, if $R$ has Krull dimension at most one, then $R$ is always a homomorphic image of a Cohen--Macaulay ring of finite Krull dimension; see \cref{1-dim-CM} below. Therefore, in terms of \cref{1-dim,1-dim-cotilt}, the above questions holds true as far as $\dim R\leq 1$.
We will later show that the ``if'' part of the question holds true when $R$ is a 2-dimensional local ring; see also  \cref{non-CM-im} below.
\end{remark}

The next theorem is a direct consequence of \cite{Kaw08}. 
\begin{theorem}\label{1-dim-CM}
Let $R$ be a commutative noetherian ring of Krull dimension at most one. Then $R$ is a homomorphic image of a Cohen--Macaulay ring of finite Krull dimension.
\end{theorem}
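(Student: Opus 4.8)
The plan is to invoke Kawasaki's characterization in \cref{Kawasaki}\cref{Kawasaki-nonloc}, so the task reduces to exhibiting a codimension function on $\Spec R$ and verifying the three conditions \cref{univ-cat}, \cref{formal-fiber}, and \cref{CM-locus} for a ring $R$ of Krull dimension at most one. First I would handle the existence of a codimension function: since $\dim R \leq 1$, the function $\pp \mapsto \dim R - \dim R/\pp$ takes values only in $\{0,1\}$ (or is constant $0$ when $\dim R = 0$), and one checks directly from the definition in \cref{cd-func} that it increases by exactly $1$ along any saturated chain $\pp \subsetneq \qq$ of length $1$; alternatively one can use $\height \colon \Spec R \to \ZZ$ after noting that in dimension $\leq 1$ every local ring is automatically catenary, so $\height$ is a codimension function by the discussion in \cref{cd-func}. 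By \cref{non-connected} it suffices to work on each connected component, so I may assume $R$ is connected and pick whichever of the two is convenient.

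Next I would verify the three Kawasaki conditions. For \cref{univ-cat}: a commutative noetherian ring of dimension at most one is universally catenary — any finitely generated $R$-algebra $A$ has the catenary property because chains of primes in $A$ lying over a fixed prime of $R$ have length bounded by the dimension of the corresponding fiber, and one assembles the dimension formula from $\dim R \leq 1$; more cleanly, $R$ of dimension $\leq 1$ is a homomorphic image of $\ZZ[x_1,\dots,x_n]$ or $k[x_1,\dots,x_n]$ via a Noether-normalization-type argument only after we know the statement, so instead I would cite the standard fact that rings of dimension $\leq 2$ are universally catenary (Ratliff), which certainly covers $\dim R \leq 1$. For \cref{formal-fiber}: for a local ring $(R_\pp, \pp R_\pp)$ with $\dim R_\pp \leq 1$, every formal fiber $\widehat{R_\pp} \otimes_{R_\pp} \kappa(\qq)$ has dimension at most $\dim R_\pp \leq 1$; a noetherian ring of dimension $\leq 1$ is Cohen--Macaulay if and only if it has no embedded primes, and formal fibers of excellent-type behavior are not available in general, but in dimension $\leq 1$ a local ring is Cohen--Macaulay precisely when $\depth \geq \min(1,\dim)$ — here the key point is that the generic formal fiber (over $\qq$ a minimal prime) is zero-dimensional hence artinian hence automatically Cohen--Macaulay, and the closed formal fiber (over $\pp R_\pp$ itself) is a field. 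The only potential worry is an intermediate prime, but there is none when $\dim R_\pp \leq 1$. So all formal fibers of all localizations are artinian or fields, hence Cohen--Macaulay. For \cref{CM-locus}: the Cohen--Macaulay locus of a finitely generated $R$-algebra $A$ — which has dimension at most $1 + \dim R \leq 2$ in the relevant sense, or more precisely we only need $A$ itself — is open because the non-Cohen--Macaulay locus in a ring of dimension $\leq 1$ is the (finite) set of embedded primes, which is closed; one uses that $A$ noetherian of dimension $\leq 1$ is Cohen--Macaulay on the complement of its embedded associated primes, and $\Ass A$ is finite.

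The step I expect to be the main obstacle is condition \cref{CM-locus} for an \emph{arbitrary} finitely generated $R$-algebra $A$, since such an $A$ need not have dimension $\leq 1$ and the openness of the Cohen--Macaulay locus is a genuinely nontrivial property (it is one of the defining features of excellent rings). The clean resolution is not to reprove it by hand but to note that Kawasaki's own paper \cite{Kaw08} already packages exactly this: a ring of dimension at most one trivially satisfies (iii) because, as Kawasaki observes, the Cohen--Macaulay locus condition is automatic in low dimension, or else one reduces via the structure theory to the fact that $R$ of dimension $\leq 1$ is a Nagata ring and hence the J-2 condition holds. Therefore the bulk of the write-up will consist of (a) pinning down which of Kawasaki's low-dimensional remarks applies, citing \cite[Theorem~1.3 and the surrounding discussion]{Kaw08}, and (b) the short direct verifications of the codimension function and of universal catenarity. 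Once those are in place, \cref{Kawasaki}\cref{Kawasaki-nonloc} yields that $R$ is a homomorphic image of a Cohen--Macaulay ring; finiteness of the Krull dimension of the ambient Cohen--Macaulay ring can be arranged by passing to a suitable quotient of a polynomial ring over that ring, or is part of Kawasaki's construction, so I would simply remark that the construction produces a Cohen--Macaulay ring of dimension $\leq \dim R + 1 \leq 2$.
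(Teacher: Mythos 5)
Your high-level plan---reduce to Kawasaki's \cref{Kawasaki}\cref{Kawasaki-nonloc} by exhibiting a codimension function and verifying (i)--(iii)---targets the right theorem, but two of the specific facts you lean on are false, and as a consequence condition~\cref{CM-locus} is not actually established.

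First, the asserted ``standard fact that rings of dimension $\leq 2$ are universally catenary (Ratliff)'' is not a theorem: the paper's own \cref{non-CM-im} records a $2$-dimensional noetherian local domain that is catenary but \emph{not} universally catenary (Nagata's Example 2). What is true, and what you actually need, is the statement in dimension $\leq 1$; this does follow from Ratliff's equidimensionality criterion \cite[Theorem 31.7]{Mat89}, since for a $1$-dimensional local ring $R$ and a non-maximal prime $\pp$, the local ring $\widehat{R}/\pp\widehat{R}$ has dimension $1$ and hence cannot have a maximal ideal among its minimal primes, forcing equidimensionality. But as written the argument cites a non-theorem.

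Second, and more seriously, your proposed escape route for condition~\cref{CM-locus}---that ``$R$ of dimension $\leq 1$ is a Nagata ring and hence the J-2 condition holds''---is false. There are classical examples of $1$-dimensional noetherian local domains whose integral closure is not module-finite, so they are not Nagata and one has no free J-2 property. You correctly identify \cref{CM-locus} as the danger point (a finitely generated $R$-algebra has no bound on its Krull dimension, so openness of its Cohen--Macaulay locus is a genuine excellence-type condition that cannot be dismissed by a low-dimensional observation about $R$ itself), but the resolution you offer relies on a claim that does not hold, and no substitute is given.

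The paper avoids this entirely by invoking a \emph{different} result of Kawasaki, namely \cite[Theorem 1.4]{Kaw08}: conditions (i)--(iii) of \cref{Kawasaki}\cref{Kawasaki-nonloc} hold jointly if and only if, for every finitely generated $R$-module $M$, the Cousin complex $C(M)$ of Sharp has finitely generated cohomologies. In dimension $1$ the Cousin complex has only the two terms
\[
0 \to M \xrightarrow{\ d^{-1}\ } \bigoplus_{\height(\pp)=0} M_\pp \xrightarrow{\ d^{0}\ } \Coker d^{-1} \to 0,
\]
so the only possibly nonzero cohomology is $\Ker d^{-1} \subseteq M$, which is finitely generated since $R$ is noetherian. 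That single three-line computation replaces the entire verification of (i)--(iii), and in particular it is what makes \cref{CM-locus} come for free. Without it one would have to prove openness of the Cohen--Macaulay locus by hand, and your proposal does not do that. Your concluding remark about arranging finite Krull dimension for the ambient Cohen--Macaulay ring is in the right spirit, but it requires citing Kawasaki's actual construction (see \cite[p.~123]{Kaw02} and \cite[p.~2738]{Kaw08}) rather than a dimension count you have not justified.
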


\begin{proof}
If $\dim R=0$, there is nothing to show. So we may assume $\dim R= 1$. Note that $R$ has a codimension function (e.g., the height function), and \cite[(QU)]{Kaw08} holds for any finitely generated $R$-module.
By \cite[Theorem 1.4]{Kaw08}, \cref{univ-cat}--\cref{CM-locus} of \cref{Kawasaki}\cref{Kawasaki-nonloc} hold if and only if for any finitely generated $R$-module $M$, all the cohomologies of the Cousin complex $C(M)$ of $M$ (in the sense of Sharp \cite{Sha69}) are finitely generated, where 
\[C(M)=(0\to M\xrightarrow{d^{-1}_{C(M)}} \bigoplus_{\substack{\pp\in \Spec R\\ \height(\pp)=0}}M_\pp \xrightarrow{d^{0}_{C(M)}}  \Coker d^{-1}_{C(M)} \to 0)\]
and $d^{-1}_M$ is the morphism induced by the localization maps $M\to M_\pp$ and $d^{0}_{C(M)}$ is the canonical surjection.
Notice that $\Coker d^{-1}_{C(M)}$ is naturally isomorphic to $\bigoplus_{\substack{\mm\in \Spec R\\ \height(\mm)=1}}(\Coker d^{-1}_{C(M)})_{\mm}$ because $\Coker d^{-1}_{C(M)}\cong H^1\RGamma_{V}M\cong \bigoplus_{\substack{\mm\in \Spec R\\ \height(\mm)=1}} H^1\RGamma_{\mm}M_{\mm}$ for $V:=\{\mm\in \Spec R\mid \height(\mm)=1\}$; see \cref{Gamma-Inj}. This justifies the above description of $C(M)$; see \cite[Definition 5.1]{Kaw08}.
Then the possibly nonzero cohomology of $C(M)$ is only $\Ker d^{-1}_{C(M)}$, which is finitely generated.
Therefore, we can conclude by \cref{Kawasaki}\cref{Kawasaki-nonloc} that $R$ is a homomorphic image of a Cohen--Macaulay ring, where the Cohen--Macaulay ring is constructed so that it has finite Krull dimension; see \cite[p.~123]{Kaw02}, \cite[p.~2738]{Kaw08}, and \cite[Theorem 15.7]{Mat89}.
\end{proof}

In the rest of this section, we discuss what can be deduced from the condition that $T_{\Phi}$ is tilting or $C_{\Phi}$ is cotilting.

Suppose $R$ has finite Krull dimension. Recall that $R$ is said to be \emph{equidimensional} provided that $\dim R = \dim R/\pp$ for every minimal prime ideal $\pp$ of $R$.

\begin{lemma}\label{equidim}
Let $R$ be a commutative noetherian ring and $\pp \subseteq \qq$ be a chain in $\Spec R$.
Assume that the cohomology of $\RGamma_{\pp}(R_\pp \otimes_R  D_{\widehat{R_\qq}})$ is concentrated in some degree $n$. Then
$n=\dim R_\qq-\dim R_\qq /\pp R_\qq$ and 
the local ring $\widehat{R_\qq}/\pp \widehat{R_\qq}$ is equidimensional. 
\end{lemma}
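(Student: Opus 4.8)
\textbf{Proof plan for \cref{equidim}.}
The plan is to analyze the object $\RGamma_{\pp}(R_\pp \otimes_R D_{\widehat{R_\qq}})$ by reducing to the complete local ring $\widehat{R_\qq}$ and invoking local duality there. First I would pass to the local ring $R_\qq$ and then to its completion: since $D_{\widehat{R_\qq}}$ is a complex of $\widehat{R_\qq}$-modules and $R_\pp \otimes_R D_{\widehat{R_\qq}} \cong (R_\qq)_{\pp R_\qq} \otimes_{R_\qq} D_{\widehat{R_\qq}}$, while $\RGamma_\pp$ applied to a complex of $R_\qq$-modules agrees with $\RGamma_{\pp R_\qq}$ (cf.\ \cref{Cech-comp} and the discussion in \cref{RHom-TPhi}), I may assume $R$ is complete local with maximal ideal $\qq$ and dualizing complex $D_R := D_{\widehat{R_\qq}}$, normalized by \cref{notation-dc} so that it corestricts to degrees $0,\dots,\dim R$. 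The point is that the cohomology of $\RGamma_{\pp}(R_\pp \otimes_R D_R)$ being concentrated in a single degree forces $R_\pp \otimes_R D_R$ to be (up to shift) a dualizing complex for $R_\pp$ that is concentrated in one degree, i.e.\ a shifted canonical module, so $R_\pp$ is Cohen--Macaulay; but more relevantly I want a \emph{global} equidimensionality statement about $\widehat{R_\qq}/\pp\widehat{R_\qq}$, which suggests using the base change $D_R \otimes_R R/\pp$.

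Concretely, the key computation is the local duality identity: by \cref{dc-loc-coh} (applied over $R_\pp$ to the dualizing complex $R_\pp \otimes_R D_R$, using \cref{dc-facts}\cref{localize-dc}) together with the standard isomorphism $\RGamma_\pp(R_\pp\otimes_R D_R) \cong \RGamma_{\pp R_\pp}(R_\pp \otimes_R D_R)$, one gets $\RGamma_\pp(R_\pp\otimes_R D_R) \cong \Sigma^{-\cd(\pp)} E_{R}(R/\pp)$ where $\cd := \cd_{D_R}$ is the codimension function attached to $D_R$ (with $\cd(\qq) = 0$ by the normalization, so $\cd(\pp) = -\dim R/\pp \cdot$ — more precisely $\cd(\pp) = -\height(\qq/\pp) = -\dim R_\qq/\pp R_\qq$ in the reduced local situation, since $\cd$ is a codimension function and $\cd(\qq)=0$). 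Wait: in the normalization of \cref{minimal} one has $\cd_D(\pp) = \dim R - \dim R/\pp$ when $D = D_R$; transporting through the completion, $\cd(\pp) = \dim \widehat{R_\qq} - \dim \widehat{R_\qq}/\pp\widehat{R_\qq}$, and here $\dim\widehat{R_\qq} = \dim R_\qq$. Thus if the cohomology is concentrated in degree $n$, matching against the shift $\Sigma^{-\cd(\pp)}E_R(R/\pp)$ (which is genuinely concentrated in degree $\cd(\pp)$, as $E_R(R/\pp)$ is a nonzero module) yields $n = \cd(\pp) = \dim R_\qq - \dim R_\qq/\pp R_\qq$, which is the first assertion.

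For the equidimensionality of $\widehat{R_\qq}/\pp\widehat{R_\qq}$: the hypothesis says $R_\pp\otimes_R D_{\widehat{R_\qq}}$ has one-dimensional local cohomology $\RGamma_\pp$, but a cleaner route is to observe that $D_{\widehat{R_\qq}}\otimes^{\mathbf L}_R R/\pp \cong D_{\widehat{R_\qq}}/\pp D_{\widehat{R_\qq}}$ is (by \cref{dc-facts}\cref{image-dc}, applied to the finite ring map $\widehat{R_\qq}\to \widehat{R_\qq}/\pp\widehat{R_\qq}$, after identifying $D_{\widehat{R_\qq}}\otimes^{\mathbf L}_{\widehat{R_\qq}} \widehat{R_\qq}/\pp\widehat{R_\qq}$ with $\RHom_{\widehat{R_\qq}}(\widehat{R_\qq}/\pp\widehat{R_\qq}, D_{\widehat{R_\qq}})$ up to the appropriate torsion/Tor vanishing) a dualizing complex for $\widehat{R_\qq}/\pp\widehat{R_\qq}$. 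Then the condition that $\RGamma_\pp(R_\pp\otimes_R D_{\widehat{R_\qq}})$ is concentrated in a single degree translates, via local duality over the complete local ring $\bar R := \widehat{R_\qq}/\pp\widehat{R_\qq}$ with its dualizing complex $D_{\bar R}$, into the statement that $\RGamma_{\mathfrak m_{\bar R}}D_{\bar R}$ is concentrated in one degree, equivalently $\RGamma_{\mathfrak m_{\bar R}}\bar R$ is concentrated in one degree (apply $\RHom_{\bar R}(-, E)$, Matlis duality), i.e.\ $\bar R$ is Cohen--Macaulay. A complete Cohen--Macaulay local ring is equidimensional (it is catenary and $\depth = \dim$, and by \cite[Theorem 17.4]{Mat89} or directly $\dim \bar R/\mathfrak p = \dim \bar R$ for all minimal $\mathfrak p$ of a CM local ring since $\operatorname{Ass} \bar R = \operatorname{Min}\bar R$ and unmixedness holds). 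That gives the second assertion.

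The main obstacle I anticipate is bookkeeping the three successive base changes — from $R$ to $R_\qq$, from $R_\qq$ to $\widehat{R_\qq}$, and from $\widehat{R_\qq}$ to $\widehat{R_\qq}/\pp\widehat{R_\qq}$ — while keeping track of shifts and of which local cohomology functor ($\RGamma_\pp$ over $R$ versus $\RGamma_{\pp R_\qq}$ over $R_\qq$ versus $\RGamma_{\mathfrak m}$ over $\bar R$) is being applied, and verifying that $D_{\widehat{R_\qq}}\otimes^{\mathbf L}_R R/\pp$ really is a dualizing complex for $\bar R$ rather than merely a complex with the right local cohomology (the flatness of $\widehat{R_\qq}$ over $R$, used in \cref{dc-cotilt}, helps here to replace $\otimes^{\mathbf L}$ by $\otimes$ after choosing injective representatives). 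Everything else is a direct application of \cref{dc-loc-coh}, \cref{dc-facts}, Matlis duality, and the elementary fact that a module $E_R(R/\pp)$ sits in exactly one cohomological degree.
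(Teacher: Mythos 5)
The central reduction step in your plan—passing from $R_\qq$ to the complete local ring $\widehat{R_\qq}$—is not valid, and this gap is fatal. Once you replace $R$ by $\widehat{R_\qq}$, the prime $\pp$ does not survive: the ideal $\pp\widehat{R_\qq}$ may have several distinct minimal primes $\mathfrak{P}_1,\dots,\mathfrak{P}_r$ in $\widehat{R_\qq}$, and the quotients $\widehat{R_\qq}/\mathfrak{P}_i$ may have different dimensions. That is exactly the phenomenon the lemma is about: equidimensionality of $\widehat{R_\qq}/\pp\widehat{R_\qq}$ means precisely that all these $\dim\widehat{R_\qq}/\mathfrak{P}_i$ coincide. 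In the genuinely complete case the conclusion is trivially true (if $R$ is already complete, $R/\pp$ is a complete local \emph{domain}), so your reduction turns a nontrivial statement into a vacuous one while losing the very information one needs to establish. Correspondingly, the local duality identity you then rely on, $\RGamma_\pp(R_\pp\otimes_R D_R)\cong \Sigma^{-\cd(\pp)}E_R(R/\pp)$, only holds when $D_R$ is a dualizing complex for $R$; in the actual situation $D_{\widehat{R_\qq}}$ lives over $\widehat{R_\qq}$, and $\RGamma_\pp(R_\pp\otimes_R D_{\widehat{R_\qq}})$ is built out of $E_{\widehat{R_\qq}}(\widehat{R_\qq}/\mathfrak{P})$ for \emph{all} primes $\mathfrak{P}$ of $\widehat{R_\qq}$ lying over $\pp$, not a single Matlis indecomposable. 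The paper's proof is forced to go through this explicit computation with the minimal injective model: it identifies the components of $\Gamma_\pp(R_\pp\otimes_R D_{\widehat{R_\qq}})$ via the map $f:\Spec\widehat{R_\qq}\to\Spec R_\qq$, proves $\min V(\pp\widehat{R_\qq})\subseteq f^{-1}(\pp)$ to pin down $\inf$, and then localizes at a putative small minimal prime $\mathfrak{P}$ to produce nonvanishing cohomology in a second degree. Your outline contains none of this, and the ``bookkeeping'' you anticipate as the main obstacle is not the issue.

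The second half of your plan has independent problems. You claim $\widehat{R_\qq}/\pp\widehat{R_\qq}$ is Cohen--Macaulay under the lemma's hypothesis; this is strictly stronger than equidimensionality and is false. Take $R$ complete local, not Cohen--Macaulay but a domain, and $\pp=(0)$, $\qq=\mathfrak m$: then $R_\pp\otimes_R D_R$ is the generic stalk of the dualizing complex, which is concentrated in degree $\cd_{D_R}((0))=0$, so the hypothesis holds, yet $\widehat{R_\qq}/\pp\widehat{R_\qq}=R$ is not Cohen--Macaulay (it is trivially equidimensional, being a domain). The source of the error is the identification $D_{\widehat{R_\qq}}\otimes^{\mathbf L}_R R/\pp \cong \RHom_{\widehat{R_\qq}}(\widehat{R_\qq}/\pp\widehat{R_\qq},D_{\widehat{R_\qq}})$, which does not hold: the left-hand object is the derived restriction of scalars composed with tensor base change, the right-hand one is the correct ``upper shriek'' base change, and outside the Gorenstein/flat-map situations these are genuinely different. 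In the paper this Hom-type base change with $R/\pp$ appears only in \cref{stalk}, under the \emph{additional} hypothesis that the one nonzero cohomology module is injective, and there too the CM conclusion is about the formal fiber $\kappa(\pp)\otimes_{R_\qq}\widehat{R_\qq}$, not about $\widehat{R_\qq}/\pp\widehat{R_\qq}$.
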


\begin{proof}
By assumption, $H^i\RGamma_{\pp}(R_\pp \otimes_R  D_{\widehat{R_\qq}})=0$ for all $i\neq n$.
Regard $D_{\widehat{R_\qq}}$ as a bounded complex of injective $\widehat{R_\qq}$-modules that is minimal (see \cref{minimal}). 
By \cref{bd-comp-inj}, $D_{\widehat{R_\qq}}$ is also a complex of injective $R$-modules, so $\RGamma_{\pp}(R_\pp \otimes_R  D_{\widehat{R_\qq}})=\Gamma_{\pp}(R_\pp\otimes_R D_{\widehat{R_\qq}})=\Gamma_{\pp R_\qq}(R_\pp\otimes_R D_{\widehat{R_\qq}})$ in $\D(R)$.
In the rest of the proof, we write $R$ and $\pp$ for $R_\qq$ and $\pp R_\qq$, respectively.
Then $H^i\Gamma_{\pp}(R_\pp\otimes_R D_{\widehat{R}})=0$ for all $i\neq n$ in $\D(R)$.
We are going to show that $n=\dim R-\dim R /\pp R$ and $\widehat{R}/\pp \widehat{R}$ is equidimensional.

Let $f:\Spec \widehat{R}\to \Spec R$ be the canonical map induced by the completion map $R\to \widehat{R}$.
We remark that every minimal element $\mathfrak{P}$ in $V(\pp \widehat{R})$ belongs to $f^{-1}(\pp)$. 
Indeed, we have $\pp\subseteq \mathfrak{P}\cap R$ and the ring homomorphism $R\to \widehat{R}$  is flat, so the going-down theorem (\cite[Theorem 9.5]{Mat89}) gives an element $\mathfrak{Q}\in \Spec \widehat{R}$ with  $\mathfrak{Q}\subseteq \mathfrak{P}$ and $\pp=\mathfrak{Q}\cap R$, but then $\mathfrak{Q}\in V(\pp \widehat{R})$, so the minimality of $\mathfrak{P}$ yields $\mathfrak{Q}=\mathfrak{P}$. Consequently $\pp=\mathfrak{P}\cap R$, that is, $\mathfrak{P}\in f^{-1}(\pp)$.

Now, since the complex $D_{\widehat{R}}$ is minimal in $\C(\widehat{R})$, its $i$th component can be written as \[D_{\widehat{R}}^i\cong \bigoplus_{\substack{\mathfrak{P}\in \Spec \widehat{R}\\i=d-\dim \widehat{R}/\mathfrak{P}}} E_{\widehat{R}}(\widehat{R}/\mathfrak{P})\] 
for every $i\in \ZZ$, where $d:=\dim \widehat{R}$.
Noting that $\Gamma_{\pp}=\Gamma_{V(\pp \widehat{R})}$ as functors $\Mod \widehat{R}\to \Mod \widehat{R}$, we have
\begin{equation}\label{dc-fiber}
\Gamma_{\pp}(R_\pp \otimes_RD_{\widehat{R}}^i)\cong \bigoplus_{\substack{\mathfrak{P}\in f^{-1}(\pp)\\ i=d-\dim \widehat{R}/\mathfrak{P}}} E_{\widehat{R}}(\widehat{R}/\mathfrak{P}),
\end{equation}
where $f^{-1}(\pp)\subseteq V(\pp \widehat{R})$; see also \cref{Gamma-Inj}.
Since every minimal element $\mathfrak{P}$ in $V(\pp \widehat{R})$ belongs to $f^{-1}(\pp)$, the non-trivial components of $\Gamma_{\pp}(R_\pp \otimes_RD_{\widehat{R}})$ appear precisely from degree $d-\dim \widehat{R}/\pp \widehat{R}$ onwards. 
Notice from \cref{dc-fiber} that the complex $\Gamma_{\pp}(R_\pp \otimes_RD_{\widehat{R}})$ is also minimal in $\C(\widehat{R})$, so we have
\begin{equation}\label{inf-dc-fiber}
\inf \Gamma_{\pp}(R_\pp \otimes_RD_{\widehat{R}})=d-\dim \widehat{R}/\pp \widehat{R}=\dim R-\dim R/\pp R.
\end{equation}
Since $H^i\Gamma_{\pp}(R_\pp\otimes_R D_{\widehat{R}})$ in $\D(R)$ for all $i\neq n$, it follows that $n=\dim R-\dim R/\pp R$.

To show that $\widehat{R}/\pp \widehat{R}$ is equidimensional, suppose that there exists a minimal element $\mathfrak{P}$ in $V(\pp \widehat{R})$ such that $\dim(\widehat{R}/\mathfrak{P}) < \dim(\widehat{R}/\pp \widehat{R})$.
It follows from the second paragraph of the proof that $\widehat{R}_\mathfrak{P}\otimes_RR_\pp\cong \widehat{R}_\mathfrak{P}$. Then we have natural isomorphisms in $\C(\widehat{R})$: 
\[\widehat{R}_\mathfrak{P} \otimes_{\widehat{R}}\Gamma_{\pp \widehat{R}}(R_\pp \otimes_R D_{\widehat{R}}) \cong \widehat{R}_\mathfrak{P} \otimes_{\widehat{R}} \Gamma_{\pp \widehat{R}}D_{\widehat{R}} \cong \Sigma^{\dim \widehat{R}/\mathfrak{P}-\dim \widehat{R}}E_{\widehat{R}}(\widehat{R}/\mathfrak{P}),\] 
where these isomorphisms follow from \cref{Gamma-Inj} and minimality of $D_{\widehat{R}}$.
The above isomorphisms show that  
$\widehat{R}_\mathfrak{P} \otimes_{\widehat{R}}\Gamma_{\pp \widehat{R}}(D_{\widehat{R}}  \otimes_R R_\pp)$ has nontrivial cohomology in degree $m:=\dim \widehat{R}  - \dim \widehat{R}/\mathfrak{P}$, but $m > \dim R  - \dim R/\pp=n$. This is a contradiction because 
$H^m(\widehat{R}_\mathfrak{P}\otimes_{\widehat{R}}\Gamma_{\pp}(R_\pp\otimes_R D_{\widehat{R}}))\cong \widehat{R}_\mathfrak{P}\otimes_{\widehat{R}}H^m\Gamma_{\pp}(R_\pp\otimes_R D_{\widehat{R}})=0$ in $\D(R)$.
\end{proof}

\begin{proposition}\label{univ-catenary}
	Let $R$ be a commutative noetherian ring and let $\Phi$ be a codimension filtration of $\Spec R$. 
Assume $T_\Phi$ is tilting or $C_\Phi$ is cotilting in $\D(R)$. Then $R$ is universally catenary.
\end{proposition}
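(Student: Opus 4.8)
By \cref{formal-equidim}, it suffices to show that $R_\mm$ is universally catenary for every maximal ideal $\mm$ of $R$. The localization $T_{\Phi_\mm}$ of the $\mm$-local tilting (resp. cosilting) object $T_\Phi$ is again a codimension filtration on $\Spec R_\mm$ by restriction, and \cref{finite-1} (or its obvious localization counterpart) gives that $(T_\Phi)_\mm \cong T_{f^{-1}\Phi}$ where $f\colon \Spec R_\mm \to \Spec R$ is the localization map; the tilting (resp. cotilting) property is preserved under this flat localization since $\RHom_R(T_\Phi, T_\Phi^{(\varkappa)})\otimes_R R_\mm \cong \RHom_{R_\mm}((T_\Phi)_\mm, (T_\Phi)_\mm^{(\varkappa)})$ by a standard base-change isomorphism for bounded complexes of projectives (cf.\ the proof of \cref{finite-2}). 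Hence I may assume from the start that $(R,\mm)$ is local, and then it remains to invoke \cref{Kawasaki}\cref{Kawasaki-local}: $R$ is universally catenary as soon as $R$ is a homomorphic image of a Cohen--Macaulay local ring, but since we want only universal catenarity (not the full conclusion), the plan is to extract it directly from the structure of the (co)tilting object.

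The key point will be a local-ring criterion for universal catenarity: by a theorem of Ratliff, a local ring $(R,\mm)$ is universally catenary if and only if $\widehat{R}/\pp\widehat{R}$ is equidimensional for every $\pp \in \Spec R$ (equivalently, for every minimal prime $\pp$). So the heart of the argument is to produce this equidimensionality of the formal fibers' closures from the (co)tilting hypothesis. This is precisely where \cref{equidim} enters: it says that if the cohomology of $\RGamma_\pp(R_\pp \otimes_R D_{\widehat{R_\qq}})$ is concentrated in a single degree for a chain $\pp \subseteq \qq$, then $\widehat{R_\qq}/\pp\widehat{R_\qq}$ is equidimensional. Taking $\qq = \mm$, I need that $\RGamma_\pp(R_\pp \otimes_R D_{\widehat{R}})$ is a stalk complex for each $\pp \in \Spec R$. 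By \cref{vanishing} (equivalence of \cref{lc-dc-vanish} with \cref{lc-vanish}), the vanishing of $H^j\RGamma_\pp(R_\pp\LotimesR D_{\widehat{R}})$ in degree $j$ is equivalent to the vanishing of $H^{\height(\mm)-j}\RHom_R(\RGamma_\pp R_\pp, \RGamma_\mm R)$; so the task becomes showing that $\RHom_R(\RGamma_\pp R_\pp, \RGamma_\mm R)$ is a stalk complex, i.e.\ concentrated in a single cohomological degree, for every $\pp$.

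Now the (co)tilting hypothesis is used to pin down that degree. Since $\Phi$ is a codimension filtration and $T_\Phi$ is tilting, we have $\Add(T_\Phi) \subseteq T_\Phi^{\perp_{\neq 0}}$; by \cref{minimal-case} and the analysis in \cref{necessity}, for a \emph{saturated} chain $\pp \subsetneq \qq$ the complex $\RHom_R(\RGamma_\pp R_\pp, \RGamma_\qq R_\qq)$ is concentrated in degree $-1$ (with value $\Sigma^{-1}\Lambda^\pp(\widehat{R_\qq}\otimes_R R_\pp)$), and more generally for any chain $\pp \subseteq \qq$ in $\Spec R$, since $T_\Phi$ being tilting forces $\f_\Phi$ to be a codimension function and $\RHom_R(\Sigma^{\f_\Phi(\pp)}\RGamma_\pp R_\pp, \Sigma^{\f_\Phi(\qq)}\RGamma_\qq R_\qq)$ to have cohomology only in degree $0$, one gets that $\RHom_R(\RGamma_\pp R_\pp, \RGamma_\qq R_\qq)$ has cohomology concentrated in degree $\f_\Phi(\qq)-\f_\Phi(\pp) = \cd(\qq)-\cd(\pp)$. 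Specializing $\qq = \mm$ gives that $\RHom_R(\RGamma_\pp R_\pp, \RGamma_\mm R)$ is a stalk complex in a single degree, hence via \cref{vanishing} so is $\RGamma_\pp(R_\pp\LotimesR D_{\widehat{R}})$, and then \cref{equidim} yields that $\widehat{R}/\pp\widehat{R}$ is equidimensional. Running this over all $\pp \in \Spec R$ and applying Ratliff's criterion, $R$ is universally catenary. The cotilting case is entirely parallel: $C_\Phi$ being cotilting gives $\Prod(C_\Phi) \subseteq \Perp{<0}C_\Phi$, which by the computations in the proof of \cref{necessity}\cref{cotilt-neces} (again using \cref{minimal-case} and \cref{vanishing}) forces the same single-degree concentration of $\RHom_R(D_{\widehat{R_\qq}}, D_{\widehat{R_\pp}})$, equivalently of $\RGamma_\pp(R_\pp\LotimesR D_{\widehat{R}})$, for every chain $\pp \subseteq \mm$.

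\textbf{Main obstacle.} The delicate point is passing from the ``saturated chain'' computation in \cref{minimal-case}/\cref{necessity} to the required single-degree concentration of $\RHom_R(\RGamma_\pp R_\pp, \RGamma_\mm R)$ for an \emph{arbitrary} chain $\pp \subseteq \mm$ — the tilting hypothesis directly controls only self-$\Ext$ groups $\Hom_{\D(R)}(T_\Phi, \Sigma^n T_\Phi)$, and one must argue that the $\pp$-to-$\mm$ component of this graded endomorphism object, which is a direct summand, is itself a stalk complex; this requires knowing the tilting object's endomorphism ring is concentrated in degree $0$ and carefully identifying $\RHom_R(\Sigma^{\cd(\pp)}\RGamma_\pp R_\pp, \Sigma^{\cd(\mm)}\RGamma_\mm R)$ as a direct summand thereof (using \cref{bi-loc} to kill off cross terms not lying over $\mm$). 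I expect this bookkeeping, together with verifying that Ratliff's equidimensionality criterion applies in the stated form and that the reduction to the local case via flat base change is legitimate, to be where the real work lies.
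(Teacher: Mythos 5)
Your plan is essentially the paper's proof: reduce via \cref{formal-equidim} to localizations, invoke Ratliff's criterion (Matsumura, Theorem 31.7) that a noetherian local ring $R_\qq$ is universally catenary if and only if $\widehat{R_\qq}/\pp\widehat{R_\qq}$ is equidimensional for every $\pp\subseteq\qq$, translate the required single-degree concentration via \cref{vanishing}, and conclude with \cref{equidim}. The one thing you add --- localizing $T_\Phi$ at $\mm$ and re-establishing the tilting property over $R_\mm$ --- is unnecessary, and without an assumption of finite Krull dimension the base-change isomorphism you invoke is not automatic (your appeal to \cref{finite-2} quietly uses that $T_\Phi$ is a \emph{bounded} complex of projectives). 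You do not need it: the tilting hypothesis already gives that $\RHom_R(T_\Phi,T_\Phi)$ is concentrated in degree $0$, and for \emph{any} chain $\pp\subseteq\qq$ in $\Spec R$ the object $\RHom_R(\Sigma^{\cd(\pp)}\RGamma_\pp R_\pp,\Sigma^{\cd(\qq)}\RGamma_\qq R_\qq)$ is a direct summand of $\RHom_R(T_\Phi,T_\Phi)$ simply because both arguments are direct summands of $T_\Phi$ (biadditivity of $\RHom$ applied to two summand decompositions --- no appeal to \cref{bi-loc} or killing of cross terms is needed). So your ``main obstacle'' evaporates: there is nothing special about saturated chains, and the result does not route through $\Hom_{\D(R)}(T_\Phi,\Sigma^n T_\Phi^{(\varkappa)})$ for $\varkappa>1$; the case $\varkappa=1$ suffices.
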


\begin{proof}
It suffices to show that $R_\qq$ is universally catenary for each $\qq\in \Spec R$; see, e.g., \cite[\href{https://stacks.math.columbia.edu/tag/0AUN}{Tag 0AUN}]{stacks-project}. 
The local ring $R_\qq$ is universally catenary if and only if $\widehat{R_\qq}/\pp \widehat{R_\qq}$ is equidimensional for each $\pp\in \Spec R$ with $\pp\subseteq \qq$ (\cite[Theorem 31.7]{Mat89}).
For such a chain $\pp\subseteq \qq$, there is an integer $n$ such that $H^i\RGamma_{\pp}(R_\pp \otimes_R  D_{\widehat{R_\qq}})=0$ for all $i\neq 0$ by assumption and \cref{vanishing}. Then \cref{equidim} implies that $\widehat{R_\qq}/\pp \widehat{R_\qq}$ is equidimensional, as desired.
\end{proof}

By a \emph{stalk complex}, we mean a complex $X$  concentrated in some degree $n$, that is, $X^i=0$ whenever $i\neq n$.

\begin{proposition}\label{stalk}
Let $R$ be a commutative noetherian ring and $\pp \subseteq \qq$ be a chain in $\Spec R$.
The following conditions are equivalent.
\begin{enumerate}[label=(\arabic*), font=\normalfont]
\item \label{stalk-lc} $\RHom_R(\RGamma_{\pp}R_\pp,\RGamma_{\qq}R_\qq)$ is isomorphic in $\D(R)$ to a stalk complex of a flat $R$-module.
\item \label{stalk-dc-lc} $\RGamma_{\pp}(R_\pp \otimes_R  D_{\widehat{R_\qq}})$ is isomorphic in $\D(R)$ to a stalk complex of an injective $R$-module.
\item \label{stalk-dc} $\RHom_R(D_{\widehat{R_\qq}},D_{\widehat{R_\pp}})$ is isomorphic in $\D(R)$ to a stalk complex of a flat $R$-module.
\end{enumerate}
Under each condition, the formal fiber $\kappa(\pp) \otimes_{R_\qq}\widehat{R_\qq}$ is Cohen--Macaulay.
\end{proposition}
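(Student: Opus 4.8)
The plan is to first establish the equivalence of the three conditions \cref{stalk-lc}--\cref{stalk-dc} using the machinery already developed, and then derive the Cohen--Macaulay conclusion about the formal fiber from the common vanishing. For the equivalences, I would reduce immediately to the local case: since for any complex $X$ of $R_\pp$-modules and $Y$ of $R_\qq$-modules one has $\RHom_R(X,Y)\cong \RHom_{R_\qq}(X,Y)$ (as already used in the proof of \cref{vanishing}), we may assume $(R,\mm,k)$ is local, $\qq=\mm$, and $\pp$ is a prime of $R$, so $\widehat{R_\qq}=\widehat{R}$. The key observation is that each of the three complexes in \cref{stalk-lc}--\cref{stalk-dc} is concentrated in a single degree if and only if \emph{all but one} of its cohomology modules vanish, and this is governed precisely by \cref{vanishing}: the equivalences \cref{k-lc-vanish}$\Leftrightarrow$\cref{lc-vanish}$\Leftrightarrow$\cref{lc-dc-vanish}$\Leftrightarrow$\cref{dc-vanish} (with $\varkappa=1$) show that the cohomology of $\RHom_R(\RGamma_{\pp}R_\pp,\RGamma_{\qq}R_\qq)$ in degree $i$ vanishes iff that of $\RGamma_\pp(R_\pp\LotimesR D_{\widehat{R_\qq}})$ vanishes in degree $\height(\qq)-i$ iff that of $\RHom_R(D_{\widehat{R_\qq}},D_{\widehat{R_\pp}})$ vanishes in degree $i-\height(\qq)+\height(\pp)$. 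Hence ``concentrated in a single degree'' transfers between the three complexes, the single degrees being related by the same shifts. It remains to identify the module appearing in that single degree as flat (resp. injective, resp. flat). For \cref{stalk-dc-lc}: $D_{\widehat{R_\qq}}$ can be taken as a bounded complex of injective $\widehat{R_\qq}$-modules which are also injective over $R$ (by \cref{bd-comp-inj}), so $\RGamma_\pp(R_\pp\otimes_R D_{\widehat{R_\qq}})=\Gamma_{\pp R_\qq}(R_\pp\otimes_R D_{\widehat{R_\qq}})$ is a complex of injective $R$-modules by \cref{Gamma-Inj}; a complex of injectives with cohomology in one degree splits off, so it is isomorphic in $\D(R)$ to a stalk complex of an injective module (and the possible non-existence of cohomology, i.e. the zero complex, is subsumed under ``stalk complex of the zero module''). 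For \cref{stalk-lc}: the complex $\RHom_R(\RGamma_\pp R_\pp,\RGamma_\qq R_\qq)$ lies in $\cC^{\{\pp\}}$ by \cref{RHom-TPhi}, and is isomorphic to a bounded complex of flat $R$-modules (using that $\RGamma_\pp R_\pp$ and $\RGamma_\qq R_\qq$ are, by \cref{Cech-comp}); a bounded complex of flats with cohomology in a single degree is isomorphic in $\D(R)$ to a stalk complex of a flat module. The case \cref{stalk-dc} is entirely parallel, using that $\RHom_R(D_{\widehat{R_\qq}},D_{\widehat{R_\pp}})$ is a bounded complex of flat modules via \cref{local-dual-4,local-dual-5}, just as in the proof of \cref{vanishing}.

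For the final assertion, assume any (hence all) of the three conditions holds. By \cref{stalk-dc-lc}, $\RGamma_\pp(R_\pp\LotimesR D_{\widehat{R_\qq}})$ has cohomology concentrated in a single degree $n$, so \cref{equidim} applies and gives $n=\dim R_\qq-\dim R_\qq/\pp R_\qq$ together with the fact that $\widehat{R_\qq}/\pp\widehat{R_\qq}$ is equidimensional. The formal fiber $\kappa(\pp)\otimes_{R_\qq}\widehat{R_\qq}$ is a localization of $\widehat{R_\qq}/\pp\widehat{R_\qq}$ (namely at the multiplicatively closed set $R_\qq\setminus \pp$), and a dualizing complex for $\widehat{R_\qq}$ localizes to a dualizing complex for it by \cref{dc-facts}\cref{localize-dc}. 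The point is that ``$\RGamma$ of the dualizing complex being concentrated in one degree'' is exactly the Cohen--Macaulay criterion: over a local ring $(S,\nn)$ with dualizing complex $D_S$, $S$ is Cohen--Macaulay iff $\RGamma_\nn D_S$ (equivalently, $D_S$ itself after the appropriate shift) has cohomology in a single degree, i.e. $D_S$ is quasi-isomorphic to a module — the canonical module (see \cref{canonical-module}). I would apply this fiberwise: for each maximal ideal of $\kappa(\pp)\otimes_{R_\qq}\widehat{R_\qq}$, which corresponds to a minimal prime $\mathfrak P$ of $\widehat{R_\qq}/\pp\widehat{R_\qq}$ lying in the fiber $f^{-1}(\pp)$ (as analyzed in the proof of \cref{equidim}), the local cohomology $\RGamma_{\mathfrak P}(R_\pp\otimes_R D_{\widehat{R_\qq}})$ localizes from $\RGamma_\pp(R_\pp\otimes_R D_{\widehat{R_\qq}})$ and is therefore again concentrated in one degree, forcing the corresponding dualizing complex of the fiber to be a module, hence that local ring to be Cohen--Macaulay.

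I expect the main obstacle to be the bookkeeping of \emph{which} degrees are involved and of the passage from ``the global $\RGamma_\pp$ is a stalk complex'' to ``each local ring of the formal fiber is Cohen--Macaulay''. The subtle point is that being concentrated in a single degree is not automatically preserved under localization to an arbitrary prime of the fiber — one needs the analysis from the proof of \cref{equidim} showing that the minimal components of $D_{\widehat{R_\qq}}$ restricted to the fiber all start in the degree $\dim\widehat{R_\qq}-\dim\widehat{R_\qq}/\pp\widehat{R_\qq}$, and that equidimensionality of $\widehat{R_\qq}/\pp\widehat{R_\qq}$ makes the top degree match as well, pinning everything to one degree; the localization at each $\mathfrak P\in f^{-1}(\pp)$ then inherits this. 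The routine verifications — that a bounded complex of flats (resp. injectives) with single-degree cohomology is a stalk complex of a flat (resp. injective) module in $\D(R)$, and the flatness of the Hom-complexes — I would dispatch quickly by citing \cref{Gamma-Inj} and the earlier arguments in \cref{vanishing} and \cref{flat-End}.
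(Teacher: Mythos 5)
Your reduction to the local case and your use of \cref{vanishing} to transfer ``cohomology concentrated in a single degree'' between the three complexes are fine, but there is a genuine gap at the very next step. You claim that a bounded complex of injective (resp.\ flat) $R$-modules whose cohomology is concentrated in a single degree is automatically isomorphic in $\D(R)$ to a stalk complex of an injective (resp.\ flat) $R$-module. This is false. A minimal injective resolution of a non-injective module of finite injective dimension is a bounded (and even minimal) complex of injectives whose only nonvanishing cohomology sits in degree $0$, yet the cohomology module is not injective. Likewise, the two-term complex $R \xrightarrow{\cdot x} R$ for a non-zerodivisor $x$ is a bounded complex of flats with cohomology $R/xR$ in a single degree, and $R/xR$ is not flat. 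Hence passing from the stalk hypotheses \cref{stalk-lc}--\cref{stalk-dc} to the weaker ``single nonvanishing cohomology degree'' loses precisely the information the proposition is about, and no structural property of these particular Čech or injective complexes lets you recover it without further argument.

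The paper's proof sidesteps this by never descending to the weaker condition. It works directly with the adjunction isomorphisms
$\RHom_R(\RGamma_\pp R_\pp, \RGamma_\mm R) \cong \RHom_{\widehat R}\big(\Sigma^{\height(\mm)}\RGamma_\pp(R_\pp\otimes_R D_{\widehat R}), E_{\widehat R}(k)\big)$
and
$\RHom_R(D_{\widehat R}, D_{\widehat{R_\pp}}) \cong \RHom_{R_\pp}\big(\Sigma^{\height(\pp)}\RGamma_\pp(R_\pp\otimes_R D_{\widehat R}), E_{R_\pp}(\kappa(\pp))\big)$,
and transports the \emph{stalk-of-flat} structure on one side to the \emph{stalk-of-injective} structure on the other via \cref{complete-cogen}: for an $\widehat R$-module $M$, the Matlis dual $\Hom_{\widehat R}(M, E_{\widehat R}(k))$ is flat over $R$ if and only if $M$ is injective over $R$. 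That remark is the key lemma you are missing; your argument collapses three genuinely distinct conditions ``stalk of flat / stalk of injective / stalk of flat'' to the common weakening ``cohomology in one degree'' and then does not get back up. The final paragraph of your proposal, deriving the Cohen--Macaulay conclusion from \cref{equidim} and localization at primes of the fiber, is workable --- though you should verify that applying $\RHom_R(R/\pp,-)$ or localizing at $\mathfrak P$ preserves the stalk-of-injective structure (which it does for stalk complexes of injectives, but not merely from single-degree cohomology) --- but that point does not rescue the equivalence argument, which is where the proof really lives.
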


\begin{proof}
We may assume that $R$ is a local ring and $\qq$ is the maximal ideal $\mm$.
We remark that there are natural isomorphisms
\begin{align}
\RHom_R(\RGamma_{\pp}R_\pp,\RGamma_{\mm}R)&\cong \RHom_{\widehat{R}}(\Sigma^{\height(\mm)}\RGamma_\pp (R_\pp\otimes_RD_{\widehat{R}}), E_{\widehat{R}}(k))\label{adjoint-Prop-1}\\
\RHom_R(D_{\widehat{R}},D_{\widehat{R_\pp}}) 
&\cong \RHom_{R_\pp}(\Sigma^{\height(\pp)}\RGamma_\pp (R_\pp\otimes_RD_{\widehat{R}}),E_{R_\pp}(\kappa(\pp))).\label{adjoint-Prop-2}
\end{align}
in $\D(R)$; see the proof of \cref{vanishing}.

Suppose that the left-hand side of \cref{adjoint-Prop-1} is isomorphic in $\D(R)$ to a stalk complex of a flat $R$-module.
Since $E_{\widehat{R}}(k)$ is an injective cogenerator in $\Mod \widehat{R}$,  $\RGamma_\pp (R_\pp\otimes_RD_{\widehat{R}})$ is isomorphic in $\D(\widehat{R})$ to a stalk complex, so there are an $\widehat{R}$-module $M$ and an integer $n$ such that $\Sigma^{\height(\mm)}\RGamma_\pp (R_\pp\otimes_RD_{\widehat{R}})\cong \Sigma^n M$ in $\D(\widehat{R})$. 
It follows that $\Hom_{\widehat{R}}(M, E_{\widehat{R}}(k))$ is a stalk complex of a flat $R$-module. In other words, $M$ is flat as an $R$-module, and this is equivalent to that $M$ is injective as an $R$-module; see \cref{complete-cogen} below. Thus the implication \cref{stalk-lc}$\Rightarrow$\cref{stalk-dc-lc} follows. 

To see the converse implication, suppose that $\RGamma_\pp (R_\pp\otimes_RD_{\widehat{R}})$ is isomorphic in $\D(R)$ to a complex of an injective $R$-module. Then 
there are an $\widehat{R}$-module $M$ and an integer $n$ such that $\Sigma^{\height(\mm)}\RGamma_\pp (R_\pp\otimes_RD_{\widehat{R}})\cong \Sigma^n M$ in $\D(\widehat{R})$.
Since $\Sigma^n M$ is isomorphic in $\D(R)$ to $\RGamma_\pp (R_\pp\otimes_RD_{\widehat{R}})$, it follows that $M$ is injective as an $R$-module. Then the left-hand side of \cref{adjoint-Prop-1} is isomorphic in $\D(R)$ to $\Hom_{\widehat{R}}(\Sigma^{n}M,E_{\widehat{R}}(k))$, which is a stalk complex of a flat $R$-module by \cref{complete-cogen}. Hence we have \cref{stalk-dc-lc}$\Rightarrow$\cref{stalk-lc}.

Next, suppose that $\RGamma_\pp (R_\pp\otimes_RD_{\widehat{R}})$ is isomorphic in $\D(R)$ a stalk complex of an injective $R$-module. Note that the injective $R$-module is also an injective $R_\pp$-module because we may regard $\RGamma_\pp (R_\pp\otimes_RD_{\widehat{R}})$ as a complex of $R_\pp$-modules in $\D(R)$.
Letting $M$ be the injective $R_\pp$-module, we have $\Sigma^{\height(\pp)}\RGamma_\pp (R_\pp\otimes_RD_{\widehat{R}})\cong \Sigma^n M$ in $\D(R_\pp)$ for some integer $n$.
Hence the right-hand side of \cref{adjoint-Prop-2} is isomorphic in $\D(R)$ to $\Hom_{R_\pp}(\Sigma^{n}M,E_{R_\pp}(\kappa(\pp)))$, which is a stalk complex of a flat $R$-module.
Thus the implication \cref{stalk-dc-lc}$\Rightarrow$\cref{stalk-dc} follows.

To see the converse implication, suppose that the left-hand side of \cref{adjoint-Prop-2} is isomorphic in $\D(R)$ to a stalk complex of a flat $R$-module. Note that the flat $R$-module is also a flat $R_\pp$-module. Since $E_{R_\pp}(\kappa(\pp))$ is an injective cogenerator in $\Mod R_\pp$, it follows that $\Sigma^{\height(\pp)}\RGamma_\pp (R_\pp\otimes_RD_{\widehat{R}})$ is isomorphic in $\D(R_\pp)$ to a stalk complex of an injective $R_\pp$-module. In other words, $\RGamma_\pp (R_\pp\otimes_RD_{\widehat{R}})$ is isomorphic in $\D(R)$ to a stalk complex of an injective $R$-module.
Thus we have \cref{stalk-dc}$\Rightarrow$\cref{stalk-dc-lc}.

Finally, we show that $\kappa(\pp)\otimes_R \widehat{R}$ is Cohen--Macaulay, assuming that $\RGamma_{\pp}(R_\pp \otimes_R  D_{\widehat{R}})$ is isomorphic in $\D(R)$ to a stalk complex of an injective $R$-module.
We remark that there are natural isomorphisms in $\D(R)$:
\begin{align*}
\RHom_R(R/\pp, \RGamma_{\pp}(R_\pp \otimes_R  D_{\widehat{R}}))
&\cong \RHom_R(R/\pp, R_\pp \otimes_R  D_{\widehat{R}})\\
&\cong \Hom_{R_\pp}(\kappa(\pp),R_\pp \otimes_R D_{\widehat{R}})\\
&\cong \Hom_{R_\pp}(\kappa(\pp), \Hom_{R_\pp\otimes_R \widehat{R}}(R_\pp\otimes_R \widehat{R},R_\pp \otimes_R D_{\widehat{R}}))\\
&\cong \Hom_{R_\pp\otimes_R \widehat{R}}(\kappa(\pp)\otimes_R \widehat{R}, R_\pp \otimes_R D_{\widehat{R}}).
\end{align*}
The last complex is a dualizing complex for $\kappa(\pp)\otimes_R \widehat{R}$ because $\kappa(\pp)\otimes_R \widehat{R}$ is a homomorphic image of $R_\pp\otimes_R \widehat{R}$ and $R_\pp \otimes_R D_{\widehat{R}}$ is a dualizing complex for $R_\pp\otimes_R \widehat{R}$.
By assumption, the cohomology of $\RHom_R(R/\pp, \RGamma_{\mm}(R_\pp \otimes_R  D_{\widehat{R}}))$ is concentrated in some degree, and the same holds for the dualizing complex for $\kappa(\pp)\otimes_R \widehat{R}$ by the isomorphisms above. This implies that $\kappa(\pp)\otimes_R \widehat{R}$ is a Cohen--Macaulay ring by local duality \cref{local-dual-1} applied to the local ring $(\kappa(\pp)\otimes_R\widehat{R})_{\mathfrak{P}}$ for each $\mathfrak{P}\in \Spec (\kappa(\pp)\otimes_R\widehat{R})$.
\end{proof}

\begin{remark}\label{complete-cogen}
Let $(R,\mm,k)$ be a commutative noetherian local ring and $M$ be an $\widehat{R}$-module.
For any finitely generated $R$-module $N$, we have standard isomorphisms
\begin{align*}\Tor_i^R(N,\Hom_{\widehat{R}}(M,E_{\widehat{R}}(k)))
&\cong \Tor_i^{\widehat{R}}(N\otimes_{R}\widehat{R},\Hom_{\widehat{R}}(M,E_{\widehat{R}}(k)))\\
&\cong \Hom_{\widehat{R}}(\Ext^i_{\widehat{R}}(N\otimes_{R}\widehat{R},M),E_{\widehat{R}}(k))\\
&\cong \Hom_{\widehat{R}}(\Ext^i_{R}(N,\Hom_{\widehat{R}}(\widehat{R},M)),E_{\widehat{R}}(k))\\
&\cong \Hom_{\widehat{R}}(\Ext^i_{R}(N,M),E_{\widehat{R}}(k))
\end{align*}
for all $i\geq 0$, where the second isomorphism follows from \cite[Theorem 3.2.13]{EJ11} and the first and third holds since $\widehat{R}$ is flat over $R$.
Since $E_{\widehat{R}}(k)$ is an injective cogenerator in $\Mod \widehat{R}$, it follows from the above isomorphisms that the $\widehat{R}$-module $M$ is injective over $R$ if and only if $\Hom_{\widehat{R}}(M,E_{\widehat{R}}(k))$ is flat over $R$.
\end{remark}

\begin{corollary}\label{flat-end-fibres}
	Let $R$ be a commutative noetherian ring and let $\Phi$ be a codimension filtration of $\Spec R$. Assume that at least one of the following conditions is true:
	\begin{enumerate}[label=(\arabic*), font=\normalfont]
		\item\label{cor-tilt-ass} $T_\Phi$ is tilting in $\D(R)$ and $\End_{\D(R)}(T_\Phi)$ is a flat $R$-module,
		\item\label{cor-cotilt-ass} $C_\Phi$ is cotilting in $\D(R)$ and $\End_{\D(R)}(C_\Phi)$ is a flat $R$-module.
	\end{enumerate}
	Then all the formal fibers of all the localizations of $R$ are Cohen--Macaulay.
\end{corollary}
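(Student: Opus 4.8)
The plan is to deduce \cref{flat-end-fibres} from \cref{stalk} together with the explicit computations of endomorphism rings in \cref{End-tilt-2,End-cotilt}. First I would fix a prime $\pp$ of $R$ and a localization $R_\qq$ with $\pp\subseteq \qq$; since a localization of $R$ inherits a codimension filtration (restrict $\Phi$ along $\Spec R_\qq\to \Spec R$, using \cref{composite-filt} and that restriction along a flat map preserves the codimension property --- indeed $R_\qq$ is catenary and $\f_\Phi$ remains a codimension function after restriction), and since (co)tilting of $T_\Phi$ resp.\ $C_\Phi$ localizes, it suffices to treat the case where $R=R_\qq$ is local with maximal ideal $\qq=\mm$ and show that the formal fiber $\kappa(\pp)\otimes_R\widehat{R}$ is Cohen--Macaulay for every $\pp\in \Spec R$. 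By \cref{stalk}, this reduces to proving that, for every $\pp\subseteq \mm$ in $\Spec R$, the complex $\RHom_R(\RGamma_\pp R_\pp,\RGamma_\mm R)$ is isomorphic in $\D(R)$ to a stalk complex of a flat $R$-module.

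The key point is then that flatness of the endomorphism ring forces exactly this stalk-complex behaviour on each "matrix entry''. Concretely: take a saturated chain $\pp=\qq_0\subsetneq \qq_1\subsetneq \cdots\subsetneq \qq_k=\mm$; then the complex $\RHom_R(\RGamma_{\pp}R_\pp,\RGamma_\mm R)$ is (up to a shift by $k=\f_\Phi(\mm)-\f_\Phi(\pp)$) a direct summand of $\RHom_R(T_\Phi,T_\Phi)$, and one sees inductively --- composing the factorizations through the intermediate $\qq_i$ as in the proof of \cref{minimal-case} --- that it is built as an iterated $\Ad^{\qq_i}$ of $\widehat{R_\mm}$, which carries the $R$-module structure of $\End_{\D(R)}(T_\Phi)$. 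More precisely, I would argue that if $\End_{\D(R)}(T_\Phi)=\Hom_{\D(R)}(T_\Phi,T_\Phi)$ (which holds as $T_\Phi$ is tilting) is flat over $R$, then since the cohomology of $\RHom_R(T_\Phi,T_\Phi)$ lives only in degree $0$ and the whole derived Hom-complex is a bounded complex of flat modules with flat $H^0$, each indecomposable summand $\RHom_R(\Sigma^{\f_\Phi(\pp)}\RGamma_\pp R_\pp,\Sigma^{\f_\Phi(\mm)}\RGamma_\mm R)$ is concentrated in cohomological degree $-k$ and its cohomology there is a flat $R$-module (being a summand of the flat module $H^0$); hence $\RHom_R(\RGamma_\pp R_\pp,\RGamma_\mm R)\cong \Sigma^{-k}(\text{flat }R\text{-module})$ is a stalk complex of a flat module. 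The cotilting case is entirely parallel, using \cref{End-cotilt} and that $C_\Phi$ cotilting gives $\End_{\D(R)}(C_\Phi)=\Hom_{\D(R)}(C_\Phi,C_\Phi)$, together with condition \cref{stalk-dc} of \cref{stalk}.

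The main obstacle I anticipate is making rigorous the passage from "$\End_{\D(R)}(T_\Phi)$ is flat'' to "each summand $\RHom_R(\RGamma_\pp R_\pp,\RGamma_\mm R)$ is a stalk complex of a flat module''. Flatness of $H^0$ of a complex does not automatically imply the complex is a stalk complex; what saves the day is \cref{End-tilt-2} (and the computations feeding it, \cref{X(n)-X(n+1),minimal-case}), which exhibit $\RHom_R(T(n),T(n+1))$ and $\RHom_R(T(n),T(n))$ explicitly as the modules $\Ad^{W_n}\Ad^{W_{n+1}}R$, $\Ad^{W_n}R$, sitting in a single cohomological degree, so the derived Hom between any two distinct-codimension summands is \emph{already} a stalk complex --- and flatness of the endomorphism ring pins down its unique cohomology module as flat. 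For a general saturated chain of length $k>1$ one would iterate: $\RHom_R(\RGamma_\pp R_\pp,\RGamma_\mm R)$ is obtained by successively applying the $\Ad^{\qq_i}$-functors along the chain, each step introducing a shift by $1$ and preserving the stalk-complex property; the flatness of the $R$-module one ends with is inherited from the flatness hypothesis on $\End_{\D(R)}(T_\Phi)$ since this module is a direct summand of $\End_{\D(R)}(T_\Phi)$ after the appropriate shift. Once the stalk-complex-of-a-flat-module conclusion is in hand for every chain $\pp\subseteq \mm$, \cref{stalk} delivers Cohen--Macaulayness of $\kappa(\pp)\otimes_R\widehat{R}$, completing the proof.
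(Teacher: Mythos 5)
Your core argument is the same as the paper's, and it is correct: because $T_\Phi$ is tilting, $\RHom_R(T_\Phi,T_\Phi)$ is isomorphic in $\D(R)$ to the module $\End_{\D(R)}(T_\Phi)$, which is flat by hypothesis; for any inclusion $\pp\subseteq\qq$, the complex $\RHom_R(\RGamma_\pp R_\pp,\RGamma_\qq R_\qq)$ is, up to shift, a direct summand of $\RHom_R(T_\Phi,T_\Phi)$; hence it is a stalk complex of a flat $R$-module, and \cref{stalk} concludes. However, your write-up wraps this two-step argument in machinery that is either superfluous or unsubstantiated.

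First, the reduction to the local case is not needed and not fully justified. \cref{stalk} is already stated for a general commutative noetherian ring $R$ and a chain $\pp\subseteq\qq$, computing the formal fiber $\kappa(\pp)\otimes_{R_\qq}\widehat{R_\qq}$; there is no need to localize at $\qq$. Moreover, the claim that the (co)tilting property of $T_\Phi$ and the flatness of its endomorphism ring ``localize'' is not proved in the paper and would need an argument of its own — so you would be introducing work rather than saving it. Second, the asserted inductive computation of $\RHom_R(\RGamma_\pp R_\pp,\RGamma_\mm R)$ via iterated $\Ad^{\qq_i}$-functors along a saturated chain of length $k>1$ is unsubstantiated: \cref{minimal-case} applies only to saturated chains of length one (this is what the hypothesis $W\subseteq\min(V(\pp)\setminus\{\pp\})$ encodes), and there is no reason given why the derived Hom over a longer chain factors step by step in that way. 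Fortunately, your proof never actually relies on that decomposition: the direct-summand observation, which you do make explicitly at the end, supplies everything you need.

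Two smaller points. The cohomological degree of the shifted summand is $k$, not $-k$: since $\RHom_R(\Sigma^{\f_\Phi(\pp)}\RGamma_\pp R_\pp,\Sigma^{\f_\Phi(\mm)}\RGamma_\mm R)\cong \Sigma^{k}\RHom_R(\RGamma_\pp R_\pp,\RGamma_\mm R)$ is a summand of a complex concentrated in degree $0$, the unshifted complex is concentrated in degree $k$ — your final formula $\Sigma^{-k}(\text{flat module})$ is correct, but not the phrase ``concentrated in cohomological degree $-k$''. And the parenthetical ``the whole derived Hom-complex is a bounded complex of flat modules'' requires $\dim R<\infty$ (via \cref{fin-proj-dim}), which is not part of the hypothesis; it is also not needed, because concentration in degree $0$ with flat $H^0$ already follows directly from $T_\Phi$ being tilting together with the flatness assumption.
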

\begin{proof}
	If \cref{cor-tilt-ass} holds then $\RHom_R(T_\Phi,T_\Phi)$ is isomorphic in $\D(R)$ to the flat $R$-module $\End_{\D(R)}(T_\Phi)$. Furthermore, for any inclusion $\pp \subseteq \qq$ we have that the object $\RHom_R(\RGamma_{\pp}R_\pp,\RGamma_{\qq}R_\qq)$ is, up to a suitable shift, a direct summand of $\RHom_R(T_\Phi,T_\Phi)$. It follows that $\RHom_R(\RGamma_{\pp}R_\pp,\RGamma_{\qq}R_\qq)$ is isomorphic to a stalk complex of a flat $R$-module in $\D(R)$. Then \cref{stalk} yields that the formal fiber $\kappa(\pp) \otimes_{R_\qq} \widehat{R_\qq}$ is Cohen--Macaulay.

	The proof is completely analogous when the assumption \cref{cor-cotilt-ass} is satisfied instead.
\end{proof}
Now we are ready to prove \cref{intro-flat-End} together with its cotilting counterpart.
	\begin{theorem}\label{flat-End-thm}
	Let $R$ be a commutative noetherian local ring with a codimension filtration $\Phi$. The following conditions are equivalent:
	\begin{enumerate}[label=(\arabic*), font=\normalfont]
		\item\label{thm-flat-End-tilt} $T_\Phi$ is tilting and $\End_{\D(R)}(T_\Phi)$ is a flat $R$-module.
		\item \label{thm-flat-End-cotilt} $C_\Phi$ is cotilting and $\End_{\D(R)}(C_\Phi)$ is a flat $R$-module.
		\item  \label{thm-flat-End-hom} $R$ is a homomorphic image of a Cohen--Macaulay local ring.
	\end{enumerate}
\end{theorem}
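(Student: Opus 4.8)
The equivalence \ref{thm-flat-End-tilt}$\Leftrightarrow$\ref{thm-flat-End-cotilt} should follow formally from the material already set up: by \cref{(co)silt-dual} and the compatibility statements after it, $T_\Phi$ and $C_\Phi$ correspond to each other, and more to the point \cref{End-tilt,End-cotilt} give natural isomorphisms $\RHom_R(T_\Phi,T_\Phi^{(\varkappa)})\cong\Hom_{\D(R)}(T_\Phi,T_\Phi^{(\varkappa)})$ and $\RHom_R(C_\Phi^{\varkappa},C_\Phi)\cong\Hom_{\D(R)}(C_\Phi^{\varkappa},C_\Phi)$ only after we know tilting/cotilting, so the cleanest route is: first prove \ref{thm-flat-End-hom}$\Rightarrow$\ref{thm-flat-End-tilt} and \ref{thm-flat-End-hom}$\Rightarrow$\ref{thm-flat-End-cotilt} directly, then prove \ref{thm-flat-End-tilt}$\Rightarrow$\ref{thm-flat-End-hom} and \ref{thm-flat-End-cotilt}$\Rightarrow$\ref{thm-flat-End-hom}, which closes the cycle. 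For the forward directions \ref{thm-flat-End-hom}$\Rightarrow$\ref{thm-flat-End-tilt}/\ref{thm-flat-End-cotilt}: write $R\cong S/I$ with $(S,\nn)$ a Cohen--Macaulay local ring; then $S$ has finite Krull dimension (it is local), and $R$ is a module-finite $S$-algebra via the surjection. Apply \cref{finite-theorem} with the roles ``$R$''$\rightsquigarrow S$ and ``$A$''$\rightsquigarrow R$: for any codimension filtration $\Phi$ of $\Spec R$ (which we may choose, by \cref{non-connected}, to be $f^{-1}\Psi$ for a codimension filtration $\Psi$ of $\Spec S$, e.g.\ the one attached to $\height$), the theorem gives that $T_\Phi$ is tilting, $C_\Phi$ is cotilting, and $\Hom_{\D(R)}(T_\Phi,T_\Phi^{(\varkappa)})$ and $\Hom_{\D(R)}(C_\Phi^{\varkappa},C_\Phi)$ are flat $R$-modules; taking $\varkappa=1$ gives flatness of the endomorphism rings. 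Since by \cref{non-connected} the tilting/cotilting property and the endomorphism ring (up to isomorphism) do not depend on the choice of codimension filtration, \ref{thm-flat-End-tilt} and \ref{thm-flat-End-cotilt} hold for the given $\Phi$.

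For the converse directions \ref{thm-flat-End-tilt}$\Rightarrow$\ref{thm-flat-End-hom} (and symmetrically \ref{thm-flat-End-cotilt}$\Rightarrow$\ref{thm-flat-End-hom}), the tool is Kawasaki's local characterization \cref{Kawasaki}\cref{Kawasaki-local}: a noetherian local ring is a homomorphic image of a Cohen--Macaulay local ring iff it is universally catenary and all its formal fibers are Cohen--Macaulay. Both hypotheses are now available: universal catenarity follows from \cref{univ-catenary} (which only needs $T_\Phi$ tilting, resp.\ $C_\Phi$ cotilting, and $\Phi$ a codimension filtration — note that by \cref{necessity} a codimension filtration automatically \emph{exists} once $T_\Phi$ is tilting, and in the statement $\Phi$ is assumed to be a codimension filtration anyway), and the Cohen--Macaulayness of all formal fibers of $R$ follows from \cref{flat-end-fibres}, whose hypotheses are exactly \ref{thm-flat-End-tilt} (resp.\ \ref{thm-flat-End-cotilt}) — for a local ring, ``all formal fibers of all localizations of $R$'' in particular includes all formal fibers of $R$ itself, which is what Kawasaki's local statement requires. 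Invoking \cref{Kawasaki}\cref{Kawasaki-local} then gives \ref{thm-flat-End-hom}.

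I expect the only subtlety — not really an obstacle but a point to state carefully — is the bookkeeping around which ring plays which role when applying \cref{finite-theorem} and the fact that the choice of codimension filtration is immaterial, both of which are handled by \cref{non-connected} and \cref{composite-filt}. Everything else is a matter of quoting \cref{finite-theorem}, \cref{univ-catenary}, \cref{flat-end-fibres}, and \cref{Kawasaki}\cref{Kawasaki-local} in the right order. No new computation is needed; in particular one does not need a separate direct argument for \ref{thm-flat-End-tilt}$\Leftrightarrow$\ref{thm-flat-End-cotilt}, as it is obtained for free by transitivity through \ref{thm-flat-End-hom}.
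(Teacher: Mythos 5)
Your proof is correct and follows essentially the same route as the paper: \cref{finite-theorem} gives \cref{thm-flat-End-hom}$\Rightarrow$\cref{thm-flat-End-tilt},\cref{thm-flat-End-cotilt}, while \cref{univ-catenary} and \cref{flat-end-fibres} combined with \cref{Kawasaki}\cref{Kawasaki-local} give the converse implications. Your extra care about choosing $\Phi=f^{-1}\Psi$ and invoking \cref{non-connected} is harmless but not needed, since \cref{finite-theorem} already applies to \emph{any} codimension filtration of $\Spec A$ and so can be used directly with the given $\Phi$.
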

\begin{proof}
	\cref{finite-theorem} yields both the implications \cref{thm-flat-End-hom} $\Rightarrow$ \cref{thm-flat-End-tilt}, \cref{thm-flat-End-cotilt}. 
	
	Conversely, if we assume either one of the assumptions \cref{thm-flat-End-tilt} or \cref{thm-flat-End-cotilt} then $R$ is universally catenary by \cref{univ-catenary} and \cref{flat-end-fibres} shows that the formal fibres of $R$ are Cohen--Macaulay. Since $R$ is local, the condition \cref{thm-flat-End-hom} follows by \cref{Kawasaki}\cref{Kawasaki-local}.
\end{proof}
\begin{question}\label{Q-flatend}
	Let $R$ be a commutative noetherian ring.
	\begin{enumerate}[label=(\arabic*), font=\normalfont]
		\item\label{Q-flatend-tilt} Let $T$ be a tilting object (resp. $C$ be a cotilting object) in $\D(R)$. Is $\End_{\D(R)}(T)$ (resp. $\End_{\D(R)}(C)$) flat as an $R$-module?
		\item\label{Q-flatend-codim} Is the answer to \cref{Q-flatend-tilt} affirmative at least in the case $T=T_\Phi$ (resp. $C=C_\Phi$) for a codimension filtration $\Phi$ of $\Spec R$?
	\end{enumerate}
\end{question}

\begin{remark}
By \cref{flat-End-thm}, an affirmative answer to \cref{Q-flatend}\cref{Q-flatend-codim} implies an affirmative answer to \cref{question} for all local rings.

By \cref{finite-theorem}, \cref{Q-flatend}\cref{Q-flatend-codim} holds true whenever $R$ is a module-finite algebra over a Cohen--Macaulay ring of finite Krull dimension.
Moreover, \cref{End-tilt,End-cotilt} shows the validity of \cref{Q-flatend}\cref{Q-flatend-codim} when  $R$ is a commutative noetherian ring of Krull dimension at most one; \cref{Q-flatend}\cref{Q-flatend-codim} is also verified by \cref{finite-theorem,1-dim-CM} in this case.
We will show that  \cref{Q-flatend}\cref{Q-flatend-codim} holds true when $R$ is a 2-dimensional ring (\cref{2-dim-tilt-flat-end}).
\end{remark}

\begin{remark}\label{non-CM-im}
There is a 2-dimensional commutative noetherian local domain such that its generic formal fiber is not Cohen--Macaulay (\cite[Proposition 4.5]{HRW01}). 
There is also a 2-dimensional commutative noetherian local domain which is  catenary (by definition) but not universally catenary and whose formal fibers are (geometrically) regular (\cite[Example 2]{Nag75}); see \cite[Example 2.6]{Nis12}.
These rings admit codimension functions but are not homomorphic images of Cohen--Macaulay rings by \cref{Kawasaki}\cref{Kawasaki-local}.
\end{remark}

The next theorem affirmatively answers \cref{Q-flatend}\cref{Q-flatend-codim} in the case of a ring of Krull dimension two.

\begin{theorem}\label{2-dim-tilt-flat-end}
Let $R$ be a 2-dimensional commutative noetherian ring with a codimension filtration $\Phi$. Then:
\begin{enumerate}[label=(\arabic*), font=\normalfont]
	\item\label{2-dim-tilt-tilt} If $T_\Phi$ is tilting, then $\End_{\D(R)}(T_\Phi)$ is flat as an $R$-module.
	\item\label{2-dim-tilt-cotilt} If $C_\Phi$ is cotilting, then $\End_{\D(R)}(C_\Phi)$ is flat as an $R$-module.
\end{enumerate}
\end{theorem}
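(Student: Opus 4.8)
The plan is to reduce the two-dimensional statement to local computations of the endomorphism ring componentwise, using the explicit descriptions from \cref{End-tilt-2} and \cref{End-cotilt}, and then prove flatness of the ``mixed'' entries by a careful analysis of the relevant local cohomology and Matlis/local duality. By \cref{necessity}, if $T_\Phi$ is tilting then $\Phi$ is a codimension filtration, and by \cref{non-connected} we may assume $\Spec R$ is connected and that $\Phi$ is, say, the height filtration; write $W_i := \{\pp \in \Spec R \mid \height(\pp) = i\}$ for $i = 0, 1, 2$. Then $T_\Phi = T(0) \oplus T(1) \oplus T(2)$ with $T(i) = \bigoplus_{\pp \in W_i} \Sigma^{i}\RGamma_\pp R_\pp$, and $\End_{\D(R)}(T_\Phi)$ is block-lower-triangular with diagonal blocks $\End_{\D(R)}(T(i)) \cong \Ad^{W_i}R = \prod_{\pp \in W_i}\widehat{R_\pp}$ (by \cref{ring-hom} and \cref{bi-loc}), which are always flat (indeed flat cotorsion, \cref{flatcotorsion}). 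So the content is to show each off-diagonal Hom-module $\Hom_{\D(R)}(T(i),T(j))$ for $i < j$ is flat. For consecutive indices $(i,j) = (0,1)$ and $(1,2)$, the saturated-chain situation of \cref{minimal-case} applies (via \cref{X(n)-X(n+1)} and the final \cref{non-connected}-style remark on removing the codimension-function hypothesis), giving $\Ad^{W_i}\Ad^{W_{i+1}}R$, which is flat. The genuinely new case is $(i,j) = (0,2)$: a chain $\pp \subsetneq \mm$ with $\height(\pp) = 0$, $\height(\mm) = 2$, which need not be saturated.

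The key step is therefore to compute, or at least establish the flatness over $R$ of, $\Hom_{\D(R)}(\RGamma_\pp R_\pp, \RGamma_\mm R_\mm)$ for $\pp \subsetneq \mm$ with $\height(\mm/\pp) = 2$, under the standing hypothesis that $T_\Phi$ is tilting; and, dually for \cref{2-dim-tilt-cotilt}, of $\Hom_{\D(R)}(D_{\widehat{R_\mm}}, D_{\widehat{R_\pp}})$. By \cref{vanishing} the tilting hypothesis on $T_\Phi$ forces $H^k\RHom_R(\RGamma_\pp R_\pp, \RGamma_\mm R_\mm) = 0$ for all $k \ne 0$ (the positive vanishing from $T_\Phi \in T_\Phi^{\perp_{>0}}$, the negative from $\Add(T_\Phi) \subseteq T_\Phi^{\perp_{<0}}$ combined with the height difference being $2$, so that the only surviving cohomological degree is $0$), equivalently $\RGamma_\pp(R_\pp \LotimesR D_{\widehat{R_\mm}})$ is a stalk complex in a single degree. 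By \cref{stalk} this already gives that $\RHom_R(\RGamma_\pp R_\pp, \RGamma_\mm R_\mm)$ is isomorphic in $\D(R)$ to a stalk complex of a flat $R$-module, and moreover that the formal fiber $\kappa(\pp)\otimes_{R_\mm}\widehat{R_\mm}$ is Cohen--Macaulay. The remaining point is that this stalk sits in degree $0$: one checks that the single nonvanishing cohomology of $\RGamma_\pp(R_\pp \LotimesR D_{\widehat{R_\mm}})$ lies in degree $\height(\mm) - \height(\mm/\pp)$ and, since $D$ is normalized by the height codimension function $\cd_D = \height$ (locally), this equals $\height(\mm) - 2 = 0$; pushing through the duality isomorphism \cref{adjoint-Prop-1} then lands $\RHom_R(\RGamma_\pp R_\pp, \RGamma_\mm R_\mm)$ in degree $0$, i.e. $\RHom_R(\RGamma_\pp R_\pp, \RGamma_\mm R_\mm) \cong \Hom_{\D(R)}(\RGamma_\pp R_\pp, \RGamma_\mm R_\mm)$ and the latter is a flat $R$-module. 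Taking products over $\pp \in W_0$ and $\mm \in W_2 \cap V(\pp)$, and using that a product of flat modules over a noetherian ring is flat (\cite[Theorem 3.2.24]{EJ11}), yields flatness of $\Hom_{\D(R)}(T(0), T(2))$.

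Assembling: $\End_{\D(R)}(T_\Phi)$ is an extension, in the category of $R$-modules, of the flat diagonal blocks by the flat off-diagonal blocks $\Hom_{\D(R)}(T(i),T(j))$ for $i<j$ (the block-triangular structure coming from $\RHom_R(T(j),T(i)) = 0$ for $i<j$, which holds since $V(\pp)\cap W_i = \emptyset$ for $\pp \in W_j$ by \cref{bi-loc}); a finite iterated extension of flat modules is flat, so $\End_{\D(R)}(T_\Phi)$ is flat over $R$. For \cref{2-dim-tilt-cotilt} the argument is the formal dual: if $C_\Phi$ is cotilting then by \cref{necessity} $\Phi$ is a codimension filtration, by \cref{cosupport-D} and \cref{bi-loc} $\End_{\D(R)}(C_\Phi)$ is block-triangular with diagonal blocks $\prod_{\pp\in W_i}\Hom_{\D(R)}(D_{\widehat{R_\pp}},D_{\widehat{R_\pp}})$ (flat by the computation in the proof of \cref{End-cotilt} and \cref{complete-cogen}), consecutive off-diagonal blocks flat by \cref{minimal-case-c}, and the $(0,2)$-block flat by \cref{vanishing} (the cotilting hypothesis forces $H^k\RHom_R(D_{\widehat{R_\mm}},D_{\widehat{R_\pp}}) = 0$ for $k \ne 0$ by the same height bookkeeping), \cref{stalk} condition \cref{stalk-dc}, and the degree check as above. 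The main obstacle I anticipate is precisely the bookkeeping pinning down the unique nonzero cohomological degree to be $0$ in the non-saturated $(0,2)$ case: one must invoke \cref{equidim} (applicable since the relevant local cohomology is concentrated in one degree), which identifies that degree as $\dim R_\mm - \dim R_\mm/\pp R_\mm$, and then combine the tilting-forced vanishing in \emph{both} directions to conclude this degree is $0$ rather than merely constant; the flatness itself is then essentially immediate from \cref{stalk}.
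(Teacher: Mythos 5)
Your structural reduction (connected components, height filtration, block-triangular $\End_{\D(R)}(T_\Phi)$ with the three diagonal and two consecutive off-diagonal blocks handled by \cref{End-tilt,End-tilt-2,End-cotilt,minimal-case-c,flatcotorsion}) is exactly the paper's, and the identification of the $(0,2)$-block as the only new content is correct. But the key step of your argument has a genuine gap: you invoke \cref{stalk} to conclude that $\RHom_R(\RGamma_\pp R_\pp,\RGamma_\mm R_\mm)$ is a stalk complex \emph{of a flat module} once you know its cohomology is concentrated in a single degree. \cref{stalk} does not say this. Its three conditions \cref{stalk-lc,stalk-dc-lc,stalk-dc} each already build in the flatness or injectivity of the stalk; the proposition only asserts that these strong conditions are equivalent to one another. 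Concentration of cohomology in one degree is a strictly weaker statement and does not, by itself, force the cohomology module to be flat --- a bounded complex of injectives quasi-isomorphic to one module need not have that module injective (think of $0\to\QQ\to\QQ/\ZZ\to0$).

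The missing ingredient is precisely \cref{generic-fiber}: each $\pp \in W_0$ is a \emph{minimal} prime, so $R_\pp$ is artinian, $\RGamma_\pp R_\pp\cong R_\pp$ is a flat $R$-module in degree $0$, and $\RHom_R(R_\pp,T(2))$ is a bounded complex of flat $R$-modules. Because over an artinian ring flat equals projective and projective dimension is bounded by the Krull dimension, a bounded complex of flats concentrated in one cohomological degree is isomorphic to a (single) flat module. This is where the paper gets flatness, and it is what your appeal to \cref{stalk} silently tries to replace. In fact the paper's proof does not use \cref{stalk}, \cref{vanishing}, or \cref{equidim} at all: the tilting hypothesis directly forces $\End_{\D(R)}(T_\Phi)$ to have zero cohomology outside degree zero, which pins down the (unique) nonvanishing degree of $\RHom_R(\RGamma_\pp R_\pp,\RGamma_\mm R_\mm)$ with no further computation, so your \cref{equidim} detour is superfluous. (Incidentally, your degree bookkeeping is off: for $\pp\in W_0$ and $\mm\in W_2$, the cohomology of the \emph{unshifted} $\RHom_R(\RGamma_\pp R_\pp,\RGamma_\mm R_\mm)$ is concentrated in degree $2=\cd(\mm)-\cd(\pp)$, not degree $0$; this is harmless since \cref{generic-fiber} works in any degree, but it signals where the argument went adrift.) The cotilting case \cref{2-dim-tilt-cotilt} has the same gap --- \cref{stalk} again cannot upgrade concentration to injectivity --- and the fix is the dual half of \cref{generic-fiber}, exploiting the same minimal-prime artinian-ness.
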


To prove this theorem, we make a remark.
\begin{remark}\label{generic-fiber}
Let $R$ be a commutative noetherian ring of finite Krull dimension and let $\pp$ be a minimal prime ideal of $R$. Assume that $X$ is a bounded complex of flat $R_\pp$-modules and $Y$ a bounded complex of flat $R$-modules. If $\RHom_{R}(X, Y)$ is concentrated in some degree, then $\RHom_{R}(X, Y)$ is isomorphic in ($\D(R)$) to a flat $R$-module.
Indeed, $X$ and $Y$ are isomorphic to bounded complexes of projective $R$-modules by \cite[Part II, Corollary 3.2.7]{RG71}. 
Thus $\RHom_{R}(X,Y)$ is isomorphic to a bounded complex of flat $R$-modules. 
By assumption, we have  $\RHom_{R}(X,Y)\cong \Sigma^n M$ for some $R_\pp$-module $M$ and $n\in \ZZ$.
Then $\Sigma^n M\cong F$, and thus $M$ has finite flat dimension over $R_\pp$. Since $\pp$ is minimal, $R_\pp$ is artinian,  so that every flat $R_\pp$-module is projective (\cite[Theorems 23.20 and 24.25]{Lam91}). Consequently $M$ has finite projective dimension over $R_\pp$, which is bounded by $\dim R_\pp=0$ (\cite[Theorem 3.2.6]{RG71}). Therefore $M$ is a projective $R_\pp$-module. In particular, $M$ is a flat $R$-module, as desired.

Similarly, if $X$ is a bounded complex of injective $R$-modules, $Y$ a bounded complex of injective $R_\pp$-modules, and $\RHom_{R}(X, Y)$ is concentrated in some degree, then $\RHom_{R}(X, Y)$ is isomorphic to an $R_\pp$-module of finite projective dimension over $R_\pp$, so the $R_\pp$-module is projective. Hence 
$\RHom_{R}(X, Y)$ is isomorphic in $\D(R)$ to a flat $R$-module.
\end{remark}

\begin{proof}
Let $\cd$ be the codimension function on $\Spec R$ such that $\Phi = \Phi_{\cd}$.
In view of \cref{non-connected}, we may assume that $\Spec R$ is connected.
Moreover, since $\dim R = 2$, we may assume that $\cd$ takes values in the interval $\{0,1,2\}$. 

\cref{2-dim-tilt-tilt}: Suppose that $T_\Phi$ is tilting.  
Let us use the notation of \cref{End-tilt}: $W_n = \{\pp \in \Spec R \mid \cd(\pp) = n\}$ and $T(n) = \bigoplus_{\pp \in W_n}\Sigma^{n}\RGamma_{\pp}R_\pp$. Then $T_\Phi = T(0) \oplus T(1) \oplus T(2)$.
By the proof of \cref{X(n)-X(n+1)}, we have 
$\Hom_{\D(R)}(T(i), T(j))=0$ if $i>j$. Hence the $R$-module $\End_{\D(R)}(T_\Phi)$ is the direct sum of 
$\Hom_{\D(R)}(T(0), T(0)\oplus T(1) \oplus T(2))$, $\Hom_{\D(R)}(T(1), T(1) \oplus T(2))$, and $\Hom_{\D(R)}(T(2), T(2))$. 
By \cref{X(n)-X(n+1),End-tilt}, it suffices to show that $\Hom_{\D(R)}(T(0), T(2))$ is a flat $R$-module.
Note that each $\pp \in W_0$ is necessarily a minimal prime of $R$ and 
\[\Hom_{\D(R)}(T(0), T(2))\cong \bigoplus_{\pp\in W_0}\Hom_{\D(R)}(\RGamma_\pp R_\pp, T(2)).\]
Then $\Hom_{\D(R)}(T(0), T(2))$ is flat as an $R$-module by \cref{Cech-comp,generic-fiber}.

\cref{2-dim-tilt-cotilt}: This follows from a parallel argument to \cref{2-dim-tilt-tilt}. 
Use \cref{bd-comp-inj,vanishing,generic-fiber,End-cotilt}.
\end{proof}

We conclude the paper by the following result, which affirmatively answers \cref{question} (and \cref{CM-equiv}) in the case of a local ring of Krull dimension two.
\begin{corollary}\label{2-dim-tilt}
	Let $(R,\mm,k)$ be a 2-dimensional commutative noetherian local ring with a codimension filtration $\Phi$. Then the following conditions are equivalent:
\begin{enumerate}[label=(\arabic*), font=\normalfont]
\item \label{2-dim-tilting} $T_\Phi$ is a tilting object in $\D(R)$.
\item \label{2-dim-cotilting} $C_\Phi$ is a cotilting object in $\D(R)$.
\item \label{2-dim-CM} $R$ is a homomorphic image of a Cohen--Macaulay local ring.
\end{enumerate}
\end{corollary}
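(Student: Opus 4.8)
The plan is to deduce \cref{2-dim-tilt} by combining the dimension-two results already assembled in the paper with Kawasaki's local characterization. First I would invoke \cref{2-dim-tilt-flat-end}: if $T_\Phi$ is tilting then $\End_{\D(R)}(T_\Phi)$ is automatically a flat $R$-module, and likewise if $C_\Phi$ is cotilting then $\End_{\D(R)}(C_\Phi)$ is flat. Thus each of \cref{2-dim-tilting} and \cref{2-dim-cotilting} upgrades, for free in Krull dimension two, to the stronger conjunction appearing in \cref{flat-End-thm}\cref{thm-flat-End-tilt} (resp. \cref{thm-flat-End-cotilt}). Since $R$ is local with a codimension filtration, \cref{flat-End-thm} then gives the equivalence of all three of \cref{2-dim-tilting}, \cref{2-dim-cotilting}, and \cref{2-dim-CM} in one stroke.

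Concretely, the argument runs as follows. \cref{2-dim-tilting} $\Rightarrow$ \cref{thm-flat-End-tilt} of \cref{flat-End-thm}: $T_\Phi$ is tilting by hypothesis, and $\End_{\D(R)}(T_\Phi)$ is flat by \cref{2-dim-tilt-flat-end}\cref{2-dim-tilt-tilt}. Then \cref{flat-End-thm} yields \cref{2-dim-CM}. Conversely, \cref{2-dim-CM} $\Rightarrow$ \cref{2-dim-tilting}: this is exactly \cref{finite-theorem} applied with $R$ itself as the module-finite algebra over the Cohen--Macaulay local ring of which it is a homomorphic image (note such a ring has finite Krull dimension, as recalled after the proof of \cref{finite-theorem}), which shows $T_\Phi$ is tilting. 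The same two steps with \cref{2-dim-tilt-flat-end}\cref{2-dim-tilt-cotilt} and the cotilting half of \cref{finite-theorem} handle the equivalence \cref{2-dim-cotilting} $\Leftrightarrow$ \cref{2-dim-CM}. One small point to address: \cref{finite-theorem} produces a codimension filtration for which $T_\Phi$ (resp. $C_\Phi$) is tilting (resp. cotilting), but by the last paragraph of \cref{non-connected}, the tilting property of $T_\Phi$ is independent of the choice of codimension filtration, so the conclusion transfers to the given $\Phi$.

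I do not expect a genuine obstacle here, since all the hard analytic and homological input has already been isolated: the flatness of endomorphism rings over Cohen--Macaulay base rings (\cref{flat-End}), its propagation along module-finite algebras (\cref{finite-2}, \cref{finite-theorem}), the computation of self-Hom of $T_\Phi$ and $C_\Phi$ in low dimension (\cref{End-tilt}, \cref{End-cotilt}, \cref{2-dim-tilt-flat-end}), and Kawasaki's theorem (\cref{Kawasaki}\cref{Kawasaki-local}) packaged into \cref{flat-End-thm}. The only thing to be careful about is bookkeeping: making sure the hypotheses of \cref{flat-End-thm} (local ring, codimension filtration) and of \cref{2-dim-tilt-flat-end} ($\dim R = 2$, codimension filtration) are literally met, and that the appeal to \cref{finite-theorem} is legitimate because a Cohen--Macaulay local ring that surjects onto a $2$-dimensional ring may itself be taken of finite Krull dimension. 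Hence the proof is essentially a three-line citation chain; I would write it as such rather than reproving anything.

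\begin{proof}
If $T_\Phi$ is tilting, then $\End_{\D(R)}(T_\Phi)$ is a flat $R$-module by \cref{2-dim-tilt-flat-end}\cref{2-dim-tilt-tilt}, so \cref{flat-End-thm}\cref{thm-flat-End-tilt} holds and hence \cref{2-dim-CM} follows from \cref{flat-End-thm}. Similarly, if $C_\Phi$ is cotilting, then $\End_{\D(R)}(C_\Phi)$ is flat by \cref{2-dim-tilt-flat-end}\cref{2-dim-tilt-cotilt}, so \cref{flat-End-thm}\cref{thm-flat-End-cotilt} holds and \cref{flat-End-thm} again gives \cref{2-dim-CM}. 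Conversely, assume \cref{2-dim-CM}, so that $R$ is a homomorphic image of a Cohen--Macaulay local ring, which can be chosen of finite Krull dimension. Applying \cref{finite-theorem} with this Cohen--Macaulay ring as the base and $R$ as the module-finite algebra over it, we obtain a codimension filtration $\Phi'$ of $\Spec R$ such that $T_{\Phi'}$ is tilting and $C_{\Phi'}$ is cotilting in $\D(R)$. By the last paragraph of \cref{non-connected}, $T_{\Phi}$ is tilting if and only if $T_{\Phi'}$ is tilting, and likewise $C_{\Phi}$ is cotilting if and only if $C_{\Phi'}$ is cotilting; hence \cref{2-dim-tilting} and \cref{2-dim-cotilting} both hold. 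This proves the equivalence of \cref{2-dim-tilting}, \cref{2-dim-cotilting}, and \cref{2-dim-CM}.
\end{proof}
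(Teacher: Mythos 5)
Your proof is correct and is essentially the same as the paper's: the paper routes the implications $(1)\Rightarrow(3)$ and $(2)\Rightarrow(3)$ through the same chain of inputs (\cref{2-dim-tilt-flat-end} together with \cref{univ-catenary}, \cref{flat-end-fibres}, and Kawasaki's \cref{Kawasaki}\cref{Kawasaki-local}), only without packaging them through \cref{flat-End-thm} as an intermediate step. One small remark: your detour via \cref{non-connected} to transfer from $\Phi'$ to $\Phi$ is harmless but unnecessary, since the statement of \cref{finite-theorem} already applies to any codimension filtration of $\Spec A$, not merely to one of its own choosing.
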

\begin{proof}
	By \cref{finite-theorem}, the condition \cref{2-dim-CM} implies \cref{2-dim-tilting,2-dim-cotilting}. Conversely, assuming either \cref{2-dim-tilting} or \cref{2-dim-cotilting}, we obtain \cref{2-dim-CM} from Theorems~\ref{Kawasaki}\ref{Kawasaki-local}~and~\ref{2-dim-tilt-flat-end}, \cref{univ-catenary}, \cref{flat-end-fibres}.
\end{proof}


\bibliographystyle{amsalpha}
\bibliography{bibitems}

\end{document}